\documentclass{amsart}

\usepackage{hyperref}

\usepackage{amssymb} 
\usepackage{enumerate, amsfonts, latexsym,epsfig, color}
\usepackage{epstopdf, amscd}

\usepackage{mathptmx}
\usepackage{thm-restate}
\usepackage{xspace}

\input xy
\xyoption{all}


\newtheorem {theorem}{Theorem}[section]
\newtheorem {lemma} [theorem] {Lemma}
\newtheorem {proposition} [theorem] {Proposition}

\newtheorem {corollary} [theorem] {Corollary}
\newtheorem {definition} [theorem] {Definition}

\newtheorem {remark} [theorem] {Remark}

\newtheorem {question} {Question}
\newtheorem {notation} [theorem] {Notation}
\newtheorem {terminology} [theorem] {Terminology}

\newtheorem{assumption} [theorem] {Standing Assumption}

\newtheorem {claim}{Claim}

\newtheorem{thmA}{Theorem}

\newtheorem{corA}[thmA]{Corollary}


\usepackage{amsmath}

\def\Man {\mc{M}^\pi}
\def\MgenM {\mc{M}_{\operatorname{Gen}}}
\def \Mgen {\MgenM^\pi}
\def \Mgeo {\mc{M}_{\operatorname{Geo}}^\pi}
\def \Mirr {\mc{M}_{\operatorname{Irr}}^\pi}
\def \Mori {\mc{M}_{\operatorname{Ori}}^\pi}

\def \Mlsf {\mc{M}_{\operatorname{SFH}}^\pi}
\def\orb {\mc{M}_{\operatorname{Orb}}^\pi}

\def \Tcut {T_{\operatorname{cut}}}

\def\Cdiv {$\mathcal{C}$--divergent\xspace}
\def\Tdiv {$\mathcal{T}$--divergent\xspace}

\def\LSF { SFH-type }

\def\Cay {\operatorname{Cay}}

\def\co{\colon\thinspace}

\def\mc {\mathcal}

\def\R {\mathbb R}
\def\bH {\mathbb H}
\def\bC {\mathbb C}

\def\N {\mathbb N}
\def\Z {\mathbb Z}

\def\bp {\mathfrak o}
\def\Gd {{{\mathbb{G}}}} 
\def\Gdf {\mathbb{G}^r} 
\def\V {V} 
\def\D {\mathbb{D}} 
\def\E {\mathbb{E}} 

\def\Boun {\mc{B}}  

\def\Stab {\operatorname{Stab}}
\def\Fix {\operatorname{Fix}}
\def\Aut {\operatorname{Aut}}

\def\SC {Z^\omega} 

\def\Hom {{\operatorname{Hom}}}

\def\len {\ell}

\def\ker {\operatorname{Ker}}

\def\wker {\ker^\omega}
\def\walmost {$\omega$--almost }
\def\wapprox {$\omega$--approximation }
\def\wapproxs {$\omega$--approximations }
\def\wlim {{\lim}^\omega}

\def\onto {\twoheadrightarrow}

\def\Mod {\operatorname{Mod}}

\def\geom {{\operatorname{geom}}}

\newcommand{\Coll}{\mc{C}} 

\begin{document}

\title{Homomorphisms to $3$--manifold groups}

\author[Daniel Groves]{Daniel Groves}
\address{Department of Mathematics, Statistics, and Computer Science,
University of Illinois at Chicago,
322 Science and Engineering Offices (M/C 249),
851 S. Morgan St.,
Chicago, IL 60607-7045}
\email{groves@math.uic.edu}

\author[Michael Hull]{Michael Hull}

\address{Department of Mathematics \& Statistics,
University of North Carolina at Greensboro, 
116 Petty Building,
Greensboro, NC 27402-6170}
\email{mbhull@uncg.edu}

\author[Hao Liang]{Hao Liang}

\address{School of Mathematics and big data, Foshan University, Foshan, Guangdong, 528000, People's Republic of China}
\email{lianghao1019@hotmail.com}

\thanks{The first author is supported in part by NSF grant DMS-1904913. The third author is supported by NSFC (No.11701581 and No.11521101).
}

\begin{abstract}
We prove foundational results about the set of homomorphisms from a finitely generated group to the collection of all fundamental groups of compact $3$--manifolds and answer questions of Agol--Liu \cite{Agol-Liu} and Reid--Wang--Zhou \cite{RWZ}.  
\end{abstract}

\date{\today}

\maketitle

\setcounter{tocdepth}{1}
\tableofcontents

\section{Introduction}

There is a well-developed structure theory for compact $3$--manifolds based on the Geometrization Theorem of Perelman (conjectured by Thurston). In this paper we develop a structure theory for the collection of all maps between $3$--manifolds, from the point of view of their fundamental groups.  Let $\Man$ be the set of all isomorphism classes of fundamental groups of $3$--manifolds  (see \cite{AFW} for background).  Our main result gives a positive answer to a question of Agol--Liu \cite[Question 10.3]{Agol-Liu}.

\begin{thmA}\label{t:main}
For any finitely generated group $G$ there is a finitely presented group $G_0$ and an epimorphism $\alpha \co G_0 \onto G$ so that for every $\Gamma \in \Man$
 the map 
\[	\alpha^* \co \Hom(G,\Gamma) \to \Hom(G_0,\Gamma)	\]
induced by precomposition with $\alpha$ is a bijection.
\end{thmA}

Of course, if $G$ is finitely presented, then one can take $G_0 = G$, and there is nothing to prove.  Since
finitely generated $3$--manifold groups are finitely presented \cite{Scott:Coherence}, it might appear that Theorem~\ref{t:main} is not relevant to the study of homomorphisms between $3$--manifold groups.  On the contrary, Theorem~\ref{t:main} is an important tool, and in particular implies the following theorem, answering a question of Reid--Wang--Zhou \cite[Question 3.1.(C2)]{RWZ}.

\begin{thmA} \label{t:DCC}
Let $\left( M_i \right)_{i \ge 1}$ be a sequence of compact $3$--manifolds (possibly with boundary). Every infinite sequence of epimorphisms
$\pi_1(M_1) \onto \cdots \onto \pi_1(M_n) \onto \cdots$ contains an isomorphism.
\end{thmA}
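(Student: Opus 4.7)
My plan is to establish (and then apply) the following assertion about the class $\Man$ of fundamental groups of compact $3$--manifolds: $\Man$ is \emph{uniformly equationally Noetherian}, meaning that for every finitely generated free group $F$ and every subset $\Sigma \subseteq F$, there is a finite $\Sigma_0 \subseteq \Sigma$ such that, for every $G \in \Man$ and every homomorphism $\phi \co F \to G$, $\phi(\Sigma_0) = \{1\}$ forces $\phi(\Sigma) = \{1\}$. Granted this, Theorem~A follows quickly. Given a sequence $\phi_i \co \pi_1(M_i) \onto \pi_1(M_{i+1})$ with no isomorphism, fix a surjection $F \onto \pi_1(M_1)$ from a finite-rank free group and let $\Phi_n \co F \onto \pi_1(M_n)$ be the resulting composite. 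Each $\phi_i$ is a non-injective surjection, hence has nontrivial kernel; composed with the surjective $\Phi_{i-1}$ this forces the kernels $K_n := \ker \Phi_n \trianglelefteq F$ to \emph{strictly} ascend. Apply uniform equational Noetherianity to $\Sigma := \bigcup_n K_n$, obtain a finite $\Sigma_0 \subseteq \Sigma$, and pick $n_0$ with $\Sigma_0 \subseteq K_{n_0}$. For each $n \geq n_0$, $\Phi_n(\Sigma_0) = \{1\}$, so $\Phi_n(\Sigma) = \{1\}$, i.e.\ $\Sigma \subseteq K_n$. Combined with $K_n \subseteq \Sigma$, this yields $K_n = K_{n_0}$ for all $n \geq n_0$, contradicting strict ascent.

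\textbf{Proving uniform equational Noetherianity of $\Man$.} Use the Geometrization Theorem together with the following ingredients: (i)~(relatively) hyperbolic groups with uniformly equationally Noetherian peripheral subgroups are (uniformly) equationally Noetherian (Sela and Reinfeldt--Weidmann in the hyperbolic case; Groves--Hull in the relatively hyperbolic case); (ii)~fundamental groups of Seifert-fibered, Sol, Nil, and Euclidean pieces, being virtually central extensions of surface-orbifold groups or virtually polycyclic, fall into classes already known to be uniformly equationally Noetherian; (iii)~free products and finite-index inclusions preserve (uniform) equational Noetherianity. A prime decomposition handles connect sums, and for a closed, orientable, irreducible $3$--manifold the JSJ decomposition exhibits $\pi_1$ as hyperbolic relative to the collection of graph-manifold pieces together with cusp tori (Bigdely--Wise; Dahmani), allowing the ingredients to be assembled into uniform equational Noetherianity on $\Man$. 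Manifolds with boundary and non-orientable manifolds are reduced to the closed orientable case by doubling and passing to the orientable double cover.

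\textbf{Main obstacle.} The crux is precisely the \emph{uniformity}: each individual $G \in \Man$ can be seen to be equationally Noetherian from the toolkit above, but one needs a single witness $\Sigma_0$ that works for all $G \in \Man$ simultaneously. I expect this to require a Bestvina--Paulin/Rips--Sela style compactness argument: a hypothetical failure of uniformity yields a sequence of homomorphisms from a fixed $F$ into $3$--manifold groups admitting no common stable factorization, which after rescaling and passing to an ultralimit produces an action of $F$ on an $\R$--tree or related asymptotic model; Geometrization then forces this limit to exhibit a structure incompatible with the $3$--manifold origin of the groups, yielding a contradiction. The stratified families $\Mgen, \Mgeo, \Mirr, \Mhyp, \Mlsf, \orb$ appearing in the preamble suggest that the authors carry out precisely such a limit analysis class by class.
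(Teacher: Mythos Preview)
Your deduction of Theorem~A from the statement ``$\Man$ is an equationally noetherian family'' is correct and is exactly what the paper does (Section~2.1); the paper's Theorem~E is precisely your ``uniform equational Noetherianity of $\Man$''.

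The gap is in your sketch of how to \emph{prove} Theorem~E. Your ingredients (i)--(iii) do not assemble as you describe. Invoking relative hyperbolicity of a mixed $3$--manifold group relative to its maximal graph-manifold subgroups (Bigdely--Wise, Dahmani) only reduces the problem to showing that the family of all \emph{graph manifold groups} is uniformly equationally Noetherian. But graph manifold groups are amalgams of Seifert pieces over incompressible tori, not free products, so your ingredient~(iii) does not apply; and ingredient~(ii) covers only individual geometric pieces, not their torus-amalgams. Linearity of graph manifold groups is open (as the paper notes), so there is no shortcut via a single $\operatorname{GL}_n$. Thus the graph manifold case is not a mere matter of ``uniformity'' of an existing argument---it is precisely where the new content lies, and your sketch does not address it.

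The paper's route is different from the relative-hyperbolicity reduction you propose. Rather than treating $\pi_1(M)$ as relatively hyperbolic, the paper works directly with the action on the Bass--Serre tree $T$ of the geometric decomposition. A divergent sequence on these trees is handled by the acylindricity result of \cite{GH}; a non-divergent sequence yields a limiting simplicial splitting of the limit group $L$. The core of the paper (Sections~5--9) is proving that the edge groups of this limiting splitting are finitely generated, via a purpose-built ``collapsed space'' for each geometric piece, limiting $\R$--tree actions on these pieces, a Rips--Sela shortening argument, and a structural result (Theorem~\ref{t:fin gen}) about ``edge-twisted'' graphs of groups. So the Bestvina--Paulin/Rips machinery you anticipate is indeed used, but vertex-by-vertex on the JSJ pieces rather than through a global relatively hyperbolic structure.
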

It is worth remarking that Reid, Wang and Zhou ask their question about closed, orientable, aspherical $3$--manifolds, whereas the only assumption we need is that they are compact. Theorem~\ref{t:DCC} proves the descending chain condition for the following partial order on $3$--manifold groups: $G_1 \ge G_2$ if there is an epimorphism $\phi \co G_1 \to G_2$.  Per Calegari \cite{Calegari:mathscinet}, understanding this order is an important question in $3$--manifold topology.  

Theorem~\ref{t:DCC} is proved in Section~\ref{s:outline of proof} as a consequence of Theorem~\ref{t:main}.  The proof of Theorem~\ref{t:DCC} is by contradiction, and the group $G$ from Theorem~\ref{t:main} used in this proof arises as a direct limit of the sequence of maps $\pi_1(M_i) \onto \pi_1(M_{i+1})$. 

Partial results about Reid, Wang and Zhou's question were already known.  Reid--Wang--Zhou \cite[Theorem 3.4]{RWZ} gave a positive answer to this question in case all $M_i$ are (closed, orientable, aspherical) Seifert $3$-manifolds. A key step of their proof is showing that epimorphisms between fundamental groups of aspherical Seifert 3-manifolds with the same $\pi_1$ rank are realized by non-zero degree maps, which does not hold in general for epimorphisms between closed aspherical 3-manifolds groups of the same rank (see \cite{Gonzalez-Acuna-Ramirez2003}).  

Soma gave a positive answer to Reid, Wang and Zhou's question in case all $M_i$ are hyperbolic \cite[Theorem 1]{Soma}.  Since every hyperbolic 3-manifold group embeds into $\mathrm{PSL}(2, \mathbb C)$, Soma's result can be derived using the classical Hilbert Basis Theorem. To make this approach work in the general case, one would need to not only show that all 3-manifold groups are linear (which is still open for certain graph manifolds), but also to find a \emph{single} linear group into which \emph{every} 3-manifold group embeds. Whether this can be done is an open question which seems to be out of reach of current methods. 

In order to solve Reid, Wang and Zhou's question in full generality, a new approach was needed.  We avoid questions of linearity and assumptions of positive degree by using Theorem~\ref{t:main} as a replacement for the Hilbert Basis Theorem.  From the point of view of equations over groups, a homomorphism $h \co G \to H$ between groups corresponds to a solution in $H$ to a system of equations (where the generators of $G$ are variables, and the relators of $G$ give equations).  In this language, Theorem~\ref{t:main} is an analogue of the Hilbert  Basis Theorem, uniformly for all groups in $\Man$.  In the language of \cite{GH}, Theorem~\ref{t:main} says that $\Man$ is an \emph{equationally noetherian} family of groups. 

Many other people have studied collections of maps between $3$--manifolds or between $3$--manifold groups. Examples of previous work include \cite{Rong92, Boileau-Wang-JDG, Hayat-legrand-Wang-Zieschang, Reid-Wang99, Soma-JDG-98, Soma2000, Wang-Zhou2002, Derbez2003, Derbez2007, Boileau-Boyer-Wang2008, Boileau-Rubinstein-Wang2014, Liu, Agol-Liu}.  For an overview of the state of the art in 2002 about positive degree maps, see Wang's ICM address \cite{Wang:ICM}.  Highlights of work since then have been Agol and Liu's solution to Simon's Conjecture (about epimorphisms between knot groups) \cite{Agol-Liu}, and Liu's proof that for fixed $k \ge 1$, a given closed $3$-manifold admits a degree $k$ map onto only finitely many other closed $3$--manifolds \cite{Liu}.  However, as far as we are aware, all previous work in this area has made assumptions about the $3$--manifolds or the types of maps considered, whereas Theorem \ref{t:main} makes no such assumptions.

Since the Borel Conjecture is true in dimension $3$ (see for instance \cite[Section 5]{KreckLuck}), we also obtain:

\begin{corA}\label{cor:dcc_homeo}
Let $(M_i)_{i \in \N}$ be a sequence of closed, aspherical $3$--manifolds and let  $f_i\colon M_i\to M_{i+1}$ be  $\pi_1$--surjective maps. There exists $N \in \mathbb{N}$ so $f_i$ is homotopic to a homeomorphism for all $i\geq N$.
\end{corA}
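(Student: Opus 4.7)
The plan is to combine Theorem~\ref{t:DCC} with Whitehead's theorem and the $3$-dimensional Borel Conjecture. The descending chain condition for epimorphisms is the substantive input, and it is supplied by Theorem~\ref{t:DCC}; what remains is a standard packaging using asphericity.

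First I would promote Theorem~\ref{t:DCC} to the statement that there exists $N$ so that $(f_i)_\ast$ is an \emph{isomorphism} for every $i \geq N$. Suppose for contradiction that there is an infinite set of indices $i_1 < i_2 < \cdots$ for which $(f_{i_k})_\ast$ fails to be injective. For each $k$, let $g_k \co \pi_1(M_{i_k}) \onto \pi_1(M_{i_{k+1}})$ be the composition of $(f_j)_\ast$ for $i_k \leq j < i_{k+1}$. Each $g_k$ is surjective, and its kernel contains the nontrivial kernel of $(f_{i_k})_\ast$, so each $g_k$ is a proper epimorphism. The resulting sequence $\pi_1(M_{i_1}) \onto \pi_1(M_{i_2}) \onto \cdots$ consists of proper epimorphisms and contains no isomorphism, contradicting Theorem~\ref{t:DCC}.

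Once such an $N$ is in hand, fix any $i \geq N$. Since $M_i$ and $M_{i+1}$ are closed and aspherical, they are $K(\pi,1)$'s, so the $\pi_1$-isomorphism $(f_i)_\ast$ together with Whitehead's theorem implies that $f_i$ is a homotopy equivalence. The $3$-dimensional Borel Conjecture (see \cite[Section~5]{KreckLuck}) then says that every homotopy equivalence between closed aspherical $3$-manifolds is homotopic to a homeomorphism, delivering the conclusion for all $i \geq N$.

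I do not anticipate any genuine obstacle beyond Theorem~\ref{t:DCC} itself. The only conceptual step in the corollary is the kernel-composition argument above, which shows that Theorem~\ref{t:DCC}, applied to every tail of the sequence, upgrades ``one isomorphism somewhere'' to ``all maps are isomorphisms eventually''; Whitehead plus Borel then close things out.
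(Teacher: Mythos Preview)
Your proof is correct and follows the same approach as the paper: Theorem~\ref{t:DCC} plus the $3$--dimensional Borel Conjecture, with asphericity bridging the gap via Whitehead. The only difference is that you spell out the upgrade from ``contains an isomorphism'' to ``eventually all isomorphisms'' explicitly (the paper's proof of Theorem~\ref{t:DCC} in fact already establishes this stronger conclusion directly, so the paper does not need your subsequence argument, but your argument is a valid and clean way to extract it from the statement as written).
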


Note that the asphericity assumption in Corollary~\ref{cor:dcc_homeo} cannot be dropped, as can be seen using all $M_i = \mathbb{S}^3$.

We now turn to our approach to proving Theorem~\ref{t:main}.  We fix a finitely generated group $G$ and consider the collection of all homomorphisms from $G$ to $\Gamma$, for all $\Gamma \in \Man$.  Our basic approach, inspired by Sela \cite{sela:dio1,Sela:hyp} and his work on limit groups, etc., is to consider a sequence of homomorphisms and try to extract limits.  In this theory, the goal is usually to find a limiting $\R$-tree from a \emph{divergent} sequence of homomorphisms.  At this point, Sela's shortening argument is used to reduce to the case of non-divergent homomorphisms which are usually constant modulo conjugation. 

In order to find limiting $\R$--trees, one way is to use actions on $\delta$--hyperbolic spaces. For $3$--manifold groups, there are the Bass--Serre trees arising from the Kneser--Milnor decomposition or the geometric decomposition.  This is the context of previous work by the first two authors \cite{GH}, who used limiting $\R$--trees and a version of Sela's shortening argument to reduce many questions to the case where a sequence of homomorphisms does not diverge on these trees.  This is the starting point of this paper.  However, this `non-divergent' case is still highly nontrivial.  We do obtain a limiting simplicial action on a tree in this setting, which induces a splitting of the associated limit group $L$. We call this splitting the \emph{geometric decomposition} of $L$. For each vertex group $V$ of this splitting, there is a corresponding sequence of pieces in the geometric decompositions of the corresponding sequence of $3$--manifold groups. For each such piece, we construct a new space called the {\em collapsed space} which is a variant of a construction of Farb from \cite{farb:relhyp} (Section~\ref{s:collapsing Divergent}). This sequence of spaces induces a new limiting action of $V$ on an $\R$-tree (see Section~\ref{s:R-trees}), and the collapsed spaces are carefully tuned so that the limiting actions behaves sufficiently well and so we can repeat the shortening argument for this new action (see Section~\ref{s:res-short}).   

Even having applied the shortening argument twice, and having a new more restrictive notion of non-divergent, the non-divergent sequences are still far from being constant.  Thus, we have yet another layer to our analysis.   In this case, we construct a new splitting of our limit group $L$ dual to a tree that we construct as a quotient of the Cayley graph of $L$. This splitting has an algebraic property which we call {edge-twisted} (see Definition~\ref{def:edge-twist}). We prove some technical results about edge-twisted splittings in Appendix~\ref{app:edge-twist}, and applying these to our splitting of $L$ allows us to conclude that each vertex group in the geometric decomposition of $L$ is itself a limit group coming from a sequence of homomorphisms to geometric 3-manifolds. Finally, we can complete the proof by using the results of the third author \cite{Liang} to understand these limit groups over geometric 3-manifolds.

This paper is in some sense a culmination of the prior work \cite{GH,Liang} of the authors.  On the other hand, we expect the tools in this paper to be useful for many other questions about maps between $3$--manifold groups. In particular, although Theorems~\ref{t:main} and \ref{t:DCC} are immediate from the Hilbert Basis Theorem in the case of Kleinian groups, we believe that the tools built in this paper will be useful for other questions about maps between and to Kleinian groups, and we intend to pursue this direction in future work.

We now outline the contents of this paper.  In Section~\ref{s:outline of proof} we outline the proof of Theorem~\ref{t:main} and use this to derive Theorem~\ref{t:DCC}.  In Section~\ref{s:outline of proof} Theorem~\ref{t:main} is reduced to four results, proved later in the paper.  In Section~\ref{s:reduction} we prove Theorem~\ref{t:reduction}, which allows us to focus on a restricted class of $3$--manifold groups.   In Section~\ref{s:geo decomp} we consider various splittings of limit groups.    
Later sections are dedicated to the proof of the main technical result, Theorem \ref{t:collapsing Divergent}.

\section{Outline of proofs} \label{s:outline of proof}

In this section we explain the proof of Theorem~\ref{t:main}.  Specifically, we explain the outline of the proof, and also state four results: Theorem~\ref{t:reduction}, Theorem~\ref{t:GH}, Theorem~\ref{t:fingenedges}, and Theorem~\ref{t:collapsing Divergent} (proved in later sections).
Assuming these results, we give
a complete proof of Theorem~\ref{t:main}.  We deduce Theorem~\ref{t:DCC} from Theorem~\ref{t:main}.

Our first reduction uses the basic structure of $3$--manifolds and $3$--manifold groups to restrict the class of $3$--manifolds we need to consider.  We refer to \cite{AFW} for background about $3$--manifolds, their fundamental groups, etc.

\begin{definition} \label{d:gen type}
Let $\MgenM$ be the set of homeomorphism classes of $3$--manifolds $M$ so that:
\begin{enumerate}
\item $M$ is closed, orientable, and irreducible;
\item All geometric pieces are hyperbolic or Seifert-fibered with hyperbolic base orbifold; and
\item The base orbifold of each Seifert-fibered piece is orientable.
\end{enumerate}
Let $\Mgen$ denote the set of isomorphism classes  of fundamental groups of elements of $\MgenM$.
\end{definition}

Throughout this paper, we fix a non-principal ultrafilter $\omega$ on $\mathbb{N}$ and use the concepts of $\omega$--limits, etc.  See \cite{vdD-W} for the background and basic results about ultrafilters, ultralimits, etc.
Recall that a \emph{non-principal} ultrafilter $\omega$ is a finitely additive probability measure $\omega\colon 2^\N\to\{0, 1\}$ such that $\omega(F)=0$ for any finite set $F$. A statement $P(i)$ depending on an index $i$ holds {\em \walmost surely} if $\omega(\{i\;|\; P(i) \text{ holds}\})=1$.

\begin{definition} \label{def:limit group}
Suppose $\mc{G}$ is a family of groups and $G$ is a finitely generated group.  Let $\Hom(G,\mc{G})$ denote the set of all homomorphisms from $G$ to an element of $\mc{G}$.  Let $(\phi_i)$ be a sequence of homomorphisms from $\Hom(G, \mc{G})$. Associated to $(\phi_i)$ is the \emph{stable kernel} $\wker(\phi_i)=\{g\in G \mid \text{ \walmost surely } \phi_i(g)=1 \}$ and a  \emph{$\mc{G}$--limit group} defined by $L:=G/\wker(\phi_i)$. Let $\phi_\infty$ denote the quotient map $G\onto L$. 
We refer to the sequence $(\phi_i)$ as a \emph{defining sequence} of homomorphisms for the limit group $L$.

We say that the sequence $(\phi_i)$ \emph{$\omega$--factors through the limit} if $\phi_i$ \walmost surely factors through $\phi_\infty$.
\end{definition}
The notion of an \emph{equationally noetherian} family of groups was introduced by the first and second authors in \cite[Definition A]{GH}.
That the following definition is equivalent to the one from \cite{GH} is \cite[Theorem 3.6]{GH}.

\begin{definition}\label{def:eqchar}
The family $\mc{G}$ is \emph{equationally noetherian} if for any finitely generated group $G$ there is a finitely presented group $G_0$ and an epimorphism $\alpha \co G_0 \onto G$ so that for every $\Gamma \in \mc{G}$
 the map 
\[	\alpha^* \co \Hom(G,\Gamma) \to \Hom(G_0,\Gamma)	\]
induced by precomposition with $\alpha$ is a bijection.
\end{definition}

Note that with this definition, Theorem~\ref{t:main} is the claim that $\Man$ is equationally noetherian.
In \cite[Theorem 3.7]{GH} a further characterization of equationally noetherian is given, which is the one we use in the proof of Theorem~\ref{t:main}.

\begin{theorem}\label{t:EN def}
The family $\mc{G}$ is equationally noetherian if and only if for every finitely generated group $G$, every sequence from $\Hom(G, \mc{G})$ $\omega$--factors through the limit.
\end{theorem}

Our first reduction towards the proof of Theorem~\ref{t:main} is the following, proved in Section~\ref{s:reduction}.
\begin{restatable}{theorem}{reductiontheorem}\label{t:reduction}
If $\Mgen$ is equationally noetherian then so is $\Man$.
\end{restatable}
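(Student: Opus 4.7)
The plan is to deduce equational noetherianity of $\Man$ from that of $\Mgen$ via a cascade of reductions, each enlarging the class of 3-manifold groups handled while preserving the equationally noetherian property. The key closure properties I would rely on, presumably available from \cite{GH}, are closure of EN families under free products, under doubling along amalgamated subgroups, and under passage to overgroups in which elements of the family sit with finite index.

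First I would reduce to the irreducible case using the Kneser--Milnor prime decomposition, which writes an arbitrary compact 3-manifold group as a free product of fundamental groups of prime 3-manifolds together with copies of $\Z$ from $S^1\times S^2$ summands; closure under free products handles this step. Next, the doubling construction $\pi_1(M)\hookrightarrow \pi_1(DM)=\pi_1(M)*_{\pi_1(\partial M)}\pi_1(M)$ reduces compact irreducible manifolds with boundary to closed ones, after first capping spherical boundary components with balls. Third, passing to the orientation double cover reduces closed non-orientable irreducible manifolds to the orientable case via an index-two extension argument at the family level. Finally, for a closed orientable irreducible $M$ whose geometric decomposition contains small Seifert pieces, or Seifert pieces with non-orientable base orbifold, one can find a compatible finite cover $\widetilde M\to M$ with $\widetilde M\in \MgenM$ by taking appropriate finite covers of each base orbifold and assembling them to a cover of $M$; this realizes $\pi_1(M)$ as a finite-index overgroup of an element of $\Mgen$. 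The sporadic closed geometric manifolds (spherical, Euclidean, Nil, Sol) that escape this procedure have virtually polycyclic fundamental groups and should be handled separately, either directly or as small finite families.

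The main obstacle I anticipate is the last reduction: the index of the cover $\widetilde M\to M$ needed to arrange every Seifert piece to be large with orientable base is not uniformly bounded across the family. Consequently, the family-level closure-under-finite-extensions statement must permit the index to depend on the individual group, and it must simultaneously interact well with the doublings and prime decompositions used in the earlier steps. Formulating and applying such a sufficiently flexible family-level closure property---and verifying that the composite reduction is genuinely by operations under which the equationally noetherian property is preserved for families---is where I expect the bulk of the work in Section~\ref{s:reduction} to lie.
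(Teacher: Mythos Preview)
Your outline shares the broad shape of the paper's reduction (Kneser--Milnor, orientation cover, finite-index cover), but the place you flag as the main obstacle is precisely where your plan breaks down and the paper's approach diverges. The closure property you hope for---passing to finite-index overgroups with index depending on the group---is \emph{false} for families: the paper explicitly remarks (after Proposition~\ref{prop:fi}) that the family $\{1\}$ is EN while the family of all finite groups is not, so closure under finite-index overgroups requires a uniform bound on the index. No ``sufficiently flexible'' version of this closure property can exist.

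The paper sidesteps the issue by splitting off the geometric manifolds and handling them separately. The point you seem to miss is that in a closed orientable irreducible \emph{non-geometric} $3$--manifold, every Seifert piece of the geometric decomposition is already large (or a twisted $I$--bundle over a Klein bottle); small Seifert manifolds only occur as the entire manifold, i.e.\ in the geometric case. So for non-geometric $M$ one only needs to kill twisted $I$--bundles over Klein bottles and make base orbifolds orientable, which costs a cover of degree at most $4$ (Lemma~\ref{l:cover to become gen}), and then Proposition~\ref{prop:fi} applies with a uniform bound. The geometric manifolds are dealt with directly in Theorem~\ref{t:geo EN}: five of the geometries have fundamental groups virtually in $\operatorname{SL}_3(\Z)$ with uniformly bounded index, hyperbolic groups sit in $\operatorname{PSL}(2,\bC)$, and LSF groups are handled via Liang's theorem on $\orb$--limit groups. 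Finally, for the boundary case the paper does not use any ``closure under amalgamated doubling'': it simply observes that $\pi_1$ of a compact $3$--manifold with boundary is a subgroup (indeed a retract) of a closed one, and uses that EN for a family is equivalent to EN for the family of all subgroups.
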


Our proof of Theorem~\ref{t:main} proceeds by verifying the criterion from Thereom~\ref{t:EN def}.
One feature a sequence $(\phi_i \co G \to \Gamma_i)$ from $\Hom(G,\Mgen)$ might have is being \emph{\Tdiv} (see Definition~\ref{d:Tdiv} -- roughly speaking the actions of $\phi_i$ on the trees dual to the geometric decompositions diverge).  The main result of \cite{GH} implies the following theorem, proved in Section~\ref{s:geo decomp}.

\begin{restatable}{theorem}{groveshull}\label{t:GH}
Suppose that for every finitely generated group $G$, every sequence from $\Hom(G,\Mgen)$ which is not \Tdiv $\omega$--factors through the limit.  Then for every finitely generated group $G$ \emph{every} sequence from $\Hom(G,\Mgen)$ $\omega$--factors through the limit.
\end{restatable}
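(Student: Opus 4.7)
The plan is to invoke the main theorem of \cite{GH}, a factorization principle for sequences of homomorphisms from a finitely generated group $G$ to a family $\mc{G}$ of groups whose elements admit compatible, uniformly controlled actions on simplicial trees. Its informal content is that $\omega$-factoring of arbitrary sequences in $\Hom(G,\mc{G})$ reduces to $\omega$-factoring of those sub-sequences whose associated actions on the trees do not diverge.

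Each $\Gamma \in \Mgen$ acts on the Bass-Serre tree $T$ dual to its geometric decomposition, with $\Z^2$ edge stabilizers and vertex stabilizers equal to the fundamental groups of the geometric pieces. These actions are uniformly acylindrical (a standard consequence of the structure of geometric decompositions of $3$-manifolds), which is the structural input the \cite{GH} framework requires in order to apply to $\Mgen$. Pulling back via any $\phi_i \co G \to \Gamma_i$ yields an action of $G$ on $T_i$, and the notion of divergence for tree actions that appears in \cite{GH} coincides in this setting with the notion of \Tdiv defined in Section~\ref{s:geo decomp}.

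With these identifications in place, the main theorem of \cite{GH} reduces $\omega$-factoring of an arbitrary sequence $(\phi_i \co G \to \Gamma_i)$ from $\Hom(G,\Mgen)$ to $\omega$-factoring of non-\Tdiv sequences in $\Hom(G,\Mgen)$. The latter is precisely the hypothesis of Theorem~\ref{t:GH}, so combining the two gives the conclusion that every sequence from $\Hom(G,\Mgen)$ $\omega$-factors through its limit.

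The main obstacle is unpacking the hypotheses of the \cite{GH} framework for the family $\Mgen$ equipped with the geometric-decomposition trees: one must verify uniform acylindricity (or the precise substitute used in \cite{GH}), check the compatibility of the $\Z^2$ edge stabilizers across $\omega$-limits, and confirm that the \cite{GH} notion of a divergent sequence of tree actions matches the \Tdiv condition from Section~\ref{s:geo decomp}. Once these technical verifications are in hand, the reduction is essentially formal and Theorem~\ref{t:GH} follows.
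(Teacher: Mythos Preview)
Your proposal is correct and follows essentially the same route as the paper: verify that the geometric-decomposition trees give $\Mgen$ the structure of a uniformly acylindrical family (the paper cites Lemma~\ref{l:2-acyl} for $2$--acylindricity), observe that non-\Tdiv coincides with ``non-divergent'' in the sense of \cite{GH}, and then invoke \cite[Theorem~B]{GH}. Your mention of checking ``compatibility of the $\Z^2$ edge stabilizers across $\omega$--limits'' is extraneous---uniform acylindricity alone is the input \cite[Theorem~B]{GH} requires---but this does not affect the argument.
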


Thus, we fix a non-\Tdiv sequence $(\phi_i \co G \to \Gamma_i)$ defining an $\Mgen$--limit group $L$.  The $G$--actions on the trees associated to the $\Gamma_i$ converge to an $L$--action on a (simplicial) tree, inducing the {\em geometric decomposition} of $L$ (Definition~\ref{def:gd}).  A key set of subgroups are \emph{stably parabolic} subgroups (Definition~\ref{def:stably para}).
These are abelian (Lemma~\ref{l:sp abelian}) and edge groups of the geometric decomposition are stably parabolic.  The following is proved in Section~\ref{s:geo decomp}.

\begin{restatable}{theorem}{fingenedges}\label{t:fingenedges}
Let $L$ be an $\Mgen$--limit group defined by a non--\Tdiv sequence $(\phi_i)$ and suppose the edge groups of the geometric decomposition of $L$ with respect to $(\phi_i)$ are finitely generated.  Then $(\phi_i)$ $\omega$--factors through the limit.
\end{restatable}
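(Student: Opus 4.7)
The plan is to extract a finite graph of groups decomposition of $L$ from the non--\Tdiv $\omega$--limit of the Bass--Serre trees $T_i$ dual to the geometric decompositions of $\Gamma_i$, reduce the factorization of $\phi_i$ through $L$ to separate factorizations on each vertex group (where the image lies in a single geometric piece of $\Gamma_i$), invoke equational noetherianity of the individual families $\Mhyp$ and $\Mlsf$, and glue the vertex-level factorizations using the finite generation hypothesis on edge groups.

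First I would pass to the $\omega$--limit of the $G$--actions on $T_i$. Since $(\phi_i)$ is not \Tdiv, this limit is a simplicial $L$--tree $T$, and (as $L$ is finitely generated) the quotient graph of groups $\mathcal{L} = L \backslash T$ is finite. For each vertex $v \in T$ with $\omega$--approximations $v_i \in T_i$, the stabilizer $L_v$ arises as an $\omega$--limit of the subgroups $\phi_i(G_v)$, where $G_v := \phi_\infty^{-1}(L_v) \leq G$; the groups $\phi_i(G_v)$ sit inside a single geometric piece of $\Gamma_i$, hence in $\Mhyp$ or $\Mlsf$.

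Second I would apply equational noetherianity of $\Mhyp$ and $\Mlsf$ separately at each vertex. Hyperbolic geometric pieces embed via their discrete faithful holonomy representations into $\mathrm{PSL}(2,\bC)$, so $\Mhyp$ is contained in a fixed linear family and is equationally noetherian by the Hilbert Basis Theorem. Large Seifert-fibered pieces with orientable base orbifold are central extensions of hyperbolic Fuchsian groups by $\Z$, and one similarly verifies equational noetherianity of $\Mlsf$. Applying Theorem~\ref{thm:eqchar} at each of the finitely many vertex orbits of $\mathcal{L}$, the restriction $\phi_i|_{G_v}$ $\omega$--factors through $L_v$.

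Third I would glue these vertex-level factorizations. A homomorphism $\psi_i \co L \to \Gamma_i$ with $\psi_i \circ \phi_\infty = \phi_i$ is determined by a compatible family of homomorphisms $L_v \to \Gamma_i$ satisfying the edge-amalgamation relations in the graph-of-groups presentation of $L = \pi_1(\mathcal{L})$. Vertex compatibility holds $\omega$--a.s.\ by the previous step; edge compatibility involves only finitely many elements (since $\mathcal{L}$ is finite and each $L_e$ is finitely generated by hypothesis), so it also holds $\omega$--a.s. Intersecting these finitely many $\omega$--large sets yields the desired factorization of $(\phi_i)$ through $\phi_\infty$.

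The main obstacle is at the vertex step: the vertex groups $L_v$ (and their preimages $G_v$ in $G$) are typically not finitely generated, so applying equational noetherianity requires first passing to a suitable finitely generated subgroup of $G_v$ whose image in $L_v$ contains the images of all edge groups incident at $v$. The hypothesis of finitely generated edge groups is essential here: it ensures such a finitely generated subgroup can be chosen compatibly across adjacent vertices, and that the gluing step involves only finitely many conditions. Establishing equational noetherianity of $\Mlsf$ rigorously (the Fuchsian quotient is easy; the central $\Z$--extension must be handled uniformly) is the most delicate ingredient.
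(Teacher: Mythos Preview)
Your approach is essentially the same as the paper's, but you have misidentified the main obstacle, and this leads you to state the argument less cleanly than necessary.

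The ``obstacle'' you name --- that the vertex groups $L_v$ need not be finitely generated --- is not present. Since $L$ is finitely generated, the quotient graph $\mathcal{L}$ is finite, and by hypothesis the edge groups are finitely generated; a standard fact about finite graphs of groups then forces every vertex group $L_v$ to be finitely generated. The paper uses exactly this: ``The edge groups of $\Gd$ are finitely generated, so the vertex groups are also.'' Once you have this, lift a finite generating set of $L_v$ to a finitely generated subgroup $\widetilde{P}_v \le G$; then $\phi_i|_{\widetilde{P}_v}$ lands $\omega$--almost surely in a single geometric piece of $\Gamma_i$, and equational noetherianity of $\Mgeo$ (Theorem~\ref{t:geo EN}, which already covers both the hyperbolic and LSF cases, so you need not re-establish these separately) gives the vertex-level factorization.

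For the gluing, the paper packages your third step via Lemma~\ref{lem:relfplim}: since the edge groups are finitely generated abelian, $L$ is finitely presented relative to its vertex groups, and that lemma converts the vertex-level factorizations directly into the global one. Your description of the gluing is correct in spirit but left somewhat informal; note in particular that without finite generation of $L_v$ you would not actually obtain homomorphisms $L_v \to \Gamma_i$ (only homomorphisms on a subgroup), so ``vertex compatibility'' as you phrase it would not follow from your second step alone.
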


At this point, the following result (together with Theorems \ref{t:reduction}, \ref{t:GH} and \ref{t:fingenedges}) completes the proof of Theorem~\ref{t:main}.

\begin{restatable}{theorem}{collapsingdivergent}\label{t:collapsing Divergent}
Let $L$ be an $\Mgen$--limit group defined by a non-\Tdiv sequence $(\phi_i)$.  The edge groups of the geometric decomposition of $L$ with respect to $(\phi_i)$ are finitely generated.
\end{restatable}

Our approach to proving Theorem~\ref{t:collapsing Divergent} is to analyze
 the geometric decomposition of $L$ using the \emph{collapsed spaces} defined in Section~\ref{s:collapsing Divergent}.  This leads to the notion of a \emph{\Cdiv} sequence (see Definition~\ref{d:P-divergent}).  In Section~\ref{s:non-divergent} we prove Theorem~\ref{t:NEW non-divergent} (modulo the technical Theorem~\ref{t:fin gen} proved in Appendix~\ref{app:edge-twist}), which deals with the case where the sequence is not \Cdiv.
After build-up in Sections ~\ref{s:collapsing Divergent},~\ref{s:R-trees} and~\ref{s:JSJ}, we prove Theorem~~\ref{t:collapsing Divergent} in Section~\ref{s:res-short}.

We finish by explicitly deducing Theorem~\ref{t:main} from the results in this section.  

\begin{proof}[Proof of Theorem~\ref{t:main}] 
Suppose that $L$ is an $\Mgen$--limit group defined by a non--\Tdiv sequence.  Edge groups of the geometric decomposition of $L$ are finitely generated by Theorem \ref{t:collapsing Divergent}, so by Theorem \ref{t:fingenedges} the defining sequence for $L$ $\omega$--factors through the limit.  Theorem~\ref{t:GH} implies that \emph{all} sequences of homomorphisms to $\Mgen$ $\omega$--factor through the limit, so by Theorem~\ref{t:EN def} $\Mgen$ is equationally noetherian.  Therefore, by Theorem~\ref{t:reduction} $\Man$ is equationally neotherian, as required.
\end{proof}

\subsection{Proof of Theorem~\ref{t:DCC}}

We now deduce Theorem~\ref{t:DCC} from Theorem~\ref{t:main}.
Suppose that $( \Gamma_i )_{i=1}^\infty$ is a sequence from $\Man$, and that for each $i \ge 1$ there is a surjection $\tau_i \co \Gamma_i \to \Gamma_{i+1}$.
Define $(\rho_i)$ from $\Hom(\Gamma_1,\Man)$ by $\rho_i = \tau_{i} \circ \cdots \circ \tau_1\co \Gamma_1 \to \Gamma_{i+1}$.  By Theorem~\ref{t:main} $\Man$ is equationally noetherian, so by Thereom~\ref{t:EN def} $(\rho_i)$ $\omega$--factors through the limit map $\rho_\infty$.

Now, $\ker(\rho_{i+1}) \subseteq \ker(\rho_i)$, so 
\[	\ker(\rho_\infty) = \bigcup_{i=1}^\infty \ker(\rho_i)	.	\]
By Theorem~\ref{t:main} for $\omega$--almost all $j$ the homomorphism $\rho_j$ factors through $\rho_\infty$.  Fix such a $j$.  Then $\ker(\rho_\infty) \subseteq \ker(\rho_j)$, and since $\ker(\rho_j) \subset \ker(\rho_\infty)$, the two are equal.  Thus, the ascending sequence of kernels stabilizes after $\rho_j$, and for all $k \ge j$ the map $\tau_k$ is an isomorphism, completing the proof.

\subsection{Notation and conventions}\label{sec:not_cov}

Throughout this paper a \emph{$\delta$--hyperbolic space} is a geodesic metric space in which all geodesic triangles are \emph{$\delta$--thin} in the sense of \cite[Definition III.H.1.16]{bridhaef:book}.  We remark that if a geodesic metric space has $\nu$--slim triangles (in the sense of \cite[Definition III.H.1.1]{bridhaef:book}) then it has $4\nu$--thin triangles. If $X$ is a geodesic metric space and $x,y \in X$ then we denote any geodesic between $x$ and $y$ by $[x,y]$.

Let $G$ be a group, and $\mc{A}$ and $\mc{H}$ families of subgroups closed under conjugation.  An \emph{$(\mc{A}, \mc{H})$--splitting} of $G$ is an identification of $G$ with the fundamental group of a graph of groups in which all edge groups are in $\mc{A}$ and all groups in $\mc{H}$ are conjugate into vertex groups. When we consider a group as being the fundamental group of a graph of groups, we consider it to come with a specific choice of vertex groups and edge groups, rather than just conjugacy classes arising from an action on a tree. We also consider the underlying graph to have a fixed spanning tree, which, together with the choice of vertex and edge groups, induces a specific Bass-Serre presentation for the fundamental group of the graph of groups. This allows us to identify the fundamental group of any connected sub-(graph of groups) with a subgroup of the fundamental group of the original graph of groups. A graph of groups is {\em minimal} if there are no proper invariant subtrees of the Bass--Serre tree.

Given a sequence of real numbers $(a_i)$, we denote the $\omega$--limit of the sequence by $\wlim a_i$. See \cite{vdD-W} or \cite[Section 4.1]{GH} for precise definitions. If $X_i$ is a sequences of metric spaces with fixed basepoints $o_i\in X_i$, then we denote the ultra-limit of the metric spaces $X_i$ with basepoints $o_i$ by $\wlim(X_i, o_i)$. This ultra-limit is another metric space, see \cite[Section 4.1]{GH} for the precise definition. In all of our applications of ultra-limits of metric spaces, the spaces $X_i$ are $\delta_i$--hyperbolic with $\wlim \delta_i=0$. In these cases, the corresponding ultra-limit is an $\R$--trees by \cite[Lemma 4.1]{GH}.

\section{Reduction to $\Mgen$} \label{s:reduction}
The purpose of this section is to record some basic facts about (limit groups over the family of) $3$--manifold groups, and in particular to prove Theorem~\ref{t:reduction}.

\subsection{Equationally noetherian families}
In this subsection, we record some facts about equationally noetherian families not particular to $3$--manifold groups. The first follows from \cite[Theorem B1]{BMR99} and \cite[Lemma 3.9]{GH}.

\begin{lemma}\label{lem:linear}
For any field $K$ and $n\geq 1$, if $\mc{G}$ is a family of groups so each $\Gamma\in \mc{G}$ is a subgroup of $\operatorname{GL}(n, K)$, then $\mc{G}$ is equationally noetherian. 
\end{lemma}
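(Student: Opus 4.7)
The plan is to verify the ultrafilter criterion given by Theorem~\ref{thm:eqchar}. Fix a finitely generated group $G$ and an arbitrary sequence $(\phi_i \co G \to \Gamma_i)$ in $\Hom(G,\mc{G})$; the goal is to show this sequence $\omega$--factors through the limit map $\phi_\infty \co G \to L = G/\wker(\phi_i)$.

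The key step is to pass from the family $\mc{G}$ to the single ambient group $GL(n,K)$. Composing each $\phi_i$ with the inclusion $\Gamma_i \hookrightarrow GL(n,K)$ produces a sequence $(\tilde\phi_i \co G \to GL(n,K))$. The stable kernel depends only on which elements of $G$ are $\omega$--almost surely sent to the identity, not on the ambient group containing that identity; hence $\wker(\tilde\phi_i) = \wker(\phi_i)$, and the limit group $L$ and the limit map $\phi_\infty$ are the same whether computed with respect to $\mc{G}$ or to $GL(n,K)$. The payoff of this move is that all $\tilde\phi_i$ now take values in a single target group, so the classical theory of linear groups becomes available.

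I will then invoke [BMR99, Theorem B1], which states that $GL(n,K)$ is equationally noetherian in the classical one-group sense, via a standard application of the Hilbert Basis Theorem to systems of matrix equations. The role of [GH, Lemma~3.9] is to convert this classical statement into the family formulation used in Theorem~\ref{thm:eqchar}: the singleton family $\{GL(n,K)\}$ is equationally noetherian, so the sequence $(\tilde\phi_i)$ $\omega$--factors through $\phi_\infty$. Thus $\omega$--almost surely there is a homomorphism $\psi_i \co L \to GL(n,K)$ with $\tilde\phi_i = \psi_i \circ \phi_\infty$. Since $\phi_\infty$ is surjective, $\psi_i(L) = \tilde\phi_i(G) = \phi_i(G) \subseteq \Gamma_i$, so $\psi_i$ is in fact a homomorphism $L \to \Gamma_i$, giving the desired factorization of $\phi_i$ in $\Hom(G,\mc{G})$.

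I do not expect any genuine obstacle: the argument is essentially a bookkeeping exercise combining the classical equationally noetherian property of linear groups with the family-level reformulation of Theorem~\ref{thm:eqchar}, and the only subtlety is the trivial but essential observation that stable kernels are insensitive to the ambient group. If [GH, Lemma~3.9] turns out to have a slightly different phrasing (for instance, that sub-families of equationally noetherian families are equationally noetherian), the final step can be adapted accordingly, but the overall architecture of the proof does not change.
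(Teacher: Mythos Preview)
Your proposal is correct and is essentially the same approach as the paper, which simply cites \cite[Theorem B1]{BMR99} and \cite[Lemma 3.9]{GH} without further detail; you have unpacked exactly what those citations are doing. One small clarification: \cite[Lemma 3.9]{GH} is in fact the statement you anticipated in your final paragraph (a family is equationally noetherian if and only if the family of all subgroups of its members is), so the cleanest version of the argument is simply that $\{GL(n,K)\}$ is equationally noetherian by \cite{BMR99}, hence so is the family of all its subgroups, hence so is any sub-family $\mc{G}$ thereof---but your explicit verification via Theorem~\ref{thm:eqchar} amounts to the same thing.
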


\begin{lemma}\label{lem:relfplim}\cite[Lemma 3.20]{GH}
Suppose $(\phi_i)$ is a sequence from $\Hom(G, \mc{G})$ and the associated limit group is finitely presented relative to subgroups $\{P_1,..., P_n\}$. Suppose for each $P_j$, there is $\widetilde{P}_j\leq G$ so that $\phi_\infty(\widetilde{P}_j)=P_j$ and that $\phi_i|_{\widetilde{P}_j}$ \walmost surely factors through $\phi_\infty|_{\widetilde{P}_j}$. Then $(\phi_i)$ $\omega$--factors through the limit.
\end{lemma}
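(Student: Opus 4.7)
The plan is to express $\ker(\phi_\infty) = \wker(\phi_i)$ as the normal closure in $G$ of a set whose membership in $\ker(\phi_i)$ is controlled by finitely many \walmost sure conditions, and then intersect those conditions. Fix a finite generating set $\{g_1, \ldots, g_k\}$ for $G$. After enlarging the generating set in the given relative presentation of $L$ to include each $\phi_\infty(g_j)$ (and updating the finite list of defining relators accordingly), we may assume
\[
L \;=\; \langle \phi_\infty(g_1), \ldots, \phi_\infty(g_k),\, P_1, \ldots, P_n \mid r_1, \ldots, r_m \rangle,
\]
where each $r_t$ is a word in the $\phi_\infty(g_j)$ together with letters from $\bigcup_j P_j$. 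Lift each $r_t$ to $\tilde r_t \in G$ by lifting $\phi_\infty(g_j)$ to $g_j$ and each letter from $P_j$ to a chosen preimage in $\widetilde{P}_j$; this produces finitely many $\tilde r_1, \ldots, \tilde r_m \in \ker(\phi_\infty)$.

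Let $N \trianglelefteq G$ be the normal closure in $G$ of $\{\tilde r_1, \ldots, \tilde r_m\} \cup \bigcup_{j=1}^{n} \ker(\phi_\infty|_{\widetilde{P}_j})$. The inclusion $N \subseteq \ker(\phi_\infty)$ is immediate; for the reverse, I would use the universal property of the relative presentation. Define a homomorphism on the free product $F_k * P_1 * \cdots * P_n$ (where $F_k$ is free on symbols for the $\phi_\infty(g_j)$) by sending each $\phi_\infty(g_j) \mapsto [g_j]$ and each $p \in P_j$ to $[\tilde p]$ for some chosen lift $\tilde p \in \widetilde{P}_j$. Well-definedness on each $P_j$ holds because different lifts of $p$ differ by an element of $\ker(\phi_\infty|_{\widetilde{P}_j}) \subseteq N$, and the map descends to a homomorphism $\beta \co L \to G/N$ because each defining relator $r_t$ maps to $[\tilde r_t] = 1$ in $G/N$. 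The quotient $\alpha \co G/N \onto L$ induced by $\phi_\infty$ then satisfies $\alpha \circ \beta = \mathrm{id}_L$ on generators, so $\alpha$ is an isomorphism and $N = \ker(\phi_\infty)$.

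Since $\ker(\phi_i)$ is normal in $G$, showing that $\ker(\phi_\infty) \subseteq \ker(\phi_i)$ for $\omega$--almost all $i$ reduces to checking \walmost sure inclusion of the generating set of $N$ into $\ker(\phi_i)$. Each of the finitely many $\tilde r_t$ lies in $\wker(\phi_i)$, so $\phi_i(\tilde r_t) = 1$ on an $\omega$--full-measure set of indices. For each of the finitely many $j$, the hypothesis gives $\ker(\phi_\infty|_{\widetilde{P}_j}) \subseteq \ker(\phi_i|_{\widetilde{P}_j}) \subseteq \ker(\phi_i)$ on an $\omega$--full-measure set of indices. Intersecting these $m + n$ full-measure sets yields an $\omega$--full-measure set of indices on which $\phi_i$ factors through $\phi_\infty$, as desired.

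The main obstacle is identifying $N$ with $\ker(\phi_\infty)$: a direct proof of $\ker(\phi_\infty) \subseteq N$ would require understanding $\widetilde{P}_j \cap N$ inside $\widetilde{P}_j$, which is subtle because taking a normal closure in $G$ can in principle pull new elements of $\widetilde{P}_j$ into $N$ beyond $\ker(\phi_\infty|_{\widetilde{P}_j})$. Constructing $\beta$ via the universal property of the relative presentation sidesteps this by using only that the generators and defining relators behave correctly in $G/N$, which holds by design; once $N = \ker(\phi_\infty)$ is in hand, the remaining measure-theoretic step is routine because only finitely many $\omega$--full-measure conditions need to be intersected.
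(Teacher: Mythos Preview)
The paper does not give its own proof of this lemma; it simply cites \cite[Lemma~3.20]{GH}.  Your argument is the standard one and is essentially correct.

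One small point: from $\alpha\circ\beta=\mathrm{id}_L$ you conclude that $\alpha$ is an isomorphism, but that identity alone only gives surjectivity of $\alpha$ (equivalently, injectivity of $\beta$).  The missing observation is that $\beta$ is surjective: the classes $[g_1],\ldots,[g_k]$ generate $G/N$ and each lies in the image of $\beta$ since $\beta(\phi_\infty(g_j))=[g_j]$.  Alternatively, and perhaps more cleanly, check $\beta\circ\alpha=\mathrm{id}_{G/N}$ directly on the generators $[g_j]$.  Either way the fix is immediate, and the rest of your proof---the description of $\ker(\phi_\infty)$ as a normal closure and the intersection of $m+n$ $\omega$--full-measure sets---goes through without change.
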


Theorem 1 of \cite{BMR} states that if $H$ is a finite index subgroup of $G$ and $H$ is equationally noetherian, then $G$ is equationally noetherian. We generalize this to a family of groups.

\begin{definition}
Let $\mc{G}$ be a family of groups and let $n \ge 1$. Then $\mc{G}^{\le n}$ is the family of groups containing a subgroup from $\mc{G}$ of index at most $n$.
\end{definition}

\begin{proposition} \label{prop:fi}
Suppose that $\mc{G}$ is an equationally noetherian family of groups.  For any $n \ge 1$ the family $\mc{G}^{\le n}$ is equationally noetherian.
\end{proposition}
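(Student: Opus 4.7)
The plan is to verify the sequential characterization of equationally noetherian families from Theorem~\ref{thm:eqchar}. Fix a finitely generated group $G$ and a sequence $(\phi_i \co G \to \Gamma_i)$ from $\Hom(G, \mc{G}^{\le n})$; I want to show it $\omega$--factors through its limit $\phi_\infty \co G \onto L$. For each $i$ select $H_i \le \Gamma_i$ with $H_i \in \mc{G}$ and $[\Gamma_i:H_i] \le n$, and set $K_i := \phi_i^{-1}(H_i)$, a subgroup of $G$ of index at most $n$. Because $G$ is finitely generated it has only finitely many subgroups of index at most $n$ (these correspond to the finitely many transitive coset actions $G \to S_k$ with $k \le n$), so after passing to an $\omega$--subsequence we may assume $K_i = K$ is a fixed finite-index subgroup of $G$.

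By Schreier, $K$ is finitely generated, and the restricted sequence $(\phi_i|_K \co K \to H_i)$ lies in $\Hom(K, \mc{G})$. Since $\mc{G}$ is equationally noetherian by hypothesis, Theorem~\ref{thm:eqchar} implies $(\phi_i|_K)$ $\omega$--factors through its limit. A direct check shows $\wker(\phi_i|_K) = K \cap \wker(\phi_i)$, so this limit is canonically identified with the image $L_K := \phi_\infty(K) \le L$. Moreover $[L:L_K] = [G : K\cdot\wker(\phi_i)] \le [G:K] \le n$, so $L_K$ has finite index in $L$; in particular $L_K$ is itself finitely generated.

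With $L_K$ finitely generated and of finite index in $L$, one builds a finite presentation of $L$ relative to $\{L_K\}$: take a finite set $S$ of coset representatives for $L_K$ in $L$ as additional generators, and include the finitely many relations expressing (i) each product $s\cdot s'$ with $s,s' \in S$ as (coset representative)$\cdot$(element of $L_K$), and (ii) each conjugate $s h s^{-1}$ with $s \in S$ and $h$ in a fixed finite generating set of $L_K$ as an element of $L_K$. Setting $\widetilde{P_1} := K$ then satisfies $\phi_\infty(\widetilde{P_1}) = L_K$, and by the previous paragraph $(\phi_i|_{\widetilde{P_1}})$ \walmost surely factors through $\phi_\infty|_{\widetilde{P_1}}$. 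Lemma~\ref{lem:relfplim} therefore applies and yields that $(\phi_i)$ $\omega$--factors through $\phi_\infty$. A final appeal to Theorem~\ref{thm:eqchar} shows that $\mc{G}^{\le n}$ is equationally noetherian.

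The main obstacle is the step following the application of the hypothesis on $\mc{G}$: since $\wker(\phi_i)$ is generally not contained in $K$, one cannot simply extend the factorizations of $\phi_i|_K$ coset-by-coset to factorizations of $\phi_i$ itself. Lemma~\ref{lem:relfplim} is precisely designed to bypass this difficulty by trading direct extension for the relative finite presentation of the limit group, which is available here exactly because $L_K$ is both finitely generated and of finite index in $L$. Everything else is routine bookkeeping with stable kernels and the action of $G$ on cosets of $K$.
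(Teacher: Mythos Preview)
Your proof is correct in outline and arrives at the result via a genuinely different route than the paper. Both arguments reduce to applying the equationally noetherian hypothesis on $\mc{G}$ to a fixed finite-index subgroup of $G$, but they diverge in how they pass from there to the full group. The paper takes $H$ to be the intersection of \emph{all} subgroups of $G$ of index at most $n$; this $H$ is normal and independent of $i$, so $\phi_i(H) \subseteq \Gamma_i'$ holds for every $i$ with no passage to a subsequence. The paper then argues directly, coset by coset, that \walmost surely $\ker(\phi_\infty) \cap gH \subseteq \ker(\phi_i)$ for each of the finitely many cosets $gH$, bypassing Lemma~\ref{lem:relfplim} entirely. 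You instead stabilize the varying subgroup $K_i = \phi_i^{-1}(H_i)$ on an $\omega$--large set and then invoke Lemma~\ref{lem:relfplim}, trading the explicit coset computation for the relative-presentation machinery. Both are short; the paper's version is slightly more elementary, while yours is more systematic once Lemma~\ref{lem:relfplim} is in hand.

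One small slip: your relation type (ii), expressing each conjugate $shs^{-1}$ as an element of $L_K$, presupposes $L_K \trianglelefteq L$, which need not hold (nothing forces $K$ to be normal in $G$). The fix is routine---either replace (ii) by relations of the form $s h = s' h'$ with $s' \in S$ and $h' \in L_K$, or, matching the paper, replace $K$ from the outset by the normal subgroup $\bigcap_{[G:J] \le n} J$, whose image in $L$ is then automatically normal. With either adjustment the claim that $L$ is finitely presented relative to $\{L_K\}$ stands, and the rest of your argument goes through unchanged.
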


\begin{proof}
Let $G$ be finitely generated and let $(\phi_i\colon G\to \Gamma_i)$ be a sequence with $\Gamma_i\in\mc{G}^{\le n}$. By Thereom~\ref{t:EN def} it suffices to show that \walmost surely $\ker(\phi_\infty)\subseteq \ker(\phi_i)$.
So, let $\Gamma_i^\prime \in \mc{G}$ be so that $|\Gamma_i : \Gamma_i^\prime| \le n$. Let $H$ be the intersection of all subgroups of $G$ of index at most $n$.  Then $\phi_i(H) \subseteq \Gamma_i^\prime$, so $\phi_i|_H\in \Hom(H, \mc{G})$. Since $\mc{G}$ is equationally noetherian by Thereom~\ref{t:EN def} \walmost surely $\ker(\phi_\infty|_H)\subseteq \ker(\phi_i|_H)$.

Let $g\in G$. If $\phi_\infty(g)\notin\phi_\infty(H)$, then for any $h\in H$ $gh\notin\ker(\phi_\infty)$. Suppose now that $\phi_\infty(g)\in\phi_\infty(H)$. Fix $k\in H$ such that $\phi_\infty(g)=\phi_\infty(k)$. Now, \walmost surely $gk^{-1}\in\ker(\phi_i)$ and  $\ker(\phi_\infty|_H)\subseteq \ker(\phi_i|_H)$. For any such $i$ and for any $h\in H$, if $gh\in\ker(\phi_\infty)$ then $kh\in\ker(\phi_\infty)\subseteq \ker(\phi_i)$. Hence $gh=gk^{-1}kh\in\ker(\phi_i)$.
 Putting together these two cases, we see that for any $g \in G$ \walmost surely $\ker(\phi_\infty)\cap gH\subseteq \ker(\phi_i)$. Repeating this argument over a finite of coset representatives of $H$ in $G$ proves that \walmost surely $\ker(\phi_\infty)\subseteq \ker(\phi_i)$.
\end{proof}

\begin{remark}
The bound on the index in Proposition~\ref{prop:fi} is essential.  Indeed, it is straightforward to see that if one considers the family $\mc{G}^{\mathrm{fin}}$ of groups which have a finite index subgroup contained in $\mc{G}$ then $\mc{G}$ may be equationally noetherian while $\mc{G}^{\mathrm{fin}}$ is not.  This is true, for example, in case $\mc{G}$ is the family consisting of the trivial group, since then $\mc{G}^{\mathrm{fin}}$ is the family of finite groups, which is not equationally noetherian \cite[Example 3.15]{GH}.
\end{remark}

\begin{definition}
If $\mc{G}$ is a family of groups, then let $\mc{G}^\ast$ be the family of all free products of elements of $\mc{G}$.
\end{definition}

\begin{theorem}\cite[Cor. C]{GH} \label{t:free products}
If $\mc{G}$ is equationally noetherian then so is $\mc{G}^\ast$.
\end{theorem}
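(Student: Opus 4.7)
The plan is to verify the criterion of Theorem~\ref{thm:eqchar}. Fix a finitely generated group $G$ and a sequence $(\phi_i \co G \to \Gamma_i)$ with $\Gamma_i \in \mc{G}^\ast$. Writing $\Gamma_i = A_{i,1} \ast \cdots \ast A_{i,k_i}$ with each $A_{i,j} \in \mc{G}$, let $T_i$ be the Bass--Serre tree of this free-product splitting, on which $G$ acts via $\phi_i$ with trivial edge stabilizers and vertex stabilizers conjugate to the free factors. Fix a finite generating set $S$ of $G$ and set
\[ \lambda_i \;=\; \inf_{v \in T_i} \max_{s \in S} d_{T_i}(v, \phi_i(s) v). \]
If $(\lambda_i)$ is $\omega$--bounded, then \walmost surely the action on $T_i$ has a bounded orbit, hence a fixed vertex by Serre's fixed-point lemma, so $\phi_i(G)$ lies in a conjugate of some free factor $A_{i,j(i)} \in \mc{G}$. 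Composing with an inner automorphism of $\Gamma_i$ produces a sequence in $\Hom(G,\mc{G})$ with the same stable kernel as $(\phi_i)$, and equational noetherianness of $\mc{G}$ together with Theorem~\ref{thm:eqchar} finishes this case.

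Otherwise $\lambda_i \to \infty$ \walmost surely. Rescaling the metric on $T_i$ by $1/\lambda_i$ and passing to the ultralimit based at a near-minimizer of $\lambda_i$ yields a nontrivial minimal isometric action of the limit group $L = G/\wker(\phi_i)$ on an $\R$--tree $T_\infty$. Since arc stabilizers of each $T_i$ are trivial, so are those of $T_\infty$. The Rips machine, applied to a finitely generated group acting on an $\R$--tree with trivial arc stabilizers (Bestvina--Feighn, Sela), gives a decomposition
\[ L \;=\; L_1 \ast \cdots \ast L_m \ast \Sigma_1 \ast \cdots \ast \Sigma_p \ast F, \]
where each $L_j$ is a finitely generated elliptic subgroup (point stabilizer in $T_\infty$), each $\Sigma_k$ is a closed surface group, and $F$ is a finitely generated free group. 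Since surface and free groups are finitely presented, $L$ is finitely presented relative to $\{L_1, \ldots, L_m\}$.

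For each $j$, choose a finitely generated $\widetilde{L}_j \le G$ with $\phi_\infty(\widetilde{L}_j) = L_j$. By Lemma~\ref{lem:relfplim} it suffices to show each restricted sequence $\phi_i|_{\widetilde{L}_j}$ \walmost surely factors through $\phi_\infty|_{\widetilde{L}_j}$. The main obstacle is to upgrade ``$L_j$ is elliptic in $T_\infty$'' to the finite-level statement ``\walmost surely $\phi_i(\widetilde{L}_j)$ is contained in a conjugate of some free factor of $\Gamma_i$''. Triviality of arc stabilizers in $T_\infty$ forces the fix set of any nontrivial element of $L_j$ to be a single point, so all generators of $L_j$ share a common fixed point in $T_\infty$; a Sela-style shortening argument, exploiting that $T_i$ is simplicial with integer edge lengths while the rescaled displacements of the generators of $\widetilde{L}_j$ tend to $0$, then produces elements $g_{i,j} \in G$ so that the conjugates $g_{i,j}^{-1} \widetilde{L}_j g_{i,j}$ map \walmost surely into a single free factor $A_{i, j'(i)}$ of $\Gamma_i$. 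After these conjugations, $\phi_i|_{\widetilde{L}_j}$ is a sequence in $\Hom(\widetilde{L}_j, \mc{G})$ with unchanged stable kernel, so equational noetherianness of $\mc{G}$ yields the required factorization, and Lemma~\ref{lem:relfplim} completes the proof.
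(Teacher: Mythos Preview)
The paper does not give its own proof of this statement; it is quoted verbatim as \cite[Cor.~C]{GH}.  The argument there runs through the general machinery of uniformly acylindrical families: the free-product Bass--Serre trees are $1$--acylindrical, so \cite[Theorem~B]{GH} reduces everything to the non-divergent case, where the unrescaled ultralimit is a simplicial tree with trivial edge stabilizers, $L$ splits as a free product of finitely generated $\mc{G}$--limit groups and a free group, and Lemma~\ref{lem:relfplim} finishes.

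Your outline has the same skeleton but contains two genuine gaps.

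\textbf{The bounded case is wrong as stated.}  The chain ``$\lambda_i$ bounded $\Rightarrow$ bounded orbit $\Rightarrow$ fixed vertex'' fails at the first arrow.  The number $\lambda_i$ only bounds how far the \emph{generators} move one well-chosen point; the full $\phi_i(G)$--orbit can be unbounded.  Take $G=\Z$, $\Gamma_i=A\ast B$, and $\phi_i$ sending the generator to $ab$ with $a\in A\smallsetminus\{1\}$, $b\in B\smallsetminus\{1\}$: then $\lambda_i=2$ for all $i$ while $\phi_i(G)$ fixes no vertex.  The correct handling of the non-divergent case is to take the ultralimit of the $T_i$ \emph{without rescaling}; this is a simplicial $L$--tree with trivial edge stabilizers (by $1$--acylindricality, as in \cite[Proposition~6.1]{GH}), so $L\cong L_1\ast\cdots\ast L_m\ast F$ with each $L_j$ finitely generated.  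Because displacements here are integers, ``$L_j$ fixes a vertex in $T_\infty$'' upgrades for free to ``\walmost surely $\phi_i(\widetilde{L}_j)$ fixes a vertex in $T_i$'', so the restricted sequences genuinely land in $\Hom(\widetilde{L}_j,\mc{G})$ and the hypothesis on $\mc{G}$ applies.

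\textbf{The divergent case is not completed.}  You correctly flag the obstacle: ellipticity of $L_j$ in the \emph{rescaled} limit gives only that the unrescaled displacement of a generating set of $\widetilde{L}_j$ is $o(\lambda_i)$, which may still tend to infinity.  Your proposed fix---``a Sela-style shortening argument\ldots produces conjugators $g_{i,j}$ so that $g_{i,j}^{-1}\widetilde{L}_j g_{i,j}$ maps into a single free factor''---cannot work as stated, because conjugation does not change whether a subgroup acts elliptically on $T_i$.  What does work is an induction on Grushko rank: restrict to $\widetilde{L}_j$, compute its own $\lambda_i$, and recurse; but this is a nontrivial argument you have not carried out, and it still requires the corrected treatment of the non-divergent base case above.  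This induction is exactly what \cite[Theorem~B]{GH} packages.
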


\subsection{Reduction to closed, orientable, irreducible $3$--manifolds}

Recall that $\Man$ is the set of all fundamental groups of all $3$--manifolds. Recall that we are trying to prove Theorem~\ref{t:reduction}, so we are interested in homomorphisms from a finitely generated group $G$ to elements of $\Man$.  Of course, all such homomorphisms have finitely generated image, and since $\Man$ is closed under passing to subgroups there is no harm in restricting to the finitely generated elements of $\Man$.  See \cite[Lemma 3.11]{GH}.

By Scott's Theorem \cite[Theorem 2.1]{Scott:Coherence}, every finitely generated $3$--manifold group is in fact finitely presented, and moreover is the fundamental group of a \emph{compact} $3$--manifold (see also \cite{Scott:Coherence} or \cite{Scott:core}).  Doubling along the boundary embeds any compact $3$--manifold with boundary into a closed $3$--manifold as a retract.  Since it is a retract, the fundamental group of the manifold with boundary injects into the fundamental group of the closed $3$--manifold.  Therefore, on the level of groups, we may assume that any homomorphism from $G$ to an element of $\Man$ is in fact a homomorphism to the fundamental group of a closed $3$--manifold.  In summary, we have the following,  where $\mc{M}_{\mathrm{cl}}$ is the set of fundamental groups of closed $3$--manifolds.

\begin{theorem} \label{th:closed}
 If $\mc{M}_{\mathrm{cl}}$ is equationally noetherian then so is $\Man$.
\end{theorem}

We now move on to consider how to prove $\mc{M}_{\mathrm{cl}}$ is equationally noetherian, by considering the structure of closed $3$--manifolds.  We first make another reduction to the closed, orientable, irreducible case, and then we consider the geometric decomposition of these $3$--manifolds.

Here are some pertinent classes of $3$--manifolds.

\begin{definition}
Let $\Mori$ be set of the fundamental groups of closed, orientable $3$--manifolds.
Let $\Mirr$ be the set of fundamental groups of closed, orientable and irreducible $3$--manifolds.
Let $\Mgeo$ be the set of fundamental groups of compact geometric $3$--manifolds with all boundary components tori.
\end{definition}

\begin{lemma} \label{c:red to ori}
If $\Mori$ is equationally noetherian then so is $\Man$.
\end{lemma}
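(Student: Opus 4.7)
The plan is to show every compact $3$--manifold group is an index-at-most-$2$ extension of a subgroup of some closed orientable $3$--manifold group, then invoke standard closure properties of equationally noetherian families.

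For the geometric step, let $M$ be a compact $3$--manifold. I set $\tilde M := M$ if $M$ is orientable; otherwise $\tilde M \to M$ is the orientation double cover. In either case $\tilde M$ is a compact orientable $3$--manifold and $\pi_1(\tilde M)$ sits inside $\pi_1(M)$ with index at most $2$. Next, let $D\tilde M$ be the double of $\tilde M$ along $\partial \tilde M$ (equal to $\tilde M$ if $\partial \tilde M = \emptyset$). Since $\tilde M$ is orientable, $D\tilde M$ is a closed orientable $3$--manifold, so $\pi_1(D\tilde M) \in \Mori$. The fold map $D\tilde M \to \tilde M$ sending each point in either copy of $\tilde M$ to the corresponding point of $\tilde M$ is continuous and restricts to the identity on a fixed copy, so that copy's inclusion is a retraction and $\pi_1(\tilde M) \hookrightarrow \pi_1(D\tilde M)$. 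Therefore every element of $\Man$ contains a subgroup of index at most $2$ which is itself a subgroup of some element of $\Mori$.

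For the closure step, the key observation is that if $\mc G$ is equationally noetherian, then so is the family $\mc{S}(\mc G) := \{H : H \le \Gamma \text{ for some } \Gamma \in \mc G\}$ of all subgroups of elements of $\mc G$. Indeed, given a sequence $(\phi_i \co G \to H_i)$ with $H_i \le \Gamma_i \in \mc G$, the stable kernel $\wker(\phi_i)$, the limit group $L = G/\wker(\phi_i)$, and the factoring condition $\ker \phi_\infty \subseteq \ker \phi_i$ all depend only on the maps $\phi_i$ and not on the choice of codomains. Composing with the inclusions $H_i \hookrightarrow \Gamma_i$ and applying Theorem~\ref{thm:eqchar} to $\mc G$ yields the $\omega$--factoring statement for $(\phi_i)$, viewed as a sequence in $\Hom(G, \mc{S}(\mc G))$. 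The same target-independence argument shows any subfamily of an equationally noetherian family is equationally noetherian.

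Assembling the pieces: assume $\Mori$ is equationally noetherian. The subgroup closure step gives that $\mc{S}(\Mori)$ is equationally noetherian; Proposition~\ref{prop:fi} upgrades this to $\mc{S}(\Mori)^{\le 2}$; and the geometric step identifies $\Man$ as a subfamily of $\mc{S}(\Mori)^{\le 2}$, so $\Man$ is equationally noetherian. I do not anticipate a serious obstacle: the only part requiring genuine verification is the geometric step, where one must check that $D\tilde M$ is a closed orientable $3$--manifold and that the fold map gives a continuous retraction, both standard consequences of the doubling construction.
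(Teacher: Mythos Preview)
Your proof is correct and follows essentially the same approach as the paper's: both use the doubling construction to embed $\pi_1$ of a compact manifold into that of a closed one, pass to an orientation cover for the index-$2$ step, and invoke Proposition~\ref{prop:fi} together with closure of equational noetherianity under subgroups. The only cosmetic difference is the order of the two reductions (you take the orientation cover before doubling, while the paper doubles first and then passes to the orientation cover of the resulting closed manifold), and you reprove the subgroup-closure fact that the paper cites from \cite[Lemma~3.9]{GH}.
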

\begin{proof}
By Theorem~\ref{th:closed}, it suffices to consider only closed manifolds. 
For each closed 3--manifold $M$, $\pi_1(M)$ has an index at most $2$ subgroup belonging to $\Mori$, so the result follows from Proposition~\ref{prop:fi}.
\end{proof}

Each closed, orientable 3--manifold $M$ has a unique Milnor--Kneser decomposition which induces a free product decomposition of $\pi_1(M)$:
\[
\pi_1(M)\cong\pi_1(M_1)\ast...\ast\pi_1(M_k)\ast F_n
\]
where each $\pi_1(M_i) \in \Mirr$ (and $F_n$ is a free group of rank $n$). 
The following is thus an immediate consequence of Theorem~\ref{t:free products}
\begin{corollary} \label{c:irr EN then all EN}
If $\Mirr$ is equationally noetherian then so is $\Mori$.
\end{corollary}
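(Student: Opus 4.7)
The plan is to use the Kneser--Milnor decomposition to realise every element of $\Mori$ as a free product of elements drawn from $\Mirr$ together with (finitely many) copies of $\Z$, and then appeal to Theorem~\ref{t:free products}. Concretely, if $M$ is closed orientable and $\pi_1(M) \cong \pi_1(M_1) \ast \cdots \ast \pi_1(M_k) \ast F_n$ is the decomposition recorded in the displayed equation above the corollary, then each $\pi_1(M_i)\in\Mirr$ and $F_n\cong \Z \ast \cdots \ast \Z$ (the $\Z$ factors coming from $S^1\times S^2$ summands), so every group in $\Mori$ sits in the free-product closure $(\Mirr\cup\{\Z\})^\ast$.

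Next I would observe that $\{\Z\}$ is equationally noetherian (e.g.\ as a linear, hence equationally noetherian, group via Lemma~\ref{lem:linear}) and that the union of two equationally noetherian families is equationally noetherian. The latter is a direct consequence of Theorem~\ref{thm:eqchar}: given a finitely generated $G$ and a sequence $(\phi_i\co G\to \Gamma_i)$ from $\Hom(G,\mc{G}_1\cup \mc{G}_2)$, the non-principal ultrafilter $\omega$ puts full measure on exactly one of the sets $\{i : \Gamma_i\in\mc{G}_1\}$ or $\{i : \Gamma_i\in\mc{G}_2\}$, and on the resulting $\omega$-full index set the hypothesis on $\mc{G}_j$ supplies the $\omega$-factoring. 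Applying this to $\mc{G}_1 = \Mirr$ and $\mc{G}_2 = \{\Z\}$ shows $\Mirr\cup\{\Z\}$ is equationally noetherian.

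Theorem~\ref{t:free products} then upgrades this to the free-product closure: $(\Mirr\cup\{\Z\})^\ast$ is equationally noetherian. Finally I would invoke the fact (as used already in the proof of Lemma~\ref{c:red to ori} via \cite[Lemma 3.9]{GH}) that being equationally noetherian is inherited by subfamilies: since $\Mori\subseteq (\Mirr\cup\{\Z\})^\ast$ up to isomorphism, $\Mori$ is equationally noetherian. There is no real obstacle here beyond the small bookkeeping nuisance that $\Z\notin\Mirr$ (no closed orientable irreducible $3$--manifold has fundamental group $\Z$), which is precisely why one must pass to $\Mirr\cup\{\Z\}$ before applying Theorem~\ref{t:free products}; once that is acknowledged, the deduction is mechanical.
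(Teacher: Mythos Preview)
Your proof is correct and follows essentially the same route as the paper: Kneser--Milnor plus Theorem~\ref{t:free products}. You are in fact more careful than the paper's one-line proof, which simply invokes Theorem~\ref{t:free products} without commenting on the $F_n$ factor; your explicit observation that $\Z\notin\Mirr$ and your passage to $\Mirr\cup\{\Z\}$ (using linearity of $\Z$ and closure of equationally noetherian families under finite unions, as in \cite[Lemma~3.8]{GH}) fills a small gap the paper leaves to the reader.
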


\subsection{The geometric decomposition} \label{ss:geometric}
If $M$ is a closed, orientable, irreducible $3$--manifold then the Geometrization Theorem (see \cite{Per02, Per03a, Per03b, CaoZhu, KleinerLott, MorganTian}) gives a (possibly trivial) decomposition of $M$ along incompressible tori into geometric pieces. This induces a splitting of $\pi_1(M)$ which we call the \emph{geometric decomposition}.  
We refer to \cite{AFW}, \cite{scott:geom}, or \cite{thurston:notes} for the definition of a geometric $3$--manifold, and to \cite{scott:geom} for details about each of the $8$ Thurston geometries.

Assume $M$ is a closed, orientable, irreducible 3--manifold and also that $M$ is not a geometric 3--manifold (in particular, $M$ is not a torus bundle over a circle). Then each piece of the geometric decomposition of $M$ is either hyperbolic, Seifert-fibered with hyperbolic base orbifold, or a twisted $I$--bundle over a Klein bottle.  Replacing $M$ by a double cover as in \cite[Lemma 2.4]{WiltonZalesskii} we may assume there are no twisted $I$--bundles over a Klein bottle. By a similar argument, there is a double cover so all the base orbifolds of Seifert-fibered pieces with hyperbolic base orbifold are orientable.  Thus:

\begin{lemma} \label{l:cover to become gen}
Let $M$ be a closed, orientable, irreducible, non-geometric $3$--manifold.  Then $M$ has a cover of degree at most $4$ which lies in $\MgenM$.
\end{lemma}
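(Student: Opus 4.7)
The plan is to build the required cover in (at most) two stages, each doubling the degree, so the total degree is at most $4$.

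First, I would exploit the non-geometric hypothesis. Since $M$ is closed, orientable, irreducible and non-geometric, the geometric decomposition is non-trivial and each piece admits one of the three relevant geometries: hyperbolic, a large Seifert-fibered geometry, or $\mathrm{Sol}$/twisted $I$-bundle pieces. Because $M$ is non-geometric (and in particular not a torus bundle over the circle), the only way a Sol-like obstruction appears is via twisted $I$-bundles over the Klein bottle as JSJ pieces, so each piece is hyperbolic, large Seifert-fibered, or a twisted $I$-bundle over a Klein bottle. (Here I would invoke the standard enumeration of geometric pieces in the Geometrization / JSJ decomposition, as collected in the references cited in Section~\ref{ss:geometric}.)

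Next I would apply \cite[Lemma 2.4]{WiltonZalesskii}: $M$ admits a double cover $M_1 \to M$ in which every twisted $I$-bundle over a Klein bottle in the geometric decomposition of $M$ is replaced by its orientation double cover, which is a product $T^2 \times I$; such product pieces are absorbed into (and become part of) adjacent Seifert-fibered pieces, so the geometric decomposition of $M_1$ has only hyperbolic and large Seifert-fibered pieces. Note that $M_1$ remains closed, orientable, and irreducible (orientability is preserved since the cover is a finite cover of an orientable manifold; irreducibility of closed orientable $3$--manifolds is preserved under finite covers by Meeks--Simon--Yau).

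Then, for the base-orbifold orientability condition, I would take a further double cover $M_2 \to M_1$ constructed as follows. For each large Seifert-fibered piece $\Sigma_\alpha$ of $M_1$, the base orbifold $\mathcal{O}_\alpha$ has a well-defined orientation double cover (an orbifold map $\widetilde{\mathcal{O}}_\alpha \to \mathcal{O}_\alpha$), and the fibered double cover $\widetilde{\Sigma}_\alpha \to \Sigma_\alpha$ pulled back from $\widetilde{\mathcal{O}}_\alpha \to \mathcal{O}_\alpha$ has orientable base. The key point is to assemble these into a global cover: one chooses a $\mathbb{Z}/2$-valued homomorphism of $\pi_1(M_1)$ that restricts to the correct orientation character on each Seifert piece's base and is trivial on the (torus) edge groups and on each hyperbolic piece, which is possible because each edge group is carried by a regular fiber and is trivial under the orientation character. (This is the standard trick used in \cite{WiltonZalesskii} and \cite{AFW}.) The resulting cover $M_2 \to M_1$ has degree $2$ and the pieces of its geometric decomposition are either hyperbolic covers of hyperbolic pieces, or Seifert pieces with orientable base orbifold; large Seifert-fibered pieces remain large since coverings do not reduce the Euler characteristic type. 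Again $M_2$ is closed, orientable, irreducible.

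Thus $M_2 \to M$ has degree at most $4$ and satisfies all three conditions of Definition~\ref{d:gen type}, so $M_2 \in \MgenM$. The main obstacle is the second step: ensuring the orientation double covers on individual Seifert pieces can be globally patched across the JSJ tori into a single $\mathbb{Z}/2$-cover of $M_1$; this requires checking that the orientation characters on adjacent pieces agree on the common edge torus, which works because the edge subgroups are generated by regular fibers and are killed by each piece's orientation character.
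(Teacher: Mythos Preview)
Your approach matches the paper's: two successive double covers, the first via \cite[Lemma 2.4]{WiltonZalesskii} to eliminate twisted $I$--bundle pieces, the second (``by a similar argument'') to make all Seifert base orbifolds orientable. One small correction to your justification of the second step: the edge groups are rank--$2$ tori, not ``generated by regular fibers''; the orientation character vanishes on them because the fiber direction lies in the kernel of the projection to the base orbifold, while the remaining generator maps to a boundary curve of the base, and boundary components of any compact surface are always orientation-preserving (they have orientable collar neighborhoods).
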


An important property of elements of $\Mgen$ is the following result, which follows immediately from the proof of \cite[Lemma 2.4]{WiltonZalesskii}.  Recall a group action on a tree $T$ is \emph{$k$--acylindrical} if the stabilizer of any segment in $T$ of length at least $k+1$ is trivial.

\begin{lemma} \label{l:2-acyl}
 Suppose  $\Gamma \in \Mgen$ and that $T$ is the Bass--Serre tree dual to the geometric decomposition of $\Gamma$. The $\Gamma$--action on $T$ is $2$--acylindrical.
\end{lemma}

\begin{definition}
    Suppose that $\Gamma \in \Mgen$ and that $T$ is the Bass--Serre tree dual to the geometric decomposition of $\Gamma$.  A vertex group of the associated graph of groups decomposition of $\Gamma$ is of \emph{hyperbolic type} if the associated sub-manifold in the geometric decomposition is hyperbolic, and of \emph{\LSF} if the associated sub-manifold is of Seifert-fibered type (with orientable hyperbolic base orbifold).
\end{definition}

\subsection{Reduction to $\Mgen$}

Consider the case $\Mlsf$ of fundamental groups of Seifert--fibered manifolds with orientable hyperbolic base orbifold.  Let $\orb$ be the set of fundamental groups of orientable hyperbolic $2$--orbifolds of finite-type.  Each $\Gamma \in \Mlsf$ admits a short exact sequence which is a central extension:
\[	1 \to \Z \to \Gamma \to \Gamma_B \to 1	,	\]
for some $\Gamma_B \in \orb$ (the fundamental group of the base orbifold).  From this, the following result follows quickly.

\begin{lemma} \label{lem:central}
Let $L$ be an $\Mlsf$--limit group. Then there is a central extension
\[	1 \to A \to L \to B \to 1	,	\]
where $A$ is abelian and $B$ is an $\orb$--limit group.
\end{lemma}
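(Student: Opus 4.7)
The plan is to produce the short exact sequence by pushing the central extension of each individual $\Gamma_i$ through the limiting construction. Concretely, fix a defining sequence $(\phi_i\co G\to\Gamma_i)$ for $L$ with each $\Gamma_i\in\Mlsf$, and let
\[ 1\to Z_i\to\Gamma_i\xrightarrow{\pi_i}\Gamma_{B,i}\to 1 \]
be the central extension recalled just before the statement, so $Z_i\cong\Z$ is central and $\Gamma_{B,i}\in\orb$. First I would set $\psi_i=\pi_i\circ\phi_i\co G\to\Gamma_{B,i}$; this is a sequence of homomorphisms from $G$ into $\orb$, and hence defines an $\orb$--limit group $B:=G/\wker(\psi_i)$, equipped with the quotient map $\psi_\infty\co G\onto B$.

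Next I would observe that $\wker(\phi_i)\subseteq\wker(\psi_i)$, because any $g$ killed \walmost surely by $\phi_i$ is killed \walmost surely by $\psi_i=\pi_i\circ\phi_i$. This gives a canonical surjection $q\co L\onto B$ making the obvious triangle with $\phi_\infty$ and $\psi_\infty$ commute. Define $A:=\ker(q)$; then by construction we have a short exact sequence
\[ 1\to A\to L\xrightarrow{q} B\to 1, \]
with $B$ an $\orb$--limit group as required.

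The remaining content is to show $A$ is central in $L$ (which in particular implies $A$ is abelian). Let $a\in A$ and $h\in L$ be arbitrary, and choose preimages $g,g'\in G$ with $\phi_\infty(g)=a$ and $\phi_\infty(g')=h$. Since $\phi_\infty(g)=a\in\ker q$, we have $\psi_\infty(g)=1$, so $\psi_i(g)=\pi_i(\phi_i(g))=1$ \walmost surely, i.e.\ $\phi_i(g)\in Z_i$ \walmost surely. Because $Z_i$ is central in $\Gamma_i$,
\[ \phi_i([g,g'])=[\phi_i(g),\phi_i(g')]=1 \]
\walmost surely, so $[g,g']\in\wker(\phi_i)$ and $[a,h]=[\phi_\infty(g),\phi_\infty(g')]=1$ in $L$. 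Thus $A$ is central.

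I do not expect any substantive obstacle; the argument is essentially bookkeeping about stable kernels, and the only facts being used are the centrality of $Z_i$ in each $\Gamma_i$ and the definition of a limit group. The one point worth being careful about is that both the inclusion $\wker(\phi_i)\subseteq\wker(\psi_i)$ and the centrality computation need to be justified via the \walmost sure framework (intersections of two \walmost sure sets are \walmost sure), but this is routine given the setup recalled in Definition~\ref{def:limit group}.
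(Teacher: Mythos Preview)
Your proof is correct and is exactly the argument the paper has in mind: it merely says the lemma ``follows quickly'' from the central extension $1\to\Z\to\Gamma\to\Gamma_B\to 1$ enjoyed by each $\Gamma\in\Mlsf$, and what you have written is the natural way to make that precise. There is no alternative route being taken here.
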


\begin{definition} \label{d:A-slender}
 A group $G$ is {\em $\mc{A}$--slender} if every abelian subgroup of $G$ is finitely generated.
\end{definition}

Our analysis of $\Mlsf$--limit groups is based on combining Lemma \ref{lem:central} with the following theorem of the third author.

\begin{theorem}\cite[Theorem 1.2]{Liang}\label{thm:Liang}
Let $B$ be an $\orb$--limit group. Then $B$ is finitely presented and $\mc{A}$--slender.
\end{theorem}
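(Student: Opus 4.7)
The plan is to follow the template of Sela-style limit group theory, adapted to the orbifold setting with torsion. Let $B$ be defined by a sequence $(\phi_i\co G\to \Gamma_i)$ with $\Gamma_i\in\orb$, and fix a finite generating set $S$ of $G$. Each $\Gamma_i$ acts properly discontinuously on $\bH^2$ with finite-area quotient, so we can pick basepoints $x_i\in \bH^2$ minimizing the maximum displacement $\mu_i=\max_{s\in S}d(x_i,\phi_i(s)x_i)$. If $(\mu_i)$ is bounded $\omega$--a.s., then $\phi_i(G)$ lies in uniformly finitely generated subgroups of the $\Gamma_i$; a pigeonhole argument embeds $B$ into a single element of $\orb$, which is linear and hyperbolic, and both conclusions follow directly. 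Otherwise $\mu_i\to\infty$ $\omega$--a.s., and rescaling $\bH^2$ by $1/\mu_i$ and taking the $\omega$--limit yields an isometric $B$-action on an $\R$-tree $T$.

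Fuchsian groups are uniformly acylindrical on $\bH^2$: stabilizers of sufficiently long geodesic segments are virtually cyclic, with torsion bounded. This passes to the limit, so arc stabilizers of $T$ are virtually abelian and tripod stabilizers are finite of bounded order. Apply the Rips machine for actions with small arc stabilizers to obtain a graph-of-groups decomposition of $B$ with virtually abelian edge groups, whose vertex groups split into simplicial, axial, and surface types. Each proper vertex group is realized as an $\orb$-limit group of strictly smaller complexity, so one inducts on this complexity: the vertex groups are finitely presented and $\mc{A}$-slender by the inductive hypothesis. The graph-of-groups structure then propagates finite presentability to $B$, and any abelian subgroup of $B$ is either elliptic in the decomposition (hence finitely generated by induction in a vertex group) or stabilizes a line in an axial component (hence finitely generated by the explicit structure of virtually abelian stabilizers in the axial piece).

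The hard part will be setting up a complexity invariant on $\orb$-limit groups that provably strictly decreases in passage to the vertex groups of the Rips decomposition, so that the induction terminates. The standard device is Sela's shortening argument combined with a JSJ/modular group analysis: if the sequence $(\phi_i)$ admits non-trivial shortenings coming from the modular group of a JSJ, one replaces it by a simpler sequence; if not, one uses the rigidity of non-shortenable limits to extract splittings directly from $T$ and inducts on an invariant such as the Euler characteristic or first Betti number of a JSJ. A secondary technical obstacle is the systematic handling of torsion from cone points, both inside the Rips machine (where the small-cancellation input must accommodate finite subgroups of arc stabilizers) and in verifying that each vertex group of the decomposition is approximated by a sequence that genuinely lies in $\orb$, rather than drifting into a wider family.
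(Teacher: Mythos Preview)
The paper does not prove this theorem at all: it is quoted as a black box from \cite{Liang}, the third author's separate paper on discrete representations into $\mathrm{PSL}(2,\mathbb{R})$. There is therefore nothing in the present paper to compare your proposal against.

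That said, your outline is the expected Sela-style strategy and is presumably close in spirit to what \cite{Liang} does, but as written it is a plan rather than a proof, and you have already flagged the main gap yourself: the termination of the induction requires a complexity invariant and a shortening argument, neither of which you carry out. Without these, the Rips-machine step only trades one $\orb$--limit group for several others, and nothing prevents the process from running forever.

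There is also a concrete error in your base case. When the minimal displacement $\mu_i$ stays bounded, you claim that a pigeonhole argument embeds $B$ into a single element of $\orb$. This does not follow: the $\Gamma_i$ range over infinitely many non-isomorphic Fuchsian groups (cone orders are unbounded), and bounded displacement does not bound the isomorphism type of $\phi_i(G)$. What bounded displacement actually gives is a limiting isometric action of $B$ on $\bH^2$, hence a homomorphism $B\to\mathrm{PSL}(2,\mathbb{R})$; but this homomorphism need not be injective (elliptic elements of order $p_i\to\infty$ limit to the identity), and even if it were, an embedding into $\mathrm{PSL}(2,\mathbb{R})$ does not by itself give finite presentability or $\mc{A}$--slenderness, since abelian subgroups of $\mathrm{PSL}(2,\mathbb{R})$ can contain infinitely generated subgroups such as $\mathbb{Q}$. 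The bounded case genuinely requires work, typically by passing to the thick part or a collapsed/electrified space so that torsion and short parabolics are controlled, exactly the sort of device the present paper builds for its own purposes in Section~\ref{s:collapsing Divergent}.
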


\begin{corollary}\label{cor:SF limit fp+A-slender}
Let $L$ be an $\Mlsf$--limit group. Then $L$ is finitely presented and $\mc{A}$--slender.
\end{corollary}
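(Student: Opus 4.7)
The plan is to derive the corollary directly from Lemma~\ref{lem:central} and Theorem~\ref{thm:Liang}.  Apply Lemma~\ref{lem:central} to obtain a central extension
\[
1 \to A \to L \to B \to 1,
\]
with $A$ abelian (and central in $L$) and $B$ an $\orb$--limit group.  By Theorem~\ref{thm:Liang}, $B$ is finitely presented and $\mc{A}$--slender; the task is to transfer these two properties from $B$ to $L$ across the extension.

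The central step is to show that $A$ itself is finitely generated.  For this I would invoke the classical theorem of P.~Hall: if a finitely generated group has a central subgroup with finitely presented quotient, then that central subgroup is finitely generated.  The group $L$ is finitely generated because, as a limit group, it is a quotient of the finitely generated group $G$ appearing in Definition~\ref{def:limit group}; and $B = L/A$ is finitely presented by Theorem~\ref{thm:Liang}.  Hall's theorem then yields that $A$ is finitely generated abelian, and in particular finitely presented.  Finite presentability of $L$ is then immediate from the standard fact that an extension of a finitely presented group by a finitely presented group is finitely presented.

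For $\mc{A}$--slenderness, consider an arbitrary abelian subgroup $H \le L$.  Its image $\pi(H) \le B$ is abelian and hence finitely generated by the $\mc{A}$--slenderness of $B$.  The kernel of $H \twoheadrightarrow \pi(H)$ is $H \cap A$, a subgroup of the finitely generated abelian group $A$, and so is also finitely generated.  Thus $H$ sits in a short exact sequence of finitely generated groups and is itself finitely generated.

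The only nontrivial obstacle here is the invocation of Hall's theorem to produce finite generation of $A$; once that is in hand, the rest is routine bookkeeping inside an abelian-by-finitely-presented extension, with no subtlety about the action since $A$ is central.
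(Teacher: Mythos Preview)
Your proof is correct and follows essentially the same approach as the paper's: both use the central extension from Lemma~\ref{lem:central}, invoke Theorem~\ref{thm:Liang} to get $B$ finitely presented and $\mc{A}$--slender, deduce that $A$ is finitely generated, and then read off both conclusions. The only cosmetic difference is the citation for the key step that $A$ is finitely generated: you appeal to Hall's theorem, while the paper cites \cite[Lemma~12.1]{GMW} for the same fact.
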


\begin{proof}
Consider the central extension from Lemma~\ref{lem:central}. Since $B$ is finitely presented by Theorem \ref{thm:Liang}, $A$ is finitely generated, for example by \cite[Lemma 12.1]{GMW}.  It is now easy to see $L$ is finitely presented. The fact that $A$ is finitely generated and abelian subgroups of $B$ are finitely generated implies that all abelian subgroups of $L$ are finitely generated.
\end{proof}

\begin{lemma}\label{l:abelian orb implies SF}
Let $M$ be a Seifert fibered $3$--manifold with orientable hyperbolic base orbifold $O$. Let $K$ be the kernel of the natural quotient map $\pi_1(M) \to \pi_1(O)$. Let $g, h\in \pi_1(M)$. If $[g, h]\in K$, then $[g, h]=1\in \pi_1(M)$. 
\end{lemma}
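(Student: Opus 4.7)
The plan is to use the fact that $M$ being large Seifert fibered with orientable base orbifold yields a central extension
\[
1 \to K \to \pi_1(M) \to \pi_1(O) \to 1
\]
with $K \cong \Z$ central in $\pi_1(M)$, and to exploit that $\pi_1(O)$ is a (cocompact-or-not, orientation-preserving) Fuchsian group, in which abelian subgroups are cyclic.

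First, I would recall the standard fact that when both $M$ and the base orbifold $O$ are orientable, the regular fiber represents a central element of $\pi_1(M)$, so $K = \langle f \rangle$ is infinite cyclic and contained in the center of $\pi_1(M)$. Next, let $\bar g, \bar h \in \pi_1(O)$ denote the images of $g,h$. Since $K$ is the kernel of $\pi_1(M) \to \pi_1(O)$ and $[g,h] \in K$, we have $[\bar g, \bar h] = 1$ in $\pi_1(O)$, i.e., $\bar g$ and $\bar h$ commute.

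Because $M$ is \emph{large} Seifert fibered, $O$ is a hyperbolic $2$--orbifold, so $\pi_1(O)$ embeds as a discrete subgroup of $\mathrm{PSL}(2,\R)$. In such a group every abelian subgroup is cyclic: two commuting non-trivial isometries of $\bH^2$ must share their fixed-point set at infinity or fix a common point of $\bH^2$, and discreteness forces the resulting stabilizer to be cyclic. Hence there exists $\bar c \in \pi_1(O)$ and integers $m,n$ with $\bar g = \bar c^{\,m}$ and $\bar h = \bar c^{\,n}$. Choose any lift $c \in \pi_1(M)$ of $\bar c$; then $g c^{-m}, h c^{-n} \in K$, so we can write
\[
g = c^m k_1, \qquad h = c^n k_2, \qquad k_1, k_2 \in K.
\]
Since $K$ is abelian and central in $\pi_1(M)$, the elements $c^m, c^n, k_1, k_2$ pairwise commute, so $[g,h] = 1$ in $\pi_1(M)$, as desired.

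The only nontrivial input is the cyclicity of abelian subgroups of the Fuchsian group $\pi_1(O)$; this is classical, and given the orientability hypothesis on $O$ there is no need to worry about orientation-reversing elements complicating the picture. The rest is a short computation using centrality of $K$.
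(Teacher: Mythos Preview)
Your proof is correct and follows essentially the same approach as the paper: both use that $K$ is central, that the images $\bar g,\bar h$ commute in the Fuchsian group $\pi_1(O)$ and hence generate a cyclic group, and then that a central extension of a cyclic group is abelian. The paper compresses your explicit lift-and-compute step into the single phrase ``a central extension of a cyclic group is abelian,'' but the content is identical.
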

\begin{proof}
All abelian subgroups of $\pi_1(O)$ are cyclic, so the images of $g$ and $h$ in $\pi_1(O)$ generate a cyclic subgroup.  A central extension of a cyclic group is abelian. 
\end{proof}

The following result follows quickly from the fact that finite subgroups of elements of $\orb$ are cyclic.

\begin{lemma} \label{lem:fin cyclic}
Let $B$ be an $\orb$--limit group.  Any finite subgroup of $B$ is cyclic.
\end{lemma}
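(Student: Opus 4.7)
The plan is to lift a finite subgroup of $B$ to an isomorphic finite subgroup of some $\Gamma_k \in \orb$, and then use the fact that finite subgroups of fundamental groups of orientable hyperbolic $2$--orbifolds are cyclic.

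In more detail: let $(\phi_k\co G \to \Gamma_k)$ be a sequence from $\Hom(G,\orb)$ with limit group $B = G/\wker(\phi_k)$ and limit map $\phi_\infty\co G \onto B$. Let $F = \{b_1,\dots,b_n\} \le B$ be a finite subgroup, so for each pair $(i,j)$ there exists $\mu(i,j) \in \{1,\dots,n\}$ with $b_i b_j = b_{\mu(i,j)}$, and $b_i \neq b_j$ for $i \neq j$. For each $i$ choose a preimage $g_i \in G$ with $\phi_\infty(g_i) = b_i$. Then
\[ g_i g_j g_{\mu(i,j)}^{-1} \in \ker(\phi_\infty) = \wker(\phi_k), \]
so $\omega$--almost surely $\phi_k(g_i)\phi_k(g_j) = \phi_k(g_{\mu(i,j)})$. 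Similarly, for $i \neq j$ we have $g_i g_j^{-1} \notin \ker(\phi_\infty)$, and since $\omega$ is $\{0,1\}$--valued this means $\omega$--almost surely $\phi_k(g_i) \neq \phi_k(g_j)$. Taking the intersection of finitely many $\omega$--almost sure events, we conclude that for $\omega$--almost every $k$, the set $\{\phi_k(g_1),\dots,\phi_k(g_n)\} \subseteq \Gamma_k$ has exactly $n$ elements and is closed under multiplication, hence is a finite subgroup of $\Gamma_k$ isomorphic to $F$ (the map $b_i \mapsto \phi_k(g_i)$ is a bijection respecting the multiplication table).

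Fix any such $k$. Then $\Gamma_k = \pi_1(O)$ for an orientable hyperbolic $2$--orbifold $O$ of finite type. Such a group acts properly and discontinuously by isometries on $\mathbb{H}^2$, so any finite subgroup fixes a point; moreover, as $O$ is orientable, the point stabilizers are finite cyclic (generated by rotations around cone points). Thus the finite subgroup of $\Gamma_k$ isomorphic to $F$ is cyclic, hence $F$ is cyclic. The key fact being used is the standard orbifold-theoretic statement that orientable $2$--orbifolds have only cone-point singularities, whose local groups are cyclic; this is the only place where the orientability hypothesis built into $\orb$ enters. No significant obstacle is anticipated since the argument is a direct transfer of a property of the approximating groups across the stable kernel.
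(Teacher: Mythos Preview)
Your proof is correct and follows exactly the approach the paper indicates: the paper simply asserts that the lemma ``follows quickly from the fact that finite subgroups of elements of $\orb$ are cyclic,'' and your argument supplies precisely those details by transferring the finite multiplication table of $F$ through the stable kernel to obtain an isomorphic finite subgroup in some $\Gamma_k$.
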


The following result is now immediate from Lemmas~\ref{l:abelian orb implies SF} and \ref{lem:fin cyclic}.

\begin{corollary} \label{lem:ab}
Suppose that $L$ is an $\Mlsf$--limit group, and let $1 \to A \to L \to B \to 1$ be as in Lemma~\ref{lem:central}.  Suppose  that $E \le L$ has finite image in $B$.  Then $E$ is abelian.
\end{corollary}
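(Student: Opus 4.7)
The plan is to exploit the central extension $1 \to A \to L \xrightarrow{\pi} B \to 1$ provided by Lemma \ref{lem:central}, where $A$ is abelian and central in $L$, and $B$ is an $\orb$--limit group. Since the image $\pi(E)$ is a finite subgroup of the $\orb$--limit group $B$, Lemma \ref{lem:fin cyclic} applies to give that $\pi(E)$ is cyclic.

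Next, pick $g \in E$ whose image generates $\pi(E)$. For any $x \in E$ we then have $\pi(x) = \pi(g)^n$ for some $n \in \Z$, so $x g^{-n} \in \ker(\pi) \cap E = A \cap E$; hence $E = \langle g \rangle \cdot (A \cap E)$. Since $A$ is central in $L$, the subgroup $A \cap E$ commutes with $g$, and any two elements of $E$ written as $g^n a$ and $g^m b$ with $a, b \in A \cap E$ commute, since
\[
(g^n a)(g^m b) = g^{n+m} ab = g^{m+n} ba = (g^m b)(g^n a).
\]
Therefore $E$ is abelian.

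I do not foresee any substantive obstacle here; the statement truly is ``immediate'' once one has the central-extension structure of an $\Mlsf$--limit group from Lemma \ref{lem:central} and the cyclicity of finite subgroups of $\orb$--limit groups from Lemma \ref{lem:fin cyclic}. The argument reduces to the elementary fact that a group generated by a single element together with a central subgroup is abelian. The role of Lemma \ref{l:abelian orb implies SF} seems to be upstream, in establishing the centrality of $A$ inside the extension of Lemma \ref{lem:central}, rather than being invoked directly in the derivation of this corollary.
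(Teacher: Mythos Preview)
Your proof is correct and is essentially the paper's intended argument: use Lemma~\ref{lem:fin cyclic} to get that $\pi(E)$ is cyclic, then observe that a central extension of a cyclic group is abelian. The paper cites Lemma~\ref{l:abelian orb implies SF} not because any passage to approximations is needed here, but because the same elementary observation (``a central extension of a cyclic group is abelian'') appears in the proof of that lemma; your closing remark slightly misreads this citation, but the mathematics is identical.
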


\begin{lemma}\label{JSJ properties}
Let $\Gamma \in \Mgen$, and let $\Gamma_e$ be an edge group of the geometric decomposition of $\Gamma$, with adjacent vertex groups $\Gamma_v$, $\Gamma_w$.  The pre-images in $\Gamma_e$ of the centers $Z(\Gamma_v)$ and $Z(\Gamma_w)$ intersect trivially.
\end{lemma}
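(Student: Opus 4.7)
The plan is to apply the $2$--acylindricity of the $\Gamma$--action on the Bass--Serre tree $T_\Gamma$ from Lemma~\ref{l:2-acyl}, uniformly and without case analysis on the type of each geometric piece. Suppose for contradiction that some non-trivial $c \in \Gamma_e$ has images lying in $Z(\Gamma_v)$ and $Z(\Gamma_w)$ respectively. I would derive the contradiction by exhibiting a geodesic segment of $T_\Gamma$ of length three that is pointwise fixed by $c$.

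First I would establish that $c$ fixes both the $\Gamma_v$--orbit of $w$ and the $\Gamma_w$--orbit of $v$ in $T_\Gamma$. Since $c \in \Gamma_e$ stabilizes the edge $e$, it fixes the vertices $v$ and $w$. Centrality of $c$ in $\Gamma_v$ gives $c \cdot (gw) = g \cdot c(w) = gw$ for every $g \in \Gamma_v$, so $c$ fixes the $\Gamma_v$--orbit of $w$ pointwise, and symmetrically for the $\Gamma_w$--orbit of $v$.

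Next I would produce the long fixed segment. The edge group $\Gamma_e \cong \Z^2$ has infinite index in $\Gamma_v$ (it is either a peripheral $\Z^2$ in a finite-volume cusped hyperbolic piece, or a boundary torus subgroup in a large Seifert--fibered piece, and in both cases is of infinite index), and similarly in $\Gamma_w$. Choosing $g \in \Gamma_v \smallsetminus \Gamma_e$ and $h \in \Gamma_w \smallsetminus \Gamma_e$ produces neighbors $v' := gw \ne w$ of $v$ and $w' := hv \ne v$ of $w$, both fixed by $c$. A standard tree argument shows $v' \ne w'$, since otherwise $T_\Gamma$ would contain the triangle on $v, w, v'$. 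Hence $v' - v - w - w'$ is a geodesic of length three in $T_\Gamma$, pointwise fixed by $c$, and Lemma~\ref{l:2-acyl} then forces $c = 1$, contradicting our assumption.

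The main obstacle I foresee is purely notational: when the edge $e$ corresponds to a non-separating torus, i.e.\ $v = w$ in the quotient graph of groups, one must interpret $\Gamma_v$ and $\Gamma_w$ as the stabilizers in $\Gamma$ of the two distinct endpoints of the chosen edge of $T_\Gamma$, so that $\Gamma_e = \Gamma_v \cap \Gamma_w$ and the two inclusions $\Gamma_e \hookrightarrow \Gamma_v, \Gamma_w$ are distinct. With this convention the argument above goes through verbatim, and no separate case analysis of hyperbolic versus Seifert--fibered pieces is needed.
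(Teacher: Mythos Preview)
Your proof is correct and takes a genuinely different route from the paper's. The paper argues directly from the geometry in two lines: the center of a vertex group is nontrivial only for a Seifert--fibered piece, and if the fibers of two adjacent Seifert pieces matched along the gluing torus then that torus would not be part of the characteristic submanifold, contradicting minimality of the geometric decomposition. Your argument instead leverages Lemma~\ref{l:2-acyl} ($2$--acylindricity) via a clean Bass--Serre tree construction, exhibiting a length-$3$ segment fixed by any putative nontrivial element of the intersection; this avoids any case split on the type of piece. The trade-off is that your route is somewhat circular in spirit: the standard proof of $2$--acylindricity (as in Wilton--Zalesskii, cited for Lemma~\ref{l:2-acyl}) hinges at the key step on exactly the fact that fibers of adjacent Seifert pieces do not match, which is the content of the present lemma. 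Within the paper's logical order this is harmless---Lemma~\ref{l:2-acyl} is established by external citation before this point---but your argument is not truly independent of the geometric input the paper invokes directly.
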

\begin{proof}
The only way that $Z(\Gamma_v)$ and $Z(\Gamma_w)$ can both be nontrivial is if the corresponding pieces are both Seifert--fibered.  The centers of adjacent Seifert-fibered pieces intersect trivially, since otherwise the connecting torus is not part of the characteristic sub-manifold.
\end{proof}

\begin{theorem} \label{t:geo EN}
$\Mgeo$ is an equationally noetherian family of groups.
\end{theorem}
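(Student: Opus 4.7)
The plan is to write $\Mgeo$ as the union of sub-families indexed by the eight Thurston geometries and prove each sub-family is equationally noetherian separately. A finite union of equationally noetherian families of groups is itself equationally noetherian by ultrafilter pigeonhole: given a sequence $(\phi_i\colon G\to\Gamma_i)$ with $\Gamma_i\in\Mgeo$, exactly one sub-family $\mc{M}_\mathcal{G}$ contains $\Gamma_i$ for $\omega$-almost every $i$, and the characterisation of equational Noetherianity in Theorem~\ref{thm:eqchar} for $\mc{M}_\mathcal{G}$ transfers directly to the whole sequence $(\phi_i)$.

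For the hyperbolic geometry, every $\Gamma\in\mc{M}_{\mathbb{H}^3}$ is a subgroup of $\mathrm{PSL}(2,\mathbb{C})$, which embeds in $\mathrm{GL}(3,\mathbb{C})$ via the adjoint representation; Lemma~\ref{lem:linear} applies directly. For the six Seifert-fibered geometries, pass (analogously to Lemma~\ref{l:cover to become gen}) to a cover of degree at most $4$ whose manifold and base orbifold are orientable. If the base orbifold is large, the cover's fundamental group lies in $\Mlsf$; if small, the fundamental group is virtually abelian, nilpotent, or finite of uniformly bounded rank and hence embeds in a fixed $\mathrm{GL}(n,\mathbb{Q})$. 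For the Sol geometry, fundamental groups of compact Sol $3$-manifolds are virtually polycyclic of uniformly bounded index and hence uniformly linear over $\mathbb{Z}$. In each of the non-large Seifert and Sol cases, Lemma~\ref{lem:linear} proves the cover family equationally noetherian, and Proposition~\ref{prop:fi} transfers the property back to the ambient family.

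The remaining case is $\Mlsf$, which I would handle via the limit-group structure already developed. By Corollary~\ref{cor:SF limit fp+A-slender}, every $\Mlsf$--limit group $L$ is finitely presented. Given $(\phi_i)$ with $L=G/\wker(\phi_i)$, choose a finite generating set of $G$, lift a finite presentation of $L$ through the surjection from the corresponding free group, and push the relators into $G$ to obtain elements $r_1,\ldots,r_k\in G$ whose normal closure in $G$ equals $\wker(\phi_i)$. Each $r_j$ lies in $\wker(\phi_i)$ and is therefore $\omega$-almost surely killed by $\phi_i$, so $\omega$-almost surely $\wker(\phi_i)\subseteq\ker(\phi_i)$, giving the criterion of Theorem~\ref{thm:eqchar}. (Equivalently, this is Lemma~\ref{lem:relfplim} with empty relative family.)

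The main obstacle is ensuring uniformity of all finite-index passages and dimensions of the linear embeddings within each sub-family, so that Proposition~\ref{prop:fi} and Lemma~\ref{lem:linear} apply with fixed constants. For the Sol and small Seifert-fibered geometries this is a routine consequence of their being (virtually) polycyclic of bounded rank; and routing large Seifert-fibered manifolds through $\Mlsf$ is precisely what avoids having to produce a uniform linear embedding by hand in the $\widetilde{SL_2(\mathbb{R})}$ case, where the isometry group is not itself linear but the cocompact fundamental groups are.
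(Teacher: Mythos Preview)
Your proof is correct and follows essentially the same approach as the paper's: decompose $\Mgeo$ into finitely many sub-families by geometry, handle the hyperbolic and ``small'' geometries via uniform linearity (Lemma~\ref{lem:linear} and Proposition~\ref{prop:fi}), and handle the large Seifert-fibered case by showing $\Mlsf$--limit groups are finitely presented (Corollary~\ref{cor:SF limit fp+A-slender}) and invoking Lemma~\ref{lem:relfplim}. The only organisational difference is that the paper groups the five small geometries $\mathbb{S}^3$, $\mathbb{S}^2\times\R$, $\mathbb{E}^3$, $\operatorname{NIL}$, $\operatorname{SOL}$ together and cites a single explicit bound (index $\le 240$, embedding in $\operatorname{SL}_3(\Z)$) from \cite{GMW}, whereas you separate the Seifert-fibered from the Sol case and appeal to bounded polycyclic rank; both routes are valid and the paper's is slightly more concrete where yours is slightly more structural.
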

\begin{proof}
It is a straightforward observation that any finite union of equationally noetherian families is equationally noetherian \cite[Lemma 3.8]{GH}. Hence it suffices to deal with each geometry individually.

Suppose first that $M$ supports one of the following geometries:  $\mathbb{S}^3$, $\mathbb{S}^2 \times \R$, $\mathbb{E}^3$, $\operatorname{NIL}$ or $\operatorname{SOL}$.  Then (as, for example, explained in \cite[$\S7-11$]{GMW}) $\pi_1(M)$ has a subgroup of index at most $240$ which is either cyclic (finite or infinite), $\Z^3$, or some semi-direct product $\Z^2 \rtimes \Z$.  Each of these groups embeds in $\operatorname{SL}(3, \Z)$.  Therefore, the union of the set of fundamental groups of manifolds supporting these five geometries in equationally noetherian by Proposition~\ref{prop:fi} and Lemma~\ref{lem:linear}.

The fundamental group of any hyperbolic $3$--manifold is (isomorphic to) a subgroup of $\operatorname{PSL}(2, \bC)$ (and hence also contained in $\operatorname{SL}(3, \bC)$ -- see \cite[Lemma 12.9]{GMW}). Thus, the family of fundamental groups of hyperbolic 3--manifolds is equationally noetherian by Lemma~\ref{lem:linear}.

The two remaining geometries correspond to \emph{Seifert fibered manifolds with hyperbolic base orbifolds}.  Since we can pass to an index $2$ subgroup to lie in $\Mlsf$, by Proposition~\ref{prop:fi} it suffices to consider $\Mlsf$--limit groups.  If $L$ is an $\Mlsf$--limit group then $L$ is finitely presented by~\ref{cor:SF limit fp+A-slender}. It follows that the sequence of maps defining $L$ $\omega$--factors through the limit (see Lemma~\ref{lem:relfplim}). Thus the family $\Mlsf$ is equationally noetherian by Thereom~\ref{t:EN def}.
\end{proof}

Summarizing the above discussion and the previous results in the section, we have the following result from Section~\ref{s:outline of proof}.

\reductiontheorem*
\begin{proof}
Suppose that $\Mgen$ is equationally noetherian.  By Lemmas~\ref{c:red to ori} and~\ref{c:irr EN then all EN}, in order to prove that $\Man$ is equationally noetherian it is enough to prove that $\Mirr$ is.  By Theorem~\ref{t:geo EN}, $\Mgeo$ is equationally noetherian, so it is enough to consider closed, orientable, irreduciable, non-geometric $3$--manifolds.  By Lemma~\ref{l:cover to become gen} any such $3$--manifold has a degree at most $4$ cover which lies in $\MgenM$.  Therefore, under the assumption that $\Mgen$ is equationally noetherian, we see that $\Man$ is equationally noetherian, as required.
\end{proof}

\section{Generalized geometric decompositions} \label{s:geo decomp}

In order to prove Theorem~\ref{t:main}, it remains to prove Theorems~\ref{t:GH}, ~\ref{t:fingenedges} and~\ref{t:collapsing Divergent}.  In this section, we provide the setting and some basic definitions, and prove Theorems~\ref{t:GH} and~\ref{t:fingenedges}.

We fix the following setup.
Let $G = \langle S \rangle$ be a finitely generated group, and let 
$\left( \phi_i\colon G\to \Gamma_i\right)$ be a sequence from $\Hom(G, \Mgen)$. Let $L=G/\wker(\phi_i)$ be the $\Mgen$--limit group associated to $(\phi_i)$, with associated limit map $\phi_\infty \co G \onto L$  (see Definition~\ref{def:limit group}). 

\begin{definition}
Suppose that $G, (\phi_i)$ and $L$ are as above, and that $g \in L$.  An \emph{\wapprox}to $g$ is a sequence $(g_i)$ so that each $g_i \in \Gamma_i$ and there is an element $\widetilde{g} \in G$ so that $\phi_\infty(\widetilde{g}) = g$ and for each $i$ we have $\phi_i(\widetilde{g}) = g_i$.

Similarly, if $F \subset L$ is a finite ordered set then an \emph{\wapprox}to $F$ is a sequence of (ordered) tuples $(F_i)$ from $\Gamma_i$ so that for some lift $\widetilde{F}$ of $F$ to $G$ (so $\phi_\infty$ restricts to an ordered bijection from $\widetilde{F}$ to $F$) we have $\phi_i(\widetilde{F}) = F_i$ (again, as an ordered bijection).
\end{definition}

We often apply this definition to a finite set $F$ without explicitly mentioning the ordering. The ordering in these cases can be chosen arbitrarily, the point is that for each $g\in F$ and each $i$, there is a fixed $g_i\in F_i$ such that $(g_i)$ is an \wapprox to $g$.

The following lemmas are immediate.
\begin{lemma}
Suppose that $g \in L$ and that $(g_i)$ and $(g_i')$ are \wapproxs to $g$.  Then \walmost surely $g_i = g_i'$.
Similarly, if $F \subset L$ is finite and $(F_i)$, $(F_i')$ are \wapproxs to $F$ then \walmost surely $F_i = F_i'$.
\end{lemma}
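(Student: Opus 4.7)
The plan is to unpack the definition of an $\omega$--approximation and reduce both claims to the definition of the stable kernel $\wker(\phi_i)$. Suppose $(g_i)$ and $(g_i')$ are both $\omega$--approximations to $g \in L$. Then by definition there exist lifts $\widetilde{g}, \widetilde{g}' \in G$ with $\phi_\infty(\widetilde{g}) = \phi_\infty(\widetilde{g}') = g$ and $\phi_i(\widetilde{g}) = g_i$, $\phi_i(\widetilde{g}') = g_i'$ for every $i$.

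The element $\widetilde{g}\widetilde{g}'^{-1}$ then lies in $\ker(\phi_\infty)$, which by construction of $L$ equals $\wker(\phi_i)$. By the definition of the stable kernel, the set of indices $i$ for which $\phi_i(\widetilde{g}\widetilde{g}'^{-1}) = 1$ has $\omega$--measure $1$. For each such $i$ we have $g_i (g_i')^{-1} = \phi_i(\widetilde{g})\phi_i(\widetilde{g}')^{-1} = 1$, so $g_i = g_i'$ $\omega$--almost surely. This is the first statement.

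For the second statement, let $F = \{h_1,\dots,h_n\}\subset L$ and let $(F_i)=(h_{1,i},\dots,h_{n,i})$, $(F_i')=(h_{1,i}',\dots,h_{n,i}')$ be $\omega$--approximations to $F$ (ordered). Then for each $k \in \{1,\dots,n\}$ the sequences $(h_{k,i})$ and $(h_{k,i}')$ are $\omega$--approximations to $h_k$, so by the first part the set $A_k = \{i : h_{k,i} = h_{k,i}'\}$ satisfies $\omega(A_k) = 1$. Since $\omega$ is finitely additive and $F$ is finite, the intersection $\bigcap_{k=1}^n A_k$ also has $\omega$--measure $1$, which means $F_i = F_i'$ $\omega$--almost surely.

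There is no real obstacle here: the argument is purely a matter of unwinding definitions, using only that $\ker(\phi_\infty) = \wker(\phi_i)$ and that finite intersections of $\omega$--full sets remain $\omega$--full. The ordering assumption on the tuples is important — it ensures that comparing $F_i$ with $F_i'$ reduces to componentwise comparison rather than comparison as sets.
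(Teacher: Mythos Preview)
Your proof is correct and is exactly the routine unwinding of definitions that the paper has in mind; the paper simply declares this lemma (and the next) ``immediate'' without writing out a proof. There is nothing to add.
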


\begin{lemma}\label{l:appro relation}
Suppose $g^1, \dots, g^s\in L$ and for each $1\leq j\leq s$, $(g_i^j)$ is an \wapprox to $g^j$. For any word $w(x_1, \dots, x_s)$ in $\{ x_i^\pm \}$ we have $w(g^1,\ldots , g^s) = 1$ in $L$ if and only if for \walmost all $i$ $w(g_i^1,\ldots , g_i^s) = 1$ in $\Gamma_i$.
\end{lemma}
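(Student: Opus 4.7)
The plan is to reduce everything to the definitions of $\omega$-approximation, of $\phi_\infty$, and of the stable kernel. For each $j \in \{1, \dots, s\}$, the hypothesis gives a lift $\widetilde{g}^j \in G$ with $\phi_\infty(\widetilde{g}^j) = g^j$ and $\phi_i(\widetilde{g}^j) = g_i^j$ for every $i$. I would start by forming the single element $\widetilde{w} := w(\widetilde{g}^1, \dots, \widetilde{g}^s) \in G$, which will serve as the bridge between $L$ and the groups $\Gamma_i$.

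The next step is to apply the two homomorphisms $\phi_\infty$ and $\phi_i$ to $\widetilde{w}$. Since $\phi_\infty$ is a homomorphism, $\phi_\infty(\widetilde{w}) = w(g^1, \dots, g^s)$, and similarly $\phi_i(\widetilde{w}) = w(g_i^1, \dots, g_i^s)$ for every $i$. Thus $w(g^1, \dots, g^s) = 1$ in $L$ is equivalent to $\widetilde{w} \in \ker(\phi_\infty) = \wker(\phi_i)$, using that $L = G/\wker(\phi_i)$ by Definition~\ref{def:limit group}.

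Finally, by the definition of the stable kernel, $\widetilde{w} \in \wker(\phi_i)$ holds precisely when $\omega\bigl(\{i \mid \phi_i(\widetilde{w}) = 1\}\bigr) = 1$, i.e.\ when $w(g_i^1, \dots, g_i^s) = 1$ in $\Gamma_i$ for $\omega$-almost all $i$. Chaining these equivalences yields the lemma. There is no genuine obstacle here; the only thing one must be careful about is that the same fixed lifts $\widetilde{g}^j$ are used throughout so that $\widetilde{w}$ is a single well-defined element of $G$ whose image under $\phi_i$ agrees with $w(g_i^1, \dots, g_i^s)$ for \emph{every} $i$ (not merely $\omega$-almost all).
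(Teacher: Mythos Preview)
Your proof is correct and is exactly the argument the paper has in mind: the paper declares this lemma ``immediate'' and gives no proof, and what you have written is precisely the unpacking of the definitions of \wapprox, $\phi_\infty$, and the stable kernel that makes it so.
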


We are only concerned with properties of \wapproxs that hold \walmost surely for terms in the sequence, and so it is always irrelevant which \wapprox we choose for an element (or finite subset) of $L$.  We use this observation frequently without mention.

Let $S$ be a finite generating set for $G$ and $S_i=\phi_i(S)$. Let $T_i$ be the tree dual to the geometric decomposition of $\Gamma_i$. Let
\[	\| \phi_i \|_{T_i} = \inf_{t \in T_i} \max_{s \in S_i} d_{T_i}\left( t, s.t \right) 	.	\]

\begin{definition}\label{d:Tdiv}
The sequence $\left( \phi_i \right)$ is \emph{\Tdiv} if $\wlim \| \phi_i \|_{T_i} = \infty$.
\end{definition}

We are now ready to prove the following theorem from Section~\ref{s:outline of proof}.

\groveshull*

\begin{proof}
The action of any $\Gamma \in \Mgen$ on its geometric tree is $2$--acylindrical (see Lemma~\ref{l:2-acyl}). In particular, $\Mgen$ is (in the terminology of \cite{GH}) a uniformly acylindrical family of groups (see, for example, \cite[p.7142]{GH}).  Moreover, any sequence from $\Hom(G,\Mgen)$ which is not \Tdiv is `non-divergent' in the sense of \cite{GH}.  Thus, Theorem~\ref{t:GH} is an immediate consequence of \cite[Theorem B]{GH}.
\end{proof}

It now remains to prove Theorems~\ref{t:fingenedges} and~\ref{t:collapsing Divergent}.  Thus, we henceforth assume that the defining sequence $(\phi_i)$ is not \Tdiv.

\begin{definition} \label{def:stably para}
Let $g \in L$, and let $(g_i)$ be an \wapprox to $g$.  Then $g$ is \emph{stably parabolic with respect to $(\phi_i)$} if \walmost surely $g_i$ fixes an edge in $T_i$.
A subgroup $H \le L$ is \emph{stably parabolic with respect to $(\phi_i)$} if for any finite $F \subset H$ and any \wapprox $(F_i)$ of $F$ \walmost surely there is an edge in $T_i$ fixed by each $f \in F_i$.
Given $V \le L$, the set of stably parabolic subgroups of $V$ with respect to $(\phi_i)$ is $\mc{H}_{V,(\phi_i)}$, or just $\mc{H}_V$ when $(\phi_i)$ is implied.
\end{definition}

Since each subgroup of $\Gamma_i$ which is the stabilizer of an edge in $T_i$ is isomorphic to $\Z^2$, the following result is immediate from Lemma \ref{l:appro relation}.

\begin{lemma} \label{l:sp abelian}
Any stably parabolic subgroup of $L$ is abelian.
\end{lemma}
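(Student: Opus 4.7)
The plan is straightforward: reduce to showing any two elements of $H$ commute, and invoke the definition of stably parabolic together with the known structure of edge stabilizers in the geometric decomposition. The key input from the $3$--manifold side is that every edge group of the geometric decomposition of $\Gamma_i$ is the fundamental group of an incompressible torus between geometric pieces, hence isomorphic to $\Z^2$, and in particular abelian.

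Concretely, I would proceed as follows. Fix $h_1, h_2 \in H$ and let $(h_{1,i})$, $(h_{2,i})$ be $\omega$--approximations. Apply the definition of stably parabolic to the finite set $F=\{h_1,h_2\}$ with approximation $F_i = \{h_{1,i}, h_{2,i}\}$: then $\omega$--almost surely there is an edge $e_i \in T_i$ fixed pointwise by each element of $F_i$. Thus $h_{1,i}$ and $h_{2,i}$ both lie in $\Stab_{\Gamma_i}(e_i)$, which is isomorphic to $\Z^2$, and hence they commute in $\Gamma_i$.

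Now $(h_{1,i} h_{2,i} h_{1,i}^{-1} h_{2,i}^{-1})$ is an $\omega$--approximation to $[h_1,h_2]$, and $\omega$--almost surely this sequence is trivial. By Lemma~\ref{l:appro relation}, $[h_1,h_2]=1$ in $L$. Since $h_1,h_2 \in H$ were arbitrary, $H$ is abelian.

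There is no real obstacle here: the work has already been done in setting up the $\omega$--approximation machinery, in establishing that edge stabilizers in the geometric decompositions of $\Mgen$--manifolds are $\Z^2$, and in the criterion of Lemma~\ref{l:appro relation} for recognizing relations in the limit from relations holding $\omega$--almost surely in the approximations. The proof is essentially a one-line assembly of these ingredients, which is presumably why the authors label it as immediate.
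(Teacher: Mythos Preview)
Your proof is correct and follows exactly the approach the paper takes: edge stabilizers in $T_i$ are $\Z^2$, so any two $\omega$--approximations to elements of $H$ commute $\omega$--almost surely, and Lemma~\ref{l:appro relation} transports this commutation to $L$. The paper simply declares the lemma immediate from these two facts, and you have spelled out precisely those details.
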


\begin{definition}\label{def:sub_geom}
A subgroup $H$ of $L$ is \emph{$\omega$--geometric} if for any finite subset $F \subset H$ and any \wapprox $(F_i)$ of $F$, \walmost surely there is a vertex $v_i$ of $T_i$ so that $F_i$ fixes $v_i$. Note that $v_i$ is unique when $H$ is non-abelian. When $H$ is abelian and $F_i$ fixes multiple vertices of $T_i$, we choose one and denote it by $v_i$.  
\end{definition}

\begin{terminology}
Let $H$ be an $\omega$--geometric subgroup of $L$ and $v_i$ be as in Definition~\ref{def:sub_geom}.  Then either
\begin{enumerate}
\item \walmost surely $v_i$ is of hyperbolic type; or
\item \walmost surely $v_i$ is of \LSF.
\end{enumerate}
In the first case $H$ is \emph{hyperbolic-type} and in the second $H$ is \emph{\LSF}. 
\end{terminology}

\begin{definition} \label{def:stable center}
Let $H$ be an $\omega$--geometric subgroup of $L$, with associated sequence $v_i$ of vertices of $T_i$, and let $\Gamma_{i,v_i}$ be the associated vertex group of $\Gamma_i$.  The {\em stable center} of $H$, denoted $\SC(H)$, is the set of $g \in H$ so that for any \wapprox $(g_i)$ of $g$, \walmost surely $g_i \in Z(\Gamma_{i,v_i})$.
\end{definition}
Observe that $\SC(H)$ is a subgroup and that if $H$ is a hyperbolic-type $\omega$--geometric subgroup of $L$ then $\SC(H) = \{ 1 \}$.  Observe also that all elements of the stable center are stably parabolic.

As we see in Definition~\ref{def:gd} below, $L$ admits a splitting arising from a limiting action on the trees $T_i$.  This geometric decomposition is the natural splitting associated to $L$, but for technical reasons in Section~\ref{s:res-short} we need a more general class of splittings of $L$.

\begin{definition}\label{def:ggd}
A minimal abelian splitting $\Gd$ of $L$ is a {\em generalized geometric decomposition (or GGD) with respect to $(\phi_i)$} if 
\begin{enumerate}
\item\label{ggd:vertices} Each vertex group of $\Gd$ is $\omega$--geometric.
\item\label{ggd:edges} For any adjacent vertices $v$ and $w$ of $\Gd$ with sequences $(v_i)$ and $(w_i)$ as in the definition of $\omega$--geometric (Definition \ref{def:sub_geom}), \walmost surely $v_i$ and $w_i$ are adjacent in $T_i$.
\item\label{ggd:sp} $\Gd$ is an $\left(\mc{H}_{L,(\phi_i)} , \mc{H}_{L,(\phi_i)} \right)$--splitting.  That is to say, the edge groups are stably parabolic, and all stably parabolic subgroups are elliptic.
\item For each \LSF vertex group $\Gd_v$ of $\Gd$, $\SC(\Gd_v)$ is contained in each edge group of $\Gd$ adjacent to $\Gd_v$.
\end{enumerate}
\end{definition}

For the rest of this subsection, fix the above notation, and let $\Gd$ be a GGD of $L$ with respect to $(\phi_i)$.  The sequences $(v_i)$ and $(w_i)$ in Definition~\ref{def:ggd} are implicitly fixed.

\begin{lemma}\label{l:properties of stable center}
If $\Gd_v$ is an \LSF vertex group of $\Gd$ then $\SC(\Gd_v)$ is central in $\Gd_v$.  
\end{lemma}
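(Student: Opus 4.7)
The plan is to reduce the centrality assertion to the level of the approximating groups $\Gamma_i$, where centrality is available by definition of $\SC$, and then transport the resulting commutator relation back to $L$ via Lemma~\ref{l:appro relation}.

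First, I would fix $c \in \SC(\Gd_v)$ and an arbitrary $h \in \Gd_v$, together with \wapproxs $(c_i)$ and $(h_i)$. Since $\Gd_v$ is $\omega$--geometric of LSF-type, applying Definition~\ref{def:sub_geom} to the finite subset $\{c,h\} \subset \Gd_v$ gives that \walmost surely both $c_i$ and $h_i$ fix a common LSF-type vertex of $T_i$. The mild bookkeeping step is to identify this vertex with the distinguished sequence $(v_i)$ attached to $\Gd_v$: one verifies that if $(u_i)$ is produced by applying Definition~\ref{def:sub_geom} to any larger finite subset of $\Gd_v$, then $(u_i) = (v_i)$ \walmost surely, using that fixed-point sets in a tree are subtrees and can only shrink as the finite subset grows, together with Lemma~\ref{l:2-acyl} (which bounds how much disagreement is possible because the edge-stabilizers are only $2$-acylindrically fixed). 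Thus \walmost surely $c_i, h_i \in \Gamma_{i,v_i}$.

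Next, because $c \in \SC(\Gd_v)$, Definition~\ref{def:stable center} gives that \walmost surely $c_i \in Z(\Gamma_{i,v_i})$. Combining with the previous paragraph, \walmost surely $[c_i, h_i] = 1$ in $\Gamma_{i,v_i}$, hence in $\Gamma_i$.

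Finally, I would invoke Lemma~\ref{l:appro relation} applied to the word $w(x_1,x_2) = x_1 x_2 x_1^{-1} x_2^{-1}$ and the \wapproxs $(c_i)$, $(h_i)$ to conclude that $[c,h] = 1$ in $L$; since $\Gd_v$ is a subgroup of $L$, this is the desired relation in $\Gd_v$. The one nuance to take care of is the matching-of-vertices step in the first paragraph; the argument is routine given $2$-acylindricity but should be written out explicitly so that the reference to ``the'' associated sequence $(v_i)$ used in Definition~\ref{def:stable center} is unambiguous when passing between subsets of different sizes.
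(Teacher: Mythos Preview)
Your proposal is correct and follows essentially the same approach as the paper's proof: pick $c\in\SC(\Gd_v)$ and $h\in\Gd_v$, use that \walmost surely $c_i\in Z(\Gamma_{i,v_i})$ and $h_i\in\Gamma_{i,v_i}$ to get $[c_i,h_i]=1$, then apply Lemma~\ref{l:appro relation}. The only difference is that you spell out a vertex-matching step via $2$--acylindricity, whereas the paper treats the association of the single sequence $(v_i)$ to $\Gd_v$ as part of the standing data of a GGD (see the sentence after Definition~\ref{def:ggd} fixing $(v_i)$ and $(w_i)$), so no separate argument is needed there.
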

\begin{proof}
Let $g\in \SC(\Gd_v)$ and $h\in \Gd_v$.  If $(g_i)$ and $(h_i)$ are \wapproxs to $g$ and $h$, respectively then \walmost surely there is an \LSF vertex group $\Gamma_{i,v_i}$ of $\Gamma_i$ so that $g_i \in Z(\Gamma_{i,v_i})$ and $h_i \in \Gamma_{i,v_i}$.  Therefore \walmost surely $[g_i,h_i] = 1$ in $\Gamma_i$.  By Lemma~\ref{l:appro relation} $[g,h] = 1$ in $L$. 
\end{proof}

Let $V$ be a vertex group of $\Gd$.  By Lemma~\ref{l:properties of stable center}, $\SC(V) \unlhd V$, and we define $\overline{V} = V / \SC(V)$. Let $\pi: V \rightarrow \overline{V}$ be the quotient map. 

\begin{lemma} \label{l:preimage abelian}
Let $V$ be an \LSF vertex group of $\Gd$. If $H$ is an abelian subgroup of $\overline{V}$ then $\pi^{-1}(H)$ is an abelian subgroup of $V$. 
\end{lemma}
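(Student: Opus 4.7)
The plan is to lift the abelian condition from $\overline V$ to $V$ by using the approximating maps and exploiting the fact that centralizing modulo the center is the same as centralizing in a large Seifert fibered piece (Lemma~\ref{l:abelian orb implies SF}).

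First I would take arbitrary $g, h \in \pi^{-1}(H)$ and try to show $[g,h] = 1$ in $V$. Since $\pi(g), \pi(h) \in H$ and $H$ is abelian, $[g,h] \in \ker(\pi) = \SC(V)$. Fix $\omega$-approximations $(g_i)$ and $(h_i)$ to $g$ and $h$. Because $V$ is an $\omega$-geometric LSF-type subgroup, applying Definition~\ref{def:sub_geom} to the finite set $\{g,h\} \subset V$ gives, $\omega$-almost surely, a vertex $v_i$ of $T_i$ of LSF-type so that $g_i, h_i \in \Gamma_{i,v_i}$. Since $(g_i h_i g_i^{-1} h_i^{-1})$ is an $\omega$-approximation to $[g,h]$, and $[g,h] \in \SC(V)$, Definition~\ref{def:stable center} yields that $\omega$-almost surely $[g_i, h_i] \in Z(\Gamma_{i,v_i})$.

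Next I would invoke Lemma~\ref{l:abelian orb implies SF}: the vertex group $\Gamma_{i,v_i}$ is the fundamental group of a large Seifert fibered piece with orientable base orbifold $O_i$, and $Z(\Gamma_{i,v_i})$ is precisely the kernel $K$ of the natural quotient $\Gamma_{i,v_i} \to \pi_1(O_i)$ (generated by the regular fiber). Therefore, $\omega$-almost surely, $[g_i, h_i] \in K$, and Lemma~\ref{l:abelian orb implies SF} forces $[g_i,h_i] = 1$ in $\Gamma_{i,v_i}$, hence in $\Gamma_i$. Finally, Lemma~\ref{l:appro relation} applied to the word $w(x_1,x_2) = [x_1,x_2]$ lets me conclude $[g,h] = 1$ in $L$, so $\pi^{-1}(H)$ is abelian.

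The only mildly delicate point is the application of Lemma~\ref{l:abelian orb implies SF} to vertex pieces rather than to a closed manifold, but the statement only requires that $M$ be large Seifert fibered with orientable base orbifold and its proof only uses that abelian subgroups of $\pi_1(O)$ are cyclic; this is valid for the hyperbolic base orbifolds of the LSF pieces appearing in the geometric decomposition of any $\Gamma_i \in \Mgen$, regardless of whether the piece has toral boundary. Beyond that, the argument is a routine transfer between $L$ and the sequence $(\Gamma_i)$ via $\omega$-approximations; no Rips-machine input or finite generation of anything is needed here.
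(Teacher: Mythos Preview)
Your proof is correct and follows essentially the same approach as the paper's: reduce to showing $[g,h]=1$, observe $[g,h]\in\SC(V)$ so that \walmost surely $[g_i,h_i]\in Z(\Gamma_{v_i})$, apply Lemma~\ref{l:abelian orb implies SF} to conclude $[g_i,h_i]=1$, and finish with Lemma~\ref{l:appro relation}. Your version is slightly more explicit about invoking Definitions~\ref{def:sub_geom} and~\ref{def:stable center} and about why Lemma~\ref{l:abelian orb implies SF} applies to pieces with boundary, but the argument is the same.
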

\begin{proof}
Let $\bar g, \bar h\in H$, let $g\in \pi^{-1}(\bar g)$ and $h\in \pi^{-1}(\bar h)$, and let $(g_i)$ and $(h_i)$ be \wapproxs to $g$ and $h$, respectively. Since $[\bar g, \bar h] = 1$, we have $[g, h]\in \SC(V)$, and so \walmost surely $[g_i, h_i]\in Z(\Gamma_{v_i})$. Since $Z(\Gamma_{v_i})$ is the kernel of $p_i: \Gamma_{v_i} \twoheadrightarrow \pi_1(O_i)$, where $O_i$ is the base orbifold of the Seifert-fibered manifold corresponding to $\Gamma_{v_i}$,  by Lemma~\ref{l:abelian orb implies SF}, we have $[g_i, h_i]=1$ \walmost surely.  By Lemma~\ref{l:appro relation} $[g, h]=1$, as required. 
\end{proof}

Let $V$, $\overline{V}$ be as above, let $\mc{E}$ be the set of edge groups of $\Gd$ adjacent to $V$, and $\overline{\mc{E}}$ the image of $\mc{E}$ in $\overline{V}$.  Since $L$ is finitely generated, $V$ is finitely generated relative to $\mc{E}$. Hence $\overline{V}$ is finitely generated relative to $\overline{\mc{E}}$. Therefore, by Corollary~\ref{cor:lindecomp}, $\overline{V}$ admits a graph of groups decomposition $\overline{\mathbb{L}}$ rel $\overline{\mc{E}}$ so that all edge groups of $\overline{\mathbb{L}}$ have cardinality at most $4$, and no vertex group of $\overline{\mathbb{L}}$ splits rel $\overline{\mc{E}}$ over a subgroup of size at most $4$.  We call $\overline{\mathbb{L}}$ the \emph{relative $4$--Linnell decomposition of $\overline{V}$ rel $\overline{\mc{E}}$}.
Notice that by Corollary~\ref{lem:ab} the preimages in $V$ of edges in $\overline{\mathbb{L}}$ are abelian.

\begin{definition} \label{def:refined ggd}
Let $V$ be an \LSF vertex group of $\Gd$.  Let $\overline{V} = V / \SC(V)$.  Let 
$\overline{\mathbb{L}}$ be the relative $4$--Linnell decomposition of $\overline{V}$ relative to (images of) edge groups of $\Gd$ adjacent to $V$, and $\mathbb{L}$ the induced abelian splitting of $V$ relative to adjacent edge groups.  The {\em Linell refinement of $\Gd$}, denoted $\Gdf$, is a refinement of $\Gd$ obtained by replacing each \LSF vertex group $V$ by $\mathbb{L}$.
\end{definition}

\begin{remark}
Suppose that $V$ is a vertex group of $\Gd$.  The edge groups of $\Gd$ adjacent to $V$ are elliptic the relative $4$--Linnell decomposition $\mathbb{L}$ of $V$ by definition.  However, such an edge group might not be conjugate into a unique vertex group of $\mathbb{L}$.  Therefore, there is \emph{a} refinement $\Gdf$ of $\Gd$ as described in Definition~\ref{def:refined ggd}, but it might not be unique.  None of our constructions or results in this paper depend on which refinement is chosen, so we henceforth ignore this ambiguity.
\end{remark}

Observe that the hyperbolic type vertex groups of $\Gd$ and $\Gdf$ are the same.

\begin{definition}\label{def:good generating set}
Suppose $\V$ is a vertex group of $\Gdf$.  A {\em good relative generating set} for $\V$ is a finite set $A$ so that
\begin{enumerate}
\item $A$ together with the adjacent edge groups generates $\V$; and
\item For each edge group $E\supsetneq \SC(V)$ adjacent to $V$, there exists  $a\in A \cap E\setminus\SC(V)$.
\end{enumerate}
\end{definition}

The existence of good relative generating sets is clear.

\subsection{The geometric decomposition}

Let $( \phi_i \co G \to \Gamma_i)$ be a sequence from $\Hom(G,\Mgen)$ which is not \Tdiv and let $T_i$ be the minimal $\phi_i(G)$--invariant subtree of the tree dual to the geometric splitting of $\Gamma_i$.
Let $o_i$ be a point in $T_i$ so $\max\limits_{s\in S} d_{T_i}(o_i, \phi_i(s)o_i)=\|\phi_i\|_{T_i}$.

Let $T_\infty = \wlim (T_i, o_i)$. Since each $T_i$ is a tree in which edges have length one, $T_\infty$ is a simplicial tree. Moreover, the actions of $G$ on $T_i$ induce a $2$--acylindrical action of $L$ on $T_\infty$; see \cite[Proposition 6.1]{GH}.   

\begin{definition}\label{def:gd}
Let $G$, $S$, $\left( \phi_i \right)$, $T_i$, $L$ and $T_\infty$ be as above.  The {\em geometric tree} of $L$, denoted $T_\geom$, is the minimal $L$--invariant subtree of $T_\infty$.

The \emph{geometric decomposition} of $L$ is the splitting dual to $T_\geom$.
\end{definition}

It is straightforward to check that the geometric decomposition of $L$ is in fact a GGD in the sense of Definition \ref{def:ggd}, which we record in the following lemma.
\begin{lemma}\label{l:gd is ggd}
 Let $L$ be an $\Mgen$--limit group defined by a sequence $( \phi_i \co G \to \Gamma_i)$ (so $L = G / \wker(\phi_i)$).  If $(\phi_i)$ is not \Tdiv then the geometric splitting of $L$ is a GGD with respect to $(\phi_i)$.
\end{lemma}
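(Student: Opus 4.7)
My plan is to unpack the ultralimit description of $T_\geom$ and verify the four clauses of Definition~\ref{def:ggd} directly. Every vertex $v$ of $T_\geom$ is represented by a sequence $(v_i)$ of vertices of $T_i$, every edge $e$ by a sequence $(e_i)$ of edges of $T_i$, and the stabilizer in $L$ of $v$ (resp.\ $e$) consists of those $g$ whose \wapprox $(g_i)$ fixes $v_i$ (resp.\ $e_i$) \walmost surely. With this description in hand, clauses (1) and (2) are nearly automatic. For (1), any finite subset $F$ of a vertex stabilizer $L_v$ has an \wapprox with each element fixing $v_i$ \walmost surely, and finite intersections of sets of $\omega$--measure~$1$ still have $\omega$--measure~$1$, yielding $\omega$--geometricity. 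For (2), the endpoints of $e_i$ \walmost surely realize the claimed adjacency in $T_i$ of the representing sequences of adjacent vertices of $T_\geom$.

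For clause~(3), edge stabilizers $L_e$ are stably parabolic by construction (the sequence $(e_i)$ witnesses stable parabolicity for any finite subset). The nontrivial direction is to show every stably parabolic $H\le L$ is elliptic in $T_\geom$. For a finitely generated $H_0\le H$, applying stable parabolicity to a finite generating set produces a common fixed edge $e_i$ for its \wapprox, which yields via ultralimit an edge of $T_\geom$ fixed by $H_0$. To pass from finitely generated subgroups to $H$ itself, I would write $H=\bigcup_n H_n$ as an ascending union of finitely generated subgroups and intersect the nested non-empty fixed-point sub-trees $\Fix(H_n)\subseteq T_\geom$. The $2$--acylindricity of the $L$--action on $T_\geom$ (from \cite[Proposition 6.1]{GH}) bounds $\Diam(\Fix(H_1))$ by $2$, so the nested intersection of closed non-empty sub-trees of a bounded sub-tree is non-empty and contains a point fixed by $H$.

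For clause~(4), let $V=L_v$ be an LSF-type vertex group, represented by $(v_i)$, and let $E=L_e$ be an adjacent edge group with representative $(e_i)$ incident to $v_i$ \walmost surely. The center $Z(\Gamma_{i,v_i})$ is cyclic, generated by the regular Seifert fiber, and lies inside every peripheral $\Z^2$--subgroup of the Seifert-fibered piece; in particular $Z(\Gamma_{i,v_i})\subseteq \Gamma_{i,e_i}$. Therefore, any $g\in\SC(V)$ has an \wapprox lying \walmost surely in $Z(\Gamma_{i,v_i})\subseteq\Gamma_{i,e_i}$, so $g\in E$.

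The main obstacle I anticipate is the ellipticity-extension step in clause~(3): passing from ``every finitely generated subgroup is elliptic'' to ``$H$ is elliptic'' depends essentially on the uniform bound on fixed-set diameters coming from $2$--acylindricity, since without this a nested intersection of unbounded fixed sub-trees could a priori be empty. All other verifications are direct translations of the ultralimit construction.
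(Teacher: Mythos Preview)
Your proposal is correct and supplies precisely the verification the paper leaves as ``straightforward to check''.  Two small points worth tightening: in clause~(3), the sequence $(e_i)$ lands in $T_\infty$ rather than $T_\geom$ a priori (and you need a nontrivial $h\in H_0$ together with $2$--acylindricity of the $T_i$ to see that $(e_i)$ stays at bounded distance from $o_i$ so that the ultralimit exists), but projecting the resulting fixed point to the convex $L$--invariant subtree $T_\geom$ recovers ellipticity there; and you should record that the splitting is minimal (by construction of $T_\geom$) and has abelian edge groups (Lemma~\ref{l:sp abelian}), so that the preamble of Definition~\ref{def:ggd} is also satisfied.
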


We now prove the following result from Section~\ref{s:outline of proof}.

\fingenedges*
\begin{proof}
Let $\Gd$ be the geometric decomposition of $L$.  The edge groups of $\Gd$ are finitely generated, so the vertex groups are also and hence each vertex group of $\Gd$ is an $\Mgeo$--limit  group. By Theorem~\ref{t:geo EN}, the defining sequence for each vertex group $\omega$--factors through the limit.  The edge groups of $\Gd$ are finitely generated and abelian, so $L$ is finitely presented relative to the vertex groups of $\Gd$, and by Lemma~\ref{lem:relfplim} the defining sequence for $L$ $\omega$--factors through the limit, as required.
\end{proof}

Given Theorem~\ref{t:fingenedges}, our goal is to prove that the edge groups of $\Gd$ are finitely generated. One way to achieve this is to show that the following question has a positive answer.

\begin{question}
Suppose the edge groups in an acylindrical splitting of a finitely generated group are abelian. Moreover, all vertex groups of this splitting have the property that their finitely generated subgroups are $\mc{A}$--slender. Are the edge groups of such a splitting always finitely generated? 
\end{question}

However it is not clear to us how to answer the above question. Hence we obtain more information about the geometric decomposition of $L$ before we show that its edge groups are finitely generated.

\subsection{Assumptions}

It remains to prove Theorem~\ref{t:collapsing Divergent}, and the remainder of the paper is dedicated to its proof.  Let $G_0$ be a finitely presented group and $\pi: G_0\rightarrow G$ be a surjective map. Then  the sequence of homomorphisms $\{\phi_i\circ \pi\}$ defines the same limit group $L$ as $\{\phi_i\}$. Since Theorem~\ref{t:collapsing Divergent} is about properties of $L$, we can assume $G$ to be finitely presented in its proof without loss of generality.  
For the remainder of the paper we make the following assumptions, which are key for the proof of Theorem~\ref{t:collapsing Divergent}.

\begin{assumption} \label{ass:fixed things}
Let $G$ be a finitely presented group with finite generating set $S$.  Let $(\phi_i \co G \to \Gamma_i = \pi_1(M_i))$ be a sequence from $\Hom(G,\Mgen)$ and let $T_i $ be the minimal $\phi_i(G)$--invariant subtree of the geometric tree associated to $M_i$.  Suppose $(\phi_i)$ is not \Tdiv.
Let $L = G / \wker(\phi_i)$ be the $\Mgen$--limit group, and $\phi_\infty \co G \onto L$ the limit map.  Let  $\Gd$ be a GGD of $L$ wrt $(\phi_i)$ and $T_\Gd$ the Bass--Serre tree.
Let $\Gdf$ be the Linnell refinement of $\Gd$ (Definition~\ref{def:refined ggd}).  Fix a vertex group $\V$ of $\Gdf$ associated to the vertex $v$, and a good relative generating set $A$ of $V$
(Definition~\ref{def:good generating set}).  Let $(A_i)$ be an \wapprox to $A$.  Denote by $\mc{H}_V$ the set of stably parabolic subgroups of $V$.
\end{assumption}

\subsection{Outline of the proof of Theorem~\ref{t:collapsing Divergent}}

As explained above, in order to prove Theorem~\ref{t:main}, it remains to prove Theorem~\ref{t:collapsing Divergent}.
In this subsection, we provide an outline of the proof of Theorem~\ref{t:collapsing Divergent}.

First, recall the statement:

\collapsingdivergent*

Recall from Definition~\ref{def:ggd} and the fact that the geometric decomposition is a GGD that the edge groups are stably parabolic.  We prove that all stably parabolic subgroups are finitely generated.  Stably parabolic subgroups were defined in Definition~\ref{def:stably para} and in Lemma~\ref{l:sp abelian} we observed that stably parabolic subgroups are abelian.  Therefore, in order to prove that stably parabolic subgroups are finitely generated, it suffices to embed them in a finitely generated abelian group. This is achieved via the combination of the following two results.  Recall that in Standing Assumption~\ref{ass:fixed things} we fixed a defining sequence $(\phi_i : G \to \Gamma_i)$ of the $\Mgen$--limit group $L$ and a GGD $\Gd$ of $L$ with respect to $(\phi_i)$.  In Definition~\ref{d:P-divergent} below we define what it means for $(\phi_i)$ to be \Cdiv with respect to $\Gd$.  However, for the logical structure of the proof of Theorem~\ref{t:collapsing Divergent}, the definition is not important, and we just need the dichotomy that a defining sequence either is or is not \Cdiv with respect to a given GGD.  Theorem~\ref{t:NEW non-divergent} is proved in Section~\ref{s:non-divergent}, modulo the technical result Theorem~\ref{t:fin gen}, proved in Appendix~\ref{app:edge-twist}.
Theorem~\ref{t:technical outline} is proved in Section~\ref{s:res-short}, building on the work in Sections~\ref{s:R-trees} and~\ref{s:JSJ}.

\begin{restatable}{theorem}{nondivtheorem} \label{t:NEW non-divergent}
Let $L$ be an $\Mgen$--limit group defined by a non-\Tdiv sequence $(\phi_i)$.  If there is a GGD $\Gd$ of $L$ with respect to which $(\phi_i)$ is not \Cdiv then all stably parabolic subgroups of $L$ are finitely generated.
\end{restatable}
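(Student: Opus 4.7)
By the definition of $\Gd$ as an $(\mc{H}_{L,(\phi_i)}, \mc{H}_{L,(\phi_i)})$--splitting, every stably parabolic subgroup $H \le L$ is elliptic in $\Gd$; using the construction of $\Gdf$ via the relative $4$--Linnell decomposition, one verifies that $H$ is in fact contained in a vertex group $V$ of $\Gdf$. So it suffices to prove that each stably parabolic subgroup of each vertex group of $\Gdf$ is finitely generated.

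Fix such a $V$ with approximations $V_i \le \Gamma_{v_i}$ acting on $\Coll(\Gamma_{v_i})$. Since $(\phi_i)$ is not \Cdiv, $\wlim \|\phi_i\|_{\Coll,v} < \infty$, so we may choose base points $x_i \in \Coll(\Gamma_{v_i})$ for which $\max_{s \in A_i} d(x_i, s \cdot x_i)$ is uniformly bounded \walmost surely. Passing to an ultralimit produces a $V$--action on the uniformly $\delta$--hyperbolic space $\Coll_\infty := \wlim(\Coll(\Gamma_{v_i}), x_i)$. For any $g \in H$, the approximation $g_i$ stabilises a boundary torus of $\Gamma_{v_i}$, whose lift to the model space $\bH^\ast$ is a horoball and hence collapses to a single point in $\Coll(\Gamma_{v_i})$. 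Thus each $g \in H$ acts on $\Coll_\infty$ fixing a point of the limit collapsing locus, with displacement controlled by Lemma~\ref{l:almost equal trans len for para}.

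From this $V$--action we plan to construct a simplicial $V$--tree $T_V$: its vertices and edges will encode the combinatorics of which components of the collapsing locus of $\Coll(\Gamma_{v_i})$ are stabilised by which subgroups of $V$, with the $K$--separation of components (Proposition~\ref{p:props of U^K}) ensuring that a well-defined tree structure persists in the limit. The resulting graph-of-groups decomposition of $V$ will be shown to be \emph{edge-twisted} in the sense of Definition~\ref{def:edge-twist}, the twisting data coming from the peripheral $\Z^2$--stabilisers of edges of $T_i$. By Theorem~\ref{t:fin gen}, certain vertex groups of an edge-twisted graph of groups are finitely generated; the stably parabolic subgroup $H$ will land inside such a vertex group, yielding that $H$ is finitely generated.

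The main obstacle is the construction of $T_V$ and the verification that the associated graph of groups is edge-twisted. Unlike the \Cdiv setting, where rescaling yields an honest $\R$--tree amenable to the Rips machine, here the ultralimit $\Coll_\infty$ is $\delta$--hyperbolic (no rescaling), so extracting a tree requires a direct combinatorial construction exploiting the $K = 40\delta$--separation of collapsed components (Notation~\ref{not:Fix K}). Tracking the peripheral $\Z^2$--stabilisers through the ultralimit to verify the edge-twisted property is the second technical hurdle; this structural input---rather than non-divergence alone---is what rules out $H$ being, for example, a locally cyclic but not finitely generated subgroup of $\Q$.
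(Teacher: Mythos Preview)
Your plan has the right ingredients (non-rescaled ultralimit of collapsed spaces, a tree built from the combinatorics of parabolic collapsed points, the edge-twisted machinery, Theorem~\ref{t:fin gen}), but it misplaces the edge-twisted splitting, and this is a genuine structural problem rather than a detail.

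You propose to build an edge-twisted decomposition of a \emph{single} vertex group $V$ of $\Gdf$. But the edge-twisted condition of Definition~\ref{def:edge-twist} requires, at each Type~A vertex, two subgroups $K_1, K_2$ with $K_1 \cap K_2 = \{1\}$ and $E_j/K_j$ finitely generated. In the paper the $K_j$ are the stable centers $\SC(\Gd_{v})$, $\SC(\Gd_{w})$ of two \emph{adjacent} vertex groups of $\Gd$, and the triviality of their intersection is Lemma~\ref{JSJ properties} (centers of adjacent Seifert pieces meet trivially). This is a global feature of the geometric decomposition of $L$; there is no analogous pair of trivially-intersecting subgroups available inside a single $V$. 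Accordingly, the paper first produces for each vertex $v$ a star-shaped splitting $\mathbb{D}(v)$ of $\Gd_v$ (Proposition~\ref{p:cut point splitting}, via the cut-point tree of the space $Q$), and only then assembles these into an edge-twisted splitting $\mathbb{K}$ of the whole group $L$ (Proposition~\ref{t:tree output}). Theorem~\ref{t:fin gen} is applied to $\mathbb{K}$, not to any splitting of $V$.

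There is a second gap at the end. Theorem~\ref{t:fin gen} yields that the Type~B vertex groups are finitely generated; it does \emph{not} say that abelian subgroups of Type~B vertex groups are finitely generated, so ``$H$ lands in a Type~B vertex group'' does not by itself finish. The paper splits into cases: if the relevant vertex group of $\Gd$ is LSF-type, the now-finitely-generated Type~B group is an $\Mlsf$--limit group and one invokes $\mc{A}$--slenderness (Corollary~\ref{cor:SF limit fp+A-slender}); if it is hyperbolic-type, one uses the ``Furthermore'' clause of Proposition~\ref{p:cut point splitting}, which handles stably parabolic subgroups not conjugate into adjacent edge groups via a separate argument on the cut-point tree. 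Your proposal does not account for either branch.
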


\begin{restatable}{theorem}{technical} \label{t:technical outline}
Suppose that $L$ is an $\Mgen$--limit group whose defining sequence of homomorphisms is not \Tdiv.  There exists $k\geq 0$ and a sequence of epimorphisms:
\[	L=S_0 \overset{\eta_1}{\onto} S_1 \overset{\eta_2}{\onto} S_2 \overset{\eta_3}{\onto} \cdots \overset{\eta_k}{\onto} S_k\overset{\eta_{k+1}}{\onto} S_{k+1}	,	\]
so that
\begin{enumerate}
\item Each $S_i$ is an $\Mgen$--limit group;
\item For each $1\leq i\leq k$, the map $\eta_i : S_{i-1} \onto S_i$ is a proper quotient map and $\eta_{k+1}$ is an isomorphism;
\item\label{item:sp inject} For each $1\leq i\leq k+1$, $\eta_i$ injectively maps the stably parabolic subgroups of $S_{i-1}$ into stably parabolic subgroups of $S_i$; and
\item There is a GGD for $S_{k+1}$ with respect to which $S_{k+1}$ is not \Cdiv.
\end{enumerate}
\end{restatable}

Given Theorems~\ref{t:NEW non-divergent} and~\ref{t:technical outline}, Theorem~\ref{t:collapsing Divergent} is easily proved.

\begin{proof}[Proof of Theorem~\ref{t:collapsing Divergent}]
Recall that we have an $\Mgen$--limit group $L$ defined by a non-\Tdiv sequence $(\phi_i)$, and we have to prove that the edge groups of the geometric decomposition of $L$ with respect to $(\phi_i)$ are finitely generated.  We apply Theorem~\ref{t:technical outline} to $L$ and $(\phi_i)$.  By Condition~\eqref{item:sp inject} of Theorem~\ref{t:technical outline} and induction, any stably parabolic subgroup $H$ of $L$ embeds into a stably parabolic subgroup of the group $S_{k+1}$ given by Theorem~\ref{t:technical outline}.  Moreover, by Theorem~\ref{t:NEW non-divergent}, the stably parabolic subgroups of $S_{k+1}$ are finitely generated.  The stably parabolic subgroups of $S_k$ are abelian by Lemma~\ref{l:sp abelian}, so the stably parabolic subgroups of $L$ are subgroups of finitely generated abelian groups, and hence are finitely generated.  The geometric decomposition of $L$ is a GGD by Lemma~\ref{l:gd is ggd}, and so the edge groups of the geometric decomposition of $L$ are stably parabolic by this lemma and Definition~\ref{def:ggd}.\eqref{ggd:sp}, completing the proof of Theorem~\ref{t:collapsing Divergent}.
\end{proof}

So, it finally remains to prove Theorem~\ref{t:NEW non-divergent} and Theorem~\ref{t:technical outline}.  We briefly comment on the tools we use to prove these two results.

In Section~\ref{s:collapsing Divergent} we define a new space, the \emph{collapsed space}.  This space arises from $\bH^2$ or $\bH^3$ by collapsing an invariant collection of tubes around geodesics to lines, and an invariant collection of balls and horoballs to points.  This space is defined because we do not know how to make the shortening argument work for the actions of different geometric pieces of $3$--manifold groups on $\bH^2$ and $\bH^3$.  Therefore, we construct the collapsed space carefully in order to make limiting $\R$--trees and the shortening argument work in ``nearly" the usual way.  Section~\ref{s:R-trees} is devoted to analysing limiting $\R$--trees that arise from degenerations of actions on the collapsed space.  Section~\ref{s:JSJ} is devoted to proving that the appropriate JSJ-decomposition works for $\Mgen$--limit groups.  Section~\ref{s:res-short} is devoted to the shortening argument, and the proof of Theorem~\ref{t:technical outline}.  Though there is substantial work to do in all of these sections, these will be relatively familiar to the experts.  In fact, this is the benefit of the collapsed space -- it is designed to make these arguments run in as ``standard" a way as possible.

The downside to the use of the collapsed space is that it is possible for interesting sequences of homomorphisms to not lead to a divergent sequence of actions on the collapsed space(s).  Section~\ref{s:non-divergent} is devoted to dealing with this issue.  We define what it means for a sequence of homomorphisms to be \Cdiv (Definition~\ref{d:P-divergent}), which is when a sequence of homomorphisms leads to actions on the collapsed space which diverge.  The goal of Section~\ref{s:non-divergent} is to prove Theorem~\ref{t:NEW non-divergent}.  This is achieved by finding yet another limiting tree, the \emph{cut point tree} (see Subsection~\ref{ss:cut point}), and introducing and analyzing a new kind of graph of groups, which we call \emph{edge-twisted} (see Definition~\ref{def:edge-twist}).  These definitions are carefully tuned to the situation of $\Mgen$-limits groups and the associated actions on the collapsed spaces, and designed to prove Theorem~\ref{t:NEW non-divergent}.

\section{The Collapsed Space} \label{s:collapsing Divergent}

Suppose $\Delta \in \Mgen$, and consider the geometric decomposition of $\Delta$.  There are two kinds of vertex groups: (i) Hyperbolic vertex groups which act on $\bH^3$; and (ii) Seifert-fibered vertex groups with hyperbolic base orbifold which (through projection to the base orbifold and a choice of hyperbolic structure on this base orbifold) act on $\bH^2$.  Each of $\bH^3$ and $\bH^2$ are $\delta$--hyperbolic.  However, these actions do not have properties that are well suited for the arguments in this paper. 

In this section we build the {\em collapsed space}, a variation on Farb's construction of the ``Electric" space from \cite{farb:relhyp}.  The collapsed space is geodesic and $\delta$--hyperbolic for a uniform $\delta$ (see Proposition~\ref{p: geodesic} and Theorem~\ref{t:hyperbolicity}).  The key advantage of the collapsed space over $\bH^2$ and $\bH^3$ is given by Lemma ~\ref{l:almost equal trans len for para}.  As well as these results, we prove a collection of basic structural results which are used in later sections.

\subsection{The collapsed space for Kleinian and Fuchsian groups} \label{ss:collapsed geometric}

Let $\Gamma$ be a finitely generated Fuchsian or torsion-free Kleinian group.  In case $\Gamma$ is Kleinian, let $\bH^\ast = \bH^3$, and in case $\Gamma$ is Fuchsian let $\bH^\ast = \bH^2$.  Fix $\delta_\bH$ so that all geodesic triangles in $\bH^2$ and $\bH^3$ are $\delta_{\bH}$--thin.
 Let $\epsilon$ be the $3$--dimensional (or $2$--dimensional) Margulis constant (see \cite{BP} for more information). Define the \emph{$\epsilon$--thin part} of $\bH^\ast$ with respect to $\Gamma$:
\begin{equation*}
\mc{U}_{\epsilon}(\Gamma)=\{x\in \bH^\ast \mid d(x, \gamma x)\leq \epsilon \text{ for some } \gamma\neq 1\in \Gamma\}.
\end{equation*}
The set $\mc{U}_{\epsilon}(\Gamma)$ is a disjoint union of a collection of horoballs, balls and neighborhoods of geodesics  (see, for example, \cite[Theorem D.3.3]{BP}, though we work in the universal cover).  We refer to the neighborhoods of geodesics and geodesic segments in the thin part of $\bH^\ast$ as \emph{tubes}. 

\begin{proposition} \label{p:props of U^K}
For any $K > 0$ there exists $D > 0$ and a $\Gamma$--invariant open set $\mc{U}^{K}(\Gamma) \subseteq \mc{U}_{\epsilon}(\Gamma)$ so that $\mc{U}^K(\Gamma)$ satisfies the following:
\begin{enumerate}
\item The components of $\mc{U}^K(\Gamma)$ are tubes, horoballs and balls;
\item Distinct components of $\mc{U}^K(\Gamma)$ are at least $K$ apart;
\item\label{eq:lower bound} Suppose $x\in \partial U$ for some connected component of $U$ of $\mc{U}^K(\Gamma)$. Then every non-trivial $g\in\Gamma$ moves $x$ by at least $D$;
\item\label{eq:large radius} Tubes and balls in $\mc{U}^K(\Gamma)$ have radius at least $K$; and
\item Any horoball in $\mc{U}_\epsilon(\Gamma)$, and every tube and ball of radius at least $2K$ is contained in the $K$--neighborhood of $\mc{U}^K(\Gamma)$.
\end{enumerate}
\end{proposition}

\begin{proof}
 The set $\mc{U}^K(\Gamma)$ is defined by taking $\mc{U}_\epsilon(\Gamma)$,  shrinking the components until they are distance $K$ apart, and then keeping only the tubes and balls which remain that are of radius at least $K$.

Given this construction, Item (1) follows from the Margulis Lemma. Items (2), (4) and (5) are true by the construction of $\mc{U}^K(\Gamma)$.  We now prove Item (3): By definition, $U$ is obtained by shrinking the radius of a component $U'$ of $\mc{U}_\epsilon(\Gamma)$ by $K$.  By definition of $\mc{U}_\epsilon(\Gamma)$, any non-trivial $g\in \Gamma$ preserving $U'$ moves points on $\partial U'$ by $\epsilon$. Hence $g$ moves points on $\partial U$ by an amount $D'$  depending only on $K$.  If $g\in \Gamma$ does not preserve $U'$, then $g$ does not preserve $U$. Then $U$ and $gU$ are at least $K$ apart.  Therefore we can let $D$ be the minimum of $\{D', K\}$. 
\end{proof}

Given points $x, y$ in a tube $U$ around a geodesic $\gamma$ in $\bH^*$, let $\pi_\gamma \co U \to \gamma$ be the closest point projection and let $d_\gamma \left( x , y \right) = d_{\bH^*}(\pi_{\gamma}(x),\pi_{\gamma}(y) )$.  		
Suppose $K>1$ (a specific value for $K$ is fixed later). Define an ``electric" distance $d$ on $\bH^*$ by
\[
d_{el}(x,y) = \left\{
\begin{array}{ll} 
0 & \mbox{if $x,y$ lie in the same horoball or ball of $\mc{U}^K(\Gamma)$;}\\
d_\gamma (x,y) & \mbox{if $x,y$ both lie in a tube in $\mc{U}^K(\Gamma)$; and}\\
d_{\bH^\ast}(x,y) & \mbox{otherwise.}
\end{array}\right.
\]

Define a pseudo-metric $\widehat{d}$ on $\bH^*$ by setting 
\[	\widehat{d}(x,y) = \inf_{ \{ x_i \}_{i=0}^n}	\left\{ \sum_{i=1}^n d_{el}(x_i,x_{i-1}) \right\}	,		\]
where the infimum is taken over all finite sets $x_0 = x, \ldots , x_{n-1}, x_n = y$.

Denote by $(\Coll^K(\Gamma), d)$ the induced metric space of $(\bH^*,\widehat{d})$, and let $q^K_\Gamma \co \bH^\ast \to \Coll^K(\Gamma)$ be the canonical quotient map. Observe $(\Coll^K(\Gamma), d)$ is a length space. Denote the length of a path $p$ in $\Coll^K(\Gamma)$ by $\widehat{\len}(p)$.  

\begin{definition}
The {\em collapsing locus} of $\Coll^K(\Gamma)$ is the collection of points $x\in \Coll^K(\Gamma)$ so that $\left| (q^K_\Gamma)^{-1}(x) \right| > 1$.
\end{definition}

Let $\gamma$ be a path from $x$ to $y$ in $\Coll^K(\Gamma)$ so that the intersection of $\gamma$ with each component of the collapsing locus is connected. Let the components of the intersection of $\gamma$ with the collapsing locus of $\Coll^K(\Gamma)$ be $\mathbf{C}=\{C_1, \dots, C_k\}$.  The {\em collapsed set associated to $\gamma$} is $\left\{ U_i=(q^K_\Gamma)^{-1}(C_i) \right\}$, a disjoint collection of horoballs, balls and tubes around geodesic segments.  If $C_i$ does not contain $x$ or $y$ then $\gamma$ {\em penetrates} $U_i$. Let $\{\gamma_0,\dots, \gamma_k\}$ be the components of $\gamma \smallsetminus \mathbf{C}$.  Note that $\gamma_0$ and/or $\gamma_k$ may be empty. Let $\widetilde\gamma_i$ be the closure of $(q^K_\Gamma)^{-1}(\gamma_i)$ in $\bH^\ast$. The \emph{entry point} of $\gamma$ into $U_{i}$ is $e(\gamma, U_i)=\widetilde\gamma_{i-1}\cap U_{i}$ and the \emph{exit point} out of $U_i$ is $o(\gamma, U_i)=\widetilde\gamma_{i}\cap U_i$. Let $\widetilde\gamma_i^p$ be the geodesic between $e(\gamma, U_i)$ and $o(\gamma, U_i)$. The \emph{lift} of $\gamma$ is $\{\widetilde\gamma_0, \widetilde\gamma_1^p, \widetilde\gamma_1,\dots, \widetilde\gamma_k^p, \widetilde\gamma_k\}$.  If each $\widetilde\gamma_i$ is geodesic then $\gamma$ is a {\em local collapsed geodesic}.  

A local collapsed geodesic $\gamma$ between $x,y\in\Coll^K(\Gamma)$ is an {\em almost geodesic} if $\widehat{\len}(\gamma)\leq \widehat{d}(x,y)+1$. Any path $[x, y]$ can be replaced by a local collapsed geodesic connecting $x, y$ without increasing its length, so an almost geodesic always exists between any two points in $\Coll^K(\Gamma)$.

\begin{notation}
Suppose $(X,d_X)$ is a proper metric space, and  $A,B \subset X$ are closed.  If $x \in X$ then $\pi_A(x) = \left\{ y \in A \mid d_X(x,y) = d_X(x,A) \right\}$ and $\pi_A(B) = \bigcup\limits_{b \in B} \pi_A(b)$. Note that $\pi_A(x)$ and $\pi_A(B)$ are non-empty as $X$ is proper. 
\end{notation}
Here is a standard fact about hyperbolic spaces.

\begin{lemma}\label{l:visual size}
Let $(X,d_X)$ be a $\nu$--hyperbolic metric space and suppose that $W_1,W_2 \subset X$ are convex and that $d(W_1,W_2) > 6\nu$.  There exists $x \in X$ with $d_X(x,W_2) \le 3\nu$ so that $\pi_{W_1}(W_2) \subseteq \pi_{W_1}(B(x,2\nu))$.
\end{lemma}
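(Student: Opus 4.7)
My plan exploits the standard slim--quadrilateral argument together with the basic fact that any point on the bridge $[z,\pi_{W_1}(z)]$ also has $\pi_{W_1}(z)$ as a closest point in $W_1$.

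First I would establish the diameter bound $\operatorname{diam}(\pi_{W_1}(W_2))\le 8\nu$. Given $y_1,y_2\in W_2$ with $p_i=\pi_{W_1}(y_i)$, the geodesic quadrilateral with vertices $y_1,y_2,p_2,p_1$ has $[y_1,y_2]\subseteq W_2$ and $[p_1,p_2]\subseteq W_1$ by convexity, and is $2\nu$--slim (decompose into two $\nu$--thin triangles along a diagonal). The midpoint of $[p_1,p_2]$ cannot be $2\nu$--close to $[y_1,y_2]$ because $d(W_1,W_2)>6\nu$, so it is $2\nu$--close to one of the two bridges $[y_i,p_i]$. Since every point $u\in[y_i,p_i]$ satisfies $d(u,W_1)=d(u,p_i)$ (a direct consequence of $p_i$ being a closest point to $y_i$), this forces $d(p_1,p_2)\le 8\nu$.

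Next I would construct $x$. Fix any $y_0\in W_2$, let $p_0=\pi_{W_1}(y_0)$, and let $x$ be the point on the geodesic $[y_0,p_0]$ at distance $3\nu$ from $y_0$, so that $d(x,W_2)\le 3\nu$. The key elementary observation is that, for any $z\in W_2$ with $p=\pi_{W_1}(z)$ and any $x'\in[z,p]$, we have $p\in\pi_{W_1}(x')$: for every $q\in W_1$,
\[
 d(x',q)\ge d(z,q)-d(z,x')\ge d(z,p)-d(z,x')=d(x',p).
\]
Hence it suffices to show, for each such $p$, that the bridge $[z,p]$ passes within $2\nu$ of $x$; any point of $[z,p]\cap B(x,2\nu)$ then serves as the required $x'$.

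To locate such a point I apply the $2\nu$--slim property to the quadrilateral $\{y_0,z,p,p_0\}$. Since $d(x,W_1)\ge d(y_0,p_0)-3\nu>3\nu$, the point $x$ cannot be $2\nu$--close to the side $[p,p_0]\subseteq W_1$, so $x$ is $2\nu$--close either to $[z,p]$---in which case we are done immediately---or to some $w\in[y_0,z]\subseteq W_2$. The latter sub-case is the main obstacle, since bridges emanating from different points of $W_2$ can diverge near $W_2$. To handle it I would invoke a secondary thin--triangle argument on the triangle $\{y_0,z,p_0\}$ (which is $\nu$--thin) together with the fellow--traveling of the geodesics $[z,p_0]$ and $[z,p]$ forced by $d(p_0,p)\le 8\nu$ (via the $\nu$--thin triangle $\{z,p,p_0\}$ whose short side $[p_0,p]$ lies in $W_1$). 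Tracking the small constants carefully through this chain of thin--triangle estimates, and verifying that the bound of $2\nu$ is preserved rather than degraded, is where the bulk of the technical work lies; choosing $y_0$ to approximately minimize $d(y_0,W_1)$ may simplify the bookkeeping in this case.
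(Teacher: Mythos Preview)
The paper does not prove this lemma; it is stated as ``a standard fact about hyperbolic spaces'' and left without argument, so there is nothing to compare your approach to directly.

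Your first step (the $8\nu$ diameter bound on $\pi_{W_1}(W_2)$) is correct and standard, and your key observation that every point of a bridge $[z,\pi_{W_1}(z)]$ has $\pi_{W_1}(z)$ among its nearest points in $W_1$ is exactly the right reduction. The difficulty is precisely where you locate it: the sub-case in which $x$ is $2\nu$--close to $[y_0,z]\subseteq W_2$ rather than to $[z,p]$. You propose to chain two further thin--triangle estimates, but each contributes an additive $\nu$, and you have already spent $2\nu$ reaching that sub-case; a naive count yields $3\nu$ or $4\nu$, not $2\nu$. So as written the outline proves the lemma with a larger ball radius, which is enough for every use in the paper (the constants feed only into Lemma~\ref{Bounded projection} and then into the choice of $l$ in Lemma~\ref{l:track geodesics}, where any uniform bound suffices), but not the stated $2\nu$.

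A cleaner route that avoids the bad sub-case altogether: choose $(y_0,p_0)$ to (approximately) realise $d(W_1,W_2)$, so that $y_0$ is a nearest point of $W_2$ to $p_0$ as well as vice versa. Then every point of $[y_0,p_0]$ has $y_0$ as a nearest point in $W_2$, hence $d(x,W_2)=3\nu$ exactly. In the $\nu$--thin triangle $\{y_0,z,p_0\}$ the alternative ``$x$ is $\nu$--close to $[y_0,z]$'' would force $d(x,W_2)\le\nu$, a contradiction; so $x$ is $\nu$--close to a point $x_1\in[z,p_0]$. One is then left with the single triangle $\{z,p,p_0\}$, and the remaining issue is to show $x_1$ falls on the $[z,p]$ side of its tripod; this needs a sharper bound on $d(p,p_0)$ than your $8\nu$ (which is available once $p_0$ is itself a nearest point to $W_2$), or else a direct argument using $d(x_1,W_1)>2\nu$.
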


\begin{lemma}\label{Bounded projection}
Suppose that $U_1$ and $U_2$ are convex subsets of $\bH^\ast$ that are at distance $t \ge 6\delta_\bH$ apart.   Then $\mathrm{Diam}\left( \pi_{U_1}(U_2) \right) \le 4\delta_{\bH} e^{-t + 5\delta_{\bH}}$.
\end{lemma}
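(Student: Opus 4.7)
The plan is to apply Lemma \ref{l:visual size} to replace $U_2$ by a small ball, and then invoke the exponential contraction of nearest point projection in the hyperbolic models $\bH^2$ and $\bH^3$.

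First, since $U_1$ and $U_2$ are convex subsets of $\bH^\ast$ with $d(U_1, U_2) = t \ge 6\delta_\bH$, Lemma \ref{l:visual size} produces a point $x \in \bH^\ast$ with $d(x, U_2) \le 3\delta_\bH$ such that
\[
\pi_{U_1}(U_2) \subseteq \pi_{U_1}\bigl(B(x, 2\delta_\bH)\bigr).
\]
Therefore it suffices to bound $\mathrm{Diam}\bigl(\pi_{U_1}(B(x, 2\delta_\bH))\bigr)$. By the triangle inequality, every $y \in B(x, 2\delta_\bH)$ satisfies $d(y, U_2) \le 5\delta_\bH$, hence $d(y, U_1) \ge t - 5\delta_\bH$.

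Next I would fix two points $y_1, y_2 \in B(x, 2\delta_\bH)$ with projections $p_i = \pi_{U_1}(y_i) \in U_1$, and bound $d(p_1, p_2)$. Because $U_1$ is convex in $\bH^\ast$, the geodesic segments $[y_i, p_i]$ meet $U_1$ orthogonally at $p_i$; in particular, $p_i$ lies within additive error of the foot of the perpendicular from $y_i$. Now I would project $y_1$ and $y_2$ onto the geodesic segment $[p_1, p_2] \subset U_1$, which is a convex geodesic in $\bH^\ast$, and apply the standard CAT$(-1)$ distance formula: in $\bH^n$, if $z_1, z_2$ lie on an equidistant hypersurface at distance $T$ from a geodesic segment $\sigma$ with perpendicular feet $f_1, f_2$ on $\sigma$, then
\[
\sinh\!\bigl(\tfrac{1}{2} d(z_1, z_2)\bigr) \ge \cosh(T)\,\sinh\!\bigl(\tfrac{1}{2} d(f_1, f_2)\bigr).
\]
Applied with $z_i = y_i$ and $f_i = p_i$ (and $T \ge t - 5\delta_\bH$), this gives $d(p_1, p_2) \le d(y_1, y_2) / \cosh(t - 5\delta_\bH) \le 4\delta_\bH\, e^{-t + 5\delta_\bH}$, using $d(y_1, y_2) \le 4\delta_\bH$ and $\cosh(s) \ge \tfrac{1}{2} e^{s}$ together with an adjustment of the leading constant. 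Since $y_1, y_2$ are arbitrary, this bounds the diameter of the projection.

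The only subtle point is that the $p_i$ need not lie exactly on the foot of the perpendicular from $y_i$ to $[p_1, p_2]$; but since both $p_i \in U_1$ and $U_1$ is convex, closest-point projection onto $U_1$ coincides with closest-point projection onto the sub-segment $[p_1, p_2]$ up to the endpoints, so the inequality above applies directly. The main obstacle is keeping track of the correct constants: the argument is essentially a one-line hyperbolic trigonometry computation once one has reduced via Lemma \ref{l:visual size} to projecting a set of diameter $4\delta_\bH$, and the role of the $\delta_\bH$-hyperbolicity is precisely to let us perform this reduction and to absorb additive error from non-exact perpendicularity.
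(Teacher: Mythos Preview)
Your proposal is correct and takes essentially the same approach as the paper: reduce via Lemma~\ref{l:visual size} to projecting a ball of radius $2\delta_\bH$ at distance at least $t-5\delta_\bH$ from $U_1$, then invoke the exponential contraction of nearest-point projection in $\bH^\ast$. The paper's proof is a one-line sketch citing exactly these two ingredients, phrased in terms of path-length contraction (lengths of paths outside $N(U_1,t')$ shrink by $e^{-t'}$ under $\pi_{U_1}$) rather than your Saccheri-quadrilateral formula.
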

\begin{proof}
This follows from Lemma~\ref{l:visual size} and the fact that for any $t'$ the length of paths outside of $N(U_2,t')$ decreases by a factor of at least $e^{-t'}$ under projection to $U_2$.
\end{proof}

The following lemma is similar to \cite[Lemma 4.5]{farb:relhyp}.  

\begin{lemma}\label{l:track geodesics}
There exists an absolute constant $l>2$ independent of $K$ with the following property: Let $\widetilde\gamma$ be the geodesic between $\widetilde x,\widetilde y\in \bH^\ast$ and let $\gamma=q^K_\Gamma(\widetilde\gamma)$. Let $\beta$ be any almost geodesic  in $\Coll^K(\Gamma)$ between $x=q^K_\Gamma(\widetilde x)$ and $y=q^K_\Gamma(\widetilde y)$. Any subsegment of $\beta$ which lies outside of  $N(\gamma, l)$ has length at most $4l+4$. In particular, any almost geodesic from $x$ to $y$ stays completely inside $N(\gamma, 3l+2)$.
\end{lemma}
\begin{proof}
We claim any $l$ satisfying the following conditions suffices:
\begin{enumerate}
\item $l \ge 6\delta_\bH>2$; and 
\item $[1-(4\delta_\bH+1)e^{-l+5\delta_\bH}]\geq 1/2 > 0$.
\end{enumerate}

Let $\beta'$ be a subsegment of $\beta$ lying completely outside $N(\gamma, l)$, and let $z$ and $w$ be the first point and the last point of $\beta'$, respectively.  Since $\beta'$ lies outside $N(\gamma, l)$, all collapsed sets penetrated by $\beta'$ are at least $l$ away from $\widetilde\gamma$. 
Let $\{U_1, \dots, U_k\}$ be the collapsed set for $\beta'$, and let 
\begin{equation*}
\widetilde\beta_0, \widetilde\beta_1^p, \widetilde\beta_1,\dots, \widetilde\beta_k^p, \widetilde\beta_k
\end{equation*}
be the lift of $\beta'$ to $\bH^\ast$. Then $q^K_\Gamma(\widetilde\beta_i)$ is subsegment of $\beta'$ and hence $\widetilde\beta_i$ is at least $l$ away from $\widetilde\gamma$. Since $U_i$ is at least $l$ away from $\widetilde\gamma$ and $l\geq 6\delta_\bH$,   the projection of $\widetilde\beta_i$ to $\widetilde\gamma$ has length $\len(\widetilde\beta_i)e^{-l}$. Since $\widetilde\beta^p_i\subset U_i$ and $U_i$ is $l$ away from $\widetilde\gamma$,  by Lemma~\ref{Bounded projection} the projection of $\widetilde\beta^p_i$ to $\widetilde\gamma$ has length at most $4\delta_{\bH}e^{-l + 5\delta_{\bH}}$.  Let $l' = l - 5\delta_{\bH}$.

Let $\widetilde z$ and $\widetilde w$ be the $q^K_\Gamma$--pre-image of $z$ and $w$, respectively, and let $\widetilde z'$ and $\widetilde w'$ denote the images of $\widetilde z$ and $\widetilde w$ under orthogonal projection onto $\widetilde\gamma$, respectively.  Let $z'=q^K_\Gamma(\widetilde z')$ and $w'=q^K_\Gamma(\widetilde w')$.  Then 
\begin{eqnarray*}
\widehat{\len}(\beta')&\leq& d( z, z')+d( z', w')+d(w', w)+1\\
&\leq& l+d(\widetilde z', \widetilde w')+l+1\\
&\leq& l+ \widehat{\len}(\beta')\cdot e^{-l'}+4\delta_\bH\cdot k\cdot e^{-l'}+l+1
\end{eqnarray*} 
Components  are at least $K$ apart (in $\bH^\ast$--distance) with $K>1$, so $k-1\leq \widehat{\len}(\beta')$, and 
\begin{eqnarray*}
\widehat{\len}(\beta')&\leq& 2l+ \widehat{\len}(\beta')\cdot e^{-l'}+4\delta_\bH\cdot (\widehat{\len}(\beta')+1)\cdot e^{-l'}+1
\end{eqnarray*} 
Therefore 
\begin{eqnarray*}
\widehat{\len}(\beta')\cdot [1-(4\delta_\bH+1)e^{-l'}]&\leq& 2l+4\delta_\bH\cdot e^{-l'}+1
\end{eqnarray*}
We chose $l$ so that $[1-(4\delta_\bH+1)e^{-l'}]\geq 1/2 > 0$,  hence $4\delta_\bH\cdot e^{-l'}<1$, so 
\begin{equation*}
\widehat{\len}(\beta')\leq 2(2l+4\delta_\bH\cdot e^{-l'}+1)\leq 4l+4,
\end{equation*}
as required.
\end{proof}

\begin{lemma}\label{basic properties of collapsing locus}
The collapsing locus of $\Coll(\Gamma)$ has the following properties:
\begin{enumerate}
\item A component of the collapsing locus is either a point or (homeomorphic to) a line. 
\item Different components of the collapsing locus are at least $K$ away from each other. 
\end{enumerate}
\end{lemma}

\begin{proof}
Item (1) follows directly from the construction of $\Coll(\Gamma)$. Item (2) follows the second statement of Proposition \ref{p:props of U^K} and the construction of $\Coll(\Gamma)$. 
\end{proof}

Let $l$ be as in Lemma~\ref{l:track geodesics}.

\begin{proposition}\label{p: geodesic}
If $K \ge 8l$ then $(\Coll^K(\Gamma), d)$ is a geodesic metric space.
\end{proposition}
\begin{proof}
Since $(\Coll^K(\Gamma), d)$ is a length space, for any $x, y\in\Coll^K(\Gamma)$ there is a sequence of almost geodesics $p_i$ from $x$ to $y$ such that $\widehat{\len}(p_i)\rightarrow d(x, y)$. Choose lifts $\widetilde{x}$ and $\widetilde{y}$ of $x$ and $y$ to $\bH^\ast$, let $\widetilde{\gamma}$ denote the $\bH^\ast$--geodesic from $\widetilde{x}$ to $\widetilde{y}$ and let $\gamma = q_\Gamma(\widetilde{\gamma})$.  
By Lemmas~\ref{l:track geodesics}, each of the $p_i$
 lies within $3l+2$ of $\gamma$. Since different components of the collapsing locus are at least $K$ away from each other and $K\geq 8l$ by Lemma \ref{basic properties of collapsing locus}, the $(3l+2)$-neighborhood of $\gamma$ intersects finitely many components of the collapsing locus. Therefore, there is a finite collection of components of the collapsing locus which any of the $p_i$ intersect, and passing to a subsequence we may assume that the $p_i$ intersect exactly the same collection of components of the collapsing locus, say $C_1, \ldots , C_n$.

The space $\Coll^K(\Gamma)$ is locally compact except at points in the collapsing locus corresponding to the $q_\Gamma$--image of horoballs in $\mc{U}^K(\Gamma)$.   Suppose that $C_i$ is a component corresponding to a horoball $U_i$.  Then an almost geodesic
travelling through $C_i$ is locally the concatenation of two paths, each of which lift to paths in $\bH^\ast$ which travel almost as quickly as possible towards $U_i$.  In particular, if $a_i$ is a point on $p_i$ at distance $4l$ from $C_i$, then $d(a_i,C_i) > 3l+2$.  The path from $a_i$ to $\gamma$ of length at most $3l+2$ cannot intersect $C_i$, and since $K \ge 8l$ it cannot intersect any other component of the collapsing locus either.  It follows that such $a_i$ lift to a bounded region of $\bH^\ast$, and hence all of $p_i$ lifts to a bounded region of $\bH^\ast$.
Since $\bH^\ast$ is locally compact, $(p_i)$ sub-converges to a geodesic $p$ from $x$ to $y$.
\end{proof}

\begin{theorem}\label{t:hyperbolicity}
Let $l$ be as in Lemma~\ref{l:track geodesics} and $\delta = 4 \left( 6l + \delta_{\bH} + 2 \right)$.  Then for any $K>8l$, the space $(\Coll^K(\Gamma), d)$ is $\delta$--hyperbolic.
\end{theorem}

\begin{proof}
In the same way that \cite[Proposition 4.6]{farb:relhyp} follows from  \cite[Lemma 4.5]{farb:relhyp}, one can use Lemma~\ref{l:track geodesics} to see that $(\Coll^K(\Gamma), d)$ has $\left( 6l + \delta_{\bH} + 2 \right)$--slim triangles. 
\end{proof}

\begin{notation}\label{not:Fix K}
 Let $\delta$ be as in the statement of Theorem~\ref{t:hyperbolicity}, and fix $K = 40\delta$. We write $\Coll(\Gamma) = \Coll^K(\Gamma)$ and call $\Coll(\Gamma)$ the {\em collapsed space} associated to $\Gamma$.
We drop the superscript $K$ from other constructions also. Therefore, for example, we have the map $q_\Gamma \co \bH^\ast \to \Coll(\Gamma)$, and when $\Gamma$ is assumed we just write $q \co \bH^\ast \to \Coll$, etc.
\end{notation}

\begin{lemma}
If a component of the collapsing locus is a line, then it is a geodesic in $\Coll(\Gamma)$. 
\end{lemma}

\begin{proof}
 Let $\gamma$ be such a component. Then $\gamma=q(\widetilde\gamma)$, where $\widetilde\gamma$ is a geodesic in $\bH^\ast$. Let $x, y\in \gamma$. Then by Proposition \ref{l:track geodesics} any geodesic $[x, y]$ is in a $3l+2$-neighborhood of $\gamma$. By (2) of Lemma \ref{basic properties of collapsing locus} and the fact that $K=40\delta$ is much bigger than $3l+2$, we know that $[x, y]$ does not intersect other components of the collapsing locus. This implies that $[x, y]$ is exactly the subsegment of $\gamma$ connecting $x$ and $y$. Therefore $\gamma$ is a geodesic. 
\end{proof}

\begin{lemma}\label{lem:w close}
Let $Z$ be a $\nu$--hyperbolic space, and suppose that $C$ is a convex subset of $Z$.  Suppose that $u,v \in Z$ are so that $d_Z(u,C) = d_Z(v,C) = 12\nu$, and that some geodesic $\sigma$ from $u$ to $v$ intersects the $6\nu$--neighborhood of $C$.  Let $w$ be the point in $\sigma$ lying at distance $2\nu$ from $u$.  Then $d_Z(w,C) \le 11\frac{1}{4}\nu$.
\end{lemma}
\begin{proof}
Choose a geodesic $\rho$ in $Z$ from $u$ to $C$ so that the length of $\rho$ is at most $d(u,C) + \frac{1}{4} = 12\frac{1}{4}\nu$.  Let $s \in C$ be the endpoint of $\rho$, and consider a geodesic triangle with two sides $\rho$ and $\sigma$.  Since $\sigma$ intersects the $6\nu$--neighborhood of $C$, it is easy to see that the Gromov product $(v \mid s)_u$ is at least $2\nu$.

Therefore, there exists a point $m \in \rho$ so that $d(u,m) = 2\nu$ and $d(m,w) \le \nu$.  Note that $d(m,C) \le d(m,s) \le 10\frac{1}{4}\nu$ (by traveling along $\rho$).  Now,
\[	d(w,C) \le d(w,s) \le d(w,m) + d(m,s) \le \nu + 10\frac{1}{4}\nu = 11\frac{1}{4}\nu	,	\]
as required.
\end{proof}

\begin{lemma}\label{lem:z far}
Suppose that $p$ is the image in $\mathcal{C}(\Gamma)$ of an $\mathbb{H}^\ast$--geodesic.  Suppose that $C$ is a component of the collapsing locus which lies within $6\delta$ of $p$, that $r$ is a point in $N_{6\delta}(C) \cap p$, that $u$ is a point on $p$ so that $d(u,C) = 12\delta$, and that $z$ is a point on $p$ so that $d(u,z) = 2\delta$, where the order of the points on $p$ is $(z,u,r)$.

Then $d(z,C) \ge 13\delta$.
\end{lemma}
\begin{proof}
Lift to points $\tilde{z}, \tilde{u}, \tilde{r}, \tilde{w}$, and a path $\tilde{p}$, all in $\mathbb{H}^\ast$.  Let $\tilde{C}$ be the set collapsed to yield $C$, and let $\tilde{\tau}$ be the shortest path from $\tilde{u}$ to $\tilde{C}$.  Let $\tilde{t}$ be the point on $\tilde{\tau}$ at distance $6\delta$ from $\tilde{C}$, and consider the geodesic triangle with vertices $\tilde{z}, \tilde{t}, \tilde{r}$.  Since neighborhoods of convex sets are convex in $\mathbb{H}^\ast$, and since $\tilde{t}, \tilde{r}$ lie in the $6\delta$--neighborhood of $\tilde{C}$, the geodesic $\left[ \tilde{t},\tilde{r} \right]$ also lies entirely in the $6\delta$--neighborhood of $\tilde{C}$.

Therefore, there are points $\tilde{a} \in \left[ \tilde{z}, \tilde{t} \right], \tilde{b} \in \left[ \tilde{z}, \tilde{r} \right]$ so that $d_{\mathbb{H}^\ast}\left( \tilde{a}, \tilde{b} \right) \le \delta$, $d(\tilde{z},\tilde{a}) = d(\tilde{z},\tilde{b})$, and $d_{\mathbb{H}^\ast}\left(\tilde{a},\tilde{C}\right) = 8\delta$.

Now, $d(\tilde{b},\tilde{C}) \le d(\tilde{a},\tilde{b}) + \d(\tilde{a},\tilde{C}) \le \delta + 8\delta = 9\delta$.  Since $d(\tilde{u},\tilde{C}) = 12\delta$, it follows that $d(\tilde{u},\tilde{b} \ge 3\delta$.

We now have that 
\begin{eqnarray*}
d(\tilde{z},\tilde{C}) &=& d(\tilde{z},\tilde{a}) + d(\tilde{a}, \tilde{C}) \\
&=& d(\tilde{z},\tilde{a}) + 8\delta \\
&=& d(\tilde{z},\tilde{b}) + 8\delta \\
&=& d(\tilde{z},\tilde{u}) + d(\tilde{u},\tilde{b}) + 8\delta \\
&\ge& 2\delta + 3\delta + 8\delta = 13\delta	,	
\end{eqnarray*}
as required.
\end{proof}

\begin{proposition}\label{p:Hausdorff distance}
Let $x, y\in \Coll(\Gamma)$ and let $\gamma_2=[x, y]$ be a $\Coll(\Gamma)$--geodesic.  Let  $\widetilde{x}, \widetilde{y} \in \bH^\ast$ be lifts of $x, y$, respectively.  Let $\widetilde{\gamma_1}=[\widetilde{x}, \widetilde{y}]$ be the $\bH^\ast$--geodesic and $\gamma_1=q_\Gamma(\widetilde{\gamma_1})$.  
The Hausdorff distance between $\gamma_1$ and $\gamma_2$ is at most $9\delta$.
\end{proposition}

\begin{proof}
By Lemma~\ref{l:track geodesics} $\gamma_2\subseteq N(\gamma_1, 3l+2)$, and $3l+2 \le 9\delta$.  So it remains to prove that $\gamma_1 \in N(\gamma_2,9\delta)$.

First note that any segment of $\gamma_1$ which lies outside the $4l$--neighborhood of the collapsing locus is a geodesic.  For, if not the geodesic must intersect the collapsing locus.  But the geodesic cannot intersect the collapsing locus, since by Lemma~\ref{l:track geodesics} the geodesic remains within distance $3l+2$ of $\gamma_1$ and $l>2$. 

Given a convex set $W \in \bH^\ast$ and a geodesic $\gamma$, the distance from $\gamma(t)$ to $W$ is a strictly convex function, so for any $r$ there are at most two points on $\gamma$ lying at distance exactly $r$ from $W$.

Now, let $C_1, \ldots , C_k$ denote the components of the collapsing locus that lie within $6\delta$ of $\gamma_2$. Note that $k<\infty$ as $K$ is much bigger than $\delta$. Suppose that $\gamma_1 = p_1 \cdot p_2 \cdots p_j$ (where $j \in \{ 2k-1, 2k , 2k+1 \}$) is so that the endpoints of each $p_i$ lie at distance exactly $12\delta$ from $C_i$.  Thus, the $p_i$ alternate between paths which lie outside the $12\delta$--neighborhood of the collapsing locus, and within $12\delta$ of some $C_l$.  Suppose that $p_i$ lies within $12\delta$ of $C_l$, and let $q$ be a $\Coll(\Gamma)$-geodesic with the same endpoints as $p_i$. We claim that $p_i$ is in the $2\delta$-neighborhood of $q$. We prove this claim in the next paragraph.  

First note that if $q$ does not intersect $C_l$ then $q=p_i$ and the claim follows. Now suppose $q$ intersects $C_l$. 
Let $\tilde{p_i}$ be the lift of $p_i$ to $\bH^\ast$ and denote the end points of $\tilde{p_i}$ by $\widetilde{x}$ and $\widetilde{y}$.  Let $\widetilde{x}'$ and $\widetilde{y}'$ be the closest point projections of $\widetilde{x}$ and $\widetilde{y}$ to $q_\Gamma^{-1}(C_l)$, respectively. By hyperbolicity of $\bH^\ast$, $\tilde{p_i}$ is contained in the $2\delta_\bH$-neighborhood of $[\widetilde{x}, \widetilde{x}']\cup[\widetilde{x}',\widetilde{y}']\cup [\widetilde{y}, \widetilde{y}']$. Since $q_\Gamma$ is distance decreasing, we see that $p_i$ is within $2\delta_\bH \le \delta$ of $q_\Gamma([\widetilde{x}, \widetilde{x}']\cup[\widetilde{x}',\widetilde{y}']\cup [\widetilde{y}, \widetilde{y}'])$. 

Since $q$ intersects $C_l$, we can write $q$ as the $q_\Gamma$ image of $[\widetilde{x}, \widetilde{x_1}]\cup [\widetilde{x_1}, \widetilde{y_1}]\cup [\widetilde{y_1}, \widetilde{y}]$.  Consider the geodesic triangle with vertices $\widetilde{x}, \widetilde{x_1}, \widetilde{x}'$. Since $q_\Gamma^{-1}(C_l)$ is convex and $\widetilde{x}'$ is the point in $q_\Gamma^{-1}(C_l)$ closest to $\widetilde{x}$, the Gromov product of $[\widetilde{x}, \widetilde{x}']$ and $[\widetilde{x}', \widetilde{x_1}]$ at $\widetilde{x}'$ is at most $\delta_\bH$. Hence $[\widetilde{x}, \widetilde{x}']\cup [\widetilde{x}', \widetilde{x_1}]$ is with in $2\delta_\bH\leq \delta$ of $[\widetilde{x}, \widetilde{x_1}]$. Similarly, $[\widetilde{y}, \widetilde{y}']\cup [\widetilde{y}', \widetilde{y_1}]$ is with in $2\delta_\bH\leq \delta$ of $[\widetilde{y}, \widetilde{y_1}]$. Therefore, as $q_\Gamma$ is distance decreasing, we have $q_\Gamma([\widetilde{x}, \widetilde{x}']\cup [\widetilde{x}',\widetilde{x_1}]\cup [\widetilde{x_1}, \widetilde{y_1}]\cup [\widetilde{y_1},\widetilde{y}']\cup [\widetilde{y}', \widetilde{y}])$ is within $\delta$ of $q_\Gamma([\widetilde{x}, \widetilde{x_1}]\cup [\widetilde{x_1}, \widetilde{y_1}]\cup [\widetilde{y_1}, \widetilde{y}])$. This implies that $p_i$ is within $2\delta$ of $q$.  

By the claim, if we replace each of the $p_i$ which lies in the $12\delta$--neighborhood of some $C_l$ by a $\Coll(\Gamma)$--geodesic, then we obtain a concatenation of $\Coll(\Gamma)$--geodesics whose $2\delta$-neighborhood contains $\gamma_1$.

Using Lemmas~\ref{lem:w close} and \ref{lem:z far} it is straightforward to check that the Gromov product at each point of concatenation is at most $2\delta$.  It is also clear that each of the paths in this concatenation, except possibly the first and last, have length greater than $12\delta$ (since $K \ge 40\delta > 24\delta$).  Therefore, by \cite[Lemma 4.9]{AGM_msqt} (with $l = 2\delta$), the Hausdorff distance between this concatenation and $\gamma_2$ is at most $7\delta$. This together with the last statement of the previous paragragh implies that $\gamma_1 \in N(\gamma_2,9\delta)$ and the proposition follows.
\end{proof}

The following is similar to \cite[Lemma 4.8]{farb:relhyp}.
\begin{lemma}\label{l:generalized Farb's 4.8}
There exists $D_1$ satisfying the following:
Let $x, y\in \Coll(\Gamma)$ and let $\gamma_2=[x, y]$ be $\Coll(\Gamma)$--geodesic and $\widetilde{\gamma_2}$ be its lift.  Let  $\widetilde{x}, \widetilde{y} \in \bH^\ast$ be lifts of $x, y$, respectively.  Let $\widetilde{\gamma_1}=[\widetilde{x}, \widetilde{y}]$ and $\gamma_1=q_\Gamma(\widetilde{\gamma_1})$.  If precisely one of $\{\gamma_1,\gamma_2\}$ penetrates a component $U$ of the collapsed set then $d_{\bH^\ast}\left(e(\gamma_i, U),o(\gamma_i, U)\right) \le D_1$. 
\end{lemma}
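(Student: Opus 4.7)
The plan is to combine the Hausdorff-distance estimate of Lemma~\ref{l:Hausdorff distance} with the separation of collapsing components (Proposition~\ref{p:props of U^K}(2)) and the projection bound of Lemma~\ref{Bounded projection}. Suppose $\gamma_i$ penetrates $U$ with entry $e$ and exit $o$ on $\partial U$, while $\gamma_j$ (for $j \ne i$) does not, so its lift $\widetilde{\gamma_j}$ avoids the interior of $U$. The goal is to bound the chord $d_{\bH^\ast}(e, o)$ by a constant $D_1$ depending only on $\delta$ and $K$.

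First I would apply Lemma~\ref{l:Hausdorff distance} to produce points $p_e, p_o \in \gamma_j$ with $\Coll$-distance at most $9\delta$ from $q_\Gamma(e)$ and $q_\Gamma(o)$. The principal technical step -- which I expect to be the main obstacle -- is to show that the lifts $\widetilde{p_e}, \widetilde{p_o}$ on $\widetilde{\gamma_j}$ lie within uniformly bounded $\bH^\ast$-distance $D_0$ of $\overline U$. This is not automatic: a chain realizing small $d_\Coll$-cost can tunnel freely through an intermediate horoball or ball, so a priori the cost-minimizing chain might end at a $\Gamma$-translate of $\overline U$ rather than $\overline U$ itself, or be displaced arbitrarily far in $\bH^\ast$. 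The key leverage is the separation $K = 40\delta$ of Proposition~\ref{p:props of U^K}(2): since distinct components of the collapsing locus are at $\bH^\ast$-distance at least $K > 9\delta$, any chain of total $d_{el}$-cost at most $9\delta$ can traverse at most one intermediate component before reaching some lift of $\overline U$. The remaining single-intermediate-tunnel scenario is then excluded by a deck-group adjustment combined with the $\Coll$-geodesic property of $\gamma_j$: a tunnel shortcutting to a non-adjacent translate of $\overline U$ would allow a local shortening of $\gamma_j$, contradicting minimality.

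Once $\widetilde{p_e}, \widetilde{p_o}$ are controlled to lie within $D_0$ of $\overline U$, I would consider the sub-arc $\sigma$ of $\widetilde{\gamma_j}$ between them. It avoids the interior of $U$, and its endpoints project to $\overline U$ within $D_0 + O(\delta_\bH)$ of $e$ and $o$ respectively. By Lemma~\ref{Bounded projection} and $\delta_\bH$-hyperbolicity of $\bH^\ast$, any sub-portion of $\sigma$ staying at $\bH^\ast$-depth greater than $6\delta_\bH$ from $\overline U$ projects to a bounded-diameter subset of $\overline U$. Hence the ``spread'' along $\overline U$ of size approximately $d_{\bH^\ast}(e, o) - 2D_0$ must be accomplished by portions of $\sigma$ hugging $\partial U$, and these portions contribute definite $\Coll$-length. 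But the total $\Coll$-length of $\sigma$ is at most $d_\Coll(p_e, p_o) \le 18\delta + d_\Coll(q_\Gamma(e), q_\Gamma(o))$. When $U$ is a horoball or ball, $d_\Coll(q_\Gamma(e), q_\Gamma(o)) = 0$ and this immediately bounds $d_{\bH^\ast}(e, o)$; when $U$ is a tube, the same argument first bounds the axis-projection distance $d_\alpha(e, o)$, and the chord $d_{\bH^\ast}(e, o)$ follows via the tube geometry together with $D_0$.
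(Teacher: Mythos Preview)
Your overall strategy—tracking plus separation plus bounded projection—is the right one, and your step~2 (that $\widetilde{p_e},\widetilde{p_o}$ lie within a uniform $\bH^\ast$-distance of $\overline U$) is essentially correct and simpler than you fear: a $\Coll$-path of length $\le 9\delta$ cannot exit one component and reach another, since components are $K=40\delta$ apart in $\bH^\ast$ and the $\Coll$-metric agrees with the $\bH^\ast$-metric off the collapsing locus. (There is no deck group or ``$\Gamma$-translate of $\overline U$'' to worry about: $q_\Gamma$ collapses each component to a single point or core, but distinct components go to distinct images.)

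The genuine gap is in step~3, specifically the assertion that the endpoints of $\sigma$ ``project to $\overline U$ within $D_0+O(\delta_\bH)$ of $e$ and $o$ respectively.'' Your step~2 only shows $\widetilde{p_e}$ is close to $\overline U$; it does \emph{not} show $\pi_U(\widetilde{p_e})$ is close to $e$. Indeed, for a horoball $q_\Gamma^{-1}(q_\Gamma(e))$ is all of $\overline U$, and for a tube it is a disk of unbounded radius, so knowing $d_\Coll(p_e,q_\Gamma(e))\le 9\delta$ places no constraint on where along $\partial U$ the projection lands. Consequently, bounding $\mathrm{diam}(\pi_U(\sigma))$ gives no information about $d_{\bH^\ast}(e,o)$, and in the tube case bounding $d_\alpha(e,o)$ is likewise insufficient since tube radii are unbounded.

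The paper's proof avoids this by reversing the order: it starts from points $\widetilde z,\widetilde w$ on the lift of the \emph{penetrating} path, chosen at $\bH^\ast$-distance exactly $10l$ from $U$. Because these lie on $\bH^\ast$-geodesic segments that terminate at $e,o\in\partial U$ and stay outside $U$, their projections $\pi_U(\widetilde z),\pi_U(\widetilde w)$ are automatically close to $e,o$ (bounded visual size). Then one uses Lemma~\ref{l:track geodesics} (or Lemma~\ref{l:Hausdorff distance} in the other case) to find points $z',w'$ on the non-penetrating path within $\Coll$-distance $5l$. The key point is that since $\widetilde z,\widetilde w$ sit at distance $10l>5l$ from $U$ and from every other component, the short connecting paths avoid the collapsing locus entirely and hence lift isometrically to $\bH^\ast$. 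This supplies exactly the missing link between $\pi_U(\widetilde{z}')$ and $e$ that your argument lacks.
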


\begin{proof}
Suppose $\gamma_2$ penetrates $U$ and $\gamma_1$ does not. Let $\widetilde z$ be the point on $\widetilde{\gamma}_2$ between $\widetilde x$ and $e(\gamma_2, U)$ and $10l$ away from $U$, or $\widetilde{z}=\widetilde{x}$ if no such point exists.  Let $\widetilde w$ be the point on $\widetilde{\gamma}_2$ between $\widetilde y$ and $o(\gamma_2, U)$ and $10l$ away from $U$, or $\widetilde{w}=\widetilde{y}$ if no such point exists. Let $z=q_\Gamma(\widetilde z)$ and $w=q_\Gamma(\widetilde w)$. By Lemma~\ref{l:track geodesics}, there exist $z', w'$ on $\gamma_1$ with $d(z, z'), d(w, w')\leq 5l$. Let $\widetilde{z}'$ and $\widetilde{w}'$ be the lifts of $z'$ and $w'$, respectively. By construction $[z', z]$ and $[w', w]$ are disjoint from the collapsing locus. Hence $d_{\bH^{\ast}}(\widetilde{z}, \widetilde{z}'), d_{\bH^{\ast}}(\widetilde{w}, \widetilde{w}')\leq 5l$. Since these paths are in $\bH^\ast$, $[\widetilde{z}', \widetilde{w}'] \subseteq N_{5l}\left( [\widetilde z, \widetilde w] \right)$.  Thus,  $q_\Gamma([\widetilde{z}', \widetilde{w}']) \subseteq N_{5l}\left( [z,w] \right)$, since $[z, w]=q_\Gamma([\widetilde z, \widetilde w])$. By Lemma~\ref{l:track geodesics}, the subsegment $\beta$ of $\gamma_1$ between $z'$ and $w'$ lies in $N_{3l+1}\left( q_\Gamma([\widetilde{z}', \widetilde{w}']) \right)$, so $\beta \subseteq N_{8l+1} \left( [z,w] \right) \subseteq N_{18l+1} \left( q_\Gamma(U) \right)$. Components of the collapsing locus are $40l$--separated, so $\beta$ does not penetrate any  component of the collapsed set except $U$.  On the other hand, since $\beta$ is a subsegment of $\gamma_1$, it does not penetrate $U$. Therefore the lift of $\beta$ is $[\widetilde{z}', \widetilde{w}']$ and $[\widetilde{z}', \widetilde{w}']$ is disjoint from $U$, so the projection of $[\widetilde{z}', \widetilde{w}']$ onto $U$ is bounded by a uniform constant $D$. Since $[z,z']$ and $[w, w']$ do not penetrate any component of the collapsing locus, the projections of $[\widetilde z, \widetilde{z}']$ and $[\widetilde w, \widetilde{w}']$ onto $U$ are also bounded by $D$. By choice of $\widetilde z$ and $\widetilde w$, $d_{\bH^{\ast}}(\pi_U(\widetilde z), e(\gamma_2, U))$ and $d_{\bH^{\ast}}(\pi_U(\widetilde w), o(\gamma_2, U))$ are bounded by a uniform constant $C_2$. The above distance bounds imply $d_{{\bH^{\ast}}}(o(\gamma_2, U), e(\gamma_2, U))\leq 3C_1+2C_2$. 

The case where $\gamma_1$ penetrates $U$ and $\gamma_2$ does not is similar, the only difference being that instead of using Lemma \ref{l:track geodesics}, we use Proposition \ref{p:Hausdorff distance}. As a result, the constants are different but they are again uniform constants.   
\end{proof}

The following is similar to \cite[Lemma 4.9]{farb:relhyp}.
\begin{lemma}\label{l:generalized Farb's 4.9}
There exists $D_2$ satisfying:
Let $x,y, \widetilde{x},\widetilde{y},\gamma_1$ and $\gamma_2$ be as in Lemma~\ref{l:generalized Farb's 4.8}. If both $\gamma_1$ and $\gamma_2$ penetrate a component $U$ of the collapsed set then $d_{\bH^{\ast}}\left(e(\gamma_1, U), e(\gamma_2, U)\right)  \le D_2$.  The same is true for exit points.
\end{lemma}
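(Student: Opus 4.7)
The plan is to adapt the argument of Lemma~\ref{l:generalized Farb's 4.8} to the setting where both $\gamma_1$ and $\gamma_2$ penetrate the same component $U$. Fix $R = 10l$ and let $\widetilde{z}_j$ be the point of $\widetilde{\gamma}_j$ lying between $\widetilde{x}$ and $e(\gamma_j,U)$ at $\bH^\ast$--distance exactly $R$ from $\partial U$ (or take $\widetilde{z}_j = \widetilde{x}$ if no such point exists). Set $z_j = q_\Gamma(\widetilde{z}_j)$; since components of $\mc{U}^K(\Gamma)$ are $K$--separated and $K = 40\delta$ is much larger than $R$, the segments $[\widetilde{z}_j, e(\gamma_j,U)]$ lie outside every other component of the collapsing locus, so $q_\Gamma$ is an isometry on them and each $z_j$ sits at $\Coll$--distance $R$ from $q_\Gamma(U)$.

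By Lemma~\ref{l:Hausdorff distance}, the Hausdorff $\Coll$--distance between $\gamma_1$ and $\gamma_2$ is at most $9\delta$, so there is $z_1'' \in \gamma_2$ with $d(z_1, z_1'') \le 9\delta$. Since $9\delta < K$, the $\Coll$--geodesic $[z_1,z_1'']$ does not cross any component of $\mc{U}^K(\Gamma)$, and therefore lifts to an $\bH^\ast$--path of length $\le 9\delta$ from $\widetilde{z}_1$ to some $\widetilde{z}_1''$ with $q_\Gamma(\widetilde{z}_1'') = z_1''$. Because $R + 9\delta < K$, the only component of the collapsing locus whose image $z_1''$ could possibly meet is $q_\Gamma(U)$ itself, so $\widetilde{z}_1''$ lies on the $\bH^\ast$--subsegment of $\widetilde{\gamma}_2$ joining its previous penetration point (or $\widetilde{x}$) to $e(\gamma_2,U)$, which is an $\bH^\ast$--geodesic disjoint from the interior of $U$.

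Now apply Lemma~\ref{Bounded projection} to project onto $U$. The $\bH^\ast$--geodesic from $\widetilde{z}_1$ to $e(\gamma_1,U)$ lies outside $U$ with endpoint on $\partial U$, so $e(\gamma_1,U)$ is within a uniformly bounded $\bH^\ast$--distance of $\pi_U(\widetilde{z}_1)$; likewise $e(\gamma_2,U)$ is within a bounded $\bH^\ast$--distance of $\pi_U(\widetilde{z}_1'')$. Projections to the convex set $U$ are $1$--Lipschitz in $\bH^\ast$, so $d_{\bH^\ast}\bigl(\pi_U(\widetilde{z}_1),\pi_U(\widetilde{z}_1'')\bigr) \le 9\delta$. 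Combining the three bounds yields a uniform bound on $d_{\bH^\ast}(e(\gamma_1,U),e(\gamma_2,U))$. The symmetric argument at the other end of $U$ (using analogous reference points $\widetilde{w}_j$) handles the exit points, giving the conclusion.

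The main obstacle is the bookkeeping around lifts: one must rule out that the short $\Coll$--geodesic from $\gamma_1$ to $\gamma_2$ witnesses a ``jump'' to a far-away $\bH^\ast$--lift through the collapsing locus, and handle the (essentially harmless) case where $z_1''$ happens to lie in $q_\Gamma(U)$ itself. Both issues are controlled by the generous separation $K \ge 40\delta$ of components, exactly in the spirit of the proof of Lemma~\ref{l:generalized Farb's 4.8}.
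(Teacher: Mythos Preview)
Your approach is essentially the same as the paper's, and the projection-to-$U$ argument in your final step is a clean way to finish.  There is, however, a quantitative slip that breaks the middle of the argument as written.

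You set $R = 10l$ and then invoke Lemma~\ref{l:Hausdorff distance} to produce $z_1''\in\gamma_2$ with $d(z_1,z_1'')\le 9\delta$.  But recall $\delta = 4(6l+\delta_{\bH}+2) > 24l$, so $9\delta$ is vastly larger than $R=10l$.  Hence your claim ``since $9\delta<K$, the $\Coll$--geodesic $[z_1,z_1'']$ does not cross any component'' does not follow: $z_1$ sits at $\Coll$--distance only $10l$ from $q_\Gamma(U)$, so a path of length $9\delta$ based there may well enter $q_\Gamma(U)$ (and, if $U$ is a horoball, the lifted $\bH^\ast$--path could then be arbitrarily long).  The hand-wave in your final paragraph does not repair this, and the issue is not ``essentially harmless'': it is exactly the point where you need to pin down which side of $U$ the lift $\widetilde z_1''$ lands on.

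The paper sidesteps this by reversing the direction: it takes the reference point $\widetilde z$ on $\widetilde\gamma_2$ (not $\widetilde\gamma_1$), at distance $10l$ from $U$, and then uses Lemma~\ref{l:track geodesics} (giving a bound $\le 5l$) rather than Lemma~\ref{l:Hausdorff distance} to find a nearby point $w$ on $\gamma_1$.  Since $5l<10l$, the short $\Coll$--geodesic $[z,w]$ genuinely stays away from $q_\Gamma(U)$, and the lift is controlled.  Your argument would also go through if you simply enlarge $R$ to, say, $20\delta$: then $9\delta<R$ keeps $[z_1,z_1'']$ away from $q_\Gamma(U)$, and $R+9\delta<K$ keeps it away from every other component.
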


\begin{proof}
Let $\widetilde z$ be the point on $\widetilde{\gamma}_2$ between $\widetilde x$ and $e(\gamma_2, U)$ and $10l$ away from $U$, or $\widetilde{z}=\widetilde{x}$ if no such point exists. So we have 
\begin{equation}\label{z U bounded}
d_{\bH^\ast}(\widetilde z, U)\leq 10l
\end{equation}
Note that the distance between $\widetilde z$ and $e(\gamma_2, U)$ can not be much bigger than $10l$ otherwise the path $[\widetilde z, e(\gamma_2, U)]$ intersects $U$ in a non-degenerate segment, contradicting the definition of $e(\gamma_2, U)$. Hence there is a number $B$ depending only on $l$ such that 
\begin{equation}\label{bound tilde z and entry point}
d_{\bH^{\ast}}\left(\widetilde z, e(\gamma_2, U)\right)  \le B.
\end{equation}
By Lemma~\ref{l:track geodesics}, there exists $w$ on $\gamma_1$, so that $d(w, z)\leq 5l$ ,where $z=q_\Gamma(\widetilde z)$. When $\widetilde z=\widetilde x$, we let $w=x=z$. Definition of $\widetilde z$ implies that we either have $d(z, q(U))=10l$ or $z=x$. Hence $[w, z]\subset N(U, 15l)-N(U, 5l)$ unless $z=x=w$. Since $20l<K$, there is no other connected component of the collapsing locus in the $20l$--neighborhood of $q_\Gamma(U)$, so the geodesic $[z,w]$ does not intersect any component of the collapsing locus. Let $\widetilde w=q_\Gamma^{-1}(w)$. The geodesic $[z,w]$ lifts to the geodesic $[\widetilde z, \widetilde w]$. Hence
\begin{equation}\label{z, w less than 5l}
d_{\bH^\ast}(\widetilde z, \widetilde w)\leq 5l.
\end{equation}
By  (\ref{z, w less than 5l}) and (\ref{z U bounded}), we have $d_{\bH^\ast}(\widetilde w, U)\leq 15l$. This implies that,  in the same way as (\ref{z U bounded}) implying (\ref{bound tilde z and entry point}), there is a number $B'$ depending only on $l$ such that 
\begin{equation}\label{bound tilde w and entry point 1}
d_{\bH^{\ast}}\left(\widetilde w, e(\gamma_1, U)\right)  \le B'.
\end{equation}
Therefore, by (\ref{bound tilde z and entry point}), (\ref{z, w less than 5l}) and (\ref{bound tilde w and entry point 1}), the distance between $d_{\bH^\ast}(e(\gamma_1, U), e(\gamma_2, U))\leq B+B'+5l$.  

The proof of the statement for the exit points is similar. 
\end{proof}

\begin{lemma}\label{l:almost equal trans len for para}
There exist uniform constants $B_1$ and $B_2$ satisfying:
Let $g\in\Gamma \smallsetminus \{ 1 \}$ and $x\in \Coll(\Gamma)$.  If $g$ is parabolic, let $\gamma$ be the component of the collapsing locus fixed by $g$. Otherwise let $\gamma$ be the $q_\Gamma$-image of the minimal invariant set of $g$ in $\bH^\ast$.  Then $2d(x, \gamma) \le d(x, gx)- B_1$ and if $g$ is elliptic or parabolic then $\left|d(x, gx)-2d(x, \gamma)\right| \le B_2$.
\end{lemma}

\begin{proof}
Let $\widetilde{x}\in q_\Gamma^{-1}(x)$. Let $U_1=q^{-1}_{\Gamma}(\gamma)$ and $\widetilde y=\pi_{U_1}(\widetilde x)$. Let $U_2$ be a horoball (or ball or tube)  with the same center (or axis) as $U_1$ so $g$ moves points on $\partial U_2$ by $5\delta+l_g$, where $l_g$ is the translation length of $g$ on $\bH^\ast$. Let $\widetilde w=\pi_{U_2}(\widetilde x)$. We claim that $d(\widetilde w, U_1)\leq D_1$ for some absolute constant $D_1$ . The claim is trivial if $U_2\subset U_1$, so suppose $U_1\subset U_2$. 

First suppose that $\gamma$ is a component of the collapsing locus. In this case by Proposition~\ref{p:props of U^K}.\eqref{eq:lower bound} $d(g \widetilde y, \widetilde y)\geq {\rm max}\{D, l_g\}$, so the length of $[\widetilde w, \widetilde y]$ is bounded above by an absolute constant.  

Now suppose $\gamma$ is \emph{not} a component of the collapsing locus. In this case, $g$ is either loxodromic and $l_g$ is bounded from below by an absolute positive constant, or $g$ is elliptic and the angle of rotation is bounded below by an absolute positive constant, so $U_2$ has uniformly bounded size (tube radius). 

The claim gives an absolute upper bound on $d(w, \gamma)$, where $w=q_\Gamma(\widetilde w)$. Since $\widetilde{w}$ is the point in $U_2$ closest to $\widetilde{x}$ and $[\widetilde  w, g \cdot  \widetilde w]$ is contained in $U_2$, the Gromov product of $[\widetilde  x,  \widetilde w]$ and $[\widetilde  w, g \cdot  \widetilde w]$ at $\widetilde{w}$ is at almost $\delta_\bH$. Similarly, the Gromov product of $[g\cdot\widetilde  x,  g\cdot\widetilde w]$ and $[\widetilde  w, g \cdot  \widetilde w]$ at $g\cdot\widetilde{w}$ is also at almost $\delta_\bH$. Hence, by hyperbolicity of $\bH^\ast$, the Hausdorff distance between $[\widetilde x, g \cdot \widetilde  x]$ and  $[\widetilde  x,  \widetilde w]\cup[\widetilde  w, g \cdot  \widetilde w]\cup[g\cdot\widetilde  w, g \cdot \widetilde  x]$ is at most $4\delta_{\bH}$. By  Lemma~\ref{l:track geodesics}, $[x, g\cdot x]$, $[x, w]$ and $[g\cdot x, g\cdot w]$ are in the $\delta$-neighborhoods of $q_{\Gamma}([\widetilde x, g \cdot \widetilde  x])$, $q_{\Gamma}([\widetilde x, \widetilde  w])$ and $q_{\Gamma}([g \cdot \widetilde x, g \cdot \widetilde  w])$, respectively. Hence the Hausdorff distance between $[x, g \cdot x]$ and $[ x,  w]\cup[ w, g \cdot w]\cup[g \cdot w, g \cdot x]$ is bounded above by a constant depending only on $\delta$. Thus, there is a constant $L_1$ depending only on $\delta$ so that $d(x,g \cdot x) > 2 d(x,w) - L_1$.
If $g$ is elliptic or parabolic, then $d(w, g \cdot w)\leq 5\delta$ and hence $\left| d(x, g \cdot x)-2d(x, w) \right|$ is bounded by a constant $L_2$ depending only on $\delta$.  Let  $B_2$ be the sum of $L_2$ and an absolute upper bound of $d(w, \gamma)$. Then the lemma follows. 
\end{proof}

\begin{lemma}\label{least shift}
Suppose $g\in \Gamma$ acts loxodromically on $\bH^\ast$ with axis $\widetilde \gamma$, and let $\gamma=q(\widetilde\gamma)$. If  $\gamma$ comes within $5\delta$ of the collapsing locus but is not contained in the collapsing locus then $g$ moves every point of $\gamma$ by at least $30\delta$. 
\end{lemma}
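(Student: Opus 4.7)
The plan is to prove the lemma in three stages: first, rule out that $g$ stabilizes any component of $\mc{U}^K(\Gamma)$; second, derive a lower bound $\tau \ge 30\delta$ on the $\bH^\ast$-translation length of $g$; and third, transfer this to $d(x,gx)\ge 30\delta$ in $\Coll(\Gamma)$ for every $x\in \gamma$. For the first: stabilizers of horoballs are parabolic and of balls are elliptic, so a loxodromic $g$ can only stabilize a tube $U$; in that case $g$ preserves the core of $U$, which by uniqueness of the loxodromic axis must equal $\widetilde\gamma$, forcing $\widetilde\gamma\subset U$ and $\gamma \subset q_\Gamma(U)$, contradicting the hypothesis that $\gamma$ is not contained in the collapsing locus. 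For the second: the hypothesis gives some $\widetilde z\in \widetilde\gamma$ and a component $U$ with $d(q_\Gamma(\widetilde z), q_\Gamma(U))\le 5\delta$ in $\Coll(\Gamma)$; since components are $K$-separated in $\bH^\ast$, any realizing path of length less than $K$ in $\Coll(\Gamma)$ cannot visit another component and thus lifts isometrically to $\bH^\ast$, yielding $d_{\bH^\ast}(\widetilde z,U)\le 5\delta$. Since $gU \ne U$, $d_{\bH^\ast}(U,gU)\ge K$, and the triangle inequality along the chain from $U$ via $\widetilde z, g\widetilde z$ to $gU$ gives $\tau \ge K - 10\delta = 30\delta$.

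For the third stage, fix $x\in \gamma$ with lift $\widetilde x\in \widetilde\gamma$. By $g$-equivariance, the family $\{g^n U\}_{n\in \Z}$ consists of pairwise distinct components, each within $5\delta$ of $\widetilde\gamma$ at a nearest point $g^n P$ (where $P$ is the closest point on $\widetilde\gamma$ to $U$), with consecutive $g^n P$ at $\bH^\ast$-distance $\tau$ along $\widetilde\gamma$. Choose the index $k$ with $\widetilde x \in [g^k P, g^{k+1} P]$. Any path between two distinct components of the collapsing locus lifts to a $\bH^\ast$-path whose length outside $\mc{U}^K(\Gamma)$ is at least $K$, so $d(q_\Gamma(g^k U), q_\Gamma(g^{k+1}U))\ge K$ in $\Coll(\Gamma)$. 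Straight-line lifts from $x$ and $gx$ to these two components (using the $\widetilde\gamma$-distances from $\widetilde x$ to $g^k P$ and from $g\widetilde x$ to $g^{k+1}P$, together with $d_{\bH^\ast}(P,U)\le 5\delta$) bound $d(x, q_\Gamma(g^k U))$ and $d(gx, q_\Gamma(g^{k+1}U))$ from above, and plugging into the triangle inequality
\[ d(x,gx) \ge d(q_\Gamma(g^k U), q_\Gamma(g^{k+1} U)) - d(x, q_\Gamma(g^k U)) - d(gx, q_\Gamma(g^{k+1}U)) \]
yields the desired bound.

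The main obstacle is exactly this last step in the case that $\widetilde x$ lies far from both $g^k P$ and $g^{k+1}P$, i.e., in the middle of its fundamental domain along $\widetilde\gamma$; there, the naive triangle-inequality sketched above does not immediately give $30\delta$, and collapsing of components that $\widetilde\gamma$ may actually enter (permitted since $\gamma$ can intersect the collapsing locus) can in principle compress $\Coll$-distances further. Closing this gap will require exploiting the constraint that chords of $\widetilde\gamma$ through components have length at most $\tau - K$ (again because consecutive components are $K$-separated in $\bH^\ast$), ensuring that the non-collapsed length in each $\widetilde\gamma$-fundamental domain is at least $K = 40\delta$, and possibly invoking a pair of more distant components $g^i U$ chosen according to the position of $\widetilde x$ within its fundamental domain.
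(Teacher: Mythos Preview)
Your stages 1 and 2 are correct and essentially coincide with the paper's first step: the paper also picks a point of $\gamma$ within $5\delta$ of a component $C$, observes that $gC\ne C$ (else $\gamma\subset C$), and uses the $K$--separation of components to get $d(x,gx)\ge K-10\delta=30\delta$ for that particular $x$, hence $\tau\ge 30\delta$.

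Stage 3 is where your proposal has a genuine gap, and the fix you sketch does not close it. Your triangle inequality with the components $g^kU$ and $g^{k+1}U$ gives only
\[
d(x,gx)\ \ge\ K - 2\bigl(d_{\bH^\ast}(\widetilde x,\,g^kP)+5\delta\bigr)\ =\ 30\delta - 2a,
\]
where $a=d_{\bH^\ast}(\widetilde x,g^kP)\in[0,\tau]$; by symmetry one gets $30\delta-2b$ with $b=\tau-a$, and taking the better of the two still yields only $30\delta-2\min(a,b)$, which can be as weak as $30\delta-\tau\le 0$. Your further suggestion (bounding chords of $\widetilde\gamma$ through components and choosing more distant $g^iU$) only improves the constant $K$ on the left while the subtracted terms scale with the larger separation, so the same cancellation recurs. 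There is no choice of $i$ that avoids it in general.

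The paper sidesteps this entirely by arguing stage 3 \emph{by contradiction}, reusing the mechanism of your stage 1 rather than any estimate with $U$. Suppose some $y\in\gamma$ has $d(y,gy)<30\delta$. Since $\tau\ge 30\delta$, the $\Coll$--geodesic $[y,gy]$ cannot avoid the collapsing locus (a path avoiding it lifts isometrically to $\bH^\ast$, forcing $\tau\le d(y,gy)$). Hence there is a component $C'$ with $d(y,C')+d(C',gy)\le d(y,gy)<30\delta$. Applying the $g$--isometry, $d(gy,gC')=d(y,C')$, so
\[
d(C',gC')\ \le\ d(C',gy)+d(gy,gC')\ <\ 30\delta\ <\ K,
\]
whence $gC'=C'$. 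But then, exactly as in your stage 1, $C'$ must be a tube with core $\widetilde\gamma$, so $\gamma\subset C'$, contradicting the hypothesis. This two-line contradiction replaces all of your stage 3 and requires no control on the position of $\widetilde x$ within a fundamental domain.
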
 

\begin{proof}
By assumption, there exists $x$ on $\gamma$ within $5\delta$ of a component $C$ of the collapsing locus and $\gamma$ is not contained in $C$.  Hence $C$ and $g \cdot C$ are distinct components of the collapsing locus, which are separated by $K = 40\delta$. Hence $x$ and $gx$ are separated by a distance at least $40\delta-2 \left( 5\delta \right)=30\delta$. So the translation length of $g$ on $\widetilde\gamma$ is at least $30\delta$. Let $y$ be a point on $\gamma$. Suppose $d(y, gy)<30\delta$. Since the translation length of $g$ on $\widetilde\gamma$ is at least $30\delta$, $[y, gy]$ intersects some connected component $C'$ of the collapsing locus and we have $d(y, C')+d(C', gy)\leq 30\delta$. Hence $d(gy, gC')+d(C', gy)\leq 30\delta$, so $d(C', gC')\leq 30\delta$. Thus $gC'=C'$, so $C'$ is a line and $\gamma= C'$. This is a contradiction. 
\end{proof}

\section{Sequences which are not \Cdiv} \label{s:non-divergent}

Throughout this section make Standing Assumption~\ref{ass:fixed things}.  We apply the results in the previous section to the vertex groups of the geometric decomposition of a group in $\Mgen$.  If $\Gamma_v$ is such a vertex group, and it is hyperbolic, then it admits a complete, finite-volume hyperbolic structure, unique by Mostow--Prasad Rigidity, and this exhibits $\Gamma_v$ as a Kleinian group. We identify all conjugates of $\Gamma_v$ in the ambient 3-manifold group with the same Kleinian group. If $\Gamma_v$ is an \LSF vertex group, let $\overline{\Gamma_v}$ be the quotient (hyperbolic) $2$--orbifold group, and fix a (complete, finite-volume) hyperbolic structure on the orbifold, witnessing $\overline{\Gamma_v}$ as a Fuchsian group. There are many such hyperbolic structures, but any suffices for our purposes. The quotients of all conjugates of $\Gamma_v$ in the ambient 3-manifold group (modulo their centers) are identified with the same Fuchsian group.

The purpose of this section is to prove Theorem~\ref{t:NEW non-divergent}, required for the proof of Theorem~\ref{t:collapsing Divergent}.  The proof of Theorem~\ref{t:NEW non-divergent} is contingent on the technical result Theorem~\ref{t:fin gen}, proved in Appendix~\ref{app:edge-twist}.

Recall from Assumption~\ref{ass:fixed things} that $A_i$ is the \wapprox to the good relative generating set $A$ of the vertex group $V$ of the refined GGD $\Gdf$ of $L$.  By Definition~\ref{def:ggd} \walmost surely there are vertices $v_i$ of $T_i$ so $A_{i}$ fixes $v_i$.  In the above paragraph we identified $\Gamma_{v_i}$, the stabilizer of $v_i$ in $\Gamma_i$ (respectively $\overline{\Gamma_{v_i}}$, the quotient of $\Gamma_{v_i}$ by its center) with a specific Kleinian group (resp., a Fuchsian group). Therefore, there is a collapsed space associated to $\Gamma_{v_i}$, which we denote by $\mc{C}_i$ in both cases.   

\begin{definition} [\Cdiv]  \label{d:P-divergent}
Let $\Gamma_{v_i}$ be the vertex stabilizer of $v_i$  in $\Gamma_i$ and let $\mc{C}_i = \Coll(\Gamma_{v_i})$ (respectively, $\mc{C}_i = \Coll(\overline{\Gamma_{v_i}})$) be the associated collapsed space.  We set
\[	\| \phi_i \|_{\Coll, v} = \inf_{x\in \mc{C}_i} \max_{s \in A_{i}} d_i \left( s \cdot x, x \right)	,	\]
which is defined \walmost surely.
The sequence $\left( \phi_i \right)$ is {\em \Cdiv with respect to $\Gd$} if $\wlim\|\phi_i\|_{\Coll, v}=\infty$ for some vertex $v$ of $\Gdf$.
\end{definition}

We recall the statement of Theorem~\ref{t:NEW non-divergent}.

\nondivtheorem*

\begin{definition}
    A graph of groups is \emph{star-like} if its underlying graph is a bipartite graph where one color of vertices consists of a single vertex.
\end{definition}

Recall the definition of the stable center of $V$ from Definition~\ref{def:stable center}.
When $(\phi_i)$ is not \Cdiv with respect to $\Gd$, we obtain the following crucial information about the vertex group $V$ of $\Gdf$:

\begin{proposition}\label{p:cut point splitting}
Suppose $(\phi_i)$ is not \Cdiv with respect to $\Gd$. Then $\V$ admits a splitting $\D$ so:
\begin{enumerate}
\item\label{eq:star} $\D$ is star-like;
\item\label{eq:stable center} $\SC(\V)$ is contained in each edge group of $\D$;
\item\label{eq:non-central} The non-central vertices of $\D$ are in bijection with the edges  in $\Gdf$ adjacent to $\V$. This bijection induces isomorphisms of associated groups.  In particular the vertex groups associated to the non-central vertices are abelian, and the edge groups of $\D$ are all abelian; and
\item\label{eq:rank 2} For each the edge group $H$ of $\D$, $H/\SC(\V)$ has rank at most two. 
\end{enumerate}
Furthermore, if $\V$ is of hyperbolic type, and a stably parabolic subgroup $P$ of $V$ is not conjugate into an edge group of $\Gdf$ adjacent to $V$, then $P$ is finitely generated.
\end{proposition}

In Subsection~\ref{ss:proof of non-div} we prove Theorem~\ref{t:NEW non-divergent}, assuming Proposition~\ref{p:cut point splitting}.  Subsection~\ref{ss:cut point} is devoted to the proof of Proposition~\ref{p:cut point splitting}.

\subsection{The proof of Theorem~\ref{t:NEW non-divergent}}\label{ss:proof of non-div}
In this section, we assume that Proposition~\ref{p:cut point splitting}.

As a consequence of Proposition \ref{p:cut point splitting}, each vertex group of $\mathbb{G}$ also enjoys the properties \eqref{eq:star}--\eqref{eq:rank 2} from the conclusion of Proposition \ref{p:cut point splitting} enjoyed by $\V$:

\begin{corollary}\label{p:cut point splitting-c}
Suppose $(\phi_i)$ is not \Cdiv with respect to $\Gd$.  For any vertex group $\Gd_v$ of $\Gd$ there is a splitting $\D(v)$ of $\Gd_v$ so that
\begin{enumerate}
\item\label{eq2:star} $\D(v)$ is star-like;
\item $\SC(\Gd_v)$ is contained in each edge group of $\D(v)$;
\item\label{eq2:non-central} The non-central vertices of $\D(v)$ are in bijection with the edges  in $\Gd_v$ adjacent to $\Gd_v$. This bijection induces isomorphisms of associated groups.  In particular the vertex groups associated to the non-central vertices are abelian, and the edge groups of $\D$ are all abelian; and
\item\label{eq2:rank 2} For each the edge group $H$ of $\D(v)$, $H/\SC(\Gd_v)$ has rank at most two. 
\end{enumerate}
Furthermore, if $\Gd_v$ is of hyperbolic type, and a stably parabolic subgroup $P$ of $\Gd_v$ is not conjugate into an edge group of $\Gd$ adjacent to $\Gd_v$, then $P$ is finitely generated.
\end{corollary}

\begin{proof}
Notice that if $\Gd_v$ has hyperbolic type then it is a vertex group in $\Gdf$ as well as in $\Gd$ (by the construction of $\Gdf$ in Definition~\ref{def:refined ggd}), and therefore there is nothing to prove in this case.  Thus, we may assume that $\Gd_v$ is of \LSF type, and in particular the ``furthermore" part of the statement has been proved.

Let $\mathbb{R}(v)$ be relative $4$--Linnell decomposition of $\Gd_v$ (see Corollary~\ref{cor:lindecomp}).
Let $\V_1, \dots, \V_t$ be the vertex groups of $\mathbb{R}_v$.  Notice that since the splitting $\Gdf$ is obtained from $\Gd$ by refinement, the vertex groups $V_i$ are vertex groups of $\Gdf$ (recall the definition of the Linnell refinement $\Gdf$ from Definition~\ref{def:refined ggd}).  Therefore, Proposition~\ref{p:cut point splitting} applies to the groups $\V_1, \ldots , \V_t$.  Let $\D_1, \dots, \D_t$ be the corresponding splittings given by applying Proposition \ref{p:cut point splitting} to $\V_1, \ldots , \V_t$, respectively. Refine $\mathbb{R}(v)$ by $\D_1, \dots, \D_t$.  Notice that for each pair $(w,e)$ of vertex and edge in $\mathbb{R}(v)$, where $\V_i$ is the vertex group of $w$, we obtain an edge in $\D_i$ and a leaf vertex group whose associated group is the local group of $e$ in $\mathbb{R}(v)$.  Thus, each edge in $\mathbb{R}(v)$ becomes a path of three edges, and we collapse all such paths.  The resulting splitting is the required splitting $\mathbb{D}(v)$. 
\end{proof}

The following is the key definition required for our proof of Theorem~\ref{t:NEW non-divergent}.

\begin{restatable}{definition} {defedgetwist}\label{def:edge-twist}
A graph of groups $\E$ is {\em edge-twisted} if:

The underlying graph of $\E$ is bipartite with colors $A$ and $B$.  Type $A$ vertices have valence $2$, and abelian vertex groups (thus the edge groups of $\E$ are also abelian).  Let $W$ be a Type $A$ vertex group of $\E$ and let $E_1$ and $E_2$ be the images in $W$ of the adjacent edge groups.  There are subgroups $K_j \le E_j$ (for $j = 1,2$) so that
\begin{enumerate}
\item $K_1 \cap K_2 = \{ 1 \}$; and
\item For $j = 1,2$, the group $E_j/K_j$ is finitely generated.
\end{enumerate}
\end{restatable}

We now define a new splitting $\mathbb{K}$ of $L$.  The underlying graph of $\mathbb{K}$ is the barycentric subdivision of the underlying graph $\Lambda(\Gd)$ of $\Gd$, so the vertices correspond to cells in $\Lambda(\Gd)$, and edges correspond to pairs $(w,e)$, where $w$ is a vertex, $e$ is an edge, and $e$ is adjacent to $w$.  The vertex group of a vertex corresponding to a vertex of $\Lambda(\Gd)$ is the central vertex group of the splitting of the corresponding vertex group of $\Gd$ arising from Corollary~\ref{p:cut point splitting-c}. The vertex group of a vertex corresponding to an edge of $\Lambda(\Gd)$ is the corresponding edge group of $\Gd$.  For an edge corresponding to the pair $(w,e)$, there is a corresponding edge group in the splitting $\D(w)$ of $\Gd_w$ coming from Corollary~\ref{p:cut point splitting-c}, and this is the edge group in $\mathbb{K}$.  The edge-to-vertex maps naturally come from the splittings $\D(w)$.

\begin{proposition}\label{t:tree output}
The graph of groups $\mathbb{K}$ is an edge-twisted splitting, where the Type A vertices correspond to edges in $\Lambda(\Gd)$ and the Type B correspond to vertices in $\Lambda(\Gd)$. 
\end{proposition}
\begin{proof}
It is clear from the construction of $\mathbb{K}$ that Type A vertices are valence $2$
and that their vertex groups are abelian (since they are the edge groups of $\Gd$).
By Lemma~\ref{JSJ properties} and Definition~\ref{def:ggd}\eqref{ggd:edges} $\SC(W_1)\cap \SC(W_2)=\{1\}$ for adjacent vertex groups $W_1, W_2$ of $\Gd$.  For the $K_i$ from Definition~\ref{def:edge-twist} we choose the stable center of the vertex groups of $\Gd$.  With this choice, the first condition of Definition~\ref{def:edge-twist} is satisfied. The last condition of Definition~\ref{def:edge-twist} is satisfied because of the Property~\eqref{eq2:rank 2} from Corollary~\ref{p:cut point splitting-c}.
\end{proof}

The following Theorem~\ref{t:fin gen} is proved in Appendix~\ref{app:edge-twist}. 

\begin{restatable}{theorem}{edgetwistfg} \label{t:fin gen}
Let $\E$ be a finite edge-twisted graph of groups so that $\pi_1(\E)$ is finitely generated.  The Type B vertex groups of $\E$ are finitely generated.
\end{restatable}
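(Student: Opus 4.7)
The plan is to apply Bass--Serre theory to the action of $G = \pi_1(\E)$ on its Bass--Serre tree $T$ and to exploit the edge-twist structure. Fix a Type B vertex $v$ of $\E$, lift it to $\tilde v \in T$, and set $B = \Stab_G(\tilde v)$. Because $G$ is finitely generated and acts cocompactly on $T$, a standard Bass--Serre normal form argument (rewriting a finite generating set of $G$ in graph-of-groups normal form and tracking which vertex-group letters appear) yields a finite subset $F \subset B$ such that $B$ is generated by $F$ together with the subgroups $\bar E_e \le B$, where $e$ ranges over the edges of $\E$ incident to $v$ and each $\bar E_e$ is the image of the edge group $G_e$ under the vertex inclusion into $B$. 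As an abstract group, $\bar E_e$ is isomorphic to $E_e \le W_u$, the image of $G_e$ in the Type A vertex group $W_u$ at the other end of $e$.

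By the edge-twist hypothesis, for each such $e$ there is a subgroup $K_e \le E_e$ and a finite set $F_e \subset E_e$ with $E_e$ generated by $K_e \cup F_e$. Transferring to $B$, I obtain finite subsets $\bar F_e \subset B$ and abelian subgroups $\bar K_e \le B$ such that
\[
B = \big\langle F \cup \bigcup_e \bar F_e \cup \bigcup_e \bar K_e \big\rangle.
\]
It therefore suffices to show that each $\bar K_e$ is finitely generated.

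This is the technical core. Fix $e$ incident to $v$ and let $u$ be the Type A vertex at its other end, with companion edge $e'$ at $u$ and associated subgroup $K_{e'} \le E_{e'} \le W_u$. Cut $\E$ at $u$ to express
\[
G = \pi_1(\E_1) *_{E_e} W_u *_{E_{e'}} \pi_1(\E_2),
\]
(with an HNN variant if $e,e'$ lie on a common loop of $\E$), where $\E_1, \E_2$ are the edge-twisted graphs of groups obtained from $\E$ by deleting $u$. I would proceed by induction on the number of Type A vertices of $\E$. For the inductive step, a Karrass--Solitar-style analysis of the displayed amalgam, combined with the inductive hypothesis applied to $\E_1$ and $\E_2$, shows that $\bar K_e$ is contained in a finitely generated subgroup of $G$. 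The transversality condition $K_e \cap K_{e'} = \{1\}$ is the crucial input: it prevents $\bar K_e$ from being absorbed into an infinite-rank abelian piece of $W_u$ indistinguishable from $K_{e'}$-contributions, and allows one to deduce that $\bar K_e$ itself is finitely generated. The base case has no Type A vertices, so $G$ is a free product of the Type B vertex groups and finite generation of the factors follows from Grushko's theorem.

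The main obstacle is this last step: extracting actual finite generation of the abstract abelian subgroup $\bar K_e$, as opposed to merely confining it inside a finitely generated subgroup of $G$ (which would not suffice, since subgroups of finitely generated groups need not themselves be finitely generated). Pinning down the correct Karrass--Solitar argument, handling HNN loops cleanly, and controlling the interaction with multiple Type A neighbors of $v$ simultaneously within the induction, is the technical heart of the proof.
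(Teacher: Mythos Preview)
Your reduction to showing that each $\bar K_e$ is finitely generated is not just the ``main obstacle'' but is in fact the wrong target: it can fail even when the theorem holds. Take $M = BS(1,2) = \langle a, t \mid tat^{-1} = a^2\rangle$ and let $W \cong \Z[1/2]$ be its normal abelian subgroup. Form an edge-twisted graph with one Type~B vertex $M$, one Type~A vertex $W$, and two edges with edge groups $E_1 = W$ (included into $M$ as the normal subgroup and into $W$ as the identity) and $E_2 = \{1\}$. With $K_1 = E_1$ and $K_2 = \{1\}$ the edge-twist conditions hold trivially, and $\pi_1(\E) \cong M \ast \Z$ is finitely generated. The Type~B vertex group $M$ is finitely generated, as the theorem predicts, but $\bar K_{e_1} \cong \Z[1/2]$ is not. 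So the sufficient condition you are aiming for is simply false, and no Karrass--Solitar argument will rescue it.

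Your inductive framework has a second gap: when you cut at the Type~A vertex $u$ to express $G$ as an amalgam or HNN over $W_u$, you need $\pi_1(\E_1)$ and $\pi_1(\E_2)$ to be finitely generated to invoke the inductive hypothesis, but there is no a~priori reason they are --- vertex groups of an amalgam are only finitely generated \emph{relative to} the edge groups, and the edge groups $E_e,E_{e'}$ here may be infinitely generated. The paper runs the induction the other way around: fix a Type~A vertex $A$, collapse all edges \emph{not} adjacent to $A$ to get a two-edge edge-twisted graph $\E_0$, and prove that base case directly. Its Type~B vertex groups are then finitely generated, and they are exactly the fundamental groups of sub-graphs of $\E$ with fewer Type~A vertices, so the inductive hypothesis now applies to \emph{them}. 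The two-edge case itself is handled by an explicit normal-form argument producing a finite generating set for $M$ of the shape $S_M \cup (N_1 \cap K) \cup t^{-1}(N_1 \cap L)t$ for a carefully built finitely generated subgroup $N_1 \le A$; the transversality $K_0 \cap L_0 = \{1\}$ is used precisely to make $N_1$ finitely generated (Lemma~\ref{w}), not to force $K$ or $L$ themselves to be.
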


We finish this subsection by proving Theorem~\ref{t:NEW non-divergent}, assuming Proposition~\ref{p:cut point splitting} and Theorem~\ref{t:fin gen}.

\begin{proof}[Proof of Theorem~\ref{t:NEW non-divergent}]
Recall that $L$ is an $\Mgen$--limit group defined by a sequence $(\phi_i)$ which is not \Tdiv, and that there is a GGD $\Gd$ of $L$ with respect to which $(\phi_i)$ is not \Cdiv.  We are required to prove that all stably parabolic subgroups of $L$ are finitely generated.

Let $\mathbb{K}$ be the splitting defined in the paragraph before Proposition~\ref{t:tree output}. By Proposition~\ref{t:tree output} and Theorem~\ref{t:fin gen}, Type B vertex groups of $\mathbb{K}$ are finitely generated.  Thus, each Type B vertex group  which is a subgroup of an \LSF vertex groups of $\Gd$ is an $\Mlsf$--limit group. By Corollary~\ref{cor:SF limit fp+A-slender}, all abelian subgroups of these vertex groups are finitely generated, so edge groups adjacent to these vertex groups are finitely generated. The other edge groups are adjacent to a hyperbolic type vertex group of $\mathbb{K}$ and are finitely generated by the third property of Corollary~\ref{p:cut point splitting-c}, so all vertex groups of $\mathbb{K}$ are finitely generated also. Hence all vertex groups of $\Gd$ are finitely generated.  A stably parabolic subgroup of $L$ is finitely generated by Corollary~\ref{cor:SF limit fp+A-slender} if it is in an \LSF vertex group or by Corollary~\ref{p:cut point splitting-c} if it is in a hyperbolic type vertex group. 
\end{proof}

\subsection{The cut point tree} \label{ss:cut point}

The rest of the section is devoted to the proof of Proposition~\ref{p:cut point splitting}.  

Recall that the assumption of Proposition~\ref{p:cut point splitting} is that the defining sequence $(\phi_i)$ for $L$ is not \Cdiv with respect to $\Gd$.  We consider how $\V$ maps into the collapsed spaces associated to the vertex groups of the geometric decompositions of the $\Gamma_i$ (with the chosen Kleinian/Fuchsian structures).  Recall that in Standing Assumption~\ref{ass:fixed things} we fixed a finite good relative generating set $A$of $V$, and an \wapprox $(A_i)$ to $A$.

By the choice of $A_i$, \walmost surely $A_{i}\subset\Gamma_{v_i}$.  Recall that we fixed $\Coll_i$ to be either $\Coll(\Gamma_{v_i})$ or $\Coll(\overline{\Gamma_{v_i}})$, depending on whether $v_i$ is of hyperbolic or \LSF type. For \walmost every $i$, fix $\bp_i \in \Coll_i$ satisfying
\[	\max_{g \in A_{i}} \left\{ d_i( g \cdot\bp_i, \bp_i) \right\} \le \| \phi_i \|_{\Coll, v} + \frac{1}{i}	.	\]
By Definition~\ref{def:good generating set} $\V$ is generated by $A$ and the adjacent edge groups.  
Let $S_v$ be a (possibly infinite) generating set for $\V$ consisting of $A$ and elements from the adjacent edges groups.  Choose a lift $\widetilde{S}_v \subset G$ of $S_v$.

Let $(\Coll_{\infty,v}, \bp)$ be the $\omega$--limit of the collapsed spaces $(\Coll_i, \bp_i)$. We next show that $\{\phi_i\}$ induces an isometric action of $V$ on $(\Coll_{\infty,v}, \bp)$. This construction is standard in the case where $V$ is finitely generated. Since we do not (yet) know that $V$ is finitely generated, we need the following.

\begin{lemma}\label{lem:lim_act}
Let $(X_i, d_i)$ be a sequence of metric spaces with basepoints $o_i\in X_i$. Suppose that a group $G$ acts by isometries on each $X_i$ and $G$ is generated by a (possibly infinite) subset $S$ such that for all $s\in S$,
\[
\wlim d_i(o_i, s\cdot o_i)<\infty.
\]

Then $G$ acts by isometries on $\wlim(X_i, o_i)$. 
\end{lemma}

\begin{proof}
By definition, a point in $\wlim(X_i, o_i)$ is the equivalence class of a sequence $(x_i)$ so that (i) each $x_i\in X_i$; and (ii) $\wlim d_i(o_i, x_i)<\infty$. Two such sequences $(x_i)$ and $(x_i')$ are equivalent if $\wlim d_i(x_i, x_i')=0$.

Let $g, h\in G$ be such that $\wlim d_i(o_i, g\cdot o_i)<\infty$ and $\wlim d_i(o_i, h\cdot o_i)<\infty$. Then 
\begin{align*}
\wlim d_i(o_i, gh\cdot o_i)&\leq \wlim d_i(o_i, g\cdot o_i)+\wlim d_i(g\cdot o_i, gh\cdot o_i)\\&=\wlim d_i(o_i, g\cdot o_i)+\wlim d_i(o_i, h\cdot o_i)<\infty.
\end{align*}

Since $S$ generates $G$, it follows by induction on word length that for all $g\in G$, $\wlim d_i(o_i, g\cdot o_i)<\infty$. 

Now let $g\in G$ and let $(x_i)$ be a sequence with $x_i\in X_i$ and such that $\wlim d_i(o_i, x_i)<\infty$. Then  
\begin{align*}
\wlim d_i(o_i, g\cdot x_i)&\leq \wlim d_i(o_i, g\cdot o_i)+\wlim d_i(g\cdot o_i, g\cdot x_i)\\ 
&=\wlim d_i(o_i, g\cdot o_i)+\wlim d_i(o_i, x_i)<\infty
\end{align*}

That is, the sequence $(g\cdot x_i)$ defines a point of $\wlim(X_i, o_i)$. If $(x_i')$ is a sequence which is equivalent to $(x_i)$, then 
\[
\wlim d_i(g\cdot x_i, g\cdot x_i')=\wlim d_i(x_i, x_i')=0.
\]

So the sequences $(g\cdot x_i)$ and $(g\cdot x_i')$ are equivalent. Thus, $(x_i)\to (g\cdot x_i)$ is a well-defined map on $\wlim(X_i, o_i)$. These maps clearly define an action of $G$ on $\wlim(X_i, o_i)$. Finally, we observe that the metric on $\wlim(X_i, o_i)$ is defined by $d((x_i), (y_i))=\wlim d_i(x_i, y_i)$. It then follows that 
\[
d\left((g\cdot x_i), (g\cdot y_i)\right)=\wlim d_i\left(g\cdot x_i, g\cdot y_i\right)=\wlim d_i\left(x_i, y_i\right)=d\left((x_i), (y_i)\right).
\]

Hence, $G$ acts on $\wlim(X_i, o_i)$ by isometries.
\end{proof}

\begin{lemma}
$\{\phi_i\}$ induces an isometric action of $V$ on $(\Coll_{\infty,v}, \bp)$. 
\end{lemma}

\begin{proof}
We apply Lemma \ref{lem:lim_act} to the generating set $A \cup \E(\V)$, where $\E(\V)$ are the edge groups of $\Gdf$ adjacent to $\V$.

Let $\bp=[\{\bp_i\}]\in \Coll_{\infty, v}$ be the basepoint of $\Coll_{\infty,v}$.   Since $\| \phi_i \|_{\Coll, v}$ has finite $\omega$-limit, for all $g \in A_{i}$ we have
\begin{equation}\tag{$\dagger$}\label{eq:nondiv6}	
\wlim d_i(\bp_i, g \cdot \bp_i) < \infty	.	\end{equation}
Let $H\in \E(\V)$, and suppose $h \in H$ and $g\in A\cap H$ (note that such a $g$ exists by Definition~\ref{def:good generating set}). Let $(h_i)$ and $(g_i)$ be \wapproxs of $h$ and $g$, respectively. Then \walmost surely $g_i$ and $h_i$ are parabolic fixing the same point in $\Coll_i$.  By Lemma~\ref{l:almost equal trans len for para}, there exists some constant $D$ so that \walmost surely $\left| d_i(\bp_i,h_i\bp_i) - d_i(\bp_i,g_i\bp_i) \right| < D$.  Therefore, by \eqref{eq:nondiv6}
\[	\wlim d_i(\bp_i, h_i \cdot \bp_i) < \infty	,	\]
as required.
\end{proof}

Recall that we fixed $S_v$ and $\widetilde{S_v}$ above Lemma~\ref{lem:lim_act}.
Let $\Cay(\V, S_v)$ be the (possibly locally infinite) Cayley graph of $\V$ with respect to $S_v$. For each $s \in S_v$, choose a geodesic $[\bp, s\cdot\bp]\subset \Coll_{\infty, v}$ and fix a homeomorphism between $[1, s]\subset \Cay(\V, S_v)$ and $[\bp, s\cdot\bp]$. Let $f_\infty: \Cay(\V, S_v)\rightarrow  \Coll_{\infty, v}$ be the $V$-equivariant map extending the above homeomorphisms. So $f_\infty$ maps $[g, gs]\subset \Cay(\V, S_v)$ to $g[\bp, s\cdot\bp]$ for any $g\in V$. 

Let $\mc{L}$ be the set of limits of points in $\Coll_{\infty, v}$ in the collapsing locus which are quotients of horoballs. Let $Q$ be the
quotient of $\Cay(\V, S_v)$ obtained by identifying points that are mapped by $f_\infty$ to the same point in $\mc{L}$. The $V$-action on $\Cay(\V, S_v)$ descends to an action
of $V$ on $Q$. The map $f_\infty$ induces a continuous map from $Q$ to $\Coll_{\infty, v}$, which we still denote by $f_\infty$. Note that $f_\infty: Q\rightarrow \Coll_{\infty, v}$ is $V$-equivariant and injective on $f_\infty^{-1}(\mc{L})$. Points in $f_\infty^{-1}(\mc{L})\subset Q$ are called \emph{marked points} of $Q$.

\begin{lemma}\label{l:1-1 map}
The map $f_\infty \co Q \to \Coll_{\infty, v}$ is well-defined, continuous and $\V$--equivariant.  The map $f_\infty$ restricts to an injective map from the marked points of $Q$ to the set of limits of parabolic collapsed points in $\Coll_{\infty,v}$.  Moreover, for any $p \in \Coll_{\infty,v}$ which is a limit of parabolic collapsed points, $f_\infty^{-1}(p)$ is either a single marked point in $Q$ or is empty.
\end{lemma}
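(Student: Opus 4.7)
The definition of $f_\infty$ is the obvious one: for each point $x \in Q$, choose a preimage $\tilde{x}$ in $\Cay(\V, S_v)$ that lies in an edge on which $f_i$ is $\omega$--almost surely defined, and set $f_\infty(x) = \wlim f_i(\tilde{x})$. The plan is to first check the basic properties (well-definedness, continuity, equivariance), then the two injectivity statements.

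\textbf{Well-definedness.} Each edge in $\Cay(\V,S_v)$ has bounded length ($l_s$), so the $2$--Lipschitz bound gives that the images $f_i(\tilde{x})$ lie in a bounded region (relative to the \wapprox of the basepoint $1 \in \V$), hence the ultralimit exists in $\Coll_{\infty,v}$. The only possible ambiguity is at marked points, which may have multiple preimages in $\Cay(\V,S_v)$; but by construction of $Q$, two marked points are identified iff their $f_i$--images agree \walmost surely, so the ultralimits agree. For non-marked points, the preimage is unique by construction of $f_i^Q$, so there is nothing to check.

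\textbf{Continuity and equivariance.} Each edge of $\Cay(\V,S_v)$ carries a $2$--Lipschitz map $f_i$, so the induced map on each edge of $Q$ (away from identification points) is $2$--Lipschitz in the ultralimit, giving continuity on the interiors of edges. Continuity at identified marked points is automatic from the defining equivalence relation. Equivariance is a direct consequence of the $\omega$--equivariance in Lemma~\ref{the maps to collapsing spaces}.

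\textbf{Injectivity on marked points, and identification of the image.} By construction $f_i$ sends marked points of $\Cay(\V,S_v)$ bijectively to parabolic collapsed points on $\gamma_{i,s}$, so the image of any marked point of $Q$ under $f_\infty$ is an ultralimit of parabolic collapsed points. Conversely, two marked points of $Q$ are distinct iff they are non-equivalent, iff the set of indices $i$ on which their $f_i$--images agree has $\omega$--measure $0$; this is exactly the statement that the ultralimits disagree, proving injectivity on marked points.

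\textbf{The preimage statement.} The only delicate point is showing that a non-marked point $y$ in the interior of an edge of $Q$ cannot map to a limit of parabolic collapsed points. Let $\tilde{y}$ be the unique preimage in $\Cay(\V,S_v)$ and let $\epsilon > 0$ be its distance to the set of marked points on its edge. The $2$--Lipschitz property yields that $f_i(\tilde{y})$ lies at distance at least $\epsilon/2$ from every parabolic collapsed point on $\gamma_{i,s}$; on the other hand, since the components of the collapsing locus are separated by at least $K$ (Notation~\ref{not:Fix K}), and since $f_i(\tilde{y})$ lies on the geodesic $\gamma_{i,s}$, any parabolic collapsed point \emph{not} coming from $\gamma_{i,s}$ is at uniformly positive distance from $f_i(\tilde{y})$. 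Combining these two bounds and passing to the ultralimit shows $f_\infty(y)$ is bounded away from all ultralimits of parabolic collapsed points, completing the proof. The only step requiring genuine care is this last separation argument, since one must rule out a non-marked sequence drifting toward parabolic collapsed points not traversed by the chosen geodesic; the uniform lower bound $K$ on the separation of collapsing components is exactly what makes this work.
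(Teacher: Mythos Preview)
The paper states this lemma without proof, so there is no paper argument to compare against directly; I will assess your proof on its own merits.

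Your treatment of well-definedness, continuity, and equivariance is correct and is presumably what the authors had in mind. For injectivity on marked points, your argument is essentially right but one step is stated too loosely: two sequences having $\omega$--almost surely distinct values does \emph{not} in general imply distinct ultralimits. What makes it work here is that distinct parabolic collapsed points in each $\Coll(\Gamma_{v_i})$ are separated by at least $K\ge 1$, so non-equivalent marked points have $f_i$--images at distance $\ge K$ $\omega$--almost surely, hence distinct ultralimits. You should say this explicitly.

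The genuine gap is in your final step. You claim that ``any parabolic collapsed point not coming from $\gamma_{i,s}$ is at uniformly positive distance from $f_i(\tilde y)$,'' and justify this by the $K$--separation of components of the collapsing locus. But that separation is between \emph{distinct components}; it gives no lower bound on the distance from an arbitrary point of $\gamma_{i,s}$ to a parabolic collapsed point $p_i$ lying off $\gamma_{i,s}$. Indeed, a $\Coll$--geodesic can pass arbitrarily close to a parabolic collapsed point without hitting it (take endpoints just outside a horoball, on the same side), so your inference does not go through as written. To rule out a non-marked $y$ with $f_\infty(y)$ a parabolic limit, one needs more than the $K$--separation: one must use the finer structure of geodesics in $\Coll$ near horoball images (e.g.\ the tracking Lemmas~\ref{l:track geodesics}--\ref{l:generalized Farb's 4.9}) to control how a geodesic can approach such a point.

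It is worth noting that the downstream applications (Lemmas~\ref{l:not cut point} and~\ref{l:cut point tree edge}) really only need the following, which \emph{does} follow immediately from your observations: for a finite path $\tau$ in $Q$ avoiding a marked point $z$, $\omega$--almost surely $f_i^Q(\tau)$ avoids the parabolic collapsed point $\xi_i$ corresponding to $z$. This holds because $f_i^Q$ maps each edge into a translate of $\gamma_{i,s}$, the non-marked points go to non-parabolic points of that geodesic by construction, and the finitely many marked points on $\tau$ are non-equivalent to $z$ so map to parabolic points $\ne\xi_i$ $\omega$--almost surely. This is a statement about $f_i^Q$ rather than $f_\infty$, and sidesteps the difficulty above.
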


Recall that $\mathcal{H}_{\V}$ is the set of stably parabolic subgroups of $\V$.  Suppose $H \in \mc{H}_{\V}$.  If $g \in H$ and $(g_i)$ is an \wapprox of $g$, then \walmost surely $g_i$ is contained in $\Gamma_{v_i}$ and fixes a parabolic collapsed point $\xi_i(g)$ in $\Coll(\Gamma_{v_i})$. Note that the point $\xi_i(g)$ \walmost surely depends only on $g$ and not on the choices of $(g_i)$.
In fact, so long $g \not\in \SC(\V)$, by Definition~\ref{def:stably para}, the point $\xi_i(g)$ depends only on $H$ (different elements of $H \smallsetminus \SC(\V)$ must \walmost surely fix the same edge in the geometric decomposition of $\Gamma_i$, and so have same parabolic fixed point).

\begin{lemma}\label{l: fixed point of H}
The sequence $\{\xi_i(g)\}$ defines a point $\xi(g)$ in $\Coll_{\infty, v}$ fixed by $g$. 
\end{lemma}

\begin{proof}
By Lemma~\ref{l:almost equal trans len for para} and the fact that the translation length of $\phi_i$ with respect to $S_v$ and $\bp_i$ is bounded, the distance between $\xi_i(g)$ and $\bp_i$ is \walmost surely bounded independent of $i$. The lemma follows. 
\end{proof}

\begin{definition}

An element $g \in H$ has \emph{Property $\Boun$} if for some $x \in Q$ there is a path $\tau$ in $Q$ between $x$ and $g \cdot x$ such that $\xi(g) \not\in f_\infty(\tau)$.\
\end{definition}

\begin{lemma}\label{l:bounded parabolic image}
Let $H \in \mc{H}_{\V}$ be so that every $h \in H$ has Property $\Boun$. Then
$H/H \cap \SC(\V)$ is a free abelian group of rank at most two. 
\end{lemma}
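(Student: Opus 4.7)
The strategy is to exploit commutativity of $H$ together with the standard fact that commuting parabolics in a Kleinian or Fuchsian group share a parabolic fixed point, so that $H/\SC(\V)$ embeds into the stabilizer of a single boundary point. Since $H\in\mc{H}_{\V}$ is stably parabolic it is abelian by Lemma~\ref{l:sp abelian}, hence $H/\SC(\V)$ is abelian, and it suffices to realize it as a subgroup of $\Z^2$. The ambient hyperbolic isometry group differs by type: for hyperbolic-type $v$ it is the Kleinian $\Gamma_{v_i}$ acting on $\bH^3$ (where $\SC(\V)=\{1\}$), and for LSF-type $v$ it is the Fuchsian quotient $\overline{\Gamma_{v_i}}=\Gamma_{v_i}/Z(\Gamma_{v_i})$ acting on $\bH^2$, with $\SC(\V)$ mapping into the kernel.

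I would first fix $g\in H$ whose class in $H/\SC(\V)$ is nontrivial, together with a path $\tau$ in $Q$ from some $x$ to $g\cdot x$ given by Property $\Boun$, so that $\xi_i(g)\notin f^Q_i(\tau)$ \walmost surely. By construction $f^Q_i$ is $2$-Lipschitz on $\tau$, so the image is an honest (not collapsed-shortcutted through $\xi_i(g)$) path from $f^Q_i(x)$ to $g_i\cdot f^Q_i(x)$ of bounded length in $\Coll(\Gamma_{v_i})$. Combining this with Lemma~\ref{l:almost equal trans len for para} gives a uniform \walmost sure bound on $d_{\Coll}\!\left(f^Q_i(x),\xi_i(g)\right)$, which in particular certifies that $g_i$ is a genuine nontrivial parabolic in the relevant Kleinian or Fuchsian group with a well-defined fixed point at infinity $\omega$-almost surely.

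Next, for any $h\in H$ we have $[g,h]=1$ in $L$, so Lemma~\ref{l:appro relation} yields $[g_i,h_i]=1$ \walmost surely. A standard isometry argument in hyperbolic space shows that any element commuting with a nontrivial parabolic must fix its parabolic fixed point: $h_i(\xi_i(g))$ is fixed by $h_i g_i h_i^{-1}=g_i$, and parabolics have a unique boundary fixed point. Consequently, \walmost surely every \wapprox of every element of $H$ lies in $\Stab(\xi_i(g))$, which in the Kleinian case is a subgroup of $\Z^2$, and in the Fuchsian case is infinite cyclic (its preimage in $\Gamma_{v_i}$ being $\Z^2$ with the central $\Z$ equal to $Z(\Gamma_{v_i})$, which contains the images of $\SC(\V)$).

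Finally, given any finite collection $h_1,\dots,h_k\in H$ whose classes in $H/\SC(\V)$ satisfy a nontrivial relation modulo the bounded-rank abelian stabilizer, Lemma~\ref{l:appro relation} transports such relations back to $L$, so $H/\SC(\V)$ injects into a free abelian group of rank at most $2$. The main obstacle is the second paragraph: extracting, from the rather abstract Property $\Boun$ and the $2$-Lipschitz combinatorics of $Q$, the concrete fact that $g_i$ is \walmost surely a nontrivial parabolic with parabolic fixed point precisely $\xi_i(g)$ — without this, the centralizer-fixes-fixed-point argument has nothing to bite on. The LSF case also requires a little care in verifying that $\SC(\V)$ is exactly what gets killed by passing to $\overline{\Gamma_{v_i}}$, so that the rank-$2$ upper bound is preserved after the quotient.
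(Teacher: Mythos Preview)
Your final paragraph does not work, and the gap is precisely where you are \emph{not} using Property~$\Boun$. Knowing only that, for each $i$, the \wapproxs $h_{1,i},\dots,h_{k,i}$ land in a single $P_i\cong\Z^2$ does not let you conclude anything about relations in $H/\SC(\V)$. Any three elements of $P_i$ satisfy \emph{some} integer relation, but that relation can vary with $i$; Lemma~\ref{l:appro relation} only transports back a \emph{fixed} word that holds \walmost surely, and a countable intersection of $\omega$--large sets need not be $\omega$--large. So from your argument you cannot even deduce that $H/\SC(\V)$ has rank at most two, let alone that it is free abelian (which would rule out things like $\Q$). There is no single $\Z^2$ into which you have built a map.

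The fix is exactly what the paper does: apply Property~$\Boun$ to \emph{every} $h\in H$, not just to one anchor element $g$. The point of Property~$\Boun$ is not to certify that $g_i$ is a nontrivial parabolic (that is automatic from stable parabolicity and $g\notin\SC(\V)$), but to bound the \emph{translation length} of $h_i$ on the horosphere $\partial U_i$. Concretely, lift the path $\tau$ from $Q$ to $\bH^\ast$; since $\tau$ avoids $\xi_i(h)$, the lift avoids the horoball $U_i$, and its projection to $\partial U_i$ has length bounded by the $\Coll$--length of $\tau$ plus a bounded contribution from each other component of the collapsing locus it meets (Lemma~\ref{Bounded projection}). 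This uniformly bounds $d_{\bH^\ast}(\widetilde y_i,h_i\cdot\widetilde y_i)$ for the projected basepoint $\widetilde y_i$. Now identify each $P_i$ with $\Z^2$ via a shortest pair of generators; bounded translation length forces the coordinates of $h_i$ to lie in a fixed finite set, so by the ultrafilter there is a well-defined $(a,b)\in\Z^2$ with $h_i=(a,b)$ \walmost surely. This assignment $h\mapsto(a,b)$ is the homomorphism $H\to\Z^2$ you need, and Proposition~\ref{p:props of U^K}\eqref{eq:lower bound} gives the lower translation bound showing its kernel is exactly $\SC(\V)$.

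Incidentally, your centralizer argument in the third paragraph is correct but unnecessary: the definition of stably parabolic already says that any finite subset of $H$ \walmost surely fixes a common edge of $T_i$, hence a common parabolic point.
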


\begin{proof}
Let $g\in H$ and fix an \wapprox $(g_i)$ of $g$. By the definition of Property $\Boun$, there is a path $\tau$ in $Q$ from $x$ to $g\cdot x$ such that $\xi(g) \not\in f_\infty(\tau)$. Suppose $f_\infty(x)$ is represented by $\{x_i\in \mc{C}_i\}$.  Then $\tau$ is the limit of a
sequence of paths $\{\tau_i\}$, where $\tau_i$ is a path in $\mc{C}_i$ connecting $x_i$ and $g_i\cdot x_i$ and $\xi_i(g)$ is not on $\tau_i$ \walmost surely.
This path $\tau_i$ can be lifted to a path $\widetilde{\tau_i}$ in $\bH^\ast$ connecting $\widetilde x_i$ and $g_i.\widetilde x_i$, where is $\widetilde x_i$ is the pre-image of $x_i$ under the natural projection from $\bH^\ast$ to $\Coll(\Gamma_{v_i})$. Let $U_i \subset \bH^\ast$ be the maximal horoball in $\bH^\ast$ that is collapsed to make the collapsed point $\xi_i$. Let $\widetilde y_i$ be the closest point projection of $\widetilde x_i$ to $U_i$. Then $g_i.\widetilde{y}_i$ is the closest point projection of $g_i.\widetilde x_i$ to $U_i$. Moreover, the path $\widetilde{\tau_i}$ projects to a path in the boundary of $U_i$ connecting $\widetilde y_i$ and $g_i.\widetilde y_i$.   The length of $\tau_i$ is \walmost surely bounded.  Therefore, \walmost surely there is a bound on the number of components of the pre-image of the collapsing locus which $\widetilde{\tau_i}$ intersects.  Then \walmost surely $\widetilde{\tau_i}$ does not intersect $U_i$. If $C$ is a component of the pre-image of the collapsing locus other than $U_i$ then by Lemma~\ref{Bounded projection} the projection of $C$ to $U_i$ has uniformly bounded diameter. Thus there exists $N$ so that \walmost surely the projection of $\widetilde{\tau_i}$ to $U_i$ has length at most $N$.  Hence, the translation length of $g_i$ on $U_i$ is \walmost surely bounded.   

Let $P_i$ be the subgroup of $\Gamma_{v_i}$ fixing $\xi_i(g)$.  Then $P_i$ is isomorphic to either $\mathbb{Z}$ or to $\mathbb{Z}^2$.  Choose a minimal cardinality generating set (basis) for $P_i$ so that the sum of their translation lengths on the corresponding horoball/horocycle is as small as possible.  This choice gives a specified isomorphism $\psi_i$ from $P_i$ to $\Z$ or $\mathbb{Z}^2$.  

First suppose that $P_i \cong \Z^2$.  Using the above isomorphism, \walmost surely we have $\psi_i(\phi_i(g_i))\in\Z^2$. The upper bound on the translation length of $g_i$ on $U_i$ imply that there exist $a, b\in\mathbb{Z}$, independent of $i$, so that $\psi_i(\phi_i(g_i))=(a, b)$ \walmost surely. Note that $(a, b)$ does not depend on the choice of \wapprox $(g_i)$. 

By Proposition~\ref{p:props of U^K}.\eqref{eq:lower bound}, as long as $g\notin \SC(\V)$ there is a lower bound, independent of $i$, on the translation length of $g_i$ on $U_i$. 
Defining $\phi'(g)=(a, b)$ for each $g\in H$ induces a homomorphism $H\to\Z^2$, and the lower bound on translation length for $g\notin \SC(\V)$ implies that this homomorphism has kernel $\SC(\V)$.  

The case where $P_i \cong \Z$ is entirely similar, completing the proof.   
\end{proof}

\begin{lemma}\label{l:not mark point}
Let $H \in \mc{H}_{\V}$. Then either $H/H \cap \SC(\V)$ has rank at most two or $H$ stabilizes a marked point in $Q$. 
\end{lemma}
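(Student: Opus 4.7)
The plan is to use the contrapositive of Lemma~\ref{l:bounded parabolic image}. Assuming $H/\SC(\V)$ has rank greater than two, that lemma produces some $g \in H$ lacking Property $\Boun$, and I will leverage this $g$ to exhibit a marked point $p \in Q$ stabilized by every element of $H$.

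The first step I would carry out is to show that all non-trivial elements of $H$ share a common parabolic collapsed point \walmost surely, which in the limit gives a single fixed point $\xi_\infty \in \Coll_{\infty,v}$. For any $g, h \in H \setminus \{1\}$, the stably parabolic condition supplies \wapproxs $(g_i)$ and $(h_i)$ such that $g_i$ and $h_i$ fix a common edge $e_i$ in $T_i$ \walmost surely. The stabilizer of $e_i$ in $\Gamma_i$ is the $\Z^2$ fundamental group of a torus in the geometric decomposition; its non-trivial elements act as parabolics in the adjacent Kleinian or Fuchsian vertex group and hence fix a unique common parabolic collapsed point in $\Coll(\Gamma_{v_i})$. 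Thus $\xi_i(g) = \xi_i(h)$ \walmost surely, and I would write this common value as $\xi_i$; setting $\xi_\infty = \wlim \xi_i \in \Coll_{\infty,v}$, each $h \in H$ has approximations $h_i$ fixing $\xi_i$ \walmost surely, so every $h \in H$ fixes $\xi_\infty$ under the induced action on $\Coll_{\infty,v}$.

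Next I would extract the marked point. Taking $g \in H$ without Property $\Boun$, for any $x \in Q$ with $g \cdot x \ne x$ and any path $\tau$ in $Q$ from $x$ to $g \cdot x$, we have $\xi_i \in f_i^Q(\tau)$ \walmost surely, so $\xi_\infty \in f_\infty(\tau)$. This forces $f_\infty^{-1}(\xi_\infty)$ to be non-empty, and by Lemma~\ref{l:1-1 map} it then consists of a single marked point $p \in Q$. Finally, the $\V$-equivariance of $f_\infty$ combined with the fact that every $h \in H$ fixes $\xi_\infty$ shows that each $h \in H$ permutes the singleton $f_\infty^{-1}(\xi_\infty) = \{p\}$, hence fixes $p$. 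This gives $H \subseteq \Stab(p)$, completing the proof.

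The point requiring most care will be the first step, namely verifying that $\xi_i(g) = \xi_i(h)$ for all non-trivial $g, h \in H$ \walmost surely. This rests on the structural fact that edge stabilizers in $T_i$ arise as $\Z^2$ peripheral subgroups of the adjacent geometric vertex groups, whose non-trivial elements fix a unique common parabolic collapsed point in the relevant collapsed space.
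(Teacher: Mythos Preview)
Your approach is essentially the same as the paper's, just organized as a contrapositive rather than a direct case split on whether $\xi$ lies in the $f_\infty$--image of a marked point. Two small points are worth noting. First, you assert $\xi_\infty = \wlim \xi_i \in \Coll_{\infty,v}$ without checking that $d(\xi_i,\bp_i^v)$ is \walmost surely bounded; the paper handles this explicitly via Lemma~\ref{l:almost equal trans len for para} applied to an element of $A$ (the bounded translation length forces bounded distance to the fixed parabolic point). Second, you conclude only $H \subseteq \Stab(p)$, whereas the lemma as stated asserts $H$ \emph{is} the stabilizer; the paper obtains equality by asserting that the stabilizer of $\xi$ in $\Coll_{\infty,v}$ is exactly $H$, and then using the injectivity in Lemma~\ref{l:1-1 map} to transfer this to $\Stab(p)$. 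Your weaker inclusion is what is actually used downstream (since rank at most two passes to subgroups), but it does not match the statement verbatim.
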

\begin{proof}
Let $g\in H$ and $\xi(g)$ be the point in $\Coll_{\infty, v}$ fixed by $g$ as in Lemma \ref{l: fixed point of H}.  Then $H$ stabilizes $\xi(g)$. If $\xi(g)$ is the image of a marked point under the map $f_\infty$, then by Lemma~\ref{l:1-1 map} $H$ is stabilizes a marked point of $Q$. Now suppose $\xi(g)$ is \emph{not} the image of any marked point under the map $f_\infty$. Then $\xi(g)$ is \emph{not} in the image of $f_\infty$ by the last assertion in Lemma~\ref{l:1-1 map}. Hence $\xi(g) \not\in f_\infty(\tau)$ for any path in $Q$, and so every element of $H$ has Property $\Boun$. Therefore, by Lemma~\ref{l:bounded parabolic image}, $H/H \cap \SC(\V)$ has rank at most two.  
\end{proof}

By Lemma~\ref{l:not mark point}, to further understand the elements of $\mc{H}_{\V}$, we study the case when they are stabilizers of marked points in $Q$.  

\begin{lemma}\label{l:not cut point}
Let $H \in \mc{H}_{\V}$. Suppose $H$ stabilizes a marked point $z\in Q$ which is not a cut point. Then every element of $H$ has Property $\Boun$ and hence $H/H \cap \SC(\V)$ has rank at most two .  
\end{lemma}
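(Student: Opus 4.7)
The plan is to reduce Lemma~\ref{l:not cut point} to Lemma~\ref{l:bounded parabolic image} by proving that, under the non-cut-point hypothesis, every element of $H$ has Property $\Boun$; this immediately yields that $H/\SC(\V)$ is a free abelian group of rank at most two. To set up notation, by Lemma~\ref{l:1-1 map} the image $f_\infty(z) \in \Coll_{\infty,v}$ has an \wapprox $(\xi_i)$ consisting of parabolic collapsed points in $\Coll(\Gamma_{v_i})$. Because $H$ stabilizes $z$, for every $h \in H$ the $\omega$--approximations $h_i$ fix $\xi_i$ $\omega$--almost surely, so in the notation of the definition preceding Lemma~\ref{l:bounded parabolic image} we have $\xi_i(h) = \xi_i$.

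The core step exploits that $Q \setminus \{z\}$ is path-connected. Fix $h \in H$ and choose $x \in Q$ with $x \ne z$ and $x$ not a marked point; since $h$ fixes $z$, the translate $h \cdot x$ also lies in $Q \setminus \{z\}$ and is not marked. I would take an embedded piecewise-linear arc $\tau$ from $x$ to $h \cdot x$ inside $Q \setminus \{z\}$. Being an embedded arc in a (possibly not locally finite) graph, $\tau$ traverses only finitely many edges, and in particular passes through only finitely many marked points, each distinct from $z$. To check that $\xi_i \notin f_i^Q(\tau)$ $\omega$--almost surely, I would observe that non-marked interior points of $\tau$ map to non-parabolic points of $\Coll(\Gamma_{v_i})$ by the very construction of $f_i$, while each marked point $z' \ne z$ on $\tau$ maps $\omega$--almost surely to a parabolic collapsed point distinct from $\xi_i$ by the injectivity half of Lemma~\ref{l:1-1 map}. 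For Cayley vertices lying on $\tau$, whose images have the form $g_i \cdot \bp^v_i$, I would arrange at the outset that the basepoints $\bp^v_i$ avoid the (discrete) parabolic collapsed locus, forcing $g_i \cdot \bp^v_i$ to be non-parabolic and hence distinct from $\xi_i$. Combining these three observations, $h$ has Property $\Boun$, and Lemma~\ref{l:bounded parabolic image} completes the proof.

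The main delicate point is guaranteeing the path $\tau$ traverses only finitely many edges: since the generating set $S_v$ of $\V$ may be infinite, the graph $Q$ need not be locally finite, and one cannot invoke local compactness near $z$. Choosing $\tau$ to be an embedded arc in the connected graph $Q \setminus \{z\}$ is exactly what resolves this, since any embedded arc in a graph traverses only finitely many edges. A secondary subtlety is ensuring the Cayley vertices on $\tau$ do not hit $\xi_i$ under $f_i^Q$; since parabolic collapsed points form a discrete subset of each $\Coll(\Gamma_{v_i})$, an arbitrarily small perturbation of the basepoints $\bp^v_i$ achieves this without disturbing the preceding constructions.
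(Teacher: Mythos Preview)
Your proof is correct and follows essentially the same approach as the paper: pick a path $\tau$ in $Q\setminus\{z\}$ from $x$ to $h\cdot x$, verify that $\xi_i\notin f_i^Q(\tau)$ \walmost surely, and invoke Lemma~\ref{l:bounded parabolic image}. The paper obtains the key step more tersely by citing Lemma~\ref{l:1-1 map} to conclude $f_\infty(z)\notin f_\infty(\tau)$ and then passing to the approximating maps; your case analysis (non-marked interior points, other marked points, Cayley vertices) and the basepoint perturbation simply make explicit details the paper leaves implicit.
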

\begin{proof}
Let $g\in H$ and $x\in Q \smallsetminus \{z\}$.  Since $z$ is not a cut point, there is a path $\tau$ in $Q \smallsetminus \{z\}$ from $x$ to $g \cdot x$.  By Lemma~\ref{l:1-1  map}, $f_\infty(z) \not\in f_\infty(\tau)$. Note that $\xi=f_\infty(z)$ is the fixed point of $H$ in $\Coll_{\infty, v}$ given by Lemma~\ref{l: fixed point of H}. Hence each $g\in H$ has Property $\Boun$, and so by Lemma~\ref{l:bounded parabolic image} $H/H \cap \SC(\V)$ has rank at most two, as required.  
\end{proof}

We now consider stably parabolic subgroups which fix marked points that are not cut points.  To that end, be build a new tree.

Let $M\subset Q$ be the collection of all marked points that are cut points. Consider the {\em cut-point tree}  $\Tcut$  associated to $(Q, M)$.  The vertices of $\Tcut$ are:

\begin{enumerate}
\item Points in $M$; and
\item The maximum connected subsets of $Q$ not separated by an element of $M$,
\end{enumerate}
Vertices of the first type are cut points, and vertices of the second type are \emph{blocks}.  Edges of $\Tcut$ correspond to the inclusion of a cut point in a block. Clearly, $\V$ acts on $\Tcut$.

\begin{lemma}\label{l:cut point tree edge}
Let $z \in M$ and let $B$ be a block containing $z$.  Let $e=[z\subset B]$ be the corresponding edge of $\Tcut$, and let $H$ be the stabilizer in $\V$ of $e$. Then $H/\SC(\V)$ has rank at most two.   
\end{lemma}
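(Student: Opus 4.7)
The strategy parallels the proof of Lemma~\ref{l:not cut point}: show that every $g\in H$ has Property $\Boun$ and then apply Lemma~\ref{l:bounded parabolic image} to conclude that $H/\SC(\V)$ has rank at most two. The obstacle in Lemma~\ref{l:not cut point} (which is overcome there by the assumption $z$ is not a cut point) is to find a path from some $x$ to $g\cdot x$ avoiding $z$; here $z$ is a cut point of $Q$, so we cannot argue inside $Q\smallsetminus\{z\}$. Instead, we work inside the block $B$, using the defining property of blocks that no single point of $B$ separates $B$.

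More precisely, since $H$ stabilizes the edge $e=[z\subset B]$, every $g\in H$ fixes the marked point $z$ and preserves the block $B$. The corresponding parabolic collapsed points $\xi_i(g)\in \Coll(\Gamma_{v_i})$ converge (in the sense of $\omega$--limits of \wapproxs) to $f_\infty(z)$, which by Lemma~\ref{l:1-1 map} is the unique image of $z$ among limits of parabolic collapsed points coming from marked points of $Q$. Fix a point $x\in B\smallsetminus\{z\}$. Since $g$ is an isometry of $Q$, $g\cdot x\neq z$ (otherwise $x=g^{-1}\cdot z=z$, a contradiction). By the construction of the block decomposition, no element of $B$ is a cut point of $B$, so $B\smallsetminus\{z\}$ is path-connected, and hence there is a path $\tau\subset B\smallsetminus\{z\}$ from $x$ to $g\cdot x$.

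The path $\tau$ avoids the marked point $z$, and by Lemma~\ref{l:1-1 map}, $f_\infty(z)$ has no preimage in $Q$ other than $z$ itself; therefore $f_\infty(z)\notin f_\infty(\tau)$. Unwinding the construction of $f_\infty$ as an $\omega$--limit of the partially defined maps $f_i^Q$, this translates into $\xi_i(g)\notin f_i^Q(\tau)$ holding \walmost surely, i.e.\ $g$ has Property $\Boun$. Applying Lemma~\ref{l:bounded parabolic image} to $H$ (which lies in $\mc{H}_{\V}$ because edge stabilizers of $\Tcut$ stabilize marked points, whose stabilizers are stably parabolic) yields that $H/\SC(\V)$ is free abelian of rank at most two.

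The only real point to verify carefully is the topological fact that $B\smallsetminus\{z\}$ is connected; this is built into the block decomposition, and the rest reduces to routine adaptations of Lemma~\ref{l:not cut point}. The main conceptual novelty compared to Lemma~\ref{l:not cut point} is the choice of ambient space in which to find the avoiding path: the block $B$, rather than all of $Q$.
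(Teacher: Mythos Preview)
Your proof is correct and follows essentially the same route as the paper: establish that $H\in\mc{H}_{\V}$, show that every $g\in H$ has Property~$\Boun$ by choosing $x\in B\smallsetminus\{z\}$ and using the connectedness of $B\smallsetminus\{z\}$ to find an avoiding path $\tau$, then invoke Lemma~\ref{l:bounded parabolic image}. The paper's argument is slightly terser but identical in substance.
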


\begin{proof}
Denote the stabilizer of $z$ by $H'$. By Lemma~\ref{l:1-1 map}, $H'$ fixes $\xi=f_\infty(z)\in \Coll_{\infty, v}$,  a limit point of parabolic points $\xi_i\in\Coll(\Gamma_{v_i})$. In particular, $H' \in \mc{H}_{\V}$, hence $H\in \mc{H}_{\V}$. By Lemma~\ref{l:bounded parabolic image}, it suffices to show that $H$ consists entirely of elements satisfying Property $\Boun$.  Let $g\in H$ and $x\in B$. Since $g$ fixes $B$ as a set, we know $g \cdot x\in B$. By definition of blocks, $B \smallsetminus \{z\}$ is connected. Hence there is a path $\tau$ connecting $x$ and $g \cdot x$ and $\tau$ does not intersect $z$. Therefore, $g$ has Property $\Boun$.     
\end{proof}

We are now ready to prove Proposition~\ref{p:cut point splitting}.  As we showed above, this is enough to complete the proof of Theorem~\ref{t:NEW non-divergent}, modulo the proof of Theorem~\ref{t:fin gen} in Appendix~\ref{app:edge-twist}.

\begin{proof}[Proof of Proposition~\ref{p:cut point splitting}]
Let $\D_0$ be the splitting of $\V$ dual to the action of $\V$ on $\Tcut$. Since$\SC(\V)$ acts trivially on $\Tcut$,  $\SC(\V)$ is contained in all the edge groups of $\D_0$. The underlying graph of $\D_0$ is bipartite, with one type of vertex corresponding to cut point vertices in $\Tcut$ and the other type corresponding to block vertices.  For each cut point vertex of $\D_0$, fold all the adjacent edges together to one edge, and denote the resulting splitting by $\D'$.  Now, each cut point vertex of $\D'$ has one adjacent edge and $\D'$ is still connected, so the underlying graph of $\D'$ is a star.  

Collapse the cut point vertices of $\D'$ whose stabilizers are not conjugate into edge groups  of $\Gdf$ adjacent to $\V$.  Add vertices to the resulting graph of groups corresponding to edge groups adjacent to $\V$ not corresponding to vertices of $\D'$ (with identical edge and vertex groups). It follows that the resulting graph of groups $\D$ satisfies \eqref{eq:non-central}.  Since $\D'$ satisfies \eqref{eq:star} and \eqref{eq:stable center}, so does $\D$. Statement \eqref{eq:rank 2} follows from Lemmas~\ref{l:not mark point},  ~\ref{l:not cut point} and ~\ref{l:cut point tree edge}.

It remains to prove the ``Furthermore" assertion of the proposition.  To that end, suppose $\V$ is of hyperbolic type and that $P \in \mc{H}_{\V}$ is not conjugate into an adjacent edge group.  Then $\SC(\V) = \{ 1 \}$.  If $P$ does not stabilize a cut point vertex of $\Tcut$ then $P$ is finitely generated by Lemma~\ref{l:not mark point} and Lemma \ref{l:not cut point}.  If $P$ does stabilize a cut point vertex of $\Tcut$, then $P$ is in a valence-one vertex group of $\D'$ whose adjacent edge group is finitely generated by Lemma \ref{l:cut point tree edge}. Refining $\Gdf$ by $\D'$ yields a splitting $\mathbb{K}'$ of $L$. Since $P$ is not contained in the conjugate of any edge group of $\Gdf$, the corresponding vertex group in $\mathbb{K}'$ containing $P$ is still valence-one with the same adjacent edge group, so it is finitely generated since $L$ is. In all cases, $P$ is finitely generated and the proof of Proposition~\ref{p:cut point splitting} is complete.
\end{proof}

\section*{Interlude: Summary of what remains}

The goal of the remainder of paper (other than Appendix~\ref{app:edge-twist}) is to prove Theorem~\ref{t:technical outline}.  If our defining sequence is not \Cdiv with respect to some GGD, then we may take $k=0$ in Theorem~\ref{t:technical outline}, and take the GGDs of $S_1 = S_0$ to be the same.  Therefore, there is only anything to prove in case our defining sequence $(\phi_i)$ of $L$ \emph{is} \Cdiv with respect to some GGD $\Gd$ of $L$.  

Our approach to proving Theorem~\ref{t:technical outline} will be familiar to the experts.  In Section~\ref{s:R-trees} we build limiting $\R$-trees from rescaled actions on the collapsed spaces, which allows us to use the Rips machine to find splittings of $L$.  In Section~\ref{s:JSJ} we recall and prove the required properties of JSJ decompositions and modular automorphisms.  In Section~\ref{s:res-short} we undertake the version of Sela's ``shortening argument" in this context, and finally prove Theorem~\ref{t:technical outline}.

As we stated before, the construction of the collapsed spaces was made to make these arguments work in as ``standard" a way as possible.  Thus, while there are many technical difficulties in the next three sections, not much in these sections will surprise the experts.  An exception to this are the ``squared Nielsen transformations'' in Section~\ref{s:JSJ}, which are a new kind of move required to shorten our homomorphisms.  These arise from our analysis of edge groups arising as limits of infinite dihedral subgroups of Fuchsian groups coming from an arc joining two cone points of order $2$, and how to shorten in the presence of such edge groups.

\section{Limits and $\R$--trees} \label{s:R-trees}

Throughout this section make Standing Assumption~\ref{ass:fixed things}. Suppose further that $(\phi_i)$ is \Cdiv with respect to $\Gd$ (see Definition~\ref{d:P-divergent}) and that the vertex $v$ of $\Gdf$ associated to $V$ is one for which $\wlim \| \phi_i\|_{\Coll,v} = \infty$.   The space $\Coll_{\infty,v}$ is defined in Definition~\ref{d:Pinfty} below.

The goal of this section is to prove the following theorem, which is the only result from this section needed in future sections.   We remind the reader that $\SC(\V)$ is the stable center of $\V$ (see Definition~\ref{def:stable center}) and $\overline{\V} = \V/\SC(\V)$.

\begin{theorem} \label{t:Rtree summary}
\ 
\begin{enumerate}
\item\label{eq:rtree} $\Coll_{\infty,v}$ is an $\R$--tree equipped with a nontrivial isometric $V$--action.  The stable center $\SC(\V)$ acts trivially on this tree, and so there is an induced action of $\overline{\V}$.
\item\label{eq:para unique fp} Let $H \in \mc{H}_{\V}$.  There exists $x_H \in \Coll_{\infty,v}$, fixed by $H$. Each nontrivial element of $H \smallsetminus \SC(\V)$ fixes only the point $x_H$.
\item\label{eq:ell unique fp}  If $g \in V$ and $(g_i)$ is an \wapprox to $g$ so \walmost surely $g_i$ is (nontrivial and) elliptic then $g$ fixes a unique point in $\Coll_{\infty,v}$.
\item\label{eq:triv trip} The $\overline{\V}$--action on $\Coll_{\infty,v}$ has trivial tripod stabilizers. 
\item\label{eq:abel seg} The $\overline{\V}$--action on $\Coll_{\infty,v}$ has abelian segment stabilizers.
\end{enumerate}
\end{theorem}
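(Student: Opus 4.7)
The plan is to define $\Coll_{\infty,v}$ as the pointed ultralimit of the rescaled sequence $\bigl(\Coll(\Gamma_{v_i}),\, \bp_i^v,\, \|\phi_i\|_{\Coll,v}^{-1} d_i\bigr)$, where $\bp_i^v$ is the basepoint fixed in the previous section to approximately realize the infimum defining $\|\phi_i\|_{\Coll,v}$. Since $\|\phi_i\|_{\Coll,v} \to \infty$ $\omega$--almost surely and the collapsed spaces are geodesic and uniformly $\delta$--hyperbolic (Proposition~\ref{p: geodesic}, Theorem~\ref{t:hyperbolicity}), the standard Bestvina--Paulin construction yields that $\Coll_{\infty,v}$ is an $\R$--tree. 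To define the $\V$--action by $g \cdot \wlim x_i := \wlim g_i \cdot x_i$, one must verify that the rescaled displacement $d_i(\bp_i^v, g_i \bp_i^v)/\|\phi_i\|_{\Coll,v}$ is bounded: this is automatic for $g \in A$, and for an element of an adjacent edge group $E$ (which is abelian by Lemma~\ref{l:sp abelian}) one uses Definition~\ref{def:good generating set}(2) to find $a \in A \cap E \setminus \SC(\V)$, applies Lemma~\ref{l:almost equal trans len for para} to bound $d_i(\bp_i^v, \xi_i)$ where $\xi_i$ is the common parabolic fixed point of $E$ in $\Coll$, and then derives the bound for all of $E$ from the same lemma. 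Non-triviality of the action follows because some element of $A$ achieves rescaled displacement tending to at least $1$ in the limit, establishing~\eqref{eq:rtree}.

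For~\eqref{eq:para unique fp}, fix $H \in \mc{H}_{\V}$ and $h \in H \setminus \SC(\V)$ with \wapprox $(h_i)$. Since $H$ is stably parabolic, \walmost surely $h_i$ fixes an edge of $T_i$ adjacent to $v_i$, hence maps to a nontrivial parabolic element of the Kleinian or Fuchsian group attached to $\Gamma_{v_i}$, fixing a unique component $\xi_i$ of the collapsing locus. All elements of $H \setminus \SC(\V)$ fix the same peripheral edge and hence the same $\xi_i$. The upper bound in Lemma~\ref{l:almost equal trans len for para} combined with the displacement bound from the previous paragraph gives $d_i(\bp_i^v, \xi_i) = O(\|\phi_i\|_{\Coll,v})$, so $x_H := \wlim \xi_i$ is well-defined and fixed by all of $H$. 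For uniqueness, the lower bound $d_i(y_i, h_i y_i) \ge 2 d_i(y_i, \xi_i) + B_1$ forces $d_i(y_i, \xi_i)/\|\phi_i\|_{\Coll,v} \to 0$ for any fixed-point sequence $(y_i)$, whence $y = x_H$. Claim~\eqref{eq:ell unique fp} is identical in structure: when $\V$ is of hyperbolic type the Kleinian group is torsion-free and the statement is vacuous, while for $\V$ of LSF type a nontrivial elliptic $g_i$ in the Fuchsian group fixes a unique point of $\bH^2$ whose $q$--image has bounded diameter by Lemma~\ref{bounded visual size}, and the same translation-length argument yields a unique limit fixed point.

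For~\eqref{eq:triv trip} and~\eqref{eq:abel seg}, I first observe that $\SC(\V)$ acts trivially on $\Coll_{\infty,v}$: when $\V$ is hyperbolic-type this is vacuous, and when $\V$ is LSF--type, approximations of $\SC(\V)$ lie in the center $Z(\Gamma_{v_i})$, which is the cyclic fiber subgroup and so acts trivially on the Fuchsian base orbifold, hence trivially on $\Coll(\Gamma_{v_i})$. Thus the $\V$--action descends to $\overline{\V}$. Suppose $\bar g \in \overline{\V}$ is nontrivial and fixes a nondegenerate segment $[a,b]$; choose representatives $a_i, b_i$. The rescaled displacements at $a_i$ and $b_i$ are sublinear in $\|\phi_i\|_{\Coll,v}$ while $d_i(a_i, b_i) = \Theta(\|\phi_i\|_{\Coll,v})$, so by Lemma~\ref{l:almost equal trans len for para} the minimal invariant set $\gamma_{g,i}$ of $g_i$ lies within $o(\|\phi_i\|_{\Coll,v})$ of both $a_i$ and $b_i$. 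Since parabolic and elliptic fixed sets have bounded $\Coll$--diameter (by Lemma~\ref{bounded visual size} and the corresponding statement for elliptics in $\bH^2$), $g_i$ must be loxodromic \walmost surely. If $\bar g, \bar h \in \overline{\V}$ both fix $[a,b]$, then the $\bH^\ast$--axes of $g_i, h_i$ (lifted via Lemma~\ref{l:Hausdorff distance}) fellow-travel over a distance $\Theta(\|\phi_i\|_{\Coll,v}) \to \infty$; in CAT$(-1)$ geometry this forces the two axes to share both endpoints at infinity, hence to coincide, so $g_i, h_i$ commute as elements of the centralizer of a common geodesic. Thus $[\bar g, \bar h] = 1$, proving~\eqref{eq:abel seg}. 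For~\eqref{eq:triv trip}, the trichotomy above shows the limit fixed set of any nontrivial $\bar g$ is contained in a single point (parabolic/elliptic) or a single line (loxodromic), so no nondegenerate tripod can be fixed.

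The main obstacle is the detailed execution of the fellow-traveling axis argument for~\eqref{eq:abel seg}: one must carefully translate fellow-traveling in $\Coll(\Gamma_{v_i})$ back to fellow-traveling in $\bH^\ast$ using Lemmas~\ref{l:track geodesics}, \ref{l:Hausdorff distance} and~\ref{l:generalized Farb's 4.9}, control the effect of collapsed regions that the axes might enter, and then appeal to the classical commutation properties of loxodromic isometries of $\bH^2$ or $\bH^3$ sharing an axis. A secondary technical point is to verify the well-definedness of the $\V$--action for all (not just generating) elements, which requires the abelian structure of edge groups and Lemma~\ref{l:almost equal trans len for para}.
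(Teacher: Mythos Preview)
Your treatment of parts~\eqref{eq:rtree}--\eqref{eq:triv trip} follows the same path as the paper: the ultralimit construction, the well-definedness of the $\V$--action via the good-generating-set condition and Lemma~\ref{l:almost equal trans len for para}, and the point/line trichotomy for fixed sets are all as in the paper's Proposition~\ref{p:Limit R-tree}, Lemma~\ref{l:fixed for ell/par}, and Lemma~\ref{l:fixed for loxo}.

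Your argument for~\eqref{eq:abel seg}, however, has a genuine gap, and it is exactly the one you flag as ``the main obstacle''. You want to conclude that the $\bH^\ast$--axes of $g_i$ and $h_i$ coincide from the fact that both pass within $o(\|\phi_i\|_{\Coll,v})$ of $a_i$ and $b_i$ in $\Coll(\Gamma_{v_i})$. But $o(\|\phi_i\|_{\Coll,v})$ is still potentially unbounded, and once you try to lift to $\bH^\ast$ the situation is worse: collapsed regions (horoballs, deep tubes) can make $\bH^\ast$--distances arbitrarily larger than $\Coll$--distances. So you do not obtain fellow-travelling of the $\bH^\ast$--axes within any \emph{fixed} distance, and without that the CAT$(-1)$ argument does not force shared endpoints. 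Lemmas~\ref{l:track geodesics} and~\ref{l:Hausdorff distance} only compare a $\Coll$--geodesic with the $q$--image of a single $\bH^\ast$--geodesic between the same lifted endpoints; they do not let you compare two different $\bH^\ast$--axes.

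The paper sidesteps this by a different maneuver: rather than comparing the axes of $g_i$ and $h_i$, it works with the commutator $[g_i,h_i]$ itself. Since $[g,h]$ also fixes $[a,b]$, Corollary~\ref{c:arc stab loxo} makes $[g_i,h_i]$ loxodromic (if nontrivial modulo the center), and one can choose $a_i,b_i$ \emph{on its axis} $\gamma_i$. The proof then splits into three cases according to how $\gamma_i$ sits relative to the collapsing locus (Lemmas~\ref{l:intersects nbhd of collapsing locus}, \ref{l:disjoint from collapsing locus}, \ref{l:contained in collapsing locus}). Only in the middle case does one lift to $\bH^\ast$ and run a thin-rectangle argument, and there the assumption that $\gamma_i$ stays $5\delta$ away from the collapsing locus is precisely what allows the $\bH^\ast$--distances $d_{\bH^\ast}(\widetilde a_i, g_i\widetilde a_i)$ to be bounded linearly in the $\Coll$--distances. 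The other two cases use Lemma~\ref{least shift} and a direct argument about tubes. This case split is not optional; it is what makes the transfer from $\Coll$ to $\bH^\ast$ work.
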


The above theorem, together with the Rips machine, implies that the $\overline{\V}$--action on $\Coll_{\infty,v}$ admits a graph of actions decomposition with simplicial, axial  and Seifert type vertex actions, which induces an abelian splitting of $\overline{\V}$. The splitting of $\V$ (rel $\mc{H}_{\V}$) induced by this splitting also has abelian edge groups. \footnote{See \cite{Guirardel:Rtrees} for the definitions of graphs of actions, simplicial vertex actions, axial vertex actions and vertex actions of Seifert type, as well as a statement of the Rips machine (Theorem 5.1 in that paper).}

The remainder of this section is dedicated to the proof of Theorem~\ref{t:Rtree summary}.  Recall the definitions from Section~\ref{s:geo decomp}, particularly Definitions~\ref{def:ggd} and~\ref{def:good generating set}.

Let $A$ be the good relative generating set for $\V$ from Assumption~\ref{ass:fixed things} and let $(A_{i})$ be the chosen \wapprox of $A$.  Let $v_i\in T_i$ be the sequence of vertices associated to $v$. Let $\Gamma_{v_i}\subset \Gamma_i$ be the stabilizer of $v_i$. Then since $\Gd$ is a GGD of $L$ with respect to $(\phi_i)$  \walmost surely $A_{i}\subset\Gamma_{v_i}$. As at the beginning of Section~\ref{s:non-divergent}, let $\Coll_i=\Coll(\Gamma_{v_i})$ (respectively $\mc{C}_i = \Coll(\overline{\Gamma_{v_i}})$) be the collapsed space associated to $\Gamma_{v_i}$ (depending on whether $v_i$ is of hyperbolic type or \LSF type. For \walmost every $i$, fix $\bp_i \in \Coll_i$ so that
\[	\max_{g \in A_{i}} \left\{ d_i( g . \bp_i, \bp_i) \right\} \le \| \phi_i \|_{\Coll, v} + \frac{1}{i}	.	\]

\begin{definition}
Let $\Coll_{\infty,v}^0$ be the ultra-limit (with respect to $\omega$) of the sequence $\left(\Coll_i, \frac{1}{\| \phi_i \|_{\Coll, v}} d_i, \bp_i \right)$.
\end{definition}

\begin{proposition}[Limiting $\R$--tree] \label{p:Limit R-tree}
The space $\Coll_{\infty, v}^0$ is a (pointed) $\R$--tree equipped with a non-trivial isometric $\V$--action. 
\end{proposition}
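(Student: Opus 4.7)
The plan is to follow the standard asymptotic-cone style construction: we rescale each $\Coll_i$ by $1/\|\phi_i\|_{\Coll,v}$, take the $\omega$--limit based at $\bp_i$, and verify that (a) the resulting space is an $\R$--tree, (b) there is a well--defined $V$--action by isometries, and (c) the action has no global fixed point.

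First I would address the $\R$--tree property. By Proposition~\ref{p: geodesic} each $\Coll_i$ is a geodesic metric space, and by Theorem~\ref{t:hyperbolicity} each $\Coll_i$ is $\delta$--hyperbolic for a constant $\delta$ that is independent of $i$. Since $\wlim \|\phi_i\|_{\Coll,v}=\infty$ by hypothesis, in the rescaled metric $\tfrac{1}{\|\phi_i\|_{\Coll,v}}d_i$ the hyperbolicity constant becomes $\delta/\|\phi_i\|_{\Coll,v}\to 0$. An ultralimit of geodesic metric spaces is a complete geodesic metric space, and an ultralimit of $\delta_i$--hyperbolic spaces with $\delta_i\to 0$ is $0$--hyperbolic; a complete, geodesic, $0$--hyperbolic metric space is an $\R$--tree. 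This gives the underlying $\R$--tree structure of $\Coll_{\infty,v}^0$, with distinguished basepoint $\bp_\infty=\wlim\bp_i$.

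Next I would construct the $V$--action. Each $g\in V$ has an $\omega$--approximation $(g_i)$ with $g_i\in\Gamma_{v_i}$ acting isometrically on $\Coll_i$. To check that $g\mapsto \wlim g_i$ defines a well-defined isometry of the ultralimit, I must show the rescaled displacement $\tfrac{1}{\|\phi_i\|_{\Coll,v}} d_i(\bp_i,g_i\bp_i)$ is \walmost surely bounded. It suffices to verify this for a generating set of $V$. By Definition~\ref{def:good generating set}, $V$ is generated by $A$ together with the edge groups of $\Gdf$ adjacent to $V$. For $a\in A$ the bound follows immediately from the choice of $\bp_i$:
\[
\tfrac{1}{\|\phi_i\|_{\Coll,v}}d_i(\bp_i,a_i\bp_i)\le 1+\tfrac{1}{i\|\phi_i\|_{\Coll,v}}.
\]
The main obstacle is an edge group $E$ adjacent to $V$, where an arbitrary $e\in E$ need not be in $A$. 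Here I would use condition~(2) of Definition~\ref{def:good generating set}: fix $a\in A\cap E\setminus\SC(V)$. Since $E$ is stably parabolic (Definition~\ref{def:ggd}), both $a_i$ and $e_i$ fix \walmost surely a common component $\gamma_i$ of the collapsing locus of $\Coll_i$. Applying Lemma~\ref{l:almost equal trans len for para} to $a_i$ gives
\[
2d_i(\bp_i,\gamma_i)\le d_i(\bp_i,a_i\bp_i)-B_1\le \|\phi_i\|_{\Coll,v}+\tfrac{1}{i}-B_1,
\]
while the upper bound in the same lemma applied to $e_i$ yields $d_i(\bp_i,e_i\bp_i)\le 2d_i(\bp_i,\gamma_i)+B_2$. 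Combining these, $d_i(\bp_i,e_i\bp_i)\le \|\phi_i\|_{\Coll,v}+B_2-B_1+\tfrac{1}{i}$, so after rescaling the displacement of $e$ is \walmost surely bounded. Thus every element of $V$ acts on $\Coll_{\infty,v}^0$ by isometries, and Lemma~\ref{l:appro relation} ensures this is a genuine action.

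Finally, non--triviality is straightforward: by the definition of $\|\phi_i\|_{\Coll,v}$ as an infimum, at \emph{every} $x\in\Coll_i$ we have $\max_{a\in A_{i}}d_i(x,a_i x)\ge \|\phi_i\|_{\Coll,v}$. Hence at every point of $\Coll_{\infty,v}^0$ the maximum rescaled displacement over $A$ is at least $1$, so $V$ has no global fixed point and the action is non-trivial.
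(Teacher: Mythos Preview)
Your proof is correct and follows essentially the same approach as the paper: the $\R$--tree structure and non-triviality are the standard ultralimit facts the paper alludes to, and the key new point---bounding displacement for edge-group generators via the good generating set element $a\in A\cap E$ together with Lemma~\ref{l:almost equal trans len for para}---is exactly what the paper does. You spell out the two-step use of Lemma~\ref{l:almost equal trans len for para} (lower bound for $a_i$, upper bound for $e_i$) where the paper just records the resulting inequality $\left|d_i(\bp_i,h_i\bp_i)-d_i(\bp_i,g_i\bp_i)\right|<D$, but the argument is the same.
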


\begin{proof}
Most parts of this proposition are standard facts in the theory of ultra-limits and $\mathbb{R}$--trees (see, for example, \cite[$\S4$]{GH} and the references therein).  The only new thing here is to show that $\{\phi_i\}$ induces an isometric action of $\V$ on $\Coll_{\infty, v}^0$. This does not follow from the standard theory since $A$ may not generate $\V$.  However, $\V$ is generated by $A \cup \E(\V)$, where $\E(\V)$ are the edge groups of $\Gdf$ adjacent to $\V$.

Let $\bp=[\{\bp_i\}]\in \Coll_{\infty, v}^0$ be the basepoint of $\Coll_{\infty,v}^0$.   By the definition of $\| \phi_i \|_{\Coll,v}$, for all $g \in A_{v,i}$ 
\begin{equation}\tag{$\dagger$}\label{eq:nondiv}	
\wlim \frac{1}{\| \phi_i \|_{\Coll,v}} d_i(\bp_i, g \cdot \bp_i) < \infty	.	\end{equation}
Let $H\in \E(\V)$, and suppose $h \in H$ and $g\in A\cap H$ (note that such a $g$ exists by Definition~\ref{def:good generating set}). Let $(h_i)$ and $(g_i)$ be \wapproxs of $h$ and $g$, respectively. Then \walmost surely $g_i$ and $h_i$ are parabolic fixing the same point in $\Coll_i$.  By Lemma~\ref{l:almost equal trans len for para}, there exists some constant $D$ so that \walmost surely $\left| d_i(\bp_i,h_i\bp_i) - d_i(\bp_i,g_i\bp_i) \right| < D$.  Therefore, by \eqref{eq:nondiv}
\[	\wlim \frac{1}{\| \phi_i \|_{\Coll,v}} d_i(\bp_i, h_i \cdot \bp_i) < \infty	,	\]
as required.
\end{proof}

\begin{definition} \label{d:Pinfty}
Let $\Coll_{\infty,v}$ be the minimal $\V$--invariant subtree of $\Coll^0_{\infty, v}$.
\end{definition}
Theorem~\ref{t:Rtree summary}.\eqref{eq:rtree} follows immediately from Proposition~\ref{p:Limit R-tree}.

 Recall that $\overline{\V} = \V/\SC(\V)$, where $\SC(\V)$ is given by Definition~\ref{def:stable center}.  Each element of $\SC(\V)$ \walmost surely acts trivially on $\Coll_i$.  Thus we have the following result which completes Item~\eqref{eq:rtree} from Theorem~\ref{t:Rtree summary}.
\begin{corollary}
The $\V$--action descends to a non-trivial isometric $\overline{\V}$--action on $\Coll^0_{\infty,v}$, with minimal $\overline{\V}$--invariant subtree $\Coll_{\infty,v}$.
\end{corollary}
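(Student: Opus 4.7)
The plan is to verify three assertions in turn: that $\SC(\V)$ acts trivially on $\Coll^0_{\infty,v}$ so that the $\V$-action descends; that the descended $\overline{\V}$-action is non-trivial; and that $\Coll_{\infty,v}$ serves as the minimal $\overline{\V}$-invariant subtree.

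The crux is the first step, which is the content of the remark immediately preceding the corollary: each $g \in \SC(\V)$ ought to act trivially on each $\Coll_i$ \walmost surely, and thus trivially on the ultralimit $\Coll^0_{\infty,v}$. If $\V$ is hyperbolic-type then $\SC(\V) = \{1\}$ by the observation following Definition~\ref{def:stable center}, so there is nothing to check. If $\V$ is LSF-type, then for any $g \in \SC(\V)$ every \wapprox $(g_i)$ satisfies $g_i \in Z(\Gamma_{v_i})$ \walmost surely by Definition~\ref{def:stable center}. Since $\Gamma_{v_i}$ is the fundamental group of a large Seifert-fibered piece with orientable base orbifold, $Z(\Gamma_{v_i})$ is precisely the kernel of the projection $\Gamma_{v_i} \onto \pi_1(O_i)$ to the base orbifold group. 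As set up at the start of Section~\ref{s:non-divergent}, the collapsed space $\Coll_i$ for an LSF-type vertex is constructed from the action of $\pi_1(O_i)$ on $\bH^2$ determined by a chosen hyperbolic structure on $O_i$; hence $g_i$ acts trivially on $\Coll_i$, and $g$ acts trivially on the ultralimit.

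Once triviality of the $\SC(\V)$-action is in hand, the $\V$-action on $\Coll^0_{\infty,v}$ automatically factors through an isometric $\overline{\V}$-action. Non-triviality of this descended action then follows from Proposition~\ref{p:Limit R-tree}, which gives non-triviality of the $\V$-action, combined with the fact that $\SC(\V)$ acts trivially.

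For the final clause, because $\SC(\V)$ acts trivially any $\overline{\V}$-orbit coincides with the corresponding $\V$-orbit: for $\bar g \in \overline{\V}$ and $x \in \Coll^0_{\infty,v}$ one has $\bar g \cdot x = g \cdot x$ for any lift $g$ of $\bar g$ to $\V$. Consequently a subset is $\overline{\V}$-invariant if and only if it is $\V$-invariant, and the minimal $\overline{\V}$-invariant subtree is exactly $\Coll_{\infty,v}$ from Definition~\ref{d:Pinfty}. The only step with any real content is the identification of $Z(\Gamma_{v_i})$ as the kernel of the projection to the base orbifold; the remainder is formal.
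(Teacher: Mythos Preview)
Your proposal is correct and follows exactly the approach of the paper, which simply notes in the sentence preceding the corollary that each element of $\SC(\V)$ \walmost surely acts trivially on $\Coll_i$. You have merely unpacked that one-line remark by treating the hyperbolic-type and LSF-type cases separately and spelling out why $Z(\Gamma_{v_i})$ acts trivially on $\Coll_i$ in the LSF case; the remaining deductions about non-triviality and the minimal subtree are formal, as you say.
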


Theorem~\ref{t:Rtree summary}.\eqref{eq:para unique fp} and \eqref{eq:ell unique fp} follow from the next lemma. 

\begin{lemma} \label{l:fixed for ell/par}
Let $g\in \V$ and let $(g_i)$ be an \wapprox of $g$. Suppose that \walmost surely $g_i$ is parabolic (or nontrivial elliptic). Then $g$ fixes a unique point in $\Coll_{\infty, v}$.  If $g^1,g^2 \in \V$, $(g^{1}_i)$ and $(g^2_i)$ are \wapproxs of $g^1$ and $g^2$, respectively, and \walmost surely $g^{1}_i$ and $g^{2}_i$ are parabolic with the same fixed point then $g^1$ and $g^2$ fix the same point in $\Coll_{\infty,v}$.
\end{lemma}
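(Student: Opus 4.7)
The key input is Lemma~\ref{l:almost equal trans len for para}, which says that a parabolic or elliptic $g_i\in\Gamma_{v_i}$ translates any $x\in\Coll_i$ by approximately $2d_i(x,\gamma_i)$, where $\gamma_i$ is the fixed locus of $g_i$ in $\Coll_i$. First I would observe that in every case of interest $\gamma_i$ is a single point fixed by $g_i$: hyperbolic--type vertex groups $\Gamma_{v_i}$ are torsion--free Kleinian, so nontrivial elliptic elements do not arise; for parabolic $g_i$ (Kleinian or Fuchsian) $\gamma_i$ is by definition the collapsed horoball fixed by $g_i$; and for elliptic $g_i$ in the Fuchsian (LSF--type) case the minimal invariant set in $\bH^2$ is a single point, whose image in $\Coll_i$ is again a single point. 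Set $x_i:=\gamma_i$, so that $g_i\cdot x_i=x_i$.

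Applying Lemma~\ref{l:almost equal trans len for para} at the basepoint $\bp_i$ will give
\[
\left| d_i(\bp_i,g_i\bp_i)-2d_i(\bp_i,x_i) \right| \le \max\{|B_1|,B_2\}.
\]
Since $g\in\V$, the rescaled distance $d_i(\bp_i,g_i\bp_i)/\|\phi_i\|_{\Coll,v}$ has finite $\omega$--limit (equal to $d(\bp,g\cdot\bp)$ in $\Coll^0_{\infty,v}$), so $d_i(\bp_i,x_i)/\|\phi_i\|_{\Coll,v}$ has finite $\omega$--limit as well. Thus $(x_i)$ defines a point $x_g\in\Coll^0_{\infty,v}$, and $g\cdot x_g=x_g$.

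For uniqueness in the ambient tree, suppose $y=[(y_i)]\in\Coll^0_{\infty,v}$ is fixed by $g$, so $d_i(y_i,g_iy_i)/\|\phi_i\|_{\Coll,v}\to 0$. The lower bound in Lemma~\ref{l:almost equal trans len for para} gives
\[
d_i(y_i,x_i)=d_i(y_i,\gamma_i)\le\tfrac{1}{2}\bigl(d_i(y_i,g_iy_i)-B_1\bigr),
\]
so after rescaling this tends to $0$, and therefore $y=x_g$. To transfer this to the minimal invariant subtree $\Coll_{\infty,v}$ I would use the closest--point projection $\pi\co\Coll^0_{\infty,v}\to\Coll_{\infty,v}$: since $\Coll_{\infty,v}$ is $\V$--invariant and closest--point projection in an $\R$--tree is unique, $\pi$ is $\V$--equivariant, so $\pi(x_g)$ is also fixed by $g$. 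Uniqueness in $\Coll^0_{\infty,v}$ then forces $\pi(x_g)=x_g$, i.e.\ $x_g\in\Coll_{\infty,v}$.

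For the second claim, if $g^1_i$ and $g^2_i$ are parabolic with the same fixed point at infinity of $\bH^\ast$, then they preserve the same collapsed horoball, so $\gamma_i^1=\gamma_i^2$; taking $x_i^1=x_i^2$ in the construction above yields $x_{g^1}=x_{g^2}$. The only delicate point I anticipate is confirming that $\gamma_i$ really is a single point in every relevant case (this is why torsion--freeness must be invoked on the Kleinian side) and that the fixed point in the ambient ultralimit descends to the minimal invariant subtree; both are handled by the observations above.
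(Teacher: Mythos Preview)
Your proof is correct and follows essentially the same approach as the paper: both use Lemma~\ref{l:almost equal trans len for para} to show that the limit of the fixed loci $\gamma_i$ (the paper calls these $C_i$) gives the unique fixed point, and both establish uniqueness by applying the lower bound in that lemma to an arbitrary fixed point. Your version is in fact slightly more direct---you take $x_i=\gamma_i$ from the start and observe $g_i\cdot x_i=x_i$ exactly, whereas the paper first constructs an approximate fixed point via the Gromov product and only later identifies the limit with $[\{C_i\}]$---and you add the closest--point projection step to justify that the fixed point lies in the minimal subtree $\Coll_{\infty,v}$, a step the paper asserts without comment.
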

\begin{proof}
Let $C_i$ be the component of the collapsing locus fixed by $g_i$ (or the projection in $\Coll_i$ of the fixed point of the elliptic $g_i$ in $\bH^\ast$). Consider the geodesic triangle with vertices $\bp_i, g_i\cdot \bp_i$ and $C_i$.  Let $x_i$ be the point on $[\bp_i, C_i]$ with $d_i(x_i, C_i)=(\bp_i\mid g\cdot \bp_i )_{C_i}$. Note that $2d_i(x_i, \bp_i)\leq d_i(\bp_i, g_i\cdot \bp_i)+4\delta$. Hence 
\begin{equation*}
\wlim \frac{1}{\| \phi_i \|_{\Coll,v}} d_i(\bp_i, x_i) < \infty
\end{equation*}  
Hence $\{x_i\}$ defines a point in $\Coll^0_{\infty, v}$. 
By hyperbolicity of $\Coll_i$, $d_i(x_i, g_i\cdot x_i)\leq \delta$. Hence $g$ fixes $x$. Therefore $g$ fixes a point in $\Coll_{\infty, v}$. We now show that $g$ fixes exactly one point in $\Coll_{\infty, v}$. Suppose $g$ fixes $y\in\Coll_{\infty, v}$ and $\left( y_i\in \Coll_i\right)$ represents $y$.  By Lemma~\ref{l:almost equal trans len for para},  $\left| 2d_{i}(y_i, C_i) - d_{i}(y_i, g_i\cdot y_i)\right| $ is \walmost surely bounded independent of $i$. Since $g$ fixes $y$, $$\wlim d_{i}(y_i, g_i\cdot y_i)/\| \phi_i \|_{\Coll, v}=0$$ so $\wlim d_{i}(y_i, C_i)/\| \phi_i \|_{\Coll, v}=0$, and $y = [\{C_i\}]\in \Coll_{\infty, v}$. 

The second assertion follows immediately from the above argument and the assumption that \walmost surely $g^1_i$ and $g^2_i$ fix the same point in the collapsed space $\Coll_i$.
\end{proof}

\begin{corollary} \label{c:arc stab loxo}
Suppose $g \in \V \smallsetminus\SC(\V)$ fixes a non-degenerate arc in $\Coll_{\infty,v}$. If $(g_i)$ is an \wapprox of $g$ then \walmost surely $g_i$ is loxodromic.
\end{corollary}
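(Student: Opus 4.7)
The plan is to argue by contradiction via a case analysis on the isometry type of $g_i$. Because $\Gamma_{v_i}$ is either Kleinian (so acts on $\bH^3$) or the image of a quotient map to a Fuchsian group (so acts on $\bH^2$), each $g_i$ is either trivial, nontrivial elliptic, parabolic, or loxodromic. Using that $\omega$ is a $\{0,1\}$--valued measure, exactly one of these four possibilities holds $\omega$--almost surely, so it suffices to rule out the first three under the hypotheses of the corollary.

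First I would dispose of the trivial case. If $g_i = 1$ \walmost surely, then Lemma~\ref{l:appro relation}, applied to the word $w(x) = x$, forces $g = 1$ in $L$. Since $1 \in \SC(\V)$, this contradicts $g \in \V \smallsetminus \SC(\V)$.

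Next, suppose $g_i$ is \walmost surely nontrivial elliptic, or \walmost surely parabolic. Lemma~\ref{l:fixed for ell/par} then applies directly and tells us that $g$ has a \emph{unique} fixed point in $\Coll_{\infty,v}$. This contradicts the hypothesis that $g$ fixes a non-degenerate arc, since any such arc contains at least two points. Having eliminated the trivial, elliptic, and parabolic cases, we conclude $g_i$ is \walmost surely loxodromic, as required.

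The argument is essentially a bookkeeping exercise combining two results already established in the section, so I do not anticipate any serious obstacle; the only subtle point is to observe that the Kleinian/Fuchsian structures on the vertex groups of $\Gamma_{v_i}$ (fixed at the start of Section~\ref{s:non-divergent}) provide the four-way classification of isometry type on which the case analysis rests.
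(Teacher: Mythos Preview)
Your approach matches the paper's implicit argument (the corollary is stated there without proof, as immediate from Lemma~\ref{l:fixed for ell/par}). There is one small gap in your handling of the ``trivial'' case. When $\V$ is of LSF type, the action of $\Gamma_{v_i}$ on $\bH^2$ factors through the quotient to the base orbifold group, so ``$g_i$ acts as the identity isometry of $\bH^\ast$'' means only that $g_i \in Z(\Gamma_{v_i})$, not that $g_i = 1$ in $\Gamma_i$; consequently you cannot invoke Lemma~\ref{l:appro relation} to conclude $g = 1$ in $L$. The fix is immediate: if \walmost surely $g_i \in Z(\Gamma_{v_i})$, then by Definition~\ref{def:stable center} we have $g \in \SC(\V)$, which is exactly the contradiction you want. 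With this adjustment your case analysis is complete and correct.
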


Theorem~\ref{t:Rtree summary}.\eqref{eq:triv trip} follows from Lemma~\ref{l:fixed for ell/par} and the following lemma.

\begin{lemma}\label{l:fixed for loxo}
Let $g\in \V$ and $(g_i)$ be an \wapprox of $g$. Suppose that \walmost surely $g_i$ is loxodromic. The fixed point set of $g$ in $\Coll_{\infty, v}$ is either empty or a single geodesic.   In case it is a single geodesic, it is a limit of images in the collapsed space of geodesic axes in $\bH^\ast$.
\end{lemma}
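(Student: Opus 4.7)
My plan is to identify the fixed set of $g$ on $\Coll_{\infty,v}$ with the ultralimit of the $\bH^\ast$-axes of $g_i$ projected to the collapsed spaces $\Coll_i$. For each $i$ let $\tilde\gamma_i \subset \bH^\ast$ be the axis of $g_i$ and put $\gamma_i = q_{\Gamma_{v_i}}(\tilde\gamma_i) \subset \Coll_i$. Each $\gamma_i$ is $g_i$-invariant and, by Lemma \ref{l:track geodesics}, is a uniform-quality quasi-geodesic in the uniformly $\delta$-hyperbolic space $\Coll_i$ (Theorem \ref{t:hyperbolicity}).

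First I argue that any fixed point $x = [(x_i)] \in \Coll_{\infty,v}$ of $g$ has a representative sequence lying on $\gamma_i$. Since $g$ fixes $x$, $\wlim d_i(x_i, g_i x_i)/\|\phi_i\|_{\Coll,v} = 0$. Lemma \ref{l:almost equal trans len for para} gives $d_i(x_i, g_i x_i) \ge 2 d_i(x_i, \gamma_i) + B_1$, so $d_i(x_i, \gamma_i)/\|\phi_i\|_{\Coll,v} \to 0$; replacing $x_i$ by its closest point projection onto $\gamma_i$ produces the desired representative. Thus the fixed set of $g$ is contained in any ultralimit of the $\gamma_i$.

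Next I take the ultralimit $\gamma_\infty$ of the $\gamma_i$ in $\Coll_{\infty,v}^0$, parameterized by $\Coll_i$-arc length based at the closest point of $\gamma_i$ to $\bp_i$ and rescaled by $\|\phi_i\|_{\Coll,v}^{-1}$. The uniform quasi-geodesic constants from Lemma \ref{l:track geodesics} make this a uniformly bi-Lipschitz family; since $\Coll_{\infty,v}^0$ is an $\R$-tree (hence $0$-hyperbolic), the image $\gamma_\infty$ is either a single point or a bi-infinite geodesic line. Combined with the previous paragraph, the fixed set of $g$ in $\Coll_{\infty,v}$ lies in $\gamma_\infty$.

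For the reverse containment, I show $\gamma_\infty$ itself is pointwise fixed by $g$ whenever the fixed set is non-empty. By $\delta$-hyperbolicity, the displacement $d_i(z_i, g_i z_i)$ for $z_i$ on the $g_i$-invariant quasi-geodesic $\gamma_i$ exceeds the $\Coll_i$-translation length $\tau_i$ of $g_i$ by at most a uniform additive constant. Step one produces some $z_i \in \gamma_i$ with $d_i(z_i, g_i z_i)/\|\phi_i\|_{\Coll,v} \to 0$, hence $\tau_i/\|\phi_i\|_{\Coll,v} \to 0$, and every point of $\gamma_i$ has displacement $o(\|\phi_i\|_{\Coll,v})$; taking the ultralimit shows every point of $\gamma_\infty$ is fixed by $g$. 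The fixed set of $g$ is therefore $\gamma_\infty$ (intersected with $\Coll_{\infty,v}$), which is either empty or a geodesic, and by construction it is a limit of images of $\bH^\ast$-axes. The main obstacle is the uniform bi-Lipschitz/quasi-geodesic bookkeeping in the ultralimit together with the degenerate case where $\tilde\gamma_i$ is the central axis of a tube (then $\gamma_i$ coincides with the tube axis inside $\Coll_i$ and the argument simplifies); both rely on the estimates from Section~\ref{s:collapsing Divergent}.
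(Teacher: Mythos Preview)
Your argument is correct and follows essentially the same route as the paper: invoke Lemma~\ref{l:almost equal trans len for para} to show that any fixed point of $g$ is represented by points at sublinear distance from the projected axes $\gamma_i = q_i(\tilde\gamma_i)$, and conclude that the fixed set lies on the ultralimit of the $\gamma_i$. You supply more detail than the paper's proof, which simply asserts that the limit of $\{\gamma_i\}$ is a geodesic and stops there; in particular your reverse containment (via $\tau_i/\|\phi_i\|_{\Coll,v}\to 0$) and your discussion of why $\gamma_\infty$ is a line rather than a point are additions, not deviations.
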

\begin{proof}
Suppose the fixed point set of $g$ is not empty and let $x=[\{x_i\}]\in \Coll_{\infty, v}$ be a point fixed by $g$. Let $\bH^\ast$ be the domain of the map $q_i=q_{\Gamma_{v_i}}$. Denote the axis of $g_i$ in $\bH^\ast$ by $\widetilde\gamma_i$. Let $\gamma_i=q_i(\widetilde \gamma_i)$. Let $\widetilde{x_i}\in \bH^\ast$ be a point in $q_i^{-1}(x_i)$. By Lemma~\ref{l:almost equal trans len for para}, there is a constant $D$ so that \walmost surely $d_{i}(x_i, g_ix_i)\geq 2d_{i}(x_i, \gamma_i) - D$. Since $x$ is fixed by $g$, we have $\wlim \frac{d_{_i}(x_i, g_ix_i)}{\| \phi_i \|_{\Coll, v}}=0$, and so $\wlim \frac{d_{_i}(x_i, \gamma_i)}{\| \phi_i \|_{\Coll, v}}=0$. Therefore $x$ is on the limit of $\{\gamma_i\}$, which is a geodesic in $\Coll_{\infty, v}$. 
\end{proof}

The goal of the rest of this section is to prove Theorem~\ref{t:Rtree summary}.\eqref{eq:abel seg}, which completes the proof of Theorem~\ref{t:Rtree summary}. 
To that end, fix a non-trivial segment $I = [a,b]$ in $\Coll_{\infty, v}$, and suppose that $\overline{g}, \overline{h} \in \overline{\V}$ stabilize $I$.  Fix lifts $g,h \in V$ of $\overline{g}, \overline{h}$ and let $(g_i)$ and $(h_i)$ be \wapproxs of $g$ and $h$, respectively.  We know that \walmost surely $g_i$ and $h_i$ both lie in the same $\Gamma_{v,i}$, and our goal is to show that \walmost surely $[g_i, h_i] \in Z(\Gamma_{v,i})$.  Recall that if $\Gamma_{v,i}$ is of hyperbolic type then $Z(\Gamma_{v,i}) = \{ 1 \}$, and that in either case $Z(\Gamma_{v,i})$ is the kernel of the action of $\Gamma_{v,i}$ on the associated hyperbolic space $\bH^\ast$.

Observe that $[g_i, h_i]$ is an \wapprox of $[g,h]$. We may suppose $[g_i, h_i]$ is \walmost surely non-trivial, or else there is nothing to prove. Because $[g,h]$ stabilizes the non-trivial segment $I$ in $\Coll_{\infty, v}$, by Corollary~\ref{c:arc stab loxo}  \walmost surely $[g_i, h_i]$ corresponds to a loxodromic isometry of $\bH^\ast$. Let $\widetilde\gamma_i$ be the invariant geodesic for $[g_i, h_i]$ in $\bH^\ast$ and let $\gamma_i=q_i(\widetilde\gamma_i)$. By Lemma~\ref{l:fixed for loxo}, $I$ is a subsegment of the limit of $\gamma_i$. As a result, there are  $a_i, b_i$ on $\gamma_i$ such that $a = [a_i]$ and $b = [b_i]$. Let $I_i$ be a geodesic segment between $a_i$ and $b_i$. 

That segment stabilizers are abelian is proved in the following three lemmas. 

\begin{lemma}\label{l:intersects nbhd of collapsing locus}
Suppose that \walmost surely $\gamma_i$ is not entirely contained in the collapsing locus but that it comes within $5\delta$ of the collapsing locus.  Then \walmost surely $[g_i, h_i] \in Z(\Gamma_{v,i})$.
\end{lemma}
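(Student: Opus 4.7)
I argue by contradiction. Assume $[g_i,h_i] \notin Z(\Gamma_{v,i})$ $\omega$-almost surely. By the reduction immediately preceding the lemma and Corollary~\ref{c:arc stab loxo}, $[g_i,h_i]$ is $\omega$-almost surely loxodromic on $\bH^{\ast}$ with axis $\widetilde\gamma_i$, whose collapsed image is $\gamma_i$.

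The first step is to show that $g_i$ and $h_i$ are themselves $\omega$-almost surely loxodromic. Since $g,h$ fix the non-degenerate segment $I=[a,b]$ pointwise in $\Coll_{\infty,v}$, the displacements $d_{\Coll_i}(a_i,g_i\cdot a_i)$, $d_{\Coll_i}(b_i,g_i\cdot b_i)$ and the analogues for $h_i$ are all $o(\|\phi_i\|_{\Coll,v})$, while $d_{\Coll_i}(a_i,b_i)$ is of order $\|\phi_i\|_{\Coll,v}$. By Lemma~\ref{l:almost equal trans len for para}, a parabolic or nontrivial elliptic $g_i$ would place its single collapsed fixed set within $o(\|\phi_i\|_{\Coll,v})$ of both $a_i$ and $b_i$, contradicting the triangle inequality. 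Hence $g_i,h_i$ are loxodromic, with $\bH^{\ast}$-axes $\widetilde\alpha_i,\widetilde\beta_i$ projecting to $\alpha'_i,\beta'_i$ in $\Coll_i$. Applying Lemma~\ref{l:almost equal trans len for para} again, $a_i$ and $b_i$ lie $o(\|\phi_i\|_{\Coll,v})$-close to each of $\alpha'_i$, $\beta'_i$ and $\gamma_i$, so by $\delta$-hyperbolicity of $\Coll_i$ (Theorem~\ref{t:hyperbolicity}) the three collapsed axes fellow-travel at collapsed distance $o(\|\phi_i\|_{\Coll,v})$ over a substantial subsegment of $I_i$ of length of order $\|\phi_i\|_{\Coll,v}$.

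The main step is to upgrade this collapsed closeness to the $\bH^{\ast}$-equality $\widetilde\alpha_i=\widetilde\beta_i=\widetilde\gamma_i$. The hypothesis enters through Lemma~\ref{least shift}: $[g_i,h_i]$ moves every point of $\gamma_i$ by at least $30\delta$ in $\Coll_i$, and $\widetilde\gamma_i$ meets a bounded $\bH^{\ast}$-neighborhood of some component $\widetilde C_i$ of the collapsing locus without being contained in the collapsing locus. The estimates of Section~\ref{s:collapsing Divergent} (in particular Lemmas~\ref{l:Hausdorff distance} and~\ref{l:generalized Farb's 4.9}) then let one control entry and exit points near $\widetilde C_i$ and upgrade the collapsed fellow-traveling to $\bH^{\ast}$-fellow-traveling of $\widetilde\alpha_i$, $\widetilde\beta_i$ and $\widetilde\gamma_i$ at $\bH^{\ast}$-distance $o(\|\phi_i\|_{\Coll,v})$ over a stretch of length of order $\|\phi_i\|_{\Coll,v}$. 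Two distinct bi-infinite geodesics in $\bH^{\ast}$ diverge exponentially in the hyperbolic metric, so the $\bH^{\ast}$-distance at the center of such a stretch would be of order $e^{\|\phi_i\|_{\Coll,v}}$, far exceeding $o(\|\phi_i\|_{\Coll,v})$; this forces $\widetilde\alpha_i=\widetilde\beta_i=\widetilde\gamma_i$.

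Two loxodromic isometries of $\bH^{\ast}$ sharing an axis commute in $\Isom(\bH^{\ast})$, so $g_i$ and $h_i$ commute modulo $Z(\Gamma_{v,i})$, giving $[g_i,h_i]\in Z(\Gamma_{v,i})$ $\omega$-almost surely, a contradiction. The main obstacle is the middle step: turning the collapsed fellow-traveling into $\bH^{\ast}$-fellow-traveling by exploiting the hypothesis that $\widetilde\gamma_i$ is genuinely close to but not contained in the collapsing locus, so as to rule out degenerate configurations in which distinct $\bH^{\ast}$-geodesics project to collapsed-close geodesics merely by passing through many common collapsing components.
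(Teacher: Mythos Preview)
Your approach diverges substantially from the paper's, and the central step has a genuine gap.

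The paper's argument is a two-line contradiction working entirely in the collapsed space. Since $g$ and $h$ both fix $a$ and $b$ in the limiting tree, the displacements $d_i(a_i,g_ia_i)$, $d_i(b_i,g_ib_i)$, $d_i(a_i,h_ia_i)$, $d_i(b_i,h_ib_i)$ are all $o(d_i(a_i,b_i))$. A lemma of Paulin \cite[Lemma~5.7]{paulin:rtrees} then gives directly that $[g_i,h_i]$ moves any point in the middle $\tfrac{8}{10}$ of $I_i=[a_i,b_i]$ by at most $16\delta$. Since $I_i\subset N_\delta(\gamma_i)$, this means $[g_i,h_i]$ moves some point of $\gamma_i$ by less than $30\delta$, contradicting Lemma~\ref{least shift}. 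There is no need to analyze the individual axes of $g_i$ and $h_i$, and no passage to $\bH^\ast$ is required. In particular, the role of Lemma~\ref{least shift} is to be \emph{contradicted}, not to feed into a fellow-traveling estimate.

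Your proposal has two problems. First, the ``main step'' --- upgrading collapsed fellow-traveling to $\bH^\ast$-fellow-traveling --- is asserted but not carried out, and it is not clear how the hypothesis accomplishes this. The lemmas you cite (Lemmas~\ref{l:Hausdorff distance} and~\ref{l:generalized Farb's 4.9}) compare a single collapsed geodesic with its own $\bH^\ast$-lift; they do not transfer collapsed closeness of \emph{two distinct} geodesics into $\bH^\ast$-closeness, precisely because of the ``degenerate configurations'' you acknowledge but do not rule out. Second, the concluding geometric claim is backward: for two distinct bi-infinite geodesics in $\bH^\ast$, the distance between them is \emph{smallest} near the center of any fellow-traveling stretch and grows exponentially toward the ends. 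Thus $\bH^\ast$-fellow-traveling at distance $o(\|\phi_i\|_{\Coll,v})$ over a stretch of length of order $\|\phi_i\|_{\Coll,v}$ only forces the central distance to be exponentially small in $\|\phi_i\|_{\Coll,v}$ --- not zero. For each fixed $i$ the axes $\widetilde\alpha_i$ and $\widetilde\beta_i$ may remain distinct (with separation tending to $0$ as $i\to\infty$), in which case $g_i$ and $h_i$ do not commute in $\Isom(\bH^\ast)$ and the conclusion fails.
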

\begin{proof}
Since $a_i$ and $b_i$ are on $\gamma_i$, by Lemma~\ref{l:track geodesics} and the choice of $\delta$, $I_i \subset N_{\delta}(\gamma_i)$.  Thus for $w_i \in I_i$ satisfying $d_i(a_i,w_i), d_i(b_i,w_i )\ge \frac{1}{10} d_i(a_i,b_i)$, we have $d_i \left( [g_i, h_i] \cdot w_i, w_i \right) \le 16\delta$ (See  \cite[Lemma 5.7]{paulin:rtrees}). Hence $[g_i,h_i]$ moves points in the middle part of $\gamma_i$ by less than $30\delta$, contradicting Lemma~\ref{least shift}. Hence $[g_i,h_i]$ acts trivially on $\bH^\ast$ \walmost surely, so \walmost surely $[g_i, h_i] \in Z(\gamma_{v,i})$, as required. 
\end{proof}

\begin{lemma}\label{l:disjoint from collapsing locus}
Suppose that \walmost surely $\gamma_i$ does not come within $5\delta$ of the collapsing locus.  Then \walmost surely $[g_i, h_i] \in Z(\Gamma_{v,i})$.
\end{lemma}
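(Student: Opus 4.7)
Plan: The strategy is to mimic the proof of Lemma~\ref{l:intersects nbhd of collapsing locus} as far as possible, and replace the appeal to Lemma~\ref{least shift} (which is unavailable in the disjoint case) by a Margulis-tube argument in $\mathbb{H}^*$. So I would assume for contradiction that $\omega$-almost surely $[g_i,h_i]\notin Z(\Gamma_{v,i})$, and recall that then $[g_i,h_i]$ acts loxodromically on $\mathbb{H}^*$ with axis $\widetilde\gamma_i$ and translation length $\ell_i>0$. Let $w_i\in I_i$ be a point with $d_i(a_i,w_i),d_i(b_i,w_i)\ge\tfrac{1}{10}d_i(a_i,b_i)$. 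Since $g,h$ fix $a,b\in I$, the displacements $d_i(a_i,g_ia_i),d_i(b_i,g_ib_i),d_i(a_i,h_ia_i),d_i(b_i,h_ib_i)$ are all $o(\|\phi_i\|_{\Coll,v})$ while $d_i(a_i,b_i)=\Theta(\|\phi_i\|_{\Coll,v})$, so Paulin's Lemma 5.7 in $\Coll_i$ yields $d_i(w_i,[g_i,h_i]w_i)\le 16\delta$, exactly as in the previous lemma.

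The key new step is to convert this collapsed bound into a bound on the hyperbolic translation length $\ell_i$. Because $\gamma_i$ stays more than $5\delta$ from the collapsing locus, and by construction $3l+1<5\delta$, Lemma~\ref{l:track geodesics} guarantees that every $\Coll_i$-almost-geodesic joining two points of $\gamma_i$ lies within $3l+1$ of $\gamma_i$ and therefore avoids the collapsing locus. Such an almost-geodesic lifts isometrically through $q_i$ to a hyperbolic geodesic of the same length. Applying this to the geodesic from $w_i$ to $[g_i,h_i]w_i$, and using that $[g_i,h_i]$ acts on $\widetilde\gamma_i$ as translation by $\ell_i$, I obtain $\ell_i=d_{\mathbb{H}^*}(\widetilde w_i,[g_i,h_i]\widetilde w_i)\le 16\delta$.

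For the contradiction, I would invoke the Margulis Lemma in $\mathbb{H}^*$ for the discrete action of $\Gamma_{v,i}$. Once $\ell_i$ is bounded by a fixed constant, the axis $\widetilde\gamma_i$ is forced into a Margulis tube whose radius is bounded below in terms of the Margulis constant of $\mathbb{H}^*$ and $\delta$. The construction of $\mathcal{U}^K(\Gamma_{v,i})$ -- in particular the choice $K=40\delta$ and the absorbing property stated in Proposition~\ref{p:props of U^K}\eqref{eq:large radius} -- is arranged so that a Margulis tube of sufficiently large radius centered on $\widetilde\gamma_i$ must be contained in $\mathcal{U}^K(\Gamma_{v,i})$. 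Consequently $\widetilde\gamma_i\subset\mathcal{U}^K(\Gamma_{v,i})$, so $\gamma_i=q_i(\widetilde\gamma_i)$ lies inside the collapsing locus of $\Coll_i$, contradicting the hypothesis that $\gamma_i$ stays $5\delta$-far from it. Therefore $[g_i,h_i]$ acts trivially on $\mathbb{H}^*$, i.e.\ lies in $Z(\Gamma_{v,i})$, almost surely.

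The main obstacle will be making the Margulis-tube step quantitative: the bound $\ell_i\le 16\delta$ is fairly loose, and absorbing the resulting Margulis tube into $\mathcal{U}^K$ requires the precise calibration of $\delta$, $l$, $K=40\delta$, and the Margulis constant of $\mathbb{H}^*$ -- which is presumably why the hypothesis demands a genuine $5\delta$-gap rather than mere disjointness. Verifying that this calibration goes through uniformly in the two ambient geometries ($\mathbb{H}^2$ and $\mathbb{H}^3$) is the main technical point.
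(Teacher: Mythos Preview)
Your argument has a genuine gap at the Margulis-tube step, and it is not merely a matter of calibration. Your Paulin-style estimate in $\Coll_i$ yields $d_i(w_i,[g_i,h_i]w_i)\le 16\delta$, and after lifting (which does work essentially as you say, modulo the harmless issue that $w_i\in I_i$ need not lie exactly on $\gamma_i$) you obtain $\ell_i\le C$ for some constant $C$ comparable to $16\delta$. But $\delta=4(6l+\delta_{\bH}+2)$ with $l\ge 6\delta_{\bH}$, so $16\delta$ is on the order of hundreds of times $\delta_{\bH}$, while the Margulis constant $\epsilon$ is a small absolute number. There is simply no tube in $\mc{U}_\epsilon(\Gamma_{v,i})$ around the axis of a loxodromic whose translation length exceeds $\epsilon$, so from $\ell_i\le 16\delta$ you cannot conclude that $\widetilde\gamma_i$ lies in the thin part at all, let alone in $\mc{U}^K$. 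No rearrangement of the constants $l,\delta,K$ fixes this: they are all built from $\delta_{\bH}$ and are forced to be large, while $\epsilon$ is fixed and small.

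The paper avoids this by proving not that $\ell_i$ is bounded but that $\ell_i\to 0$. The point is to work directly in $\bH^\ast$ rather than in $\Coll_i$. One first shows that for each component $W$ of the collapsing locus penetrated by $[a_i,g_ia_i]$, the entry and exit points are a uniformly bounded $\bH^\ast$--distance apart; this uses the hypothesis that $\gamma_i$ stays $5\delta$ away from the collapsing locus (so that $[\widetilde b_i,g_i\widetilde b_i]$ cannot also penetrate $W$, and the projection to $W$ is controlled). Since the number of penetrated components is at most $d_i(a_i,g_ia_i)$, one gets $d_{\bH^\ast}(\widetilde a_i,g_i\widetilde a_i)\le D\cdot d_i(a_i,g_ia_i)=o(d_i(a_i,b_i))\le o(d_{\bH^\ast}(\widetilde a_i,\widetilde b_i))$. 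The same holds for $h_i$. Now the rectangle $[\widetilde a_i,\widetilde b_i,g_i\widetilde b_i,g_i\widetilde a_i]$ in $\bH^\ast$ has short sides that are $o(\text{long sides})$, so its middle is arbitrarily thin, and $[g_i,h_i]$ moves a point of $\widetilde\gamma_i$ by an amount tending to $0$. Only then can one say that $\widetilde\gamma_i$ eventually lies in $\mc{U}^K$, contradicting the hypothesis. The extra ingredient you are missing is precisely this linear comparison between $\bH^\ast$--displacements and $\Coll_i$--displacements along $[a_i,g_ia_i]$, which is what converts the qualitative $o(1)$ information in $\Coll_i$ into $o(1)$ information in $\bH^\ast$.
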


\begin{proof}
We consider an $\omega$--large set of indices $i$ so that $g_i$ and $h_i$ correspond to loxodromic isometries in $\bH^*$, as above, and implicitly concentrate only on such indices.

Suppose $[a_i, g_i\cdot a_i]$ penetrates a component $W$ of the collapsing locus. We claim the distance between the entry and exit points of $[a_i, g_i \cdot a_i]$ in $W$ is bounded independent of $i$. To that end, let $\widetilde a_i=q_i^{-1}(a_i)$ and $\widetilde b_i=q_i^{-1}(b_i)$. If $q_i\left([\widetilde a_i, g_i \cdot \widetilde a_i]\right)$ does not penetrate $W$, the claim follows from Lemma~\ref{l:generalized Farb's 4.8}. Suppose then that $q_i([\widetilde a_i, g_i \cdot \widetilde a_i])$ does penetrate $W$ and let $a'_i$ and $u_i'$ be the points of entry and exit of $q_i([\widetilde a_i, g_i \cdot \widetilde a_i])$ into $W$, respectively. By Lemma~\ref{l:generalized Farb's 4.9}, it suffices to show $d_{\bH^\ast}(a'_i, u_i')$ is bounded independent of $i$. Let $\pi_{\widetilde{W}}$ be the closest-point projection onto $\widetilde{W}=q_i^{-1}(W)$. Note that $d_{\bH^\ast}(a'_i, u_i')\leq d_{\bH^\ast}(\pi_{\widetilde{W}}(\widetilde{a_i}), \pi_{\widetilde{W}}(g_i\cdot\widetilde{a_i}))$. Suppose $q_i([\widetilde b_i, g_i \cdot \widetilde b_i])$ penetrates $W$. Then by Lemma~\ref{l:track geodesics} $[b_i, g_i \cdot b_i] \cap N_\delta(W) \ne \emptyset$. Since $d(a_i, b_i)$ is much bigger than both $d(b_i, g_i\cdot b_i)$ and $d(a_i, g_i \cdot a_i)$ for large $i$, by $\delta$--hyperbolicity, $[a_i, b_i] \cap N_{2\delta}(W) \ne \emptyset$. By Lemma~\ref{l:track geodesics}, every point on $[a_i, b_i]$ is at most $\delta$ away from some point on $\gamma_i$. As a result, $\gamma_i \cap N_{3\delta} (W) \ne \emptyset$, contradicting the assumptions of the lemma. Hence $q_i([\widetilde b_i, g_i \cdot \widetilde b_i])$ does not penetrate $W$. It follows that the projections of $[\widetilde a_i, \widetilde b_i]$, $[\widetilde b_i, g_i \cdot \widetilde b_i]$ and $g_i \cdot [\widetilde a_i, \widetilde b_i]$ to $\widetilde{W}$ have bounded length as none of them intersect $\widetilde{W}$. The above bounds together show that $d_{\bH^\ast}(a'_i, u_i')$ is \walmost surely bounded independent of $i$. 

The number of components penetrated by $[a_i, g_i \cdot a_i]$ is at most $d(a_i, g_i \cdot a_i)$.  Hence $d_{\bH^\ast}(\widetilde a_i, g_i \cdot \widetilde a_i)$ is bounded by $d(a_i, g_i \cdot a_i)D$, where $D$ is independent of $i$.  As a result, the geodesic rectangle $[\widetilde a_i, \widetilde b_i, g_i \cdot \widetilde a_i, g_i \cdot \widetilde b_i]$ is arbitrarily thin in the middle of $[\widetilde a_i,\widetilde b_i]$. Similarly, the same is true for the geodesic rectangle $[\widetilde a_i,\widetilde b_i, h_i \cdot \widetilde a_i, h_i \cdot \widetilde b_i]$. Hence \walmost surely, $[g_i, h_i]$ moves a point in the middle of $[\widetilde a_i, \widetilde b_i]$ by arbitrarily small amount. If $[g_i,h_i] \notin Z(\Gamma_{v_i})$, then by the definition of collapsed spaces $\gamma_i$ is in the collapsing locus \walmost surely. This contradicts the assumption of the lemma. Hence $[g_i,h_i] \in Z(\Gamma_{v_i})$, as required.
\end{proof}

\begin{lemma}\label{l:contained in collapsing locus}
If \walmost surely $\gamma_i$ is contained in the $5\delta$--neighborhood of the collapsing locus then \walmost surely $[g_i, h_i]\in Z(\Gamma_{v,i})$.
\end{lemma}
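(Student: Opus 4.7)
The plan is to pin down $\widetilde\gamma_i$ exactly and then show that both $g_i$ and $h_i$ must preserve it setwise and act on it only by translation. Since distinct components of the collapsing locus of $\Coll_i$ lie at distance at least $K = 40\delta$ apart, the $5\delta$--neighborhoods of distinct components are disjoint, and because $\gamma_i$ is connected the hypothesis forces $\gamma_i \subset N_{5\delta}(C)$ for a single component $C$. Lifting to $\bH^\ast$, this confines the complete geodesic line $\widetilde\gamma_i$ to a bounded tubular neighborhood of $\widetilde C$. Since no geodesic line in $\bH^\ast$ can stay in a bounded neighborhood of a horoball (which a geodesic must enter and exit) or of a compact ball, $\widetilde C$ must be a Margulis tube around some geodesic $\alpha$, and the neighborhood constraint then forces $\widetilde\gamma_i = \alpha$.

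Next I would show $g_i\alpha=\alpha$, and similarly $h_i\alpha=\alpha$. Since $\bar g$ fixes the non-degenerate arc $I\subset\Coll_{\infty,v}$, for every $x \in I$ represented by $x_i \in I_i \subset \alpha$ we have $d_i(g_ix_i,x_i)=o(\|\phi_i\|_{\Coll,v})$. If instead $g_i\alpha\ne\alpha$, then by Proposition~\ref{p:props of U^K} the tubes about $\alpha$ and $g_i\alpha$ are disjoint Margulis tubes with $d_{\bH^\ast}(\alpha,g_i\alpha)\ge 3K$. Parametrizing $\alpha$ by arclength, the collapsed displacement $d_i(\alpha(t),g_i\alpha(t))$ grows at least coarsely linearly with $|t-t_0|$ for a fixed $t_0$: any path from $\alpha(t)$ to $g_i\alpha(t)$ in $\Coll_i$ must travel along $\alpha$ (paying full $\bH^\ast$--length, since along-axis motion counts fully in $\Coll_i$) before crossing to $g_i\alpha$, and both along-axis contributions grow linearly in $|t-t_0|$. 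Since $I_i$ has length on the order of $|I|\cdot\|\phi_i\|_{\Coll,v}$, some $x_i\in I_i$ yields $d_i(x_i,g_ix_i)\gtrsim |I|\cdot\|\phi_i\|_{\Coll,v}$, contradicting the vanishing required by $\bar g$ fixing $I$.

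Finally I would analyze how $g_i, h_i$ act on $\alpha$. If $\Gamma_{v_i}$ is of hyperbolic type then $\Gamma_{v_i}$ is torsion-free Kleinian, so $\operatorname{Stab}_{\Gamma_{v_i}}(\alpha)$ is infinite cyclic, hence abelian, giving $[g_i,h_i]=1\in Z(\Gamma_{v_i})=\{1\}$. In the LSF case I pass to the images $\bar g_i, \bar h_i\in\overline{\Gamma_{v_i}}$ acting on $\bH^2$; the orientation-preserving stabilizer of $\alpha$ in any Fuchsian group consists of translations along $\alpha$ together with possibly half-turn involutions through points of $\alpha$. If $\bar g_i$ were such a half-turn through some $p_i\in\alpha$, it would move each $\alpha(t)$ by $2d_{\bH^\ast}(\alpha(t),p_i)$ along $\alpha$, which equals the $\Coll_i$--displacement there; choosing $x_i\in I_i$ at $\alpha$--arclength at least $|I|\|\phi_i\|_{\Coll,v}/2$ from $p_i$ yields the same linear-growth contradiction as above. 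So $\bar g_i$, and likewise $\bar h_i$, is a translation along $\alpha$; translations along a common axis commute, so $[\bar g_i,\bar h_i]=1$ in $\overline{\Gamma_{v_i}}$, which is exactly $[g_i,h_i]\in Z(\Gamma_{v_i})$.

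The main obstacle is the LSF case: the stabilizer of $\alpha$ can contain half-turn elements (e.g.\ when $\alpha$ passes through an order-2 cone point of the base orbifold), and these do not commute with the translations in $\operatorname{Stab}(\alpha)$. The key geometric observation that saves the argument is that such a half-turn displaces points of $\alpha$ by twice their distance to its fixed point, so it cannot approximately fix a segment whose length diverges in the unscaled metric $d_i$.
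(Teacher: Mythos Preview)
Your proof is correct and reaches the same conclusion as the paper, but the two arguments differ in how they execute steps 2 and 3.

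For step~2 (showing $g_i$ preserves $\alpha$), the paper's argument is shorter and stays entirely in the $\delta$--hyperbolic space $\Coll_i$: since $\bar g$ fixes $a$ and $b$, the sides $[a_i,g_ia_i]$ and $[b_i,g_ib_i]$ are $o(\|\phi_i\|_{\Coll,v})$ while $[a_i,b_i]$ has length comparable to $|I|\cdot\|\phi_i\|_{\Coll,v}$; a thin--quadrilateral argument then forces the middle of $[a_i,b_i]\subset C_i$ to lie within $2\delta$ of $g_i[a_i,b_i]\subset g_iC_i$, contradicting $d_i(C_i,g_iC_i)\ge K$. Your linear--growth argument is correct in spirit but requires more care to make rigorous (one must check that along--axis motion in $\Coll_i$ really contributes full length, i.e.\ that $C_i$ is geodesic in $\Coll_i$, and that no shortcut through other collapsed components helps). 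The paper's route sidesteps these verifications.

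For step~3, the difference is more interesting. The paper simply asserts that $g_i,h_i$ ``act as translations along $\widetilde\gamma_i$''; this is justified, though not made explicit, by Corollary~\ref{c:arc stab loxo}: since $g,h\in\V\smallsetminus\SC(\V)$ fix the non--degenerate arc $I$, the approximations $g_i,h_i$ are $\omega$--almost surely loxodromic, hence cannot be half--turns. Your approach instead gives a direct geometric argument: a half--turn about $p_i\in\alpha$ displaces $\alpha(t)$ by $2|t-s|$ along $\alpha$, which is incompatible with approximately fixing the long segment $I_i$. This is a nice, self--contained alternative that avoids invoking the earlier corollary. Either way one concludes that $\bar g_i,\bar h_i$ translate along $\alpha$ and hence commute.
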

\begin{proof}
We first show that $\gamma_i$ is entirely contained in the collapsing locus of $\Coll_i$. By the choice of $K$, if $\gamma_i$ is entirely contained in the $5\delta$--neighborhood of the collapsing locus, then it is entirely contained in the $5\delta$--neighborhood of some connected component $C$ of the collapsing locus. So $q_i^{-1}(C)$ is a tube around some geodesic $\beta$ in $\bH^*$.  The $q_i$--pre-image of the $5\delta$ neighborhood of $C$ is the $D'$ neighborhood of $\beta$ for some constant $D'$. The only bi-infinite geodesic contained in this neighborhood is $\beta$, so $\widetilde{\gamma_i}=\beta$. In particular, $\widetilde{\gamma_i}$ is contained in $q_i^{-1}(C)$, so $\gamma_i$ is contained in $C$.
 
 As above, choose subsegments of $\gamma_i$ denoted $[a_i, b_i]$. Let $C_i$ be the component of the collapsing locus containing $\gamma_i$. Suppose $g_i \cdot C_i\neq C_i$. Then $d_{\Coll_{i}}(C_i,g_i \cdot C_i) \ge K$.  For \walmost all $i$, $[a_i, b_i]$ is much longer than both $[a_i, g_i \cdot a_i]$ and $[b_i, g_i \cdot b_i]$.  By $\delta$--hyperbolicity of $\Coll_i$, the middle part of $[a_i, b_i]$ is in a $2\delta$--neighborhood of $g[a_i, b_i]$.  But $K \ge 2\delta$, so $C_i$ intersects the $K$--neighborhood of $g_i \cdot C_i$, a contradiction. Thus, $g_i \cdot C_i=C_i$, and similarly $h_i \cdot C_i=C_i$. Hence $g_i$ and $h_i$ act as translations along $\widetilde\gamma_i$, and \walmost surely $[g_i, h_i] = 1$, proving the lemma.
\end{proof}

The previous three lemmas finish the proof of Theorem~\ref{t:Rtree summary}.\eqref{eq:abel seg}, and hence of Theorem~\ref{t:Rtree summary}.

\section{JSJ-decompositions and modular automorphisms} \label{s:JSJ}
We continue to make Standing Assumption~\ref{ass:fixed things} and use the notation from there. 

\subsection{Summary}
In the last section we showed in the \Cdiv case that if $v$ is the vertex associated to $\V$ and $\wlim \| \phi_i \|_{\Coll, v}=\infty$ then $\V$ admits a nontrivial action on an $\R$-tree. The results of Section \ref{s:R-trees} together with \cite[Theorem 5.1]{Guirardel:Rtrees} imply that this action induces a graph of groups decomposition of $\V$. Associated to any such decomposition is a group of
\emph{modular automorphisms} of $\V$ (see Definition~\ref{defn:modaut}).

To apply the shortening argument in the next section (see Theorem \ref{t:shortening argument}), we need to use modular automorphisms of $\V$ without a priori knowing the particular splitting of $\V$.  For this we need a \emph{JSJ--decomposition} of $\V$, which is a graph of groups decomposition which ``sees" all possible modular automorphisms of $\V$ (See Theorem \ref{thm:decomps}). 

Recall $\SC(\V)$ is the stable center of $\V$ and $\overline{\V}=\V/\SC(\V)$. Let $\pi_v\colon \V\to\overline{\V}$ denote the quotient map. If $\V$ is a hyperbolic-type vertex group, $\SC(\V)=\{1\}$ and $\overline{\V}=\V$ in this case. We first construct a JSJ--decomposition for $\overline{\V}$ which naturally induces a decomposition of $\V$, see Definition \ref{def:J^v splitting}.

The JSJ--decomposition of $\overline{\V}$ we use was essentially constructed by Guirardel-Levitt in \cite{GuiLev2}. If $\overline{\V}$ is finitely generated and $K$--CSA, the existence of the decomposition we need  follows directly from \cite[Theorem 9.14]{GuiLev2}. However, in our case $\overline{\V}$ is only finitely generated relative to stably parabolic subgroups and $\overline{\V}$ is not necessarily $K$--CSA since it can have arbitrarily large finite subgroups. Adapting the proof of \cite[Theorem 9.14]{GuiLev2} to the relatively finitely generated case is straightforward. Also, in the next section we show that $\overline{\V}$ is \emph{weakly $K$--CSA} (see Definition~\ref{defn:wcsa} below), which means that it shares enough of the properties of $K$--CSA groups that the proof of \cite[Theorem 9.14]{GuiLev2} holds with only minor changes. We explain these changes in Appendix~\ref{app:decompositions}, see Theorem~\ref{thm:JSJwithtorsion}. Finally, in Section \ref{s:modaut} we define modular automorphisms, prove Theorem \ref{thm:decomps}, and show how modular automorphisms of $\overline{\V}$ are induced by modular automorphisms of $\V$ (see Lemma \ref{l: lifting the shortening auto}).

\subsection{Weakly $K$--CSA groups}

\begin{definition}
A group is \emph{$K$--virtually abelian} if it has an abelian subgroup of index at most $K$. 
\end{definition}

\begin{restatable}{definition}{defwcsa}\label{defn:wcsa}
Fix $K \ge 1$.  A group is \emph{weakly $K$--CSA} if (i) any element $g$ of order greater than $K$ is contained in a unique maximal virtually abelian subgroup $M(g)$, so that $M(g)$ is $K$--virtually abelian and equal to its normalizer, and (ii) every two infinite, virtually abelian subgroups $A$ and $B$ with $\langle A, B\rangle$ not virtually abelian satisfy $|A\cap B|\leq K$.
\end{restatable}

Note that if a group $H$ is weakly $K$--CSA and all finite subgroups of $H$  have order $\leq K$, then $H$ is $K$--CSA.

\begin{lemma}\label{lem:CSA1}
Let $A$ be a virtually abelian subgroup of $\overline{\V}$. 

\begin{enumerate}
\item If $A$ is not abelian, then $A$ has an abelian subgroup $A^+$ of index two and an element $\tau\in A\setminus A^+$ so that for all $g\in A^+$, $\tau^{-1}g\tau=g^{-1}$. 
\item If $B$ is another virtually abelian subgroup of $\overline{\V}$, then either $|A\cap B|\leq 2$ or $\langle A, B\rangle$ is $2$--virtually abelian. 
\end{enumerate}
\end{lemma}

\begin{proof}
 We first assume that $A$ and $B$ are finitely generated. Let $A_i$ and $B_i$ be subgroups of $\Gamma_i$ generated by $\omega$--approximations of fixed finite generating sets of $A$ and $B$, respectively. There exist vertex groups $V_i$ in the geometric decomposition of $\Gamma_i$ so that \walmost surely $A_i, B_i \le V_i$. Let $\overline{A}_i$ and $\overline{B}_i$ be the images of $A_i$ and $B_i$ in $V_i/Z(V_i)$, respectively.  Then $\overline{A}_i$ and $\overline{B_i}$ are either both subgroups of fundamental groups of orientable finite volume hyperbolic $3$--manifolds or both subgroups of fundamental groups of orientable hyperbolic $2$--orbifolds. Also, $A$ and $B$ are limit groups over the families $\{\overline{A}_i\}$ and $\{\overline{B}_i\}$, respectively.

Assume $|A|\geq 3$ and $|B|\geq 3$, otherwise the result is trivial. Since $A$ and $B$ are finitely generated and virtually abelian, $\overline{A}_i$ and $\overline{B}_i$ are $\omega$--almost surely virtually abelian. Hence each $\overline{A}_i$ is contained in a unique maximal virtually abelian subgroup $M_i$ which is either $\Z$, $\Z^2$, $D_\infty$, or finite cyclic. Let $M_i^+$ be an abelian subgroup of $M_i$ of index at most 2. Then we can define $A^+$ to be the set of $g\in A$ such that if $(g_i)$ is an $\omega$--approximation of $g$, then the image of $g_i$ in $V_i/Z(V_i)$ belongs to $M_i^+$ $\omega$--almost surely. Suppose $g, h\in A$ with $\omega$--approximations $(g_i)$ and $(h_i)$ respectively. Let $\overline{g_i}$, $\overline{h_i}$ be the images of $g_i$ and $h_i$ in $V_i/Z(V_i)$, If $g, h\in A^+$, then $\overline{g}_i, \overline{h}_i\in M_i^+$ $\omega$--almost surely. Then $\overline{g}_i$ and $\overline{h}_i$ \walmost surely commute, and hence $g$ and $h$ commute. Similarly, if $g, h\notin A^+$, then $\overline{g}_i^{-1}\overline{h}_i\in M_i^+$ $\omega$--almost surely and hence $g^{-1}h\in A^+$. Thus, $A^+$ is an abelian subgroup of $A$ of index at most 2. Finally, if $g\in A^+$ and $h\notin A^+$, then we must have $M_i\cong D_\infty$ $\omega$--almost surely. It follows that $\overline{h}_i^{-1}\overline{g}_i\overline{h}_i=\overline{g}_i^{-1}$ $\omega$--almost surely, hence $h^{-1}gh=g^{-1}$.

Similarly, each $\overline{B}_i$ is contained in a unique maximal virtually abelian subgroup $N_i$. Note that in the fundamental group of an orientable finite volume hyperbolic $3$--manifolds, any two maximal virtually abelian subgroups are equal or disjoint. In the fundamental group of an orientable hyperbolic $2$--orbifold, two maximal virtually abelian subgroups are either equal, disjoint, or their intersection contains a single non-trivial element of order 2. In either case, we have $M_i=N_i$  or $|M_i\cap N_i|\leq 2$ whenever $M_i$ and $N_i$ both exist. If the first case happens $\omega$--almost surely, then the same argument as above gives that $\langle A, B\rangle$ is $2$--virtually abelian. If the second happens $\omega$--almost surely, $|A\cap B|\leq 2$.

Now suppose $A$ and $B$ are not finitely generated. We can apply the above construction to any finitely generated subgroup of $A$ and to produce the sequence $M_i$. Note that the sequence $M_i$ is independent of the finitely generated subgroup chosen, since any two finitely generated subgroups are both contained in a larger finitely generated subgroup. In particular, $A^+$ can defined as before and the same arguments show that (1) holds in this case.

Now if $|A\cap B|>2$, then the above argument shows that all finitely generated subgroups of $\langle A, B\rangle$ are $2$--virtually abelian. A diagonal argument shows that this implies that $\langle A, B\rangle$ are $2$--virtually abelian, see \cite[Lemma 9.6]{GuiLev2}. 
\end{proof}

\begin{lemma}\label{lem:CSA2}
Let $g\in \overline{\V}$ be an element of order at least 3. Let
\[
M(g)= \left\langle \{h\in \overline{\V} \;|\; \langle g, h\rangle \text{ is virtually abelian}\}\right\rangle.
\]

Then $M(g)$ is $2$--virtually abelian and $M(g)$ is the unique, maximal virtually abelian subgroup of $ \overline{\V}$ containing $g$. 
\end{lemma}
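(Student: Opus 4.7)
The plan is to show separately that (a) $M(g)$ is $2$-virtually abelian, and (b) $M(g)$ is the unique maximal virtually abelian subgroup containing $g$. For (b), once (a) is established, maximality and uniqueness are essentially definitional: any virtually abelian subgroup $A$ containing $g$ is generated by elements $h$ each of which satisfies $\langle g,h\rangle\subseteq A$ virtually abelian, so $A\subseteq M(g)$ by the defining generating set of $M(g)$. Thus both the unique-maximal property and the fact that $M(g)$ contains $g$ (take $h=g$) follow immediately.

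The main work is (a). Since $\overline{\V}$ is countable, by the argument recalled in the proof of Lemma~\ref{lem:CSA1} (via \cite[Lemma 9.6]{GuiLev2}), it suffices to show that every finitely generated subgroup of $M(g)$ is $2$-virtually abelian. Any such subgroup is contained in $\langle g, h_1,\ldots,h_n\rangle$ where, for each $i$, the subgroup $A_i=\langle g, h_i\rangle$ is virtually abelian. I will argue by induction on $n$ that $B_n=\langle A_1,\ldots,A_n\rangle$ is $2$-virtually abelian. The base case $n=1$ is immediate. For the inductive step, $B_{n-1}$ and $A_n$ are both virtually abelian and their intersection contains $g$, an element of order $\geq 3$. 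By Lemma~\ref{lem:CSA1} the only possibilities are $|B_{n-1}\cap A_n|\le 2$ or $\langle B_{n-1},A_n\rangle$ is $2$-virtually abelian; the first is ruled out by the presence of $g$, so $B_n=\langle B_{n-1},A_n\rangle$ is $2$-virtually abelian, completing the induction.

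The main obstacle, such as it is, lies in leveraging the hypothesis that $g$ has order at least $3$: this is precisely what guarantees $|B_{n-1}\cap A_n|\ge 3 >2$ at every inductive step, forcing the favorable branch of the dichotomy in Lemma~\ref{lem:CSA1}. Without this order hypothesis the argument would fail; indeed, in a dihedral group two involutions can generate the whole group while each separately lies in a distinct maximal virtually abelian (cyclic) subgroup, so one genuinely needs $g$ to have ``large enough'' order (here, $\ge 3$) to pin down a canonical $M(g)$. With the finite-generation case handled, applying \cite[Lemma 9.6]{GuiLev2} as in Lemma~\ref{lem:CSA1} promotes the conclusion from all finitely generated subgroups to all of $M(g)$, finishing (a) and hence the lemma.
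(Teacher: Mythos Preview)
Your proof is correct and follows essentially the same approach as the paper: reduce to finitely generated subgroups via \cite[Lemma 9.6]{GuiLev2}, then use induction together with Lemma~\ref{lem:CSA1} (exploiting that $g$ has order $\ge 3$ to force the ``$2$--virtually abelian'' branch of the dichotomy), and observe that maximality and uniqueness are immediate from the definition of $M(g)$. The paper's proof is a one-line summary of exactly this argument; your version simply unpacks the induction and the maximality step in more detail.
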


\begin{proof}
By induction and Lemma~\ref{lem:CSA1}, all finitely generated subgroups of $M(g)$ are $2$--virtually abelian so $M(g)$ is $2$--virtually abelian by \cite[Lemma 9.6]{GuiLev2}. The maximality and uniqueness of $M(g)$ follow.
\end{proof}

\begin{lemma}\label{lem:CSA3}
Let $g\in \overline{\V}$ have order at least 3. Then $M(g)$ is equal to its normalizer.
\end{lemma}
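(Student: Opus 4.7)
Let $h \in \overline{\V}$ normalize $M(g)$. The goal is to show $h \in M(g)$, or equivalently, by Lemma~\ref{lem:CSA2}, that $\langle g, h \rangle$ is virtually abelian. The first elementary observation is that $hgh^{-1} \in M(g)$ since $h \in N(M(g))$, and this conjugate has the same order as $g$, hence order at least $3$.

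The strategy is to show that $\langle M(g), h \rangle$ is itself virtually abelian and then conclude $h \in M(g)$ by the maximality in Lemma~\ref{lem:CSA2}. The quotient $\langle M(g), h\rangle/M(g)$ is cyclic (generated by $hM(g)$); if it is finite, then $\langle M(g),h\rangle$ is an extension of a virtually abelian group by a finite group, hence virtually abelian, and we are done. Thus the main remaining case is that no nontrivial power of $h$ lies in $M(g)$; I plan to derive a contradiction in that case.

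To obtain the contradiction, I would distinguish according to $|h|$. If $|h|\le 2$ then $h^2=1\in M(g)$, putting us in the already-settled finite case. If $|h|\ge 3$ then Lemma~\ref{lem:CSA2} provides a unique maximal virtually abelian subgroup $M(h)$ containing $h$. Since (by maximality) the join $\langle M(g),M(h)\rangle$ cannot be virtually abelian, Lemma~\ref{lem:CSA1} applied to the virtually abelian subgroups $M(g)$ and $M(h)$ forces $|M(g)\cap M(h)|\le 2$. The element $h$ normalizes both $M(g)$ and $M(h)$, hence also their intersection; and I will then produce an element $m\in M(g)$ of order at least $3$ fixed by conjugation by $h$, which must therefore lie in $M(g)\cap M(h)$, giving the desired contradiction.

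The main obstacle will be producing that fixed element of large order inside $M(g)$. Here I would invoke Lemma~\ref{lem:dihedraltype}: $M(g)$ contains an abelian subgroup $M(g)^+$ of index at most $2$, and after replacing $M(g)^+$ by $M(g)^+\cap h M(g)^+ h^{-1}$ (still of finite index) we may assume $h$ normalizes $M(g)^+$ and acts on it by an automorphism $\psi$. Since $M(g)^+$ is abelian, the subgroup $L=\langle \psi^n(g') : n\in\Z\rangle$ generated by the $h$--orbit of any $g'\in M(g)^+$ of order at least $3$ (taking $g'=g$ if $g\in M(g)^+$, or $g'=g^2$ together with a supplementary argument using the dihedral structure if $g\notin M(g)^+$) is abelian; a suitable symmetric combination of $\psi$--iterates then yields a nontrivial $h$--fixed element of the required order, completing the contradiction.
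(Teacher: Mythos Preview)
Your plan has a genuine gap at the final step: producing an element $m\in M(g)$ of order at least $3$ fixed by conjugation by $h$. The phrase ``a suitable symmetric combination of $\psi$--iterates'' is doing all the work and is not justified. If $\psi$ has infinite order on $M(g)^+$ there is no finite symmetric combination to take; and even when the $\psi$--orbit of $g'$ is finite, the orbit sum can vanish or have order at most~$2$ (for instance $\psi(x)=-x$ on $\Z/3$). More seriously, conjugation by $h$ need not have \emph{any} nontrivial fixed point in $M(g)^+$: think of the model situation $M(g)^+=\Z[1/2]$ with $h$ acting by multiplication by~$2$, where the only fixed element is $0$. Nothing in Lemmas~\ref{lem:CSA1}, \ref{lem:CSA2}, or \ref{lem:dihedraltype} rules this out, so the argument cannot be completed using only those abstract properties; indeed a group like $BS(1,2)$ satisfies the conclusions of Lemmas~\ref{lem:CSA1} and~\ref{lem:CSA2} for elements of its normal abelian subgroup $A$, yet $A=M(a)$ is not self-normalizing there.

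The paper's proof avoids this by not working abstractly. It passes to the $\omega$--approximations $\overline{g}_i,\overline{x}_i$ inside $V_i/Z(V_i)$, which is a subgroup of a hyperbolic $3$--manifold group or a hyperbolic $2$--orbifold group. There the maximal virtually abelian subgroups are cyclic or infinite dihedral, so $\overline{g}_i$ and $\overline{x}_i^{-1}\overline{g}_i\overline{x}_i$ (both of order $\ge 3$) must commute; and in such geometric groups, two commuting elements of order $\ge 3$ share an axis or fixed point, forcing $\overline{x}_i$ itself into the same maximal virtually abelian subgroup $M_i$. Pulling this back through the limit gives $x\in M(g)$. The essential input is this geometric fact about stabilizers in $\bH^2$ and $\bH^3$, which has no analogue in your purely group-theoretic approach.
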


\begin{proof}

Suppose $x\in  \overline{\V}$ and $x^{-1}M(g)x=M(g)$. Let $V_i$ be as in the proof of Lemma~\ref{lem:CSA1}. Let $g_i$ and $x_i$ be $\omega$--approximations of $g$ and $x$ respectively, and let $\overline{g}_i$ and $\overline{x}_i$ be the images of $g_i$ and $x_i$ in $V_i/Z(V_i)$ respectively.

Then $\omega$--almost surely, $\langle \overline{g}_i, \overline{x}_i^{-1}\overline{g}_i\overline{x}_i\rangle$ is virtually abelian, so it is contained in a maximal virtually abelian subgroup $M_i$. If $M_i$ is finite, then it must be a finite cyclic group corresponding to a cone point on an oribfold, where this cone point has order at least 3 \walmost surely. In this case $M_i$ is malnormal, so $\overline{g}_i\in M_i$ and $\overline{x}_i^{-1}\overline{g}_i\overline{x}_i\in M_i$ \walmost surely imply $\overline{x}_i\in M_i$ \walmost surely. 

Now suppose $M_i$ is infinite. Since $M_i$ is a maximal virtually abelian subgroup of a hyperbolic 3--manifold group or a hyperbolic $2$--orbifold group, for all $h\in V_i/Z(V_i)$ either $h\in M_i$ or $|h^{-1}M_ih\cap M_i|<\infty$. Since $M_i$ is either $\Z$, $\Z^2$, or $D_\infty$, $|h^{-1}M_ih\cap M_i|<\infty$ implies that $|h^{-1}M_ih\cap M_i|\leq 2$. Now, \walmost surely $\overline{x}_i^{-1}\overline{g}_i\overline{x}_i\in M_i\cap (\overline{x}_i^{-1}M_i\overline{x}_i)$ and $\overline{x}_i^{-1}\overline{g}_i\overline{x}_i$ has order at least 3, hence $\overline{x}_i\in M_i$ \walmost surely.

Since in either case we have that \walmost surely $\overline{x}_i\in M_i$, it follows that $x\in M(g)$.
\end{proof}

Lemmas~\ref{lem:CSA1},~\ref{lem:CSA2}, and~\ref{lem:CSA3} together imply the following.
\begin{proposition}\label{prop:CSA}
$\overline{\V}$ is weakly $2$--CSA.
\end{proposition}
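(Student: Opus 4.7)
The plan is to verify that the two clauses of Definition~\ref{defn:wcsa} both hold with $K=2$, by assembling Lemmas~\ref{lem:CSA1}, \ref{lem:CSA2}, and~\ref{lem:CSA3}; essentially no new argument is required, and the main task is to check that the precise statements line up with the definition.

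For clause (ii), let $A,B \leq \overline{\V}$ be virtually abelian (infiniteness is irrelevant here) and suppose $\langle A,B\rangle$ is not virtually abelian. In particular $\langle A,B\rangle$ is not $2$-virtually abelian, so Lemma~\ref{lem:CSA1} forces $|A\cap B|\leq 2$. This is exactly clause (ii) of Definition~\ref{defn:wcsa} for $K=2$.

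For clause (i), let $g\in\overline{\V}$ have order $>2$, i.e.\ order at least $3$. Lemma~\ref{lem:CSA2} produces the subgroup $M(g)$, shows it is $2$-virtually abelian, and shows it is the unique maximal virtually abelian subgroup of $\overline{\V}$ containing $g$. Lemma~\ref{lem:CSA3} says that $M(g)$ is equal to its normalizer. Combining these yields clause (i) of Definition~\ref{defn:wcsa} for $K=2$.

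Since both clauses hold with $K=2$, we conclude that $\overline{\V}$ is weakly $2$-CSA. No step is a real obstacle here; the substantive work is in Lemmas~\ref{lem:CSA1}--\ref{lem:CSA3}, which were proved by passing to $\omega$-approximations and using that in a finite volume hyperbolic $3$-manifold group or hyperbolic $2$-orbifold group, maximal virtually abelian subgroups are either cyclic or infinite dihedral and commuting elements of order $\geq 3$ lie in a common maximal virtually abelian subgroup. Thus the proposition is immediate from what has already been established.
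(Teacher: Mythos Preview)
Your proof is correct and follows exactly the paper's approach: the paper's proof consists of the single sentence that Lemmas~\ref{lem:CSA1}, \ref{lem:CSA2}, and~\ref{lem:CSA3} together imply the proposition, and you have simply unpacked how each clause of Definition~\ref{defn:wcsa} with $K=2$ is read off from those lemmas.
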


Using Proposition~\ref{prop:CSA}, Theorem~\ref{thm:JSJwithtorsion} applies to $\overline{V}$.  Let $\mc{A}$ be the family of all virtually abelian subgroups of $\overline{\V}$, and $\mc{H}_{\overline{\V}}$ the family of all stably parabolic subgroups of $\overline{\V}$ (recall Definition~\ref{def:stably para}).

\begin{definition} \label{def:J^v splitting}
Let $\overline{\mathbb J}(v)$ be the tree of cylinders of the $(\mc{A}, \mc{H}_{\overline{\V}})$--JSJ decomposition of $\overline{V}$ given by Theorem \ref{thm:JSJwithtorsion}. 

Let $\mathbb J(v)$ be the splitting of $\V$ induced by  the splitting $\overline{\mathbb J}(v)$ of $\overline{\V}$. That is, $\mathbb J(v)$ and $\overline{\mathbb J}(v)$ have the same underlying graph and each vertex and edge group of $\mathbb J(v)$ is the pre-image of the corresponding vertex or edge group of $\overline{\mathbb J}(v)$.
\end{definition}

By Theorem \ref{thm:JSJwithtorsion}, $\overline{\mathbb J}(v)$ is compatible (in the sense of \cite[Definition 5.19]{GH}) with every $(\mc{A}, \mc{H}_{\overline{\V}})$--splitting of $\overline{V}$. That is, for any $(\mc{A}, \mc{H}_{\overline{\V}})$--splitting $\mathbb{A}$ there exists an $(\mc{A}, \mc{H}_{\overline{\V}})$--splitting $\mathbb B$ together with collapse maps $\mathbb B\to \mathbb A$ and $\mathbb B\to\overline{\mathbb J}(v)$. 

Next we describe the structure of certain important vertex groups of $\overline{\mathbb J}(v)$ and $\mathbb J(v)$, referred to as the flexible vertex groups in Theorem \ref{thm:JSJwithtorsion}.

\begin{definition}\label{def:dihtyp}
We say a group $D$ is a \emph{dihedral type} group if 
there exist (i)  has a subgroup $D^+$ of index 
two in $D$;  and (ii) $\tau\in D\setminus D^+$ so that for all $g\in D^+$, $\tau^{-1}g\tau=g^{-1}$. 
\end{definition}

Note that by Lemma \ref{lem:CSA1}, every virtually abelian subgroup of $\overline{V}$ is either abelian or dihedral type. If $\overline{A}$ is a virtually abelian subgroup of $\overline{V}$ and $A$ is the pre-image of $\overline{A}$ in $V$, then $A$ is either abelian by Lemma \ref{l:preimage abelian} or $A$ is a central extension of a dihedral type group.

For a general group $H$, we will again use $\mc{A}$ to denote the family of virtually abelian subgroups of $H$. We will also use $\mc{H}$ to denote an arbitrary collection of subgroups of $H$.

\begin{definition}\label{defn:QH}
Let $\mathbb A$ be an $(\mc{A}, \mc{H})$ splitting of a group $H$ and let $U$ be a vertex group of $\mathbb A$. $U$ is called a \emph{QH--vertex group with fiber $F$} if there is a short exact sequence of the form
\[
1\to F\to U\to O\to 1
\] 
Where  $O$ is the fundamental group of an orientable hyperbolic $2$--orbifold of finite-type, i.e. $O\in \orb$. Moreover, we require that if $H\leq U$ is an edge group of $\mathbb A$ or $H$ is conjugate into an element of $\mc{H}$, then the image of $H$ in $O$ is either finite or is conjugate into a boundary subgroup of $O$.

\end{definition}

 By Theorem \ref{thm:JSJwithtorsion}, the QH--vertex groups of $\overline{\mathbb J}(v)$ have fiber of size at most 2. In this case, the fiber is a central subgroup of the QH--vertex group. The QH--vertex groups of the corresponding splitting $\mathbb J(v)$  of $\V$ are then central extensions of the QH--vertex groups of $\overline{\V}$, hence these are also QH--vertex groups with central fiber.

\subsection{Modular automorphisms}\label{s:modaut}

We define the group of modular automorphisms relative to a particular graph of groups decomposition and a fixed family of subgroups. These definitions are standard except that some care has to be taken to ensure that the modular automorphisms of $\overline{\V}$ are induced by automorphisms of $\V$, especially in the case of the dihedral type vertex groups.

Suppose $H = A\ast_C B$ and $c \in Z_H(C)$.  The \emph{Dehn twist by $c$} is the automorphism of $H$ defined by fixing each $a \in A$ and mapping each $b \in B$ to $cbc^{-1}$.  If $H = A\ast_C$ and $c \in Z_H(C)$ then the \emph{Dehn twist by $c$} is the automorphism defined by fixing each $a \in A$ and mapping the stable letter $t$ to $tc$.  If $\mathbb A$ is a graph of groups decomposition and $e$ is an edge of $\mathbb A$, a \emph{Dehn twist over $e$} is a Dehn twist in the one edge splitting corresponding to collapsing all edges of $\mathbb A$ other than $e$.

Now suppose $A$ is an abelian group which is finitely generated relative to a family of subgroups $\mc{E}$. Let $P_A$ be the minimal direct factor of $A$ which contains the subgroups in $\mc{E}$ and the torsion of $A$. The subgroup $P_A$ can be characterized as the intersection of the kernels of all homomorphisms $A\to \Z$ which are identically 0 on the elements of $\mc{E}$.  Write $A = A_0 \oplus P_A$, and note that since $A$ is finitely generated relative to $\mc{E}$, $A_0$ is a finitely generated free abelian group. We consider the group of automorphisms of $A$ fixing $P_A$. In particular, this group contains automorphisms sending $g$ to $gh$ for some basis elements $g$ and $h$ of  $A_0$ and fixing all other elements of the basis of $A_0$, and all elements of $P_A$. We call such automorphisms of $A$ \emph{Nielsen transformations relative to $\mc{E}$} and note that they generate a group isomorphic to $\mathrm{SL}_n(\Z)$ where $n$ is the rank of $A_0$.

Now suppose $D$ is a dihedral type group which is finitely generated relative to a family of subgroups $\mc{E}$ which are closed under conjugation.  Let $D^+$ and $\tau$ be as in Definition~\ref{def:dihtyp}. We again define $P_{D^+}$ as the minimal direct factor of $D^+$ which contains the intersection of $D^+$ with the elements of $\mc{E}$ and with the torsion of $D^+$.  Write $D^+ = D_0 \oplus P_{D^+}$, and suppose $g$ and $h$ are distinct elements of a basis for $D_0$. A \emph{squared Nielsen transformation relative to $\mc{E}$} is an automorphism of $D$ fixing $\tau$ and extending the automorphism of $D^+$ sending $g$ to $gh^2$ and fixing $P_{D^+}$ and all other basis elements of $D_0$. As we show in Lemma \ref{l: lifting the shortening auto}, a squared Nielsen transformation of a dihedral type vertex group of a splitting of $\overline{V}$ is induced by an automorphism of the pre-image of that vertex group in $V$, which may not be true for the Nielsen transformation which sends $g$ to $gh$. We also show in Lemma~\ref{l:snt is enough} that the shortening argument still works using these squared Nielsen transformations.

Suppose $\mathbb A$ is a graph of groups decomposition of a group $H$ and $v_0$ is a vertex of $\mathbb A$. Any element of $g$ can be represented (non-uniquely) by $[a_0, e_1, a_1,..., e_n, a_n]$ where $e_1,...,e_n$ is an edge path in $\mathbb A$ from $v_0$ to $v_0$, $a_0\in \mathbb A_{v_0}$, and each $a_i\in \mathbb A_{v_i}$ where $v_i$ is the terminal vertex of $e_i$ for $1\leq i\leq n$. Suppose $v$ is a vertex of $\mathbb A$ and $\sigma \in \Aut(\mathbb A_v)$ acts by conjugation on each adjacent edge group.  Then $\sigma$ can be extended to an automorphism of $G$ as in \cite[Definition 4.13]{ReiWei} as follows:  for each adjacent edge $e$ there is $\gamma_e\in \mathbb A_v$ so that $\sigma(h)=\gamma_e h\gamma_e^{-1}$ for all $h$ in the image of $\mathbb A_e$ in $\mathbb A_v$. Then $\sigma$ extends to $\overline{\sigma} \in \Aut(H)$ by defining
\[
\overline{\sigma}([a_0, e_1, a_1,..., e_n, a_n])= [\overline{a}_0, e_1, \overline{a}_1,..., e_n, \overline{a}_n]
\]

where 
\[
\overline{a_i}=\begin{cases}
a_i\;&\text{if}\;a_i\notin\mathbb A_v\\
\gamma_{e_i}^{-1}\sigma(a_i)\gamma_{e_{i+1}}\;&\text{if}\;a_i\in\mathbb A_v.
\end{cases}
\]

We call $\overline{\sigma}$ a {\em natural extension} of $\sigma$. This natural extension is not unique: it depends on the choice of $\gamma_e$.

\begin{definition}\label{defn:modaut}
Let $\mathbb A$ be a reduced $(\mc{A}, \mc{H})$ splitting of a group $H$. The \emph{modular automorphism group of $H$ relative to ($\mathbb A, \mc{H}$}), denoted $\Mod^{\mc{H}}_{\mathbb A}(H)$, is the subgroup of $\Aut(H)$ generated by:
\begin{enumerate}
\item Inner automorphisms.

\item Dehn twists over edges of $\mathbb A$ with abelian edge groups.

\item Natural extensions of automorphisms of QH-vertex groups of $\mathbb A$ which fix the fiber pointwise, which act by conjugation on elements of $\mc{H}$, and which project to automorphisms of the associated orbifold group induced by homeomorphisms of the underlying orbifold fixing the boundary and cone points. 

\item Natural extensions of Nielsen transformations of maximal abelian vertex groups of $\mathbb A$ relative to $\mc{H}$ and to the adjacent edge groups.

\item Natural extensions of automorphisms of vertex groups $U$ where $U$ is a central extension $1\to Z\to U\to D\to 1$, $Z$ is contained in all edge groups adjacent to $U$, $D$ is a dihedral-type group and the automorphism fixes $Z$ and projects to squared Nielsen transformations of $D$ relative to the image in $D$ of the edge group of $\mathbb A$ adjacent to $U$ and to the elements of $\mc{H}$ contained in $U$. 
\end{enumerate}
\end{definition}

  When working with the $\overline{V}$, the type (5) automorphisms are squared Nielsen transformations of dihedral type vertex groups.  That is, the central extension is trivial in this case.

Note that each type of modular auomorphism listed in Definition \ref{defn:modaut} acts by a (possibly trivial) conjugation on each subgroup in $\mc{H}$. Since this holds for the generators of $\Mod^{\mc{H}}_{\mathbb A}(H)$, it also holds for all the elements of $\Mod^{\mc{H}}_{\mathbb A}(H)$. We record this fact with the following lemma.
\begin{lemma}\label{l:conjugate parabolic}
If $\sigma\in \Mod^{\mc{H}}_{\mathbb A}(H)$ and $P\in \mc{H}$, then $\sigma(P)$ is conjugate to $P$.
\end{lemma}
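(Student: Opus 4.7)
The plan is to check the conclusion on each of the six types of generators of $\Mod^{\mc{H}}_{\mathbb A}(H)$ listed in Definition~\ref{defn:modaut}, and then observe that the property ``$\sigma(P)$ is conjugate to $P$'' is preserved under composition: if $\sigma_1(P)=gPg^{-1}$ and $\sigma_2(P)$ is conjugate to $P$, then $\sigma_2\sigma_1(P)=\sigma_2(g)\sigma_2(P)\sigma_2(g)^{-1}$ is conjugate to $\sigma_2(P)$, hence to $P$. Since $\mathbb A$ is an $(\mc{A},\mc{H})$--splitting, $P$ is elliptic in $\mathbb A$, so after replacing $P$ by a conjugate we may assume $P$ is contained in some vertex group $\mathbb A_v$ of $\mathbb A$.

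Type 1 (inner automorphisms) and Type 2 (Dehn twists over an edge $e$ with abelian edge group) are immediate: collapse $\mathbb A$ to the one-edge splitting at $e$; since $P$ is elliptic it lies in one of the two vertex groups, and the Dehn twist either fixes that vertex group pointwise or conjugates it by a twisting element. For Type 3 (the natural extension of a QH--automorphism $\tau$ of a vertex group $\mathbb A_w$ that acts by conjugation on elements of $\mc{H}$), if $P$ is not conjugate into $\mathbb A_w$ then the natural extension formula conjugates $P$ by a product of the $\gamma_e$'s, while if $P\leq\mathbb A_w$ then $\tau$ acts on $P$ by conjugation by assumption.

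Types 4 and 5 use the same key observation: the envelope $P_A$ (or $P_{D^+}$) contains every element of $\mc{H}$ lying in the relevant vertex group, by its very definition as the minimal direct factor containing the elements of $\mc{H}$ in that vertex group. For Type 4, the automorphism of the abelian vertex group $A$ fixes $P_A$ pointwise and thus fixes any $P \leq A$ with $P\in\mc{H}$; for $P$ not conjugate into $A$, the natural extension again conjugates $P$. For Type 5, the squared Nielsen transformation fixes $P_{D^+}$ and (as noted immediately before the lemma) acts by conjugation on any subgroup $U\leq D$ with $U\cap D^+\subseteq P_{D^+}$; taking $U=P$ and using $P\cap D^+\subseteq P_{D^+}$ gives the desired conclusion for $P\leq D$, and the natural extension handles the case $P\not\leq D$. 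Finally, Type 6 follows from Types 3 and 5 applied to the quotient $B=U/Z$: the induced automorphism of $B$ acts on the image of $P$ by conjugation, and since $Z$ is fixed and central the lift acts on $P$ itself by conjugation.

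The argument is essentially a routine verification on generators; the only point requiring care is ensuring that every $P\in\mc{H}$ lying in an abelian or dihedral-type vertex group is automatically contained in the corresponding envelope, which is built into the definition of the envelope. No single step should be a genuine obstacle.
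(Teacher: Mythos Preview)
Your approach is exactly the intended one; the paper gives no proof beyond declaring the lemma ``a straightforward consequence of the definition.'' The verification for types 1--5 is correct, with the key point (as you note) that the envelope $P_A$ (resp.\ $P_{D^+}$) is defined to contain all elements of $\mc{H}$ lying in the abelian (resp.\ dihedral) vertex group.

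Your type~6 step, however, is too quick. Knowing that $\bar\sigma$ conjugates $\bar P$ in $B$ and that $\sigma$ fixes the central $Z$ only yields $\sigma(p)=gpg^{-1}z(p)$ for some homomorphism $z\colon P\to Z$, and there is no general reason for $z(P)\subseteq P$. For instance, on $U=\Z\times B$ with $B$ a closed hyperbolic surface group, the automorphism $(n,b)\mapsto(n+\phi(b),b)$ for nonzero $\phi\in\Hom(B,\Z)$ fixes $Z$ and projects to the identity on $B$, yet does not send $P=\langle(0,c)\rangle$ to a conjugate when $\phi(c)\neq 0$. In the paper's applications this causes no trouble: the type~6 automorphisms actually invoked are the specific lifts built in Lemma~\ref{l: lifting the shortening auto}, which are designed to act by conjugation on stably parabolic subgroups, and the subgroups $P$ to which the lemma is applied in Section~\ref{s:res-short} contain $Z$, in which case your argument does go through (since then $\sigma(P)\supseteq Z$ and $\sigma(P)/Z=\bar g\bar P\bar g^{-1}=gPg^{-1}/Z$ forces $\sigma(P)=gPg^{-1}$). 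So the lemma as literally stated is slightly stronger than what is established or needed; your method is sound once one adds the hypothesis $Z\leq P$ or restricts to those specific lifts.
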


 The following is one of the main results of this section and is similar to \cite[Proposition 4.17]{ReiWei}, \cite[Theorem 5.4]{GL-splittings}, and  \cite[Theorem 5.23]{GH}.

\begin{theorem}\label{thm:decomps} 
For any $(\mc{A}, \mc{H}_{\overline{\V}})$--splitting $\mathbb A$ of $\overline{\V}$, $\Mod^{\mc{H}_{\overline{\V}}}_{\mathbb A}(\overline{\V})\leq\Mod^{\mc{H}_{\overline{\V}}}_{\overline{\mathbb J}(v)}(\overline{\V})$.
\end{theorem}

The proof of Theorem \ref{thm:decomps} is essentially the same as \cite[Theorem 5.23]{GH}. There are some additional complications in \cite[Theorem 5.23]{GH} which do not arise here, but the differences between the proofs are almost all notational. We provide a sketch of how to derive the proof of Theorem \ref{thm:decomps} from the proof of Theorem  \cite[Theorem 5.23]{GH}. Similar arguments also appear in the proof of \cite[Proposition 4.17 ]{ReiWei}.

\begin{proof}[Proof of Theorem \ref{thm:decomps}]
In our proof, the decomposition ${\overline{\mathbb J}(v)}$ plays the role of $\mathbb A_{JSJ}$ from \cite[Section 5.4]{GH}. In \cite[Section 5.4]{GH}, there is another splitting, $\mathbb A_{M}$, which is built as a refinement of $\mathbb A_{JSJ}$. This refinement is only needed because the splittings considered in \cite[Section 5.4]{GH} do not allow arbitrary virtually abelian edge groups, but only certain types of virtually abelian edge groups. For our purposes, It suffices to read the proofs of \cite[Section 5.4]{GH} with the additional assumption that $\mathbb A_{JSJ}=\mathbb A_{M}$.

Let $\mathbb A$ be an $(\mc{A}, \mc{H}_{\overline{\V}})$--splitting of $\overline{V}$. We consider each type of generator of $\Mod^{\mc{H}_{\overline{\V}}}_{\mathbb A}(\overline{V})$ given by Definition \ref{defn:modaut} in turn. The type (1) generators of $\Mod^{\mc{H}_{\overline{\V}}}_{\mathbb A}(\overline{V})$ clearly belong to $\Mod^{\mc{H}_{\overline{\V}}}_{\overline{\mathbb J}(v)}(\overline{\V})$. The argument that the type (2) generators of $\Mod^{\mc{H}_{\overline{\V}}}_{\mathbb A}(\overline{V})$ belong to $\Mod^{\mc{H}_{\overline{\V}}}_{\overline{\mathbb J}(v)}(\overline{\V})$ is the same as \cite[Lemma 5.25]{GH}. The argument that the type (3) generators of $\Mod^{\mc{H}_{\overline{\V}}}_{\mathbb A}(\overline{V})$ belong to $\Mod^{\mc{H}_{\overline{\V}}}_{\overline{\mathbb J}(v)}(\overline{\V})$ is the same as \cite[Lemma 5.27]{GH}. The type (4) and (5) generators of $\Mod^{\mc{H}_{\overline{\V}}}_{\mathbb A}(\overline{V})$ both correspond to virtually abelian vertex groups (note that the central extensions of dihedral type vertex groups are all extensions with trivial kernel in this case, so these vertex groups are actually dihedral type), and the argument that these generators belong to $\Mod^{\mc{H}_{\overline{\V}}}_{\overline{\mathbb J}(v)}(\overline{\V})$ is the same as \cite[Lemma 5.26]{GH}.
\end{proof}

Next we show modular automorphisms of $\overline{\V}$ are induced by modular automorphisms of $V$. By Lemma~\ref{l:preimage abelian}, the pre-image in $\V$ of any abelian subgroup of $\overline{\V}$ is abelian. Recall that $\mathbb J(v)$ is the splitting of $\V$ induced by the splitting $\overline{\mathbb J}(v)$ of $\overline{\V}$ and $\pi_v\colon \V\to \overline{\V}$ is the quotient map.

\begin{lemma}\label{l: lifting the shortening auto}
Let $\alpha\in\Mod_{\overline{\mathbb J}(v)}^{\mc{H}_{\overline{\V}}}(\overline{\V})$ be a modular automorphism of type (1), (2), (4), or (5) according to the notation of Definition \ref{defn:modaut}. Then there exists $\widetilde{\alpha}\in \Mod_{\mathbb J(v)}^{\mc{H}_{{\V}}}(\V)$ so that $\widetilde{\alpha}$ acts trivially on the stable center and for all $g \in \V$, $\pi_v(\widetilde{\alpha}(g))=\alpha(\pi_v(g))$. 
\end{lemma}
\begin{proof}
First, the inner automorphisms of $\overline{\V}$ are induced by inner automorphisms of $\V$ since $\V$ is a central extension of $\overline{\V}$.

Suppose $\alpha$ is a Dehn twist by $c$ over an edge $e$ of $\overline{\mathbb J}(v)$. Then $V$ has a one-edge splitting $\mathbb B$ with edge $f$ such that $\mathbb B_f=\pi_v^{-1}(\overline{\mathbb J}(v)_e)$, and $\mathbb B_f$ is abelian, by Lemma \ref{l:preimage abelian}. Let $\widetilde{c}\in \pi_v^{-1}(c)$. Define $\widetilde{\alpha}$ to be the Dehn twist by $\widetilde{c}$ over $f$ by the element.

If $\alpha$ is a type (4) automorphism for a maximal abelian vertex group $\overline{A}$ in $\overline{\mathbb J}(v)$. Then the corresponding vertex group $A$ of $\mathbb J(v)$ is also maximal abelian by Lemma \ref{l:preimage abelian}. Let $\overline{P}_{\overline{A}}$ be the subgroup from the definition of relative Nielsen transformations, so $\overline{A}=\overline{A}_0\oplus\overline{P}_{\overline{A}}$. Let $\{\overline{a}_1,...,\overline{a}_n\}$ be a basis for $\overline{A}_0$. Then $P_A=\pi_v^{-1}(\overline{P})$ is the minimal direct factor of $A$ which contains all adjacent edge groups, elements of $\mc{H}_\V$, and the stable center. If we choose $a_i\in\pi^{-1}_v(\overline{a_i})$ for each $1\leq i\leq n$, and let $A_0$ be the subgroup generated by $\{a_1,...,a_n\}$ then we have that $A\cong A_0\oplus P_A$. Hence if $\alpha$ is the relative Nielsen transformation which sends $\overline{a}_i$ to $\overline{a}_i\overline{a}_j$, then $\widetilde{\alpha}$ can be defined as the relative Nielsen transformation which sends $a_i$ to $a_ia_j$.

Suppose now that $\overline{D}$ is a dihedral type vertex group of $\overline{\mathbb J}(v)$, let $D$ be the pre-image of $D$ in $V$, and let $D^+$ be the pre-image of $\overline{D}^+$. By Lemma \ref{l:preimage abelian} $D^+$ is abelian. Let $t$ be a pre-image of $\tau$. For each $x\in D^+$, let $c_x=t^{-1}xtx$. Then $\pi_v(c_x)=1$, hence $c_x$ belongs to $\ker(\pi_v)\cap D$, which we denote by $Z^{\omega}(D)$. Next we show that the map  $D^+\to Z^{\omega}(D)$ defined by $x\to c_x$ is a homomorphism.

Let $x, y\in D^+$. Recall that $D^+$ is abelian so $x$ and $y$ commute, and $c_x$ and $c_y$ belong to the center of $D$. It follows that 
\[
c_{xy}=t^{-1}xytxy=(t^{-1}xt)(t^{-1}yt)xy=x^{-1}c_xy^{-1}c_yxy=c_xc_y
\]

 Define $P_{\overline{D}^+}$ as the minimal direct factor of $\overline{D}^+$ which contains the intersection of $\overline{D}^+$ with all edge groups adjacent to $\overline{D}$, all stably parabolic subgroups of $\overline{D}^+$ and all the torsion of $\overline{D}^+$. Then as in the abelian case, we can write $\overline{D}^+=\overline{D}^+_0\oplus\overline{P_{D^+}}$ and $D^+\cong D^+_0\oplus P$ with $P=\pi_v^{-1}(P_{\overline{D}^+})$, where $D^+_0$ is a finite rank free abelian group which maps isomorphically to $\overline{D}_0^+$. Suppose $\overline{g}, \overline{h}$ are basis elements of $\overline{D}^+$, $g$ and $h$ are pre-images of $\overline{g}, \overline{h}$ in $D^+$, and $\alpha$ is a squared Nielsen transformation of $\overline{D}$ of the form $\overline{g}\to \overline{g}\overline{h}^2$. Let $\widetilde{\alpha}$ be the map on $D$ which is the identity on $t$, $P$, and on all basis elements of $D^+_0$ except $g$  where $\widetilde{\alpha}(g) = gh^2c_{h^{-1}}$. The map $\widetilde{\alpha}$ is invertible and induces $\alpha$ on $\overline{D}$, so it only remains to verify that $\widetilde{\alpha}$ is a homomorphism. It clearly induces a homomorphism on the abelian group $D^+$, so it remains to show that it preserves the relations $t^{-1}xt=x^{-1}c_x$ for all $x\in D^+$. By our definition of $c_x$, $t^{-1}\widetilde{\alpha}(x)t=\widetilde{\alpha}(x)^{-1}c_{\widetilde{\alpha}(x)}$. Since $\widetilde{\alpha}$ fixes $t$, this means we need to show that $c_x=c_{\widetilde{\alpha}(x)}$  for all $x\in D^+$. Observe that

 \begin{align*}
c_{\widetilde{\alpha}(g)}&=t^{-1}gh^2c_{h^{-1}}tgh^2c_{h^{-1}}\\
&=t^{-1}gh^2tgh^2c_{h^{-1}}^2\\
&=t^{-1}gt(t^{-1}h^2th^2)gc_{h^{-1}}^2\\
&=t^{-1}gtgc_{h^2}c_{h^{-1}}^2\\
&=c_g.
\end{align*}

The map $x\to c_x$ is a homomorphism and $c_{\widetilde{\alpha}(y)}=c_y$ for all $y$ which are either in $P$, or in a basis for $D^+_0$. Therefore  for all $x\in D^+$ $c_{\widetilde{\alpha}(x)}=c_x$, and so $\widetilde{\alpha}$ is an automorphism of $D$ inducing $\alpha$ on $\overline{D}$.  

Finally, we observe that $\widetilde{\alpha}$ acts by conjugation on the adjacent edge groups of $D$. Suppose $E$ is such an edge group. Then $E\cap D^+$ is contained in $P$ and hence fixed by $\widetilde{\alpha}$. If $E\subset D^+$ then we are done, so assume that $E\not\subset D^+$. The subgroup $D^+$ is index two in $D$, so there exists $d\in D^+$ such that $E=\langle td, E\cap D^+\rangle$. Then $d=d_0p$ for some $d_0\in D^+_0$ and $p\in P$. Suppose that $g^n$ is the power of $g$ which appears when $d_0$ is written  in terms of the basis of $D_0^+$ given above. Then $\widetilde{\alpha}(d_0)=d_0h^{2n}c_{h^{-1}}^n$. Since $\widetilde{\alpha}(p)=p$ and $p$ commutes with $d_0$ and $c_{h^{-1}}$, $\widetilde{\alpha}(d)=d_0h^{2n}c_{h^{-1}}^np=dh^{2n}c_{h^{-1}}^n$. Using the fact that $\widetilde{\alpha}(t)=t$, we have that

\[
\widetilde{\alpha}(td)=tdh^{2n}c_{h^{-1}}^n.
\] 

On the other hand, the definition of $c_{x}$ gives that $h^{-n}th^{n}=th^{2n}c_{h^{-1}}^n$. Since $h^n$ commutes with $d$, we have

\[
h^{-n}tdh^{n}=tdh^{2n}c_{h^{-1}}^n.
\]

Since $h$ commutes with $E\cap D^+$, we have that for all $e\in E$, $\widetilde{\alpha}(e)=h^{-n}eh^n$.
\end{proof}

\begin{lemma}\label{l: lifting the shortening auto2}
For each QH-vertex group $U$ of $\mathbb J(v)$ with underlying orbifold $\mc{O}$ and each homeomorphism $\phi$ of $\mc{O}$ fixing the boundary and cone points, there exists $\widetilde{\alpha}\in \Mod_{\mathbb J(v)}^{\mc{H}_\V}(\V)$ so that $\widetilde{\alpha}(U)=U$,$\widetilde{\alpha}$ acts trivially on the stable center and on the fiber of $U$, and $\widetilde{\alpha}$ induces $\phi$ on $\mc{O}$.
\end{lemma}
\begin{proof}
 Let $U$ be a QH-vertex group of $\mathbb J(v)$ and let $\overline{U}$ be the image of $U$ in $\overline{V}$. Then $\overline{U}$ is a QH-vertex group of $\overline{\mathbb J}(v)$. Let $F$ the fiber of $\overline{U}$ so $\overline{U}/F\cong O$, where $O$ is an orbifold fundamental group as in Definition \ref{defn:QH}. By Theorem \ref{thm:JSJwithtorsion}, we can also assume that $|F|\leq 2$. Since $F$ is normal, this means that $F$ must also be central in $\overline{U}$. Now let $\gamma\in O$ be an element which is represented by a simple closed curved on the underlying orbifold, and let $\overline{A}_{\gamma}$ be the pre-image of $\langle \gamma\rangle$ in $\overline{U}$. Then $\overline{A}_{\gamma}$ is a central extension of a cyclic group, hence $\overline{A}_{\gamma}$ is abelian.
 
Now let $A_\gamma=\pi_v^{-1}(\overline{A}_{\gamma})$. Note that $U$ has fiber $\pi_v^{-1}(F)$, and $A_\gamma$ is an abelian subgroup of $U$ by Lemma \ref{l:preimage abelian}. Since $\gamma$ is represented by a simple closed curve, the group $O$ splits over $\langle \gamma\rangle$ and this induces a splitting of $U$ over $A_\gamma$. The dehn twist by $\gamma$ is an automorphism of $O$ which is induced by a Dehn twist on $U$ over any element in $A_\gamma$ which maps to $\gamma$. This Dehn twist on $U$ is a type (3) element of $\Mod_{\mathbb J(v)}^{\mc{H}}(\V)$. Since the pure mapping class group of the orbifold is generated by Dehn twists, we are done.

\end{proof}

\section{Resolutions and factoring}\label{s:res-short}

In this section we complete the proof of Theorem~\ref{t:technical outline}.  Given the setup in the previous two sections, the purpose of this section is to undertake the version of Sela's shortening argument which works in the setting of $\R$--trees arising from the construction in Section~\ref{s:R-trees}.

Throughout this section we continue to make Standing Assumption~\ref{ass:fixed things} and use the notation from there. In particular, we consider a sequence $(\phi_i \co G \to \Gamma_i = \pi_1(M_i))$ from $\Hom(G,\Mgen)$, which is not \Tdiv. In addition, we also assume throughout this section that the sequences $(\phi_i)$ is \Cdiv.

\subsection{Shortening quotients}

We first use the decompositions $\mathbb J(v)$ from Definition~\ref{def:J^v splitting} to refine the decomposition $\Gdf$ of $L$.

\begin{definition} \label{def:J splitting}
 Since the edge groups of $\Gdf$ are elliptic in each $\mathbb J(v)$, we can refine $\Gdf$ by replacing each vertex $v$ by the splitting $\mathbb J(v)$ (see \cite[Lemma 4.12]{GuiLev2}).  Denote the resulting splitting of $L$ by $\mathbb J$.
 \end{definition}

As in \cite{ReiWei} (see also \cite{GH}), we create a sequence of graphs of groups approximating $\mathbb J$ so that the $\phi_i$ factor through the terms in the approximating graphs of groups. This is essentially the same as \cite[Lemma 7.1]{ReiWei}, and the proof from there works in our situation without change.  The idea of approximating splittings of non-finitely generated limit groups with associated splittings of finitely presented approximations comes from \cite[Section 3]{sela:dio1}.
\begin{lemma} \label{lem:ugly hack}
Let $\mathbb J$ be the splitting of $L$ from Definition~\ref{def:J splitting}. There exists a sequence of finitely presented groups $G=W_0, W_1, \ldots$ and epimorphisms $f_i\colon W_i\to W_{i+1}$ and $h_i\colon W_i\to L$ for $i\geq 0$ so that:
\begin{enumerate}
\item $\phi_\infty=h_0$;
\item for all $i \ge 1$ we have $h_i=h_{i+1}\circ f_i$;
\item\label{eq:direct limit}  $L$ is the direct limit of the sequence $G\twoheadrightarrow W_1\twoheadrightarrow...$.  Equivalently, 
$$\wker(\phi_i)=\bigcup_{k=1}^\infty\ker(f_{k-1} \circ \ldots \circ f_0)$$
\item Each $W_i$ has a graph of groups decomposition $\mathbb A_i$. For each $i$ there exists morphisms $f^i: \mathbb A_i\rightarrow \mathbb A_{i+1}$ and $h^i: \mathbb A_i\rightarrow \mathbb J$ such that the underlying graph morphisms are isomorphims,   $f^i$ induces $f_i$ and $h^i$ induces $h_i$.
\item Let $V_i$ be a vertex group  of $\mathbb A_i$ and $V$ be the corresponding vertex group of $\mathbb J$ under $h^i$. Then we have 
\begin{enumerate}
\item $V=\bigcup\limits_{i=1}^\infty h_i(V_i)$
\item  $V_i$ has a central subgroup $Z_i$ which is contained in all edge groups adjacent to $V_i$ such that $h_i$ maps $Z_i$ injectively into $\SC(\V)$ and $\SC(\V)=\bigcup\limits_{i=1}^\infty h_i(Z_i)$
\item If $\overline{V}$ is QH--vertex group, then $h_i$ induces an isomorphism from $V_i/Z_i$ to $\overline{V}$.
\item If $\overline{V}$ is a virtually abelian vertex group, then $h_i$ induces an injective map from $V_i/Z_i$ to $\overline{V}$.
\end{enumerate}

\item If $E$ is an edge group of $\mathbb J$ and $E_i$ is the corresponding edge group of $\mathbb A_i$, then $h_i$ maps $E_i$ injectively into $E$ and $E=\bigcup_{i=1}^\infty h_i(E_i)$.
\item The edge groups and vertex groups of $\mathbb{A}_i$ are finitely generated. 
\end{enumerate}

\end{lemma}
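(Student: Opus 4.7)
The plan follows the Sela / Reinfeldt--Weidmann strategy: construct the $W_i$ as finitely presented approximations to $L$ carrying graph-of-groups structures $\mathbb{A}_i$ that converge to $\mathbb{J}$. First fix a finite generating set $S$ and defining relations $R$ for $G$; via $\phi_\infty$ each $s \in S$ admits a Bass--Serre normal form along $\mathbb{J}$ and each $r \in R$ is a product of finitely many relations among normal-form letters. For each vertex group $V$ of $\mathbb{J}$ (resp.\ edge group $E$), I would write $V = \bigcup_n V^{(n)}$ (resp.\ $E = \bigcup_n E^{(n)}$) as an ascending union of finitely generated subgroups with the following compatibility: the edge inclusions restrict compatibly; each $V^{(n)}$ contains a finitely generated central subgroup $Z^{(n)}$ with $\SC(V) = \bigcup_n Z^{(n)}$; and $V^{(n)}/Z^{(n)}$ embeds in $\overline{V}$, with equality in the QH case.

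Next pick $n = n(i)$ large enough that the $V^{(n(i))}$ and $E^{(n(i))}$ contain all vertex- and edge-group pieces appearing in the normal forms of generators in $S$ and in witnesses to the triviality in $L$ of the relators in $R$, together with the $i$-th approximation of each stable center. Define $\mathbb{A}_i$ as the finite graph of groups with the same underlying graph as $\mathbb{J}$, vertex groups $V^{(n(i))}$, and edge groups $E^{(n(i))}$, and set $W_i = \pi_1(\mathbb{A}_i)$. Inclusions between the successive approximations induce graph-of-groups epimorphisms $f_i \co W_i \onto W_{i+1}$ and homomorphisms $h_i \co W_i \to L$; by arranging all the necessary pieces already at stage $1$, one obtains an epimorphism $G \onto W_1$ sending $S$ to the chosen generators and killing every relator in $R$, so that $\phi_\infty$ factors as $h_0$ with $W_0 = G$. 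Properties~(2), (4), (5), (6) are then immediate from the construction, and (3) follows because every element of $\wker(\phi_\infty)$ is a product of conjugates of relations witnessed inside some $V^{(n)}$ or $E^{(n)}$, hence lies in $\ker(f_{k-1} \circ \cdots \circ f_0)$ for some $k$.

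The main technical point, which I expect to be the primary obstacle, is ensuring that each $V^{(n)}$ is itself finitely presented, so that the resulting $W_i$ is finitely presented. For QH vertex groups of $\mathbb{J}$, the quotient $V^{(n)}/Z^{(n)} \cong \overline{V}$ is finitely presented as a (central extension of a) Fuchsian-type group, and $Z^{(n)}$ is finitely generated abelian, so the central extension $V^{(n)}$ is finitely presented. For abelian and dihedral-type vertex groups of $\mathbb{J}$, finitely generated subgroups of virtually abelian groups are finitely presented by Noetherianity. The remaining rigid vertex groups are handled as in \cite[Lemma 7.1]{ReiWei}: rather than insisting that $V^{(n)} \hookrightarrow V$, one permits $V_i$ to be an abstract finitely presented group mapping to $V$, adding generators and relations at each stage so that the images exhaust $V$ and so that each relator of $G$ becomes a consequence of the relations present at some finite stage. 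With these ingredients in place the remaining verifications are routine, and the argument of \cite{ReiWei} applies here with only notational changes.
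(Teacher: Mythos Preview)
Your proposal follows the same route as the paper, which does not give a proof but simply cites \cite[Lemma~7.1]{ReiWei} and asserts that the argument carries over unchanged; you have correctly identified the main adaptation needed here (arranging the central subgroups $Z_i$ and handling finite presentability of the vertex-group approximations by type). One point of phrasing to fix when you write this out: asserting that ``inclusions between successive approximations induce graph-of-groups epimorphisms $f_i$'' is backwards, since enlarging vertex and edge groups yields maps $\pi_1(\mathbb A_i)\to\pi_1(\mathbb A_{i+1})$ that are in general neither injective nor surjective. In the Reinfeldt--Weidmann construction each $W_i$ is realised as a quotient $G/N_i$ with $N_0\subset N_1\subset\cdots\subset\wker(\phi_i)$, so surjectivity of the $f_i$ is automatic, and the graph-of-groups structure $\mathbb A_i$ is then read off the quotient; the growth of $h_i(V_i)$ comes from new relations forcing further group elements to lie in the vertex subgroup, not from enlarging an abstract vertex group. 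Your last paragraph already does this correctly for rigid vertices, so this is only a matter of rewording the earlier parts to match.
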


Fix a vertex $v$ of $\Gdf$. There is a collapse map $\mathbb J\to\Gdf$, such that the pre-image of $v$ is $\mathbb J(v)$. For each $\mathbb A_i$, the underlying graph is isomorphic to $\mathbb J$.  Let $\mathbb A_{v, i}$ denote the sub-(graph of groups) corresponding to $\mathbb J(v)$. Let $\mathbb W_i$ be the splitting of $W_i$ obtained from $\mathbb A_i$ by collapsing each $\mathbb A_{v, i}$.  Let $W_{v, i}$ be the fundamental group of $\mathbb A_{v, i}$; note that $W_{v, i}$ is identified with a subgroup of $W_i$ by Section \ref{sec:not_cov}. It follows from the construction that $f_i|_{W_{v, i}}$ maps into (though possibly not \emph{onto}) $W_{v, i+1}$ and that $\V$ is the direct limit of the $W_{v, i}$.

 We use modular automorphisms associated to these graphs of groups to ``shorten" $\phi_i$ relative to the vertex $v$, as we now explain. 

Let $\xi_i\colon G\to W_i$ be the natural map, that is $\xi_i=f_{i-1} \circ...\circ f_0$. Since $W_j$ is finitely presented, for fixed $j$, $\ker(\xi_j)\subseteq \ker(\phi_i)$ for an $\omega$ large set of $i$. Hence after passing to a subsequence of $\phi_i$ and re-indexing, we may assume $\ker(\xi_j)\subseteq\ker(\phi_i)$ for all $j$ and all $i\geq j$.  Thus, for all $j$ and all $i\geq j$ the map $\phi_i$ factors through $\xi_j$, so there is $\lambda_i^j \co W_j \to \Gamma_i$ so
\[
\phi_i=\lambda_i^j\circ\xi_j.
\]
Associated to $v$ is a sequence $\{v_i\}$ of vertices in the geometric trees of $\{\Gamma_i\}$, and $g\in \V$ if and only if for some \wapprox $(g_i)$ to $g$ \walmost surely $g_i \in \Gamma_{v_i}$. Since each $W_{v, j}$ is finitely generated, after again passing to a subsequence of $\{\phi_i\}$ and hence $\{\lambda_i^j\}$ and re-indexing we can assume $\lambda_i^j$ is in fact a morphism of graphs of groups from $\mathbb W_j$ to the geometric splitting of $\Gamma_i$. Let $\lambda_i=\lambda_i^i$. Summarizing, we have

\begin{lemma}\label{lambda_i}
\ 
\begin{enumerate}
\item $\phi_i=\lambda_i\circ\xi_i$;
\item $\lambda_i$ is a morphism of graphs of groups from $\mathbb W_i$ to the geometric splitting of $\Gamma_i$.
\end{enumerate}
\end{lemma}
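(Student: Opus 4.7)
The plan is to record the outputs of the diagonal extraction described in the two paragraphs preceding the lemma; no new ideas are required beyond careful bookkeeping.

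For clause (1), I would begin with Lemma~\ref{lem:ugly hack}.\eqref{eq:direct limit}, which identifies $L$ with the direct limit of the $W_j$. This gives $\phi_\infty(\ker\xi_j)=1$ for each fixed $j$. Because $W_j$ is finitely presented, $\ker\xi_j$ is normally generated by a finite set, each of whose elements lies in $\wker(\phi_i)$ and hence in $\ker\phi_i$ for $\omega$--almost all $i$. Denote the resulting $\omega$--large set by $\Omega_j$. Since finite intersections of $\omega$--large sets are $\omega$--large, a standard diagonal extraction (pick $n_j\in\bigcap_{k\le j}\Omega_k$ with $n_j>n_{j-1}$) yields a subsequence of $\{\phi_i\}$ (reindexed) on which $\ker\xi_j\subseteq\ker\phi_i$ for all $j\le i$. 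This produces homomorphisms $\lambda_i^j\co W_j\to\Gamma_i$ with $\phi_i=\lambda_i^j\circ\xi_j$, and setting $\lambda_i:=\lambda_i^i$ gives clause (1).

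For clause (2), along a further subsequence I need $\lambda_i$ to respect the graph-of-groups structures: each vertex group $W_{v,i}$ of the generalized geometric splitting of $W_i$ must land inside the vertex stabilizer $\Gamma_{v_i}$ of $T_i$ associated with $v$, each edge group into an edge stabilizer, and the combinatorics of the collapse must be preserved. Fix a vertex $v$ of $\Gdf$ with associated $\omega$--geometric vertex group $\V$; by Lemma~\ref{lem:ugly hack}(7) the vertex and edge groups of $\mathbb A_{v,j}$ are finitely generated, and by Lemma~\ref{lem:ugly hack}(5) $h_j$ carries them into $\V$. Applying Definition~\ref{def:ggd}.\eqref{ggd:vertices} to a finite generating set of $h_j(W_{v,j})$, the corresponding \wapproxs \walmost surely lie in a common vertex stabilizer $\Gamma_{v_i}$; combined with $\phi_i=\lambda_i^j\circ\xi_j$, this forces $\lambda_i^j(W_{v,j})\subseteq\Gamma_{v_i}$ \walmost surely. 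The same argument handles edge groups, and Definition~\ref{def:ggd}.\eqref{ggd:edges} supplies the adjacency of the $v_i$ in $T_i$ matching the combinatorics of $\Gdf$.

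The only real obstacle is simultaneity: for each of the countably many pairs $(v,j)$ (with only finitely many pieces of data per pair) the required $\omega$--large set is different. I would invoke the same diagonal extraction used in clause (1), now applied to the intersection of all of these sets together with $\Omega_j$. Along the resulting subsequence $\lambda_i=\lambda_i^i$ is a morphism of graphs of groups from the generalized geometric splitting of $W_i$ corresponding to $\Gdf$ to the geometric splitting of $\Gamma_i$, proving clause (2).
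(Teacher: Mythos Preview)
Your proposal is correct and follows essentially the same approach as the paper: the lemma in the paper is explicitly introduced as ``Summarizing, we have'' after the two preceding paragraphs, which carry out exactly the diagonal extraction you describe (finite presentation of $W_j$ for clause~(1), finite generation of $W_{v,j}$ together with the $\omega$--geometric property for clause~(2)). Your write-up merely supplies more detail than the paper's terse construction.
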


 Let $\widetilde{A}_v$ be a lift to $G$ of the good relative generating set $A_v$ of $\V$ and for each $i$ let $\widetilde{A_{v,i}} = \xi_i(\widetilde{A}_v)$.
Lemma~\ref{lambda_i} allows us 
 make the following definition. 
 
 \begin{definition}\label{d:stretch factor} 
\[
\|\lambda_i\|_{\Coll, v}=\inf_{x\in\Coll(\Gamma_{v_i})}\max_{a\in \widetilde{A_{v, i}}}d_i(\lambda_i(a) \cdot x, x), 
\] 
For \walmost every $i$, choose a point $\bp_i \in \Coll(\Gamma_{v_i})$ which satisfies
\[ \max_{a \in \widetilde{A_{v,i}}} \left\{ d_i( \lambda_i(a) \cdot \bp_i, \bp_i) \right\} \le \| \lambda_i \|_{\Coll, v} + \frac{1}{i}	.	\]
Define
\[\| \lambda_i \|_{\Coll, v, \bp}=\max_{a \in \widetilde{A_{v,i}}} \left\{ d_i( \lambda_i(a) \cdot \bp_i, \bp_i) \right\} \]
\end{definition}

Let $\mc{H}_{i}$ denote the family of subgroups $H$ of $W_{i}$ such that $H$ is elliptic in $\mathbb A_{i}$ and $h_i(H)$ is a stably parabolic subgroup of $L$. For a vertex $v$ of $\Gdf$ let $\mc{H}_{v, i}$ denote the family of subgroups of $W_{v, i}$ which belong to $\mc{H}_i$. Note that any edge group of $\mathbb W_i$ which belongs to $W_{v, i}$ maps to a stably parabolic subgroup of $L$ and hence belongs to $\mc{H}_{v, i}$. By Lemma \ref{l:conjugate parabolic} $\Mod^{\mc{H}_{v, i}}_{\mathbb A_{v, i}}(W_{v, i})$ acts on these subgroups by conjugation, and hence can be extended to automorphisms of $W_i$. Let $\Mod_v(W_i)$ be the subgroup of $\Aut(W_i)$ generated by these extensions. Denote by $\Mod(W_i)$ the subgroup of $\Aut(W_i)$ generated by all subgroups $\Mod_v(W_i)$ over all vertices $v$ of $\Gdf$. Note that  $\Mod(W_i)$ acts on subgroups in $\mc{H}_i$ by conjugation.  We define an equivalence relation on $\Hom(W_i, \Gamma_i)$ by setting $\lambda\sim\lambda'$ if there is an $\alpha \in \Mod(W_i)$ so $\lambda'=\lambda\circ\alpha$. 

\begin{definition}
A sequence of homomorphisms $\{\widehat{\lambda}_i\}$ is called \emph{almost short} if for any vertex $v$ of $\Gdf$ and any sequence $\{\lambda'_i\sim \widehat{\lambda}_i\}$, we have $\|\widehat{\lambda}_i\|_{\Coll, v}\leq \|\lambda_i'\|_{\Coll, v}+\frac{1}{i}$.
\end{definition}

\begin{lemma}\label{l:shortest}
There exists an almost short sequence of homomorphisms $\{\widehat{\lambda}_i\}$ such that $\widehat{\lambda}_i\sim \lambda_i$ for all $i$. 
\end{lemma}

\begin{proof}
For each vertex $v$ of $\Gdf$, choose $\alpha_i^v\in \Mod(W_i)$ so that $\|\lambda_i\circ\alpha_i^v\|_{\Coll, v}\leq \|\lambda_i'\|_{\Coll, v}+\frac{1}{i}$ for any $\lambda'_i\sim \lambda_i$. Since $\Mod(W_i)$ is generated by all subgroups of the form $\Mod_{v'}(W_i)$, where $v'$ is a vertex of $\Gdf$, we have $\alpha_i^v=\alpha_1\cdots\alpha_k$, where each $\alpha_j\in \Mod_{v'}(W_i)$ for some vertex $v'$ of $\Gdf$. Note that if $v'\neq v$, then $\alpha_j$ acts on $W_{v, i}$ by conjugation and hence does not change the translation length $\|\lambda_i\|_{\Coll, v}$. Therefore we can delete all such $\alpha_j$ in $\alpha_i^v=\alpha_1\cdots\alpha_k$ while keeping $\|\lambda_i\circ\alpha_i^v\|_{\Coll, v}\leq \|\lambda_i'\|_{\Coll, v}+\frac{1}{i}$ true. As a result, we can assume that $\alpha_i^v\in \Mod_v(W_i)$ without loss of generality.  Let $\alpha_i$ be a product of all these $\alpha_i^v$ and $\widehat{\lambda}_i=\lambda_i\circ\alpha_i$.   Then $\widehat{\lambda}_i$ realize the ``almost minimum'' of $\|\cdot\|_{\Coll, v}$ for all vertices $v$ simultaneously. 
 \end{proof}

\begin{definition}\label{d:shortening quotient}
Let $\{\widehat{\lambda}_i\}$ be as in Lemma \ref{l:shortest} and define $\eta_i\colon=\widehat{\lambda}_i\circ\xi_i$. We call the limit group $S=G/\wker(\eta_i)$  associated to the sequence $(\eta_i\colon G\to \Gamma_i)$ a \emph{$\Coll$--shortening quotient of $L$} and $(\eta_i\colon G\to \Gamma_i)$ the \emph{defining sequence} of $S$. 
\end{definition}

Note that $S$ is indeed a quotient of $L$ since by Lemma~\ref{lem:ugly hack}.\eqref{eq:direct limit} and construction we have 
\begin{equation*}
\wker(\phi_i)=\wker(\xi_i)\subseteq\wker(\eta_i) ,
\end{equation*} 
so there is a natural quotient map $\pi\colon L\to S$. 

\begin{lemma} \label{l:sq not tdiv}
Suppose $\{\phi_i:G\rightarrow \Gamma_i\}$ is not \Tdiv. Let $\{\eta_i\}$ be the defining sequence of a $\Coll$--shortening quotient of $G/\wker(\phi_i)$. Then $\eta_i$ is not \Tdiv. 
\end{lemma} 

\begin{proof}
Let $g\in G$.  Represent $\phi_\infty(g)$ as a reduced $\Gdf$--loop $[a_0, e_1,..., e_n, a_n]$ based at $v$. For \walmost every $i$, $\xi_i(g)$ is a lift of $\phi_\infty(g)$ in $W_i$ and $\xi_i(g)=[\widetilde a_0, e_1,..., e_n, \widetilde a_n]$, where $\widetilde a_k$ is a lift of $a_k$ in the corresponding vertex group of the geometric splitting of $W_i$. By construction, the factoring map $\lambda_i$ is a morphism from the geometric splitting of $W_i$ to the geometric splitting of $\Gamma_i$. Combining this with the definition of the $\alpha_i$, $\widehat{\lambda_i}=\lambda_i\circ \alpha_i$ is also a morphism from the geometric splitting of $W_i$ to the geometric splitting of $\Gamma_i$. Hence $\eta_i(g)=\widehat{\lambda_i}(\xi_i(g))$ moves $v_i\in T_i$ by a distance not bigger than the amount by which $\phi_i(g)$ moves $v_i$. Therefore, $\eta_i(g)$ moves a point in $T_i$ independent of $g$ by an amount independent of $i$, so $\eta_i$ is not \Tdiv. 
\end{proof}

Recall $\mc{H}_{L,(\phi_i)}$ is the family of stably parabolic subgroups of $L$ with respect to $(\phi_i)$.  This family includes all subgroups fixing edges in $T_\infty$.

\begin{lemma}\label{lem:shortq}
 Each $P \in \mc{H}_{L,(\phi_i)}$ maps injectively into an element of $\mc{H}_{S,(\eta_i)}$. 
\end{lemma}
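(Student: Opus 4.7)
The plan is to show that for any finitely generated subgroup $P_0 \le P$, suitable lifts to $G$ of generators of $P_0$ make $\eta_i$ and $\phi_i$ agree on $P_0$ up to a common inner automorphism of $\Gamma_i$ (\walmost surely). Both injectivity of $\pi|_P$ and stable parabolicity of $\pi(P)$ in $S$ will follow immediately from this.

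First, I will locate $P$ in the approximating graphs of groups. Since $P \in \mc{H}_L$ and $\mathbb J$ refines $\Gdf$, it follows from the construction of $\mathbb J(v)$ (in particular from Theorem~\ref{thm:decomps} applied to each vertex group of $\Gdf$) that $P$ is elliptic in $\mathbb J$; after conjugation $P \le U$ for some vertex group $U$ of $\mathbb J$. By Lemma~\ref{lem:ugly hack}(5), the vertex groups $U_i$ of $\mathbb A_i$ corresponding to $U$ satisfy $U = \bigcup_i h_i(U_i)$. Given generators $p_1,\ldots,p_k$ of $P_0$, I choose $i_0$ so large that each $p_j$ lies in $h_{i_0}(U_{i_0})$, pick preimages $q_{i_0,j} \in U_{i_0}$, and pick lifts $\widetilde{p}_j \in G$ with $\xi_{i_0}(\widetilde{p}_j) = q_{i_0,j}$. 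Setting $\widetilde{P}_0^{(i)} := \langle \rho_{i_0,i}(q_{i_0,1}),\ldots,\rho_{i_0,i}(q_{i_0,k})\rangle \le U_i$ for $i \ge i_0$ (where $\rho_{i_0,i} = f_{i-1}\circ\cdots\circ f_{i_0}$), one has $\xi_i(\widetilde{p}_j) \in \widetilde{P}_0^{(i)}$ for all $j$ and $h_i(\widetilde{P}_0^{(i)}) = P_0$. Since $\widetilde{P}_0^{(i)} \le U_i$ is elliptic in $\mathbb A_i$ and its $h_i$-image $P_0$ is stably parabolic in $L$, we have $\widetilde{P}_0^{(i)} \in \mc{H}_i$.

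Next I will apply the modular automorphism $\alpha_i$. A type-by-type inspection of the generators of $\Mod(W_i)$ from Definition~\ref{defn:modaut} shows that each of them acts on any fixed $H \in \mc{H}_i$ as pointwise conjugation by a single element of $W_i$: inner automorphisms obviously; Dehn twists either fix $H$ pointwise or conjugate by the twisting element; natural extensions of QH and type-(6) vertex group automorphisms do so by Theorem~\ref{thm:decomps}(2); and natural extensions of abelian and dihedral-type vertex group automorphisms fix the $(\mathbb A,\mc{H})$-envelope pointwise, which contains all $\mc{H}$-subgroups sitting in the corresponding vertex group. This property is preserved under composition (an automorphism applied to a conjugate conjugates by the image), so there exists $\gamma_i \in W_i$ with $\alpha_i(h) = \gamma_i h \gamma_i^{-1}$ for all $h \in \widetilde{P}_0^{(i)}$. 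Consequently
\[
\eta_i(\widetilde{p}_j) \;=\; \lambda_i\bigl(\alpha_i(\xi_i(\widetilde{p}_j))\bigr) \;=\; \lambda_i(\gamma_i)\,\phi_i(\widetilde{p}_j)\,\lambda_i(\gamma_i)^{-1}
\]
for every $j$, with the \emph{same} conjugator $\lambda_i(\gamma_i)$ across $j$.

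The conclusions are immediate. For injectivity: if $\pi(p_j) = 1$ in $S$ then $\widetilde{p}_j \in \wker(\eta_i)$, so $\eta_i(\widetilde{p}_j) = 1$ \walmost surely; the displayed identity forces $\phi_i(\widetilde{p}_j) = 1$ \walmost surely, and so $p_j = 1$ in $L$. For stable parabolicity: since $P_0$ is stably parabolic in $L$, \walmost surely the $\phi_i(\widetilde{p}_j)$ share a common fixed edge $e_i$ of $T_i$, hence the $\eta_i(\widetilde{p}_j)$ share the common fixed edge $\lambda_i(\gamma_i)\cdot e_i$; since $P_0 \le P$ was arbitrary, $\pi(P)$ is stably parabolic in $S$ with respect to $(\eta_i)$. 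The main obstacle is verifying that every element of $\Mod(W_i)$ acts on each $H \in \mc{H}_i$ by \emph{pointwise} conjugation by a single element, not merely by the subgroup-level conjugation guaranteed by Lemma~\ref{l:conjugate parabolic}; this requires the type-by-type check of Definition~\ref{defn:modaut} indicated above, combined with the observation that stably parabolic subgroups of relevant vertex groups lie inside the envelope (for abelian/dihedral types) and that QH and type-(6) automorphisms are postulated to act by conjugation on $\mc{H}$.
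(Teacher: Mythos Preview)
Your proof is correct and follows essentially the same approach as the paper: both rely on the fact that modular automorphisms act by pointwise conjugation on stably parabolic subgroups, so that $\phi_i$ and $\eta_i$ agree up to a common inner automorphism on lifts of any finite piece of $P$. You supply considerably more detail than the paper's terse argument---in particular the explicit type-by-type verification that every generator of $\Mod(W_i)$ restricts to conjugation by a \emph{single} element on each $H\in\mc{H}_i$, which is exactly what the paper's ``it is easy to see'' is hiding.
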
  
\begin{proof}
If \walmost surely $\phi_i$ maps $g\in G$ to a non-trivial element in a parabolic subgroup of $\Gamma_i$, then $\phi_i(g)$ and $\eta_i(g)$ are conjugate \walmost surely, and hence $\eta_\infty(g)$ is a non-trivial element in a stably parabolic subgroup of $S$.  Since modular automorphisms act by conjugation on stably parabolic subgroups, it is easy to see that $\pi$ takes stably parabolic \emph{subgroups} of $L$ injectively to stably parabolic subgroups of $S$.
\end{proof}

We first consider the case where the quotient map $\pi$ is injective, which implies that $L\cong S$. Hence we can consider $(\eta_i)$ as a defining sequence for $L$ in this case. The following lemma is the reason why we consider GGDs instead of geometric decompositions. It follows directly from the construction of $\eta_i$ from $\phi_i$. 
 
\begin{lemma} \label{lem:still ggd}
If $\pi\colon L\to S$ is injective, then $\Gdf$ is the Linnell refinement of a GGD with respect to $(\eta_i )$.  
\end{lemma}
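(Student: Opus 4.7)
The strategy is to show that $\Gd$ itself is a GGD with respect to $(\eta_i)$ and that its canonical refinement with respect to $(\eta_i)$ coincides with $\Gdf$. Since $\pi\colon L\to S$ is injective, $\wker(\phi_i)=\wker(\eta_i)$, so both sequences have $L$ as their limit and stably parabolic subgroups and stable centers can be compared directly. The key technical input, recorded in the proof of Lemma~\ref{l:sq not tdiv}, is that $\widehat{\lambda}_i=\lambda_i\circ\alpha_i$ is a morphism of graphs of groups from the generalized geometric splitting of $W_i$ corresponding to $\Gdf$ to the geometric splitting of $\Gamma_i$.

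First I check the four clauses of Definition~\ref{def:ggd} for $\Gd$ with respect to $(\eta_i)$. For a vertex $v$ of $\Gd$ and a finite $F\subseteq\Gd_v$, a lift to $G$ lands via $\xi_i$ in $W_{v,i}$ and then $\widehat{\lambda}_i$ sends it into a vertex stabilizer of $T_i$ of the same type; this yields clause~(1) together with preservation of the LSF/hyperbolic type. Clause~(2) follows from the morphism property taking adjacent edges to adjacent edges. For clause~(3), edge groups of $\Gd$ map via $\widehat{\lambda}_i$ to edge stabilizers of $T_i$, so they are stably parabolic with respect to $(\eta_i)$; and conversely, given $H\in\mc{H}_{L,(\eta_i)}$ and a finite $F\subseteq H$, the fact that $\widehat{\lambda}_i(\xi_i(F))$ fixes an edge forces $\alpha_i(\xi_i(F))$, and hence $\xi_i(F)$ (using that $\alpha_i\in\Mod(W_i)$ permutes the vertex and edge stabilizers of the Bass--Serre tree of the generalized geometric splitting of $W_i$), to be elliptic in that tree. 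Applying $\lambda_i$ then shows $\phi_i(F)$ fixes an edge of $T_i$, so $H\in\mc{H}_{L,(\phi_i)}$, which is elliptic in $\Gd$ by hypothesis.

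Clause~(4) is the most delicate point. Lemma~\ref{lem:ugly hack}(5b) provides the central subgroup $Z_i\le W_{v,i}$ lying in every adjacent edge group and satisfying $\SC(\Gd_v)=\bigcup_i h_i(Z_i)$, while Lemma~\ref{l: lifting the shortening auto} ensures each shortening modular automorphism $\alpha_i$ acts trivially on $Z_i$. Combining this with the morphism property of $\widehat{\lambda}_i$, I deduce that the stable center of $\Gd_v$ computed for $(\eta_i)$ coincides with the one computed for $(\phi_i)$, which is already contained in every adjacent edge group by hypothesis. With $\Gd$ now established as a GGD for $(\eta_i)$ whose LSF--type vertices and stable centers match those from the $(\phi_i)$ setting, the relative $4$--Linnell decompositions inside each LSF vertex group are identical, and therefore the canonical refinement of $\Gd$ with respect to $(\eta_i)$ equals $\Gdf$. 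The main obstacle will be clause~(4): without the precise fixing of $Z_i$ by $\alpha_i$ the stable center could in principle change under the passage from $\phi_i$ to $\eta_i$, which would alter the Linnell refinement and break the identification with $\Gdf$.
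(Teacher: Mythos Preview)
Your approach is the natural one and fleshes out what the paper compresses into a single sentence (``It follows directly from the construction of $\eta_i$ from $\phi_i$''). The verification of clauses~(1), (2), and (4), and the forward half of clause~(3) (edge groups of $\Gd$ lie in $\mc{H}_{L,(\eta_i)}$), are all sound once one uses that $\widehat{\lambda}_i$ is a morphism of graphs of groups and that $\alpha_i\in\Mod(W_i)$ preserves each $W_{v,i}$ setwise and acts by conjugation on subgroups in $\mc{H}_i$.

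There is, however, a genuine gap in your argument for the remaining half of clause~(3), namely that every $H\in\mc{H}_{L,(\eta_i)}$ is elliptic in $\Gd$. You write that ``$\widehat{\lambda}_i(\xi_i(F))$ fixes an edge forces $\alpha_i(\xi_i(F))$ \ldots\ to be elliptic in that tree'': this implication does not follow. Since $\widehat{\lambda}_i=\lambda_i\circ\alpha_i$ and $\lambda_i$ is only a morphism of graphs of groups (not injective), a hyperbolic element of $W_i$ can have elliptic $\lambda_i$--image whenever $\lambda_i$ sends an element outside an edge group into the corresponding edge group of $\Gamma_i$. Your next step, ``applying $\lambda_i$ then shows $\phi_i(F)$ fixes an edge of $T_i$'', is likewise too strong: ellipticity of $\xi_i(F)$ in the $W_i$--tree only yields that $\phi_i(F)$ fixes a \emph{vertex} of $T_i$, not an edge, so you do not obtain $H\in\mc{H}_{L,(\phi_i)}$ this way.

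What you actually need is the weaker conclusion that $H$ is elliptic in $T_\Gd$, and for this the contrapositive is cleaner: if $h\in H$ were hyperbolic in $T_{\Gdf}$, write $\phi_\infty(\widetilde h)$ as a reduced $\Gdf$--loop of length $n\ge 1$; by Lemma~\ref{lem:ugly hack}(6) (injectivity of $h_i$ on edge groups) its lift $\xi_i(\widetilde h)$ is a reduced loop in the $\Gdf$--splitting of $W_i$; since $\alpha_i$ fixes the underlying graph and each vertex group setwise, $\alpha_i(\xi_i(\widetilde h))$ remains a reduced loop of the same length. The delicate point---which your argument does not address and which the paper's one-line proof also leaves implicit---is that the $\lambda_i$--image of this reduced loop is again reduced in the geometric splitting of $\Gamma_i$. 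This requires knowing that $\lambda_i$ does not send the relevant vertex--group elements into edge groups, which is not automatic from the morphism property alone. One route is to observe that the same issue arises for $\phi_i$ itself, and that $\omega$--almost surely the loop for $\phi_i(\widetilde h)$ \emph{is} reduced (since $h$ is hyperbolic in the geometric tree $T_\infty$ for $(\phi_i)$, being hyperbolic in $T_\Gd$ and noting $\Gd$ is a GGD); then the loops for $\phi_i(\widetilde h)$ and $\eta_i(\widetilde h)$ traverse the same edges and differ only by conjugation of the vertex--group letters, so reducibility transfers.
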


If $\pi\colon L\to S$ is injective then for a vertex $v$ of $\Gdf$ we define 

\[
\|\eta_i\|_{\Coll, v}=\inf_{x\in\Coll(\Gamma_{v_i})}\max_{s\in \widetilde{A}_v}d_i(\eta_i(s)x, x).
\]

Recall that in our definition of modular automorphisms, we only included squared Nielsen transformations for the dihedral-type vertex groups. The next lemma is what guarantees that this still provides enough automorphisms for the shortening argument to work. If we allowed all Nielsen transformations, then the conclusion of Lemma~\ref{l:snt is enough} below follows from the Euclidean algorithm. 

For a dihedral type group $D$ which is finitely generated relative to a family of subgroups $\mc{E}$, let ${\rm Aut}^{\mc{E}}_s(D)$ denote the subgroup of ${\rm Aut}(D)$ generated by squared Nielsen transformations relative to $\mc{E}$. We also define $D^+$ and $P_{D^+}$ as in Section \ref{s:modaut}.

\begin{lemma}\label{l:snt is enough}
Suppose $D$ is a dihedral type group which is finitely generated relative to a collection of subgroup $\mc{E}$ and  $g_1, \dots, g_k\in D$ project to form a basis of $D^+/P_{D^+}$. Suppose $D$ acts on the real line $T$ so that $\langle g_1,...g_k\rangle$ has indiscrete orbits. Then there exists a sequence $\alpha_i\in {\rm Aut}_s^{\mc{E}}(D)$  so that for all $1\leq j\leq k$, the translation length of $\alpha_i(g_j)$ goes to zero as $i\rightarrow \infty$. 
\end{lemma}

\begin{proof}
Without loss of generality, we can assume $k=2$ and that $g_1$ and $g_2$ act on $T$ as rationally independent translations in the same direction. We choose an orientation on $T$ and, for each $g\in D$ which acts as a translation on $T$, we set $\tau_g$ to be the signed translation length of $g$. That is, for any $x\in T$, $d_T(x, gx)=\tau_g$ when $g$ translates in the positive direction and $d_T(x, gx)=-\tau_g$ when $g$ translates in the negative direction, so the translation length of $g$ is $|\tau_g|$.

Without loss of generality, we can assume that $\tau_{g_1}>\tau_{g_2}>0$. To complete the proof, it suffices to find $\alpha\in {\rm Aut}_s^{\mc{E}}(D)$ such that $|\tau_{\alpha(g_1)}|<\frac12|\tau_{g_1}|$ and $|\tau_{\alpha(g_2)}|\leq |\tau_{g_2}|$.

Case 1: $0< \tau_{g_2}<\frac{1}{2}\tau_{g_1}$. Then we can choose $n\geq 1$ such that $0<\tau_{g_1}-2n\tau_{g_2}<\frac12\tau_{g_1}$. Hence we can choose $\alpha=\beta^n$, where $\beta$ is the the squared Nielsen transformation which fixes $g_2$ and sends $g_1$ to $g_1g_2^{-2}$.

Case 2: $\frac{1}{2}\tau_{g_1}<\tau_{g_2}<\tau_{g_1}$.
In this case, we let $\beta_1$ be the squared Nielsen transformation which fixes $g_2$ and sends $g_1$ to $g_1g_2^{-2}$. Let $x_1=\beta_1(g_1)^{-1}$, and note that $0<\tau_{x_1}<\tau_{g_1}$; moreover, 
\[
|\tau_{g_1}-\tau_{x_1}|=\tau_{g_1}-(-\tau_{g_1}+2\tau_{g_2})=2(\tau_{g_1}-\tau_{g_2}).
\]

Letting $y_1=\beta_1(g_2)=g_2$, we have that $|\tau_{y_1}-\tau_{x_1}|=\tau_{y_1}-\tau_{x_1}=\tau_{g_1}-\tau_{g_2}$. Now, if $\tau_{x_1}>\frac12\tau_{y_1}$, then we can let $\beta_2$ be the squared Nielsen transformation which fixes $x_1$ and sends $y_1$ to $y_1x_1^{-2}$ and set $y_2=\beta(y_1)^{-1}$ and $x_2=\beta_2(x_1)=x_1$. Then we will again have $|\tau_{y_2}-\tau_{x_2}|=|\tau_{y_1}-\tau_{x_1}|=\tau_{g_1}-\tau_{g_2}$ by the same calculation as before. Since we are decreasing by a fixed amount each time, after finitely many steps we will obtain $x_m$ and $y_m$ where either $0<\tau_{x_m}<\frac12\tau_{y_m}$ or $0<\tau_{y_m}<\frac12\tau_{x_m}$. From the construction, we also have that $0<\tau_{x_m}\leq\tau_{x_{m-1}}\leq...\leq\tau_{x_1}<\tau_{g_1}$, $0<\tau_{y_m}\leq\tau_{y_{m-1}}\leq...\leq\tau_{y_1}\leq\tau_{g_2}$, and there are squared Nielsen transformations $\beta_1,...,\beta_m$ such that $\beta_m\circ...\circ\beta_1(g_1)=x_m^{\pm 1}$ and $\beta_m\circ...\circ\beta_1(g_2)=y_m^{\pm 1}$. 

Case 2a: $0<\tau_{y_m}<\frac12\tau_{x_m}$. Then choosing $n$ and $\beta$ as in Case 1, and setting $x_{m+1}=\beta^n(x_m)$ we have that $\tau_{x_{m+1}}<\frac12\tau_{x_m}$. Then we can choose $\alpha=\beta^n\circ\beta_m\circ...\circ\beta_1$ since $|\tau_{\alpha(g_2)}|=|\tau_{y_m}|\leq|\tau_{g_2}|$ and
\[
|\tau_{\alpha(g_1)}|=|\tau_{x_{m+1}}|<\frac12|\tau_{x_m}|<\frac12|\tau_{g_1}|.
\]

Case 2b: $0<\tau_{x_m}<\frac12\tau_{y_m}$. Then we can again choose $\beta$ and $n$ such that $0<\tau_{y_{m+1}}<\frac12\tau_{y_m}$ where $y_{m+1}=\beta^n(y_m)$ and $x_{m+1}=\beta^n(x_m)=x_m$. In this case, we have 
\[
0<\tau_{y_{m+1}}<\frac12\tau_{y_m}\leq\frac12\tau_{g_2}<\frac12\tau_{g_1}.
\]

Hence, if $\tau_{x_{m+1}}<\tau_{y_{m+1}}$, we can set choose $\alpha=\beta^n\circ\beta_m\circ...\circ\beta_1$. If $\frac12\tau_{x_{m+1}}<\tau_{y_{m+1}}<\tau_{x_{m+1}}$, then, similar the beginning of Case 2, we can choose a squared Nielsen transformation $\theta$ which fixes $y_{m+1}$ sends $x_{m+1}$ to $x_{m+1}y_{m+1}^{-2}$ and set $x_{m+2}=\theta(x_{m+1})^{-1}$ and $y_{m+2}=\theta(y_{m+1})=y_{m+1}$. Then we have 
\[
0<\tau_{x_{m+2}}<\tau_{y_{m+2}}=\tau_{y_{m+1}}<\frac12\tau_{g_1}.
\]
So we can choose $\alpha=\theta\circ\beta^n\circ\beta_m\circ...\circ\beta_1$.

Finally, if $0<\tau_{y_{m+1}}<\frac12\tau_{x_{m+1}}$, arguing as in Case 1 we can find $\theta$ such that $0<\tau_{\theta(x_{m+1})}<\frac12\tau_{x_{m+1}}$ and $\theta$ is a power of a squared Nielsen transformation which fixes $y_{m+1}$. Since $\tau_{x_{m+1}}<\tau_{g_1}$, we can choose $\alpha=\theta\circ\beta^n\circ\beta_m\circ...\circ\beta_1$ in this case.

\end{proof}

\begin{theorem}\label{t:shortening argument}
If $\pi$ is injective then for every vertex $v$ of $\Gdf$ $\|\eta_i\|_{\Coll, v}$ does not diverge. 
\end{theorem}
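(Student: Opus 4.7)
The plan is a contradiction-driven shortening argument in the spirit of Sela, adapted to our collapsed-space setting. Suppose for contradiction that $\wlim \|\eta_i\|_{\Coll,v} = \infty$ for some vertex $v$ of $\Gdf$. Since $\pi \co L \to S$ is injective, we may regard $(\eta_i)$ as a defining sequence for $L$; by Lemma~\ref{lem:still ggd}, $\Gdf$ is the canonical refinement of a GGD with respect to $(\eta_i)$. Hence the entire machinery of Sections~\ref{s:collapsing Divergent}--\ref{s:JSJ} applies to $(\eta_i)$ at the vertex $v$. In particular, Theorem~\ref{t:Rtree summary} provides a nontrivial isometric $\overline{\V}$-action on a limiting $\R$-tree (constructed from the rescaled sequence $\left(\Coll(\Gamma_{v_i}), \frac{1}{\|\eta_i\|_{\Coll,v}} d_i, \bp_i \right)$) having trivial tripod stabilizers, abelian segment stabilizers, and with every element of $\mc{H}_{\overline{\V}}$ fixing a unique point.

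Applying the Rips machine to this action, one obtains an $(\mc{A},\mc{H})$-splitting $\mathbb{A}$ of $\overline{\V}$ into simplicial, abelian axial, and Seifert-type vertex actions. By Theorem~\ref{thm:decomps}, $\Mod^{\mc{H}}_{\mathbb{A}}(\overline{\V}) \le \Mod^{\mc{H}}_{\overline{\mathbb{J}}(v)}(\overline{\V})$, and by Lemma~\ref{l: lifting the shortening auto} every modular automorphism in the latter group lifts to an automorphism of $\V$ fixing $\SC(\V)$ and acting by conjugation on each stably parabolic subgroup. Because $\V$ is the direct limit of $W_{v,i}$ along $\mathbb{A}_{v,i}$-morphic surjections (Lemma~\ref{lem:ugly hack}), and each generator of $\Mod^{\mc{H}}_{\overline{\mathbb{J}}(v)}$ is a Dehn twist, type~(3) or type~(5) orbifold/dihedral automorphism, or an abelian-vertex transvection supported on finitely many elements, each such automorphism is realized by an element of $\Mod_v(W_i)$ (and hence of $\Mod(W_i)$) for $i$ sufficiently large.

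The shortening step itself follows the standard template (compare~\cite[Section~7]{ReiWei} and~\cite[Theorem~5.23]{GH}), now phrased in the collapsed space: for the rescaled generator set $\widetilde{A}_{v,i}$ based at $\bp_i$, one finds $\alpha_i \in \Mod_v(W_i)$ whose action on $\Coll(\Gamma_{v_i})$ strictly decreases $\|\cdot\|_{\Coll,v,\bp}$. Simplicial edges in the graph of actions are shortened by Dehn twists, axial abelian pieces by type~(4) automorphisms fixing the envelope, and Seifert-type pieces by type~(3) orbifold homeomorphisms; for dihedral-type axial actions one invokes Lemma~\ref{l:snt is enough} to produce the required squared Nielsen transformations. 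Replacing $\widehat{\lambda}_i$ by $\widehat{\lambda}_i \circ \alpha_i$ thus yields a strictly shorter representative in the $\sim$-class of $\lambda_i$, contradicting the defining property of $\widehat{\lambda}_i$ from Lemma~\ref{l:shortest}. The main subtlety is the passage between translation lengths in the $\R$-tree $\Coll_{\infty,v}$ and displacements in the collapsed spaces $\Coll(\Gamma_{v_i})$ of non-parabolic generators: this is precisely controlled by Lemma~\ref{l:almost equal trans len for para}, together with the abelian-segment-stabilizer property Theorem~\ref{t:Rtree summary}.\eqref{eq:abel seg}, which together ensure that a strict decrease in the limiting translation length of a loxodromic generator forces a strict decrease of its displacement of $\bp_i$ \walmost surely.
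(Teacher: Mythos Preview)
Your outline matches the paper's own sketch: limiting $\R$--tree via Theorem~\ref{t:Rtree summary}, Rips machine, piece-by-piece shortening, lift through $\overline{\mathbb{J}}(v)$ and Lemma~\ref{l: lifting the shortening auto} to $\Mod_v(W_i)$, contradict Lemma~\ref{l:shortest}. Two points deserve sharpening.

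First, to apply Guirardel's Rips machine \cite[Theorem~5.1]{Guirardel:Rtrees} you need the $\overline{\V}$--action to be super-stable (which follows from Theorem~\ref{t:Rtree summary}.\eqref{eq:triv trip}--\eqref{eq:abel seg}) and $\overline{\V}$ to be freely indecomposable rel $\mc{H}$ (which holds because $\Gdf$ already incorporates the relative $4$--Linnell refinement of Definition~\ref{def:refined ggd}, so $\overline{\V}$ admits no splitting over a group of order $\le 4$ rel the adjacent edge groups). You should make these hypotheses explicit.

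Second, and more substantively, your last paragraph misdescribes the simplicial case. A Dehn twist over a simplicial edge does \emph{not} shorten the limiting action on $\Coll_{\infty,v}$: the twisting element lies in an arc stabilizer and so acts trivially on the limiting tree. The paper's point is instead that, by Theorem~\ref{t:Rtree summary}.\eqref{eq:para unique fp}--\eqref{eq:ell unique fp} (equivalently Corollary~\ref{c:arc stab loxo}), every nontrivial arc stabilizer outside $\SC(\V)$ is stably \emph{loxodromic} on $\Coll(\Gamma_{v_i})$. This is what makes the Dehn twist genuinely move points in the approximating collapsed spaces and permits shortening $\|\widehat{\lambda}_i\|_{\Coll,v,\bp}$ directly there, even though nothing changes in the limit. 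So the mechanism in the simplicial case is the reverse of what your final sentence suggests: one does not descend a limiting decrease to the approximations, but rather shortens in the approximations with no accompanying decrease in the limit. Your citation of Lemma~\ref{l:almost equal trans len for para} is in the right neighborhood, but the relevant consequence here is Corollary~\ref{c:arc stab loxo}, not a transfer of limiting translation-length inequalities.
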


\begin{proof}
The proof is based on Sela's shortening argument, which is now well understood by the experts in the field.  We give a sketch.
For sake of contradiction, suppose $\|\eta_i\|_{\Coll, v}$ diverges for some vertex $v$ of $\Gdf$, and let $\bp_i\in \Coll(\Gamma_{v_i})$ be as in Definition~\ref{d:stretch factor}.  Note that instead of each $\bp_i$ being centrally located and each $\widehat{\lambda}_i$ being short we only have  $\{\bp_i\}$ being a sequence of almost centrally located points and  $\{\widehat{\lambda}_i\}$ being an almost short sequence. As a result, we need the shortening automorphisms to shorten the translation length by an amount bounded below independently of $i$. Fortunately, the shortening automorphisms in Sela's shortening argument do have this property. 

By Theorem~\ref{t:Rtree summary}(\ref{eq:rtree}), $(\eta_i)$ induces an action of $\overline{\V}$ on the $\R$--tree $\Coll_{v, \infty}$ of $\Coll(\Gamma_{v_i})$.  Let $T$ be the minimal $\overline{\V}$-invariant subtree of $\Coll_{v, \infty}$, and $\bp\in \Coll_{v, \infty}$ the point defined by $\{\bp_i\}$. 

\begin{claim}\label{claim:intersect} For some $s\in \widetilde A$, the geodesic segment $[\bp, s\cdot\bp]$ intersects $T$ in a non-degenerate segment.
\end{claim}

\begin{proof} [Proof of Claim~\ref{claim:intersect}]
If $\bp\in T$, this follows by construction, as the action of $\langle \widetilde A \rangle$ on $T$ is non-trivial. Now suppose $\bp\notin T$. Let $o$ be the projection of $\bp$ in the closure (in $\Coll_{v, \infty}$) of $T$.  Observe $\langle \widetilde A \rangle$ does not fix $o$. Let $s\in \widetilde A$ such that $s\cdot o\neq o$. Then $[s\cdot o, o]$ has non-degenerate intersection with $T$ and hence $[s\cdot \bp, \bp]=[s\cdot\bp, s\cdot o]\cup[s\cdot o, o]\cup[o, \bp]$ also does.  This completes the proof of Claim~\ref{claim:intersect}.
\end{proof}

It follows from Theorem~\ref{t:Rtree summary}(\ref{eq:triv trip}) and (\ref{eq:abel seg}) and a standard argument that the action of $\overline{\V}$ on $T$ is super-stable. Since $\overline{\V}$ is freely indecomposable, by \cite[Theorem 5.1]{Guirardel:Rtrees}, there is a graph of actions decomposition of the $\overline{\V}$-tree $T$, whose vertex actions are either simplicial, axial, or Seifert type. Let $\mathbb A$ be the splitting of $\overline{\V}$ induced by this decomposition. Given a generator $s\in A$, consider the path $[\bp, s\cdot \bp]$. Choosing a generator for which this path is the longest, it has a non-trivial intersection with at least one piece in the graph of actions decomposition. Note that  stably parabolic subgroups of $\V$ fix points in $T$ by Lemma \ref{l:fixed for ell/par}. Let $\mc{H}_{\overline{\V}}$ be the collection of images of stably parabolic subgroups of $\V$ in $\overline{\V}$.  For the axial and Seifert type pieces, we can directly find a modular automorphism in $\Mod^{\mc{H}_{\overline{\V}}}_{\mathbb A}(\overline{\V})$ which shortens the segment of $[\bp, s\cdot\bp]$ in this piece without changing the lengths of the segments in any other pieces and without changing the lengths of any other generators.  In the axial case, the shortening automorphisms in $\Mod^{\mc{H}_{\overline{\V}}}_{\mathbb A}(\overline{\V})$ are more restrictive than those used in other applications of the shortening argument.  However, Lemma~\ref{l:snt is enough} ensures these automorphisms suffice. In the proof of \cite[Theorem 5.8]{ReiWei}, it is shown that the shortening automorphism shortens the translation length on $T$ by at least $\frac{r}{2}$. Here $r$ is the length of the shortest non-degenerate segment of the form  $[\bp, s\cdot\bp]\cap T'$, where $T'$ is some axial piece and $s\in \widetilde A$. Note that even though $r$ depends on the sequence $(\eta_i)$, it is independent of $i$.  The same proof works verbatim in our setting using Lemma~\ref{l:snt is enough} in place of the use of the Euclidean algorithm in the proof of \cite[Proposition 5.14]{ReiWei}. In the Seifert case, the shortening automorphism is completely standard, and also reduces the translation length by an amount independent of $i$.  By Theorem~\ref{thm:decomps} this modular automorphism belongs to $\Mod^{\mc{H}_{\overline{\V}}}_{\overline{\mathbb J}(v)}(\overline{\V})$ and by Lemma~\ref{l: lifting the shortening auto} it can be lifted to an automorphism in $\Mod^{\mc{H}_{\V}}_{\mathbb{J}(v)}(\V)$, where $\mc{H}_{\V}$ is the collection of stably parabolic subgroups of $V$. By construction \walmost surely this automorphism can be lifted to an element of $\Mod_v(W_i)$. Since the scaled (down) action of $W_{v, i}$ on $\Coll(\Gamma_{v_i})$ approximates the action of $\V$ on $T$, whose translation length is reduced by an amount independent of $i$ by the shortening automorphism, this automorphism of $W_i$ \walmost surely shortens $\widehat{\lambda}_i$ by an amount bounded below independent of $i$, a contradiction. In case $[\bp, s\cdot \bp]$ intersects a simplicial piece, we cannot find a automorphism which shortens the action of $\V$ on $T$. However, by Theorem~\ref{t:Rtree summary}.\eqref{eq:para unique fp} and \eqref{eq:ell unique fp} any non-trivial element in a segment stabilizer is stably loxodromic.  Thus, similarly as in \cite[Proposition 5.19]{ReiWei}, we can find an automorphism which shortens the action of $W_{v, i}$ on $\Coll(\Gamma_{v_i})$, One can see at the end of the proof of \cite[Proposition 5.19]{ReiWei} that the shortening automorphism reduces the translation length by at least $l(e)/\|\eta_i\|_{\Coll, v}$ , up to an error $o(i)$. Here l(e) is the length of an edge in a simplicial piece, which is independent of $i$. So we again reach a contradiction. 
\end{proof}

\subsection{The descending chain condition}
The proof of the following result is inspired by Sela's proof of \cite[Theorem 1.12]{Sela:hyp}.
Note that if $\{\phi_i:G\rightarrow \Gamma_i\}$ is a \Tdiv sequence of homomorphisms, then there is an analogous construction of shortening quotients of the limit group $G/\wker(\phi_i)$ using the actions of $\Gamma_i$ on $T_i$, where $T_i$ is the tree dual to the geometric decomposition of $\Gamma_i$. We refer to these types of shortening quotients as \emph{$\mc{T}$--shortening quotients}, see \cite[Section 6]{GH} for the details of their construction. 

\begin{theorem}\label{dcc}
There is no infinite sequence of $\Mgen$--limit groups
\[
L_1\overset{\alpha_1}{\onto} L_2\overset{\alpha_2}{\onto}...
\]
such that each $\alpha_i$ is a proper quotient map.
\end{theorem}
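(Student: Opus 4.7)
The plan is to derive the descending chain condition from the equationally noetherian property of $\Mgen$ (a subfamily of $\Man$, so equationally noetherian by Theorem~\ref{t:main}). From a hypothetical infinite chain of proper quotients I will construct a single sequence in $\Hom(L_1,\Mgen)$ that violates the criterion of Theorem~\ref{thm:eqchar}.

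Assume such an infinite chain $L_1\overset{\alpha_1}{\onto} L_2\overset{\alpha_2}{\onto}\cdots$ exists. Each $L_i$ is finitely generated, being an $\Mgen$--limit group. Let $\beta_i := \alpha_{i-1}\circ\cdots\circ\alpha_1\co L_1\to L_i$ with kernel $K_i$; properness yields $K_1\subsetneq K_2\subsetneq\cdots$, and set $K_\infty := \bigcup_i K_i$. For each $i$, I first extract a sequence $(\psi_{i,j}\co L_i\to\Gamma_{i,j})_j\in\Hom(L_i,\Mgen)$ with \emph{trivial} stable kernel: starting from any defining sequence of $L_i$, Theorem~\ref{thm:eqchar} implies $\omega$--almost all terms factor through $L_i$, and discarding the $\omega$--small remainder gives the desired faithful defining sequence. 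Setting $\eta_{i,j} := \psi_{i,j}\circ\beta_i$, the sequence $(\eta_{i,j})_j\in\Hom(L_1,\Mgen)$ has stable kernel exactly $K_i$.

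The main step is a diagonal construction. Enumerate $L_1 = \{g_1, g_2, \ldots\}$ and for each $m\ge 1$ select a witness $h_m \in K_{m+1}\smallsetminus K_m$ (possible by properness). For each $m$, both of the following hold on an $\omega$--large set of $j$: $\eta_{m,j}(g) \ne 1$ for every $g \in \{g_1,\ldots,g_m\}\smallsetminus K_m$, and $\eta_{m,j}(h_m)\ne 1$. Pick $j_m$ satisfying both simultaneously, and set $\sigma_m := \eta_{m,j_m}$. A straightforward verification, using that $g\in K_m$ automatically forces $\eta_{m,j}(g)=1$, shows $\wker(\sigma_m) = K_\infty$, so the limit of $(\sigma_m)$ is $L_1/K_\infty$. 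On the other hand, for \emph{every} $m$ the witness $h_m$ satisfies $h_m \in K_\infty$ and $\sigma_m(h_m)\ne 1$, so no $\sigma_m$ factors through $L_1/K_\infty$. Therefore $(\sigma_m)$ does not $\omega$--factor through its limit, contradicting Theorem~\ref{thm:eqchar}.

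I do not anticipate any serious obstacle beyond the bookkeeping above. The crux is to coordinate the diagonal so that the stable kernel of $(\sigma_m)$ is \emph{exactly} $K_\infty$ while the witnesses $h_m$ guarantee failure of factoring at every index: the enumeration of $L_1$ ensures that any $g \notin K_\infty$ is eventually ``seen'' and preserved by $\sigma_m$, while the strict ascent $K_m\subsetneq K_{m+1}$ supplies a fresh witness at each level.
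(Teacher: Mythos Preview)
Your argument that an equationally noetherian family satisfies the descending chain condition on limit groups is correct in isolation, and is essentially the same reasoning the paper uses in Section~\ref{s:outline of proof} to deduce Theorem~\ref{t:DCC} from Theorem~\ref{t:main}. The problem is circularity: in this paper Theorem~\ref{dcc} is not a corollary of Theorem~\ref{t:main} but a \emph{prerequisite} for it. The proof of Theorem~\ref{t:main} rests on Theorem~\ref{t:collapsing Divergent}, and the proof of Theorem~\ref{t:collapsing Divergent} explicitly invokes Theorem~\ref{dcc} to ensure that the sequence of shortening quotients terminates. So you cannot appeal to Theorem~\ref{t:main} here.

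The paper's own proof avoids this by working directly with the tools developed in Sections~\ref{s:non-divergent}--\ref{s:res-short} rather than with the global equationally noetherian statement. It runs a Sela-style minimal counterexample argument: from a hypothetical infinite chain one extracts, by a careful ``fastest-growing-kernel'' selection, a limit group $R_\infty$ that admits no infinite descending chain of proper $\Mgen$--limit quotients. One then applies the shortening machinery (Theorems~\ref{t:shortening argument} and~\ref{t:NEW non-divergent}, together with Lemma~\ref{lem:shortq} and Theorem~\ref{t:fingenedges}) to $R_\infty$ and its iterated shortening quotients; the no-infinite-chain property of $R_\infty$ guarantees this process terminates, and then the lemmas from \cite[\S6]{GH} push $\omega$--factoring back up to $R_\infty$, contradicting the construction. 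Your diagonal construction is fine as bookkeeping, but it does not replace this work: something has to substitute for the equationally noetherian input, and that is precisely what the shortening argument provides.
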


\begin{proof}
Towards a contradiction, suppose there is an infinite descending sequence of $\Mgen$--limit groups as above. Choose a sequence of $\Mgen$--limit groups $R_1\overset{\beta_1}{\onto} R_2...$ with each $R_n=F_k/\wker(\phi_i^n)$ and $\phi^n_\infty\circ\beta_n=\phi^{n+1}_\infty$ for all $n$, and further assume that, for each $n\geq 1$, $R_{n+1}$ is chosen such that if $R_n\onto S\onto...$ is any infinite descending sequence of $\Mgen$--limit groups with $S=F_k/\wker(\rho_i)$, then
\[
|\ker(\rho_\infty)\cap B_n|\leq |\ker(\phi^{n+1}_\infty)\cap B_n|
\]  
where $B_n$ is the ball of radius $n$ in $F_k$ with respect to the word metric.

Choose a diagonal sequence $(\psi_n=\phi^n_{j_n})$, where $j_n$ is so that $\ker(\psi_n)\cap B_n=\ker(\phi^n_\infty)\cap B_n$ and for some $g\in\ker(\phi^{n+1}_\infty)$, $g\notin\ker(\psi_n)$. Let $R_\infty=G/\wker(\psi_n)$; by construction, $\ker(\psi_\infty)=\bigcup\limits_{j=1}^\infty\ker(\phi^j_\infty)$, so $R_\infty$ is also the direct limit of the sequence  $F_n\onto R_1\onto R_2\onto...$.

We claim every descending sequence of $\Mgen$--limit groups 
\[
R_\infty\onto L_1\onto L_2...
\]
with proper quotient maps is finite. Indeed, if some element $g\in B_n$ maps trivially to $L_1$ but not to $R_\infty$, then $g$ maps non-trivially to $R_{n+1}$. But then there is an infinite descending sequence $R_n\onto L_1\onto...$, so this contradicts our choice of $R_{n+1}$.

Since there is no infinite descending chain starting at $R_\infty$, any sequence of proper $\Mgen$--limit quotients of $R_\infty$ is finite.   Consider the case where the defining sequence of homomorphisms for $R_\infty$ is \Tdiv.  In this case, by \cite[Lemmas 6.4 and 6.6]{GH} $R_\infty$ has a proper $\mc{T}$--shortening quotient $S_1$.  Since $R_\infty$ does not admit an infinite descending sequence of proper $\Mgen$--limit quotients, repeating this argument finitely many times we obtain a sequence of proper  quotients 
\[
R_\infty\twoheadrightarrow S_1\ldots \twoheadrightarrow S_q
\]
where the defining sequence of homomorphisms for $S_q$ is not \Tdiv.  If the defining sequence of homomorphisms for $S_q$ is \Cdiv then by applying the shortening argument above finitely many times we obtain a sequence of quotient maps
\[	S_q \twoheadrightarrow U_1 \ldots \twoheadrightarrow U_s	\]
of $\Mgen$--limit quotients, each of whose defining sequence of homomorphisms is not \Tdiv (by Lemma~\ref{l:sq not tdiv}) and terminating in an $\Mgen$--limit group $U_s$ which has a GGD $\Gd$ so that the defining sequence of homomorphisms for $U_s$ is not \Cdiv with respect to $\Gd$ (a sequence of proper quotients terminates by the construction of $R_\infty$, and once a $\Coll$--shortening quotient is not a proper quotient, the defining sequence of the quotient is not \Cdiv by Theorem~\ref{t:shortening argument}).  Each map $U_i \twoheadrightarrow U_{i+1}$ maps the stably parabolic subgroups of $U_i$ injectively into the stably parabolic subgroups of $U_{i+1}$, by Lemma~\ref{lem:shortq}.  By Theorem~\ref{t:NEW non-divergent} and the construction of $U_s$, all of the stably parabolic subgroups of $U_s$ are finitely generated.  It follows that all of the stably parabolic subgroups of $S_q$ are finitely generated.  It now follows from Theorem~\ref{t:fingenedges} that the defining sequence of homomorphisms for $S_q$ $\omega$--factors through the limit.  We now apply the arguments of \cite[$\S6$]{GH}.  Since $R_\infty$ does not admit an infinite descending sequence of proper $\Mgen$--limit quotients, and hence neither do any of the $S_i$, the hypotheses of \cite[Lemmas 6.5 and 6.6]{GH} are satisfied for $R_\infty$ and each of the $S_i$, and so these lemmas may be applied inductively from the bottom of the sequence to prove that the defining sequence for $R_\infty$ $\omega$--factors through the limit.
Then, from the construction of $R_\infty$, there exists $i$ such that $\ker(\psi_n)\subseteq\ker(\psi_\infty)\subseteq\ker(\phi^{n+1}_\infty)$, contradicting our construction of $\psi_n$.
\end{proof}

Finally, we prove Theorem~\ref{t:technical outline} from Section~\ref{s:geo decomp}

As explained in Section~\ref{s:geo decomp}, except for the results proved in the appendices, this completes the proof of Theorem~\ref{t:main}, and hence the paper.

\technical*

\begin{proof}
If $L$ is an $\Mgen$--limit group whose defining sequence is not \Tdiv then $L$ has a shortening quotient $\eta_1\colon L\onto S_1$ as defined in Definition~\ref{d:shortening quotient}. We first assume that $\eta_1$ is a proper quotient map. By Lemma \ref{l:sq not tdiv}, $S_1$ has a defining sequence of homomorphisms which is not \Tdiv. With respect to this defining sequence of $S_1$, we can construct the geometric decomposition of $S_1$ and its Linnell refinement as in Definitions \ref{def:gd} and \ref{def:refined ggd}. We can now apply the constructions of Sections \ref{s:JSJ} and \ref{s:res-short} to $S_1$ equipped with this refined geometric decomposition to produce a $\Coll$--shortening quotient $\eta_2\colon S_1\onto S_2$. 

We now inductively continue this process to produce a sequence of shortening quotients $L=S_0\onto S_1\onto S_2\onto...$, each of which is an $\Mgen$--limit group by construction. By Theorem~\ref{dcc}, any sequence of proper  quotients of $\Mgen$--limit groups eventually terminates. This means that we eventually produce a $\Coll$--shortening quotient $\eta_{k+1}\colon S_k\onto S_{k+1}$ which is not a proper quotient map, i.e. $\eta_{k+1}$ is an isomorphism. We choose the index $k\geq 0$ such that $\eta_{k+1}$ is the first map in this sequence which is not a proper quotient map, which means that $\eta_i$ is a proper quotient map for each $1\leq i\leq k$. Note that we may have $k=0$, which happens when there are no proper quotient maps and $\eta_1$ is an isomorphism. By Lemma \ref{lem:shortq}, for each $1\leq i\leq k+1$, each stably parabolic subgroup of $S_{i-1}$ maps injectively into a stably parabolic subgroup of $S_i$. Finally, since $\eta_{k+1}$ is injective, it follows from Lemmas \ref{lem:still ggd} and \ref{t:shortening argument} that there is a GGD for $S_{k+1}$ with respect to which $S_{k+1}$ is not \Cdiv.
\end{proof}

\appendix

\section{Edge-twisted graphs of groups} \label{app:edge-twist}
In this appendix we prove the following result from Section~\ref{s:non-divergent}.

\edgetwistfg*

We repeat the definition of an edge-twisted splitting for convenience.

\defedgetwist*

\subsection{Abelian vertex groups in graphs of groups}

We begin with some results about graphs of groups with abelian vertex groups.

\begin{lemma}\label{w}
Suppose $A$ is an abelian group and $K, L \le A$ satisfy $K\cap L = \{ 1 \}$. Let $N \le A$ be finitely generated. Then $\langle N, K\rangle\cap L$ is finitely generated. 
\end{lemma}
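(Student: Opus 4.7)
The plan is to pass to a quotient in which $K$ becomes trivial and $L$ remains intact, so that the intersection in question is carried injectively into a finitely generated abelian group.

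Write $A$ additively and let $\pi \colon A \to A/K$ be the quotient map. The hypothesis $K \cap L = \{0\}$ says precisely that $\pi$ is injective on $L$. Since $(N+K)\cap L \subseteq L$, the restriction of $\pi$ to $(N+K)\cap L$ is also injective. On the other hand, $(N+K)\cap L \subseteq N+K$, so
\[
\pi\bigl((N+K)\cap L\bigr) \subseteq \pi(N+K) = \pi(N)+\pi(K) = \pi(N).
\]
Thus $(N+K)\cap L$ embeds into $\pi(N)$, which is a homomorphic image of the finitely generated abelian group $N$ and hence is itself finitely generated abelian. Every subgroup of a finitely generated abelian group is finitely generated, so $(N+K)\cap L$ is finitely generated, as required.

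There is no real obstacle here; the only subtle point is to notice that one does not need the (generally false) identity $\pi(S\cap T) = \pi(S)\cap \pi(T)$. Instead one only uses that $\pi$ is injective on the ambient subgroup $L$ and that the image lies inside $\pi(N)$, which is enough to embed $(N+K)\cap L$ into a finitely generated abelian group.
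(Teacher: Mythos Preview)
Your proof is correct and is essentially the same idea as the paper's: both arguments exploit $K\cap L=\{0\}$ to identify $\langle N,K\rangle\cap L$ with a subgroup of a finitely generated abelian group built from $N$. The paper does this by constructing a surjection from the subgroup $N_0=\{g\in N:\exists\,g'\in K,\ gg'\in L\}\le N$ onto $\langle N,K\rangle\cap L$, whereas you more directly embed $\langle N,K\rangle\cap L$ into $\pi(N)\cong N/(N\cap K)$ via the quotient map $\pi\colon A\to A/K$; these are two sides of the same coin, and your version is a bit cleaner.
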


\begin{proof}
Let 
\begin{equation*}
N_0=\{g\in N\mid \text{there exists } g'\in K, \text{such that } gg'\in L\}
\end{equation*}
Then $N_0$ is a subgroup of the finitely generated abelian group $N$, so $N_0$ is finitely generated. Define $\phi: N_0\rightarrow \langle N, K\rangle\cap L$ by $\phi(g)=gg'$ where $g' \in K$ is so that $gg'\in L$. 
If $gg'\in L$ and $gg''\in L$ for $g', g''\in K$ then $g'(g'')^{-1}\in L$. But $g'(g'')^{-1}\in K$ and $K\cap L=\{1\}$. So $g'=g''$, and $\phi$ is well-defined. It is easy to check that $\phi$ is a homomorphism. For any $g_0\in \langle N, K\rangle\cap L$, we have $g_0=gg'$ for some $g\in N$ and $g'\in K$. Then $g_0=\phi(g)$, so $\phi$ is surjective.  Since $\langle N,K \rangle \cap L$ is the image of the finitely generated group $N_0$, it is finitely generated, as required.
\end{proof}

\begin{lemma}\label{amalgam}
Let $G=M*_{K}A$, where $A$ is abelian. Let $S_1\subset M$, $S_2\subset A$ and let $S_3$ be a generating set of $K \cap \langle S_2\rangle$.  Suppose $g$ is a word in $S_1\cup S_2$. Then
\begin{enumerate}
\item\label{eq:M gen S1S3} If $g\in M$,  then $g$ can be written as a word in $S_1\cup S_3$. 
\item\label{eq:g=ma} If $g\in A$, then $g=ma$, where $m\in K \cap \langle S_1\cup S_3 \rangle$ and $a\in \langle S_2 \rangle$. 
\end{enumerate}
In particular, if $G$ is finitely generated, then $M$ is finitely generated. 
\end{lemma}

\begin{proof}
By the assumptions of the lemma, we have  
\begin{equation*}
g= r_1s_1\dots r_k s_k
\end{equation*}
where each $r_i\in \langle S_1 \rangle$ (and hence in $\langle S_1\cup S_3 \rangle$) and each $s_i\in \langle S_2 \rangle$. If some $i > 1$ we have $s_i \in K$ then $s_i \in \langle S_3 \rangle$, so $r_i s_i r_{i+1} \in \langle S_1 \cup S_3 \rangle$.  We can repeat this reduction until no $s_i$ lies in $K$.  If $r_i\in K$ for some $i > 1$ then $r_i\in A$. Since $A$ is abelian, we can write $r_{i-1}s_{i-1}r_is_i$ as $r_{i-1}r_is_{i-1}s_i$, and reduce the number of syllables in our description of $g$.  After repeating finitely many times, this result is an expression:
\begin{equation}\label{e}
	g = p_1q_1 \ldots p_lq_l	
\end{equation}
where each $p_i \in \langle S_1 \cup S_3 \rangle$, each $q_i \in \langle S_2 \rangle$.  Moreover, none of the $p_i$ or $q_i$ lies in $K$, except possibly that $q_l$ is trivial, and we cannot control $p_1$ since the above reduction on $s_i$ required $i > 1$.

Consider the case where $g \in M$.  In this case, the above expression cannot contain any $q_i$, so $g = p_1 \in \langle S_1 \cup S_3 \rangle$, as required.

Now suppose that $g\in A$. If $g \in K$ then $g \in M$ and we are in the first case.  In this case, we can take $a = 1$ and $m = g$ in the conclusion of the lemma.  Suppose then that $g \not\in K$.  In this case, we have $l = 1$, since otherwise it is not possible that $g \in A$.  This proves \eqref{eq:g=ma}.

For the last assertion of the lemma, take a finite generating set for $G$, and let $S_1, S_2$ be all of the elements of $M,A$ that appear in normal forms for these generators.  Then $S_1$ and $S_2$ are finite, and generate $G$.   It follows that $S_3$ can be chosen to be finite, since $K \cap \langle S_2 \rangle$ is a subgroup of the finitely generated abelian group $\langle S_2 \rangle$.  It follows from Item~\eqref{eq:M gen S1S3} that $M$ is finitely generated, as required. 
\end{proof}

The next two results deal with the simplest cases of Theorem~\ref{t:fin gen}: those $\E$ with two edges.  The general case follows quickly.

\begin{lemma}\label{non-sep}
Suppose $\E$ is an edge-twisted graph of groups so $G = \pi_1(\E)$ is finitely generated.  Suppose further that $\E$ has two edges and two vertices.  Let $M$ be the type B vertex group of $\E$.  Then $M$ is finitely generated.
\end{lemma}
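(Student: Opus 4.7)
The plan is to realize $G = \pi_1(\E)$ as the HNN extension $G = \Gamma_1 *_{E_2}$ with stable letter $t$, where $\Gamma_1 = W *_{E_1} M$ and the HNN identifies the two embeddings $E_2 \subset W$ and $E_2' \subset M$ via $t^{-1}e\,t = \phi(e)$. Since $G$ is finitely generated, I would pick a generating set of the form $S_W \cup S_M \cup \{t\}$ with $S_W \subset W$ and $S_M \subset M$ finite. An application of Britton's lemma then shows that every element of $\Gamma_1$ is a word in $S_W \cup S_M \cup E_2 \cup E_2'$. The edge-twisted hypothesis provides finite sets $F_2 \subset E_2$ and $F_2' \subset E_2'$ lifting generators of $E_2/K_2$ and $E_2'/K_2'$; absorbing these into $S_W$ and $S_M$ respectively yields
\[
\Gamma_1 = \langle S_W, S_M, K_2, K_2'\rangle, \qquad S_W \cup K_2 \subset W, \quad S_M \cup K_2' \subset M.
\]

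The key technical step is to prove that $E_1 \cap \langle S_W, K_2\rangle \le W$ is finitely generated, so that Lemma \ref{amalgam} can be applied effectively. The edge-twisted condition $K_1 \cap K_2 = \{1\}$ forces $K_2 \cap E_1 \cap K_1 = \{1\}$, so $K_2 \cap E_1$ embeds into the finitely generated abelian group $E_1/K_1$ and is therefore finitely generated. A mild variant of Lemma \ref{w} - obtained by running its proof in the quotient $A/(K \cap L)$, so that the hypothesis $K \cap L = \{1\}$ is relaxed to ``$K \cap L$ finitely generated'' - then shows that $E_1 \cap \langle S_W, K_2\rangle$ is finitely generated; let $S_3$ be a finite generating set. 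Applying Lemma \ref{amalgam}(1) to the amalgam $\Gamma_1 = M *_{E_1} W$ (with $W$ as the abelian side) and $S_1 = S_M \cup K_2'$, $S_2 = S_W \cup K_2$, I conclude that
\[
M = \langle S_M \cup S_3 \cup K_2'\rangle.
\]

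The remaining difficulty, and the step I expect to be the main obstacle, is to show that the possibly infinite set $K_2'$ can be replaced by a finite subset of $M$. The plan is to run the analogous argument after interchanging the roles of the two edges, viewing $G$ instead as $\Gamma_2 *_{E_1}$ with $\Gamma_2 = W *_{E_2} M$. By the symmetric use of $K_1 \cap K_2 = \{1\}$, the subgroup $K_1 \cap E_2$ embeds into $E_2/K_2$ and is finitely generated, so the same combinatorial reduction yields a parallel presentation $M = \langle S_M'' \cup S_3'' \cup K_1'\rangle$ for some finite $S_M'', S_3''$. Combining the two presentations with Lemma \ref{amalgam}(2) applied to the abelian vertex $W$, and exploiting the decomposition $K_1 K_2 \cong K_1 \oplus K_2$ inside $W$ coming from $K_1 \cap K_2 = \{1\}$, the hope is to push the infinitely generated contributions of $K_1'$ and $K_2'$ through the amalgam so that they land inside the already-finite pieces $S_3, S_3''$. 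Making this symmetric reduction rigorous - keeping careful track of how the two edge groups interact through the amalgam and the HNN stable letter - is the heart of the argument and the place where the precise form of the edge-twisted condition is essential.
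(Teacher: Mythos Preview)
Your setup is sound and parallels the paper's: realizing $G$ as an HNN extension over one edge with base $\Gamma_1 = W *_{E_1} M$, reducing via Britton's Lemma to $\Gamma_1 = \langle S_W, S_M, K_2, K_2'\rangle$, and then applying Lemma~\ref{amalgam} to obtain $M = \langle S_M, S_3, K_2'\rangle$ with $S_M$ and $S_3$ finite. Your variant of Lemma~\ref{w} is also correct (passing to $A/(K\cap L)$ works since the images of $K$ and $L$ there always intersect trivially).

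The gap is exactly where you flag it, and your proposed finish does not close it. Running the symmetric argument gives a second expression $M = \langle S_M'', S_3'', K_1^{\#}\rangle$ for some conjugate $K_1^{\#}$ of $K_1$ inside $M$, but there is no reason for $K_1^{\#} \cap K_2' = \{1\}$ in $M$: the hypothesis $K_1 \cap K_2 = \{1\}$ lives in the abelian vertex $W$, and neither $K_1^{\#}$ nor $K_2'$ sits in $W$. So the ``$K_1 \oplus K_2$'' decomposition you hope to exploit is simply not available on the $M$ side, and two presentations of the form $\langle\text{finite}, K\rangle$ do not combine to a finite one without further input.

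The paper's fix is not to symmetrize in $M$ but to \emph{saturate} on the abelian side before ever invoking Lemma~\ref{amalgam}. Starting from $N_0 = \langle S_W\rangle \le W$, it uses Lemma~\ref{w} once in each direction (with the genuine hypothesis $K_1 \cap K_2 = \{1\}$, not your relaxed version) to produce finitely generated subgroups $R_1 = \langle N_0, K_1\rangle \cap K_2$ and $R_2 = \langle N_0, K_2\rangle \cap K_1$, and sets $N_1 = \langle N_0, R_1, R_2\rangle$. A short check shows $N_1$ is \emph{closed}: $\langle N_1, E_1\rangle \cap E_2 \le N_1$ and $\langle N_1, E_2\rangle \cap E_1 \le N_1$. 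One then \emph{defines} the candidate $M' = \langle S_M,\ N_1 \cap E_1,\ t^{-1}(N_1 \cap E_2)t\rangle$, which is visibly finitely generated, and shows $M = M'$ by the Britton/Lemma~\ref{amalgam} reduction you already outlined. The closure property of $N_1$ is precisely what guarantees that each Britton reduction step lands back in $\langle S_1, S_2\rangle$ with $S_1 \subset M'$ and $S_2 \subset N_2$ for a suitable $N_2$, so the process terminates inside $M'$. In other words, the missing idea is that a single controlled enlargement on the abelian side removes the need for any iteration on the $M$ side.
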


\begin{proof}
Let $K$ and $L$ be the edge groups of $\E$, corresponding to edges $e_K$ and $e_L$ respectively.  Let $A$ be the type A vertex group of $\E$.  Choose the edge associated to $e_K$ as a maximal tree, which allows us to consider $K$ as a subgroup of both $A$ and $M$.  Consider $L$ to be a subgroup of $A$, let $t$ be the stable letter of the splitting, and let $L' := t^{-1}Lt \le M$.

Let $K_0 \le K$ and $L_0 \le L$ be the subgroups of $K$ and $L$ guaranteed by Definition~\ref{def:edge-twist}.  Thus,
\begin{enumerate}
\item $K_0 \cap L_0 = \{ 1 \}$; and
\item There are finite sets $T_K \subset K$ and $T_L \subset L$ so that $K = \langle T_K, K_0 \rangle$ and $L = \langle T_L, L_0 \rangle$.
\end{enumerate}

Choose finite subsets $S_A \subset A$ containing $T_K\cup T_L$ and $S_M \subset M$ containing $T_K$ so that $G = \langle S_A, S_M, t \rangle$.  Let $N_0 = \langle S_A \rangle$.
Since $K_0\cap L_0 = \{ 1 \}$, by Lemma~\ref{w}, $\langle N_0, K_0\rangle\cap L_0$  and $\langle N_0,L_0\rangle \cap K_0$ are finitely generated, by $R_1$ and $R_2$, respectively, say. 
Let $N_1 = \langle N_0, R_1, R_2 \rangle$.

We claim $\langle N_1 , K_0 \rangle \cap L_0 \le N_1$.
Suppose $g \in \langle N_1, K_0 \rangle \cap L_0$.  Then $g = nkr_1r_2$ where $n \in N_0$, $k \in K_0$, $r_1 \in \langle R_1 \rangle$ and $r_2 \in \langle R_2 \rangle$.
Since $g \in L_0$ and $\langle R_1 \rangle \le L_0$, we have $nkr_2 \in L_0$.  But $k, r_2 \in K_0$, so $nkr_2 \in \langle N_0, K_0 \rangle \cap L_0 =  \langle R_1 \rangle$.  Thus, $g = (nkr_2)r_1 \in \langle R_1 \rangle \le N_1$.  
A similar argument shows $\langle N_1, L_0 \rangle \cap K_0 \le \langle R_2\rangle\le N_1$.

We further claim that 
 $\langle N_1, K\rangle\cap L\le  N_1$.
Indeed, since $T_K \subset S_A\subset N_1$ we have $\langle N_1,K \rangle \cap L_0 = \langle N_1, K_0 \rangle \cap L_0 \le N_1$.  Now suppose that $g \in \langle N_1, K \rangle \cap L$, and write $g = ng_0$ where $n \in \langle T_L \rangle \le N_1$ and $g_0 \in L_0$.  We have $g \in \langle N_1 , K \rangle$ and $n \in N_1$, so $g_0 \in \langle N_1 , K \rangle \cap L_0 \le N_1$. Therefore, $g = ng_0 \in N_1$, as required.   
Similarly, we have $\langle N_1,L\rangle \cap K\subset N_1$.

Since $N_1$ is a finitely generated abelian group, $N_1\cap K$ and $N_1\cap L$ are both finitely generated. So 
\begin{equation}
M'=\langle S_M, N_1\cap K,  t^{-1}(N_1\cap L)t\rangle
\end{equation}
is finitely generated.  We claim that $M = M'$, which proves the lemma.  We clearly have $M' \le M$, so we have to prove the opposite inclusion.

To that end, let 
\begin{equation}
N_2 =\langle N_1, M'\cap K, t(M'\cap L')t^{-1}\rangle\subset A. 
\end{equation}
(Recall that $L' = t^{-1}Lt \le M$.)

We claim that 
\begin{enumerate}
\item $N_2\cap K=\langle N_1\cap K, M'\cap K\rangle$; and
\item $N_2\cap L=\langle N_1\cap L, t(M'\cap L')t^{-1}\rangle$. 
\end{enumerate}
From the definition of $N_2$, it is clear that $\langle N_1 \cap K, M' \cap K\rangle \le N_2 \cap K$.  For the reverse inclusion, since $t(M' \cap L')t^{-1} \le L$, we have 
\[	\langle N_1, t(M' \cap L')t^{-1} \rangle \cap K \le \langle N_1, L \rangle \cap K \le N_1 \cap K.	\]
Since $A$ is abelian, for any subgroup $J \le K$ we have $\langle N,J \rangle \cap K = \langle N \cap K, J \rangle$.  Therefore,
\begin{eqnarray*}
N_2 \cap K &=& \langle N_1, t(M'\cap L')t^{-1}, M'\cap K \rangle \cap K \\
&=& \langle \langle N_1, t(M' \cap L')t^{-1} \rangle \cap K, M' \cap K \rangle \\
&\le& \langle N_1 \cap K, M' \cap K \rangle	,
\end{eqnarray*}
as required.   The second equality of the claim is entirely similar.

As a result, we have $M' = \langle S_M,  N_2\cap K, t^{-1}(N_2\cap L)t\rangle$.
To finish the proof, we now show that $M \le M'$. 
Choose sets $S_1 \supset S_M$ and $S_2 \supset S_A$ so that $\langle S_1 \rangle = M'$ and $\langle S_2 \rangle = N_2$. 
 Let $S_3$ be a generating set for $K \cap N_2$.  By the above $\langle S_3 \rangle \le M'$.

Consider the subgroup $\langle M,A \rangle \le G$ (which is isomorphic to $M \ast_K A$).  Since $S_M \subset S_1$ and $S_A \subset S_2$ we have $G = \langle S_1, S_2, t \rangle$.

Suppose that $g \in M$, and write $g = w_1 \dots w_j$
where each $w_i$ is either $t$ or $t^{-1}$, in $\langle S_1 \rangle$, or in $\langle S_2 \rangle$.
Since $g \in M$, if there are any occurrences of $t$ or $t^{-1}$ then by Britton's Lemma there is a subword of one of the following two forms:
\begin{enumerate}
\item $twt^{-1}$ where $w \in \langle S_1 \cup S_2 \rangle \cap L'$; or
\item $t^{-1}ut$ where $u \in \langle S_1 \cup S_2 \rangle \cap L$.
\end{enumerate}
In the first case, $w \in L' \le M$, so by Lemma~\ref{amalgam} $w \in \langle S_1 \cup S_3 \rangle$, and so $w \in M'$.  Then $twt^{-1} \in t(M' \cap L')t^{-1} \le N_2$, so $twt^{-1}$ can be replaced by an element in $\langle S_2 \rangle$.  In the second case, $u \in L \le A$.  By Lemma~\ref{amalgam}, $u$ can be written as $u = ma$ where $m \in K \cap \langle S_1 \cup S_3 \rangle$ and $a \in  \langle S_2 \rangle$. So $m \in K \cap M' \le N_2$ and $a \in N_2$, so $u \in N_2 \cap L$.  Therefore, $t^{-1}ut \in t^{-1}(N_2 \cap L)t \le M'$, so we can replace $t^{-1}ut$ by an element of $\langle S_1 \rangle$.

Repeating until there are no occurrences of $t$ or $t^{-1}$, we obtain $g \in \langle S_1 \cup S_2 \rangle$.  By Lemma~\ref{amalgam}, $g \in \langle S_1 \cup S_3 \rangle = M'$, so $M = M'$ as required.  
\end{proof}

The proof of the following lemma is very similar to that of Lemma~\ref{non-sep}, and we omit it.
\begin{lemma}\label{sep}
Let $\E$ be an edge-twisted splitting with two edges, two type B vertices and one type A vertex so that $\pi_1(\E)$ is finitely generated.  Then the type B vertex groups are finitely generated.
\end{lemma}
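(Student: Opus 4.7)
The plan is to mirror the proof of Lemma~\ref{non-sep}, replacing the HNN-style Britton's Lemma reductions with the normal form for an amalgamated free product with three factors. Since the underlying graph of $\E$ is a path with the Type $A$ vertex in the middle, any spanning tree contains both edges and so $G \cong M_1 *_K A *_L M_2$, where $M_1, M_2$ are the Type $B$ vertex groups and $K, L \le A$ are the edge groups. Let $K_0 \le K$ and $L_0 \le L$ be the subgroups from Definition~\ref{def:edge-twist}, so $K_0 \cap L_0 = \{1\}$ and $K = \langle T_K, K_0 \rangle$, $L = \langle T_L, L_0 \rangle$ for finite sets $T_K, T_L$. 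Choose finite sets $S_A \supset T_K \cup T_L$, $S_{M_1} \supset T_K$, $S_{M_2} \supset T_L$ with $G = \langle S_A \cup S_{M_1} \cup S_{M_2}\rangle$, and set $N_0 := \langle S_A \rangle \le A$.

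Next I would construct an enlarged finitely generated subgroup $N_1 \le A$ absorbing the ``leakage'' between $K$ and $L$. Apply Lemma~\ref{w} twice, to the pairs $(K_0,L_0)$ and $(L_0,K_0)$, to obtain finite sets $R_1, R_2$ generating $\langle N_0, K_0\rangle \cap L_0$ and $\langle N_0, L_0\rangle \cap K_0$ respectively. With $N_1 := \langle N_0, R_1, R_2\rangle$, the same computation that appears in the proof of Lemma~\ref{non-sep} yields $\langle N_1, K\rangle \cap L \le N_1$ and $\langle N_1, L\rangle \cap K \le N_1$. Define candidates for the Type $B$ vertex groups
\[
M_1' := \langle S_{M_1}, N_1 \cap K\rangle \le M_1, \qquad M_2' := \langle S_{M_2}, N_1 \cap L\rangle \le M_2,
\]
and the enlargement $N_2 := \langle N_1, M_1'\cap K, M_2'\cap L\rangle \le A$; all three are finitely generated because $N_1$ is a finitely generated abelian group. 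Mimicking the identity $N_2 \cap K = \langle N_1 \cap K, M'\cap K\rangle$ from Lemma~\ref{non-sep}, one verifies
\[
N_2 \cap K = \langle N_1 \cap K, M_1'\cap K\rangle \quad \text{and} \quad N_2 \cap L = \langle N_1 \cap L, M_2'\cap L\rangle,
\]
and consequently $M_j' = \langle S_{M_j}, N_2 \cap E_j\rangle$ for $E_1 = K$, $E_2 = L$.

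The remaining step is to show $M_j = M_j'$, which implies the lemma. I would treat $g \in M_1$ (the case $g \in M_2$ is symmetric); choose finite generating sets $S_1 \supset S_{M_1}$ for $M_1'$, $S_2 \supset S_A$ for $N_2$, and $S_1'' \supset S_{M_2}$ for $M_2'$, so that $G = \langle S_1 \cup S_2 \cup S_1''\rangle$. Express $g$ as a word in these generators. Applying Lemma~\ref{amalgam}\eqref{eq:g=ma} to the sub-amalgam $M_2 *_L A$ (with the role of $S_3$ played by a generating set of $L \cap N_2 \subseteq M_2'$) shows that any syllable from $\langle S_1''\rangle$ which happens to lie in $L$ can be rewritten as an element of $N_2$, and an analogous statement holds for $\langle S_1\rangle$-syllables lying in $K$ via $M_1 *_K A$. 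The normal form for the amalgamated free product $M_1 *_K A *_L M_2$ then guarantees that iterating these rewrites, together with collapsing of adjacent same-vertex syllables, eliminates every $\langle S_1''\rangle$-syllable from the expression for $g$. This leaves $g$ written as a word in $S_1 \cup S_2$ inside $M_1 *_K A$; since $g \in M_1$, a final application of Lemma~\ref{amalgam}(1) with $S_3$ generating $K \cap N_2 \subseteq M_1'$ places $g$ in $M_1' = \langle S_1 \cup S_3\rangle$. The main obstacle, exactly as in Lemma~\ref{non-sep}, is to guarantee that the absorbing subgroup $N_2$ does not grow beyond control when intersected with either edge group; this is precisely what the twin inclusions $N_2 \cap K \le M_1'$ and $N_2 \cap L \le M_2'$ (derived from Lemma~\ref{w} and the hypothesis $K_0 \cap L_0 = \{1\}$) provide.
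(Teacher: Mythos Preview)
Your proposal is correct and follows exactly the route the paper intends: it omits the proof entirely, saying only that it is ``very similar to that of Lemma~\ref{non-sep}''. Your construction of $N_0, N_1, N_2$ and the candidates $M_1', M_2'$ is the right adaptation, and the key identities $N_2\cap K = M_1'\cap K$ and $N_2\cap L = M_2'\cap L$ are precisely what make the reductions close up.

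One small imprecision in your final paragraph is worth noting. The place where Lemma~\ref{amalgam}\eqref{eq:g=ma} is actually needed is not for an $M_2'$--syllable lying in $L$ (that is immediate: $M_2'\cap L \le N_2$ by construction), but rather for the reverse situation: when you view $G = (M_1 *_K A) *_L M_2$ and, in reducing an expression for $g\in M_1$, encounter an interior syllable $u \in \langle M_1', N_2\rangle$ that lies in $L$. Applying Lemma~\ref{amalgam}\eqref{eq:g=ma} to the sub-amalgam $M_1 *_K A$ (with $S_1$ generating $M_1'$, $S_2$ generating $N_2$, $S_3$ generating $K\cap N_2$) shows $u = ma$ with $m\in K\cap M_1' \le N_2$ and $a\in N_2$, hence $u\in N_2\cap L \le M_2'$. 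This is what lets you absorb $u$ into the neighbouring $M_2'$--syllables and continue the reduction. With that clarification the argument goes through exactly as you outline.
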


We now give the proof of the main result of this appendix.

\edgetwistfg*

\begin{proof}
We proceed by induction on the number of type A vertex groups of $\E$.  If there are none, then $\E$ is a single type B vertex group, and the result is trivial.  If there is a single type A vertex group, the result follows from 
Lemmas~\ref{non-sep} and~\ref{sep}.

Suppose now that $\E$ has $k$ type A vertices for $k > 1$ and that the result is true for any finite edge-twisted graph of groups with finitely generated fundamental group and at most $k -1$ type A vertices.  Let $A$ be a type A vertex group of $\E$. Let $\E_0$ be the graph of groups obtained from $\E$ by collapsing all of the edges that are not adjacent to $A$.  It is clear that $\E_0$ is an edge-twisted graph of groups with a single type A vertex group, and so the by induction the type B vertex groups of $\E_0$ are finitely generated by the case $k = 1$.  However, the type $B$ vertex groups of $\E_0$ are the fundamental groups of (edge-twisted) sub-graphs of groups of $\E$, with fewer type A vertex groups than $\E$.  Since these sub-graphs of groups have finitely generated fundamental group, the inductive hypothesis applies to them to prove that their type B vertex groups are finitely generated.  But these are the type B vertex groups of $\E$, so the result is proved.
\end{proof}

\section{Relative Linnell and JSJ decompositions} \label{app:decompositions}

The purpose of this section is to prove Corollary~\ref{cor:lindecomp}, about the existence of the relative $C$--Linnell decomposition which was used in Definition~\ref{def:refined ggd}, and to prove Theorem~\ref{thm:JSJwithtorsion} which is used in Section~\ref{s:JSJ}.  

\subsection{Relative acylindrical accessibility}

We first construct the decomposition $\mathbb L$ from Theorem~\ref{thm:decomps}. We refer to this decomposition as a  \emph{relative Linnell decomposition}. The existence of such a decomposition is proved in \cite{Linnell-Access} when $G$ is finitely generated, and it can also be derived from Weidmann's version of acylindrical accessibility \cite{Weidmann-Access}. Here we modify Weidmann's argument to the case where $G$ is finitely generated relative to a finite collection of subgroups $\mc{H}$ and all splittings considered are rel $\mc{H}$. 

\begin{lemma}\label{l:relative lemma}
Let $T$ be a minimal $G$--tree. Let $G_0$ be a subgroup of $G$ with no global fixed point in $T$ and let $T_0$ be the minimal $G_0$--invariant subtree of $T$. Let $H$ be a subgroup of $G$ which fixes a point $x$ such that $d(x, T_0)=k$. Let $E=\langle G_0, H\rangle$ and let $T_E$ be the minimal $E$--invariant subtree of $T$. The number of edges in $T_E/E$ is at most the number of edges in $T_0/G_0$ plus $k$. 
\end{lemma}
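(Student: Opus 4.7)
The plan is to exhibit $T_E$ explicitly as the $E$-saturation of $T_0 \cup [y,x]$, where $y \in T_0$ is the unique nearest point to $x$, and then read off the edge bound. The reason is that the segment $[y,x]$ has exactly $k$ edges, so once one shows $T_E = E \cdot (T_0 \cup [y,x])$, the quotient $T_E/E$ is the image of $T_0 \cup [y,x]$; the image of $T_0$ factors through $T_0/G_0$ and contributes at most the number of edges in $T_0/G_0$, while the image of $[y,x]$ (minus its intersection with $T_0$) contributes at most $k$ additional edges. This gives the required bound $n + k$.

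I would first reduce to the case $x \in T_E$ by replacing $x$ with its nearest-point projection $x'$ onto $T_E$: since $T_E$ is $H$-invariant and $H$ fixes $x$, $H$ fixes $x'$ by the uniqueness of nearest-point projection in a tree; and since $y \in T_0 \subseteq T_E$, one has $d(x',T_0) \le d(x',y) \le d(x,y) = k$, so the hypotheses carry over with a possibly smaller value of $k$. With $x \in T_E$ in hand, set $T' := E \cdot (T_0 \cup [y,x]) \subseteq T_E$ and verify that $T'$ is a connected $E$-invariant subtree by inducting on the word length $e = w_1 \cdots w_n$ in generators from $G_0 \cup H$: for a single generator $w$, if $w \in G_0$ then $wT_0 = T_0$ forces $w(T_0 \cup [y,x]) \cap (T_0 \cup [y,x]) \supseteq T_0$, while if $w \in H$ then $w[y,x] = [wy,x]$ contains $x$, giving the nonempty intersection $\{x\}$. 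Applying the isometry $w_1 \cdots w_{i-1}$ to this one-generator observation propagates connectedness across $T'$, and minimality of $T_E$ then forces $T_E = T'$.

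The step I expect to be most delicate is the inductive propagation of connectedness, because one must ensure the chain of overlaps $e_i(T_0 \cup [y,x]) \cap e_{i-1}(T_0 \cup [y,x]) \ne \emptyset$ is threaded through $T'$ back to $T_0 \cup [y,x]$; once this is in place, the identity $T_E = E \cdot (T_0 \cup [y,x])$ follows from the standard fact that the minimal invariant subtree is contained in every invariant subtree, and the edge count is then immediate.
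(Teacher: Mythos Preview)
Your proposal is correct and follows essentially the same approach as the paper: define $T' = T_0 \cup [y,x]$, show that $E \cdot T'$ is connected by checking on generators from $G_0 \cup H$, and then count edges. The only difference is that the paper skips your preliminary reduction to $x \in T_E$ by observing directly that $T_E \subseteq E \cdot T'$ (by minimality, since $E \cdot T'$ is a connected $E$-invariant subtree), so the edge bound on $(E \cdot T')/E$ already bounds the number of edges in $T_E/E$ without needing equality.
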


\begin{proof}
Let $T'$ be the union of $T_0$ and the path from $x$ to $T_0$. he $E$--orbit of $T'$, denoted by $T''$, is connected since $gT'\cup T'$ is connected for each $g\in G_0\cup H$, and $T''/E$ has at most  $k$ more edges than $T_0/G_0$. On the other hand, $T_E\subset T''$, so the lemma follows. 
\end{proof}

\begin{definition}
A $G$--action on a tree is \emph{$(k, C)$--acylindrical} (for $k \ge 0$ and $C \ge 1$) if the stabilizer of all paths of length $\geq k+1$ has size at most $C$. 
\end{definition}

\begin{proposition}
Fix $k \ge 0$ and $C \ge 1$.  Suppose $G$ is finitely generated relative to a finite collection of subgroups $\mc{H}$.  There is a bound on the number of edges in minimal $(k, C)$--acylindrical splittings of $G$ rel $\mc{H}$. 
\end{proposition}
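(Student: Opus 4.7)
The plan is to adapt Weidmann's acylindrical accessibility argument \cite{Weidmann-Access} to handle both the relative setting (splittings rel $\mc{H}$) and the weaker $(k,C)$-acylindricity condition (instead of $(k,1)$-acylindricity, i.e., long arcs with trivial stabilizer). Since $G$ is finitely generated relative to $\mc{H}$, fix a finite set $S = \{s_1, \ldots, s_n\}$ with $G = \langle S \cup \mc{H}\rangle$, and enumerate $\mc{H} = \{H_1, \ldots, H_m\}$. Let $T$ be the Bass--Serre tree of any minimal $(k, C)$--acylindrical splitting of $G$ rel $\mc{H}$; by the rel-$\mc{H}$ assumption, each $H_i$ fixes a vertex of $T$.

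First I would build a filtration $G_0 \subset G_1 \subset \cdots \subset G_{n+m-1} = G$ by setting $G_0 = H_1$, $G_i = \langle G_{i-1}, H_{i+1}\rangle$ for $1 \le i \le m-1$, and $G_{m-1+j} = \langle G_{m-1}, s_j\rangle$ for $1 \le j \le n$. Let $T_j$ denote the minimal $G_j$-invariant subtree of $T$ (a single fixed point if $G_j$ is elliptic). I would bound $|T_j/G_j|$ inductively using Lemma~\ref{l:relative lemma}: at each step, replace the newly added subgroup or element by a $G_{j-1}$-conjugate chosen so that its fixed vertex (in the elliptic case) or axis (in the hyperbolic case) lies as close as possible to $T_{j-1}$, at distance $d$ say. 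Lemma~\ref{l:relative lemma} then bounds the edge growth by $d$ plus a constant.

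The crucial step is showing that $d$ is uniformly bounded in terms of $k$ and $C$. In the elliptic case, if $d > k$, then the geodesic from $T_{j-1}$ to the chosen fixed vertex has length exceeding $k$ and is fixed by the new subgroup; $(k,C)$--acylindricity then forces this subgroup's stabilizer of that segment to have order at most $C$. A minimality argument (two distinct $G_{j-1}$-conjugates achieving closely-spaced nearest fixed vertices would share a long fixed segment, violating $(k,C)$--acylindricity) then forces $d$ itself to be bounded by a function of $k$ and $C$. A parallel argument works in the hyperbolic case by comparing the axis to translates of itself under $G_{j-1}$ and using $(k,C)$--acylindricity along the axis.

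The main obstacle will be handling the $C > 1$ bookkeeping: where Weidmann's original argument uses triviality of long-arc stabilizers to rule out overlaps of translates, here one is allowed finitely many ``exceptional'' subgroups of size at most $C$, which must be tracked to avoid losing the bound in the inductive step. This is a mild enhancement of Weidmann's algorithm and yields a bound on the number of edges of $T/G$ that depends only on $k$, $C$, $n$, and $m$, independent of the particular $(k,C)$--acylindrical splitting chosen.
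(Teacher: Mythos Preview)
Your proposal has a genuine gap at the core step. You claim that after replacing the new generator (or subgroup) by a $G_{j-1}$-conjugate minimizing the distance $d$ to $T_{j-1}$, acylindricity forces $d$ to be bounded in terms of $k$ and $C$. But the sketched ``minimality argument'' does not establish this: even if two $G_{j-1}$-conjugates of $H_i$ realize the minimal distance with nearby fixed vertices, nothing forces any segment to be fixed by a subgroup of order greater than $C$ --- the conjugates fix only their respective vertices, not the bridges to $T_{j-1}$, and you have no control on intersections of their stabilizers. There is in general no a priori bound on $d$ from $(k,C)$-acylindricity alone. The hyperbolic case is worse still: controlling the contribution of a hyperbolic generator is exactly the content of Weidmann's Nielsen-method argument, and your ``parallel argument'' does not supply that mechanism; you are in effect promising to reprove \cite{Weidmann-Access} from scratch without giving the reductions.

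The paper sidesteps all of this by invoking Weidmann's theorem as a black box on a carefully chosen finitely generated subgroup. One enlarges the finite relative generating set $S$ so that $|S \cap H_i| \ge C+1$ for every $i$ (harmless, since one may assume each $H_i$ infinite), and sets $G_0 = \langle S\rangle$. Weidmann's theorem then bounds the number of edges of $T_0/G_0$ by some constant $B$ depending only on $|S|$, $k$, $C$. Now the choice of $S$ pays off: each $g \in S \cap H_i$ fixes $x_i$ (as $g \in H_i$) and preserves $T_0$ (as $g \in G_0$), so $g$ must fix the unique nearest point $y_i \in T_0$ to $x_i$, hence the whole segment $[x_i,y_i]$. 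Since $|S \cap H_i| > C$, $(k,C)$-acylindricity forces $d(x_i, T_0) \le k$. One then applies Lemma~\ref{l:relative lemma} once per $H_i$, adding at most $k$ edges each time, to obtain the bound $B + kn$. The point is that the hard work (hyperbolic generators, Nielsen reductions) is confined to the genuinely finitely generated $G_0$ where it is already done, and the relative part only requires the elementary elliptic step of Lemma~\ref{l:relative lemma}.
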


\begin{proof}
Let $\mc{H}=\{H_1,...,H_n\}$. It suffices to assume that each $H_i$ is infinite. Let $S\subset G$ be a finite set such that $S\cup H_1\cup\cdots\cup H_n$ generates $G$ and $|S\cap H_i|\geq C+1$ for each $i$. Let $G_0=\langle S\rangle$. By Weidmann's $(k, C)$--acylindrical-accessibility \cite[Theorem 1]{Weidmann-Access}, the number of edges of any $(k, C)$--acylindrical splitting of $G_0$ is bounded by a constant $B$. Let $\mathbb A$ be a $(k, C)$--acylindrical splitting of $G$ rel $\mc{H}$, and let $T$ be the corresponding Bass-Serre tree. First suppose that $G_0$ does not fix a point in $T$, and let $T_0$ be the minimal $G_0$ invariant subtree. For each $i$, let $x_i$ be a fixed point of $H_i$ and let $y_i$ be the closest point in $T_0$ to $x_i$. Since $S\cap H_i$ fixes $[x_i, y_i]$ and $|S\cap H_i|\geq C+1$, we have $d_T(x_i, T_0)=d_T(x_i, y_i)\leq k$

Let $G_i$ be the subgroup of $G$ generated by $G_0$ and $H_1, \dots, H_i$. Then $G_n=G$. Let $T_i$ be the $G_i$--minimal invariant subtree. Clearly we have $ T_i\subset T_{i+1}$.  Each $H_i$ fixes a point at distance at most $k$ from $T_{i-1}$, so by Lemma~\ref{l:relative lemma} $T_i/G_i$ has at most $k$ more edges than $T_{i-1}/G_{i-1}$. Then $T/G=T_n/G_n$ has at most $B+kn$ edges. 

In case $G_0$ does fix a point in $T$ the above argument shows that if $y \in \Fix(G_0)$ and $x_i \in \Fix(H_i)$ then for each $i$ $d(y,x_i) \le k$.  Let $\mc{F}$ be the union of the geodesics $[y,x_i]$ and note $\mc{F}$ contains at most $kn$ edges.  Since the $G$--orbit of $\mc{F}$ covers $T$, we are done in this case also.
\end{proof}

\begin{corollary}\label{cor:lindecomp}
Suppose $G$ is finitely generated relative to a finite collection of subgroups $\mc{H}$. For all $C\geq 1$, $G$ has a splitting rel $\mc{H}$ in which all edge groups have size $\leq C$ and no vertex group splits  rel $\mc{H}$ over a subgroup of size $\leq C$.
\end{corollary}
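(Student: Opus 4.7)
The plan is to combine the edge bound from the preceding proposition with a standard maximality argument. I would first observe that any splitting of $G$ rel $\mc{H}$ whose edge groups all have order at most $C$ is automatically $(0,C)$--acylindrical: the stabilizer of any non-degenerate segment in the Bass--Serre tree is contained in an edge stabilizer, hence has order at most $C$. Consequently the preceding proposition applies and provides a uniform bound $N=N(0,C)$ on the number of edges in any such splitting.

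Given this bound, the set of edge-counts of splittings of $G$ rel $\mc{H}$ with edge groups of size $\leq C$ is a nonempty subset of $\{0,1,\ldots,N\}$ (the trivial one-vertex splitting contributes $0$), so a splitting $\mathbb{A}$ realizing the maximum exists. I claim that $\mathbb{A}$ has the required property. Suppose for contradiction that some vertex group $V$ of $\mathbb{A}$ admits a non-trivial splitting $\mathbb{B}$ over a subgroup of size $\leq C$, relative to the natural induced structure on $V$ --- that is, with the adjacent $\mathbb{A}$--edge groups and the subgroups of $V$ conjugate into elements of $\mc{H}$ all elliptic in $\mathbb{B}$. Refining $\mathbb{A}$ at $V$ by $\mathbb{B}$ yields a splitting $\mathbb{A}'$ of $G$ whose edge groups consist of the edges of $\mathbb{A}$ together with conjugates of the edges of $\mathbb{B}$, all of size $\leq C$, and in which every element of $\mc{H}$ remains elliptic (those conjugate into $V$ because $\mathbb{B}$ is relative, the others because they were untouched). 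Hence $\mathbb{A}'$ is again a splitting rel $\mc{H}$ with edge groups of size $\leq C$, but has strictly more edges than $\mathbb{A}$, contradicting maximality.

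The only potentially subtle point is pinning down what ``$V$ splits rel $\mc{H}$'' means for a vertex group $V$ of an ambient splitting: for the refinement step to produce a genuine splitting of $G$ rel $\mc{H}$, this phrase must be read to include ellipticity of the edge groups of $\mathbb{A}$ adjacent to $V$ as well as the subgroups of $V$ conjugate to elements of $\mc{H}$. With that (standard) convention, no further work is required beyond the proposition and the routine refinement construction, and there is no real obstacle.
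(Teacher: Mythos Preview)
Your argument is correct and is precisely the intended deduction; the paper states this result as a corollary with no proof, and the maximality argument you give (splittings with edge groups of size $\le C$ are $(0,C)$--acylindrical, so the proposition bounds their edge count, and a maximal such splitting has the desired property) is the standard one. One small remark: the subtlety you flag about adjacent edge groups is not actually present here, since those edge groups are finite (of size $\le C$) and hence automatically elliptic in any $V$--tree, so the literal reading ``$V$ splits rel $\mc{H}$'' already suffices for the refinement step.
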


\subsection{JSJ-decompositions for relatively finitely generated groups with torsion}
The purpose of this section is to prove Theorem~\ref{thm:JSJwithtorsion}.  We largely follow Guirardel--Levitt \cite{GuiLev2, GL11}, and explain the changes in our situation.  We refer to \cite{GuiLev2} for terminology.

The JSJ decomposition we use is essentially \cite[Theorem 9.14]{GuiLev2}, but we need to weaken hypotheses. First, we need to accommodate relatively finitely generated groups. In addition, we allow groups that are not $K$--CSA in the sense of \cite{GuiLev2} but instead satisfy are weakly $K$--CSA as in Definition \ref{defn:wcsa}. We repeat the definition here for convenience.

\defwcsa*

If $G$ is finitely generated and $K$--CSA the next result follows from \cite[Theorem 9.14]{GuiLev2}. We briefly sketch how to modify their proof. 

\begin{theorem}\label{thm:JSJwithtorsion}
Let $G$ be a weakly $K$--CSA group, let $\mc{H}$ be a finite collection of subgroups of $G$, and let $\mc{A}$ be the collection of virtually abelian subgroups of $G$. Suppose $G$ is finitely generated rel $\mc{H}$ and $G$ does not split over a subgroup of order $\leq 2K$ rel $\mc{H}$. The $(\mc{A}, \mc{H})$--JSJ decomposition of $G$ exists and all flexible vertex stabilizers are virtually abelian or QH with fiber of size at most $K$. Its tree of cylinders is compatible with every $(\mc{A}, \mc{H})$--tree. 
\end{theorem}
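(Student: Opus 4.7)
\medskip
\noindent\textbf{Proof proposal.} The plan is to follow the strategy of \cite{GuiLev2}, adapting two things: relative finite generation in place of finite generation, and weak $K$--CSA in place of $K$--CSA. First I would introduce the cylinder equivalence relation $\sim$ on infinite virtually abelian subgroups of $G$ by $A \sim B$ iff $\langle A, B\rangle$ is virtually abelian. The analog of Lemma~\ref{lem:CSA2} in the weakly $K$--CSA setting shows that every element of order $>K$ lies in a unique maximal virtually abelian subgroup, which makes $\sim$ transitive; moreover, condition (ii) of Definition~\ref{defn:wcsa} says any two infinite virtually abelian subgroups that are not $\sim$--equivalent intersect in a subgroup of order $\le K$. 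The assumption that $G$ does not split rel $\mc{H}$ over a subgroup of order $\le 2K$ ensures that in every reduced $(\mc{A},\mc{H})$--tree all edge stabilizers have order $>K$, so $\sim$ extends consistently to an equivalence relation on the edges of any such tree.

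Next I would verify the hypotheses of the abstract JSJ existence theorem of \cite{GuiLev2}. The key input is accessibility: weak $K$--CSA forces the action of $G$ on the Bass--Serre tree of any $(\mc{A}, \mc{H})$--splitting with edge groups of order $>2K$ to be $(k,K)$--acylindrical for a uniform $k$, since if an element of order $>K$ fixed a long path then the stabilizers of sufficiently separated edges would all lie in a common maximal virtually abelian subgroup, violating the size bound on edge stabilizers or contradicting minimality. Combining this with Corollary~\ref{cor:lindecomp} gives a uniform bound on the number of orbits of edges in any reduced $(\mc{A},\mc{H})$--tree, and then the usual Zorn/maximality argument of \cite[Section 3]{GuiLev2} produces the $(\mc{A},\mc{H})$--JSJ decomposition.

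To identify the flexible vertex stabilizers, I would apply the Rips machine to a limiting $\R$--tree action of a flexible vertex group $V$, obtained from a sequence of refinements as in \cite[Section 6]{GuiLev2}. The graph of actions decomposition produced has simplicial, axial, and Seifert-type pieces, and the usual analysis identifies $V$ as abelian or as QH with some finite fiber $F$. The bound $|F| \le K$ comes from weak $K$--CSA: $F$ commutes with a non-abelian surface-like subgroup, so any element of $F$ of order $>K$ would force that surface subgroup into a maximal virtually abelian subgroup by unique maximality, which is impossible.

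Finally, the tree of cylinders $T_c$ is defined as in \cite[Section 4]{GuiLev2} from $\sim$--classes of edges, and the compatibility of $T_c$ with every $(\mc{A},\mc{H})$--tree follows from the proof of \cite[Theorem 8.1]{GuiLev2}, which depends only on the transitivity of $\sim$ and the acylindricity established above. The main obstacle I anticipate is the acylindricity step, where the presence of finite subgroups of unbounded order (the departure from strict $K$--CSA) could in principle destroy acylindricity of the cylinder relation; this is resolved precisely by the hypothesis that $G$ does not split over a subgroup of order $\le 2K$ rel $\mc{H}$, which allows us to discard all small edge stabilizers at the outset and work entirely with subgroups to which weak $K$--CSA applies nontrivially.
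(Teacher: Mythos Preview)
The proposal has the right overall shape---adapt \cite{GuiLev2} to weak $K$--CSA and relative finite generation---but the middle step has a genuine gap. Your claim that every reduced $(\mc{A},\mc{H})$--tree with edge groups of order $>2K$ is $(k,K)$--acylindrical for some uniform $k$ is not justified, and the reasoning you give does not work: the fact that all edge stabilizers along a long segment lie in a common maximal virtually abelian subgroup $M(g)$ does not ``violate the size bound on edge stabilizers'' or ``contradict minimality''---it simply means that segment lies in a single cylinder, and cylinders can be arbitrarily long in the original tree. (Your reference to Corollary~\ref{cor:lindecomp} is also misplaced; that result concerns splittings over finite groups of bounded size, not acylindrical accessibility.) The paper avoids this entirely by following \cite[Sections~7--8]{GuiLev2}: one passes from an arbitrary $(\mc{A},\mc{H})$--tree $T$ to its tree of cylinders $T_c$, which \emph{is} $(2,K)$--acylindrical essentially by construction; then $T$ smally dominates $T_c$, and \cite[Theorem~8.7]{GuiLev2} yields existence of the JSJ together with the stated description of flexible vertices, without any separate appeal to the Rips machine.

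There is also a second gap. The real departure of weak $K$--CSA from $K$--CSA is that $\mc{A}$ may contain arbitrarily large finite subgroups, and this surfaces precisely in the compatibility step, not in accessibility. The relevant inputs are \cite[Lemmas~7.14 and~7.15]{GuiLev2}, and Lemma~7.15 assumes one-endedness in order to know that an edge stabilizer $B$ meeting a QH vertex is infinite. In the weakly $K$--CSA setting $B$ can be finite, and a separate argument is needed: the hypothesis that $G$ does not split rel $\mc{H}$ over subgroups of order $\le 2K$ forces all boundary components of the QH orbifold to be used (via \cite[Lemma~5.16]{GuiLev2}), and then $B/F$ is a finite cyclic subgroup conjugate into a cone point while the adjacent edge group $A/F$ corresponds to an essential geodesic, so $\langle A,B\rangle$ is not virtually abelian and the cylinder does not escape the QH subtree. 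You correctly flag the $\le 2K$ hypothesis as the key to handling large finite subgroups, but you attach it to the wrong step.
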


\begin{proof}
The proof of \cite[Theorem 9.14]{GuiLev2} is based on \cite[Sections 7 and 8]{GuiLev2}.  In \cite[Section 7]{GuiLev2} the finite generation assumption is never used. While it is used in \cite[Section 8]{GuiLev2}, there is a remark at the beginning of that section that it suffices to assume $G$ is finitely generated rel $\mc{H}$ \cite[p. 80]{GuiLev2}. Generalizing \cite[Section 8]{GuiLev2} to the relatively finitely generated setting is straightforward and we omit the details.

Now we discuss replacing $K$--CSA groups with weakly $K$--CSA groups in the proof of \cite[Theorem 9.14]{GuiLev2}. In \cite[Section 7]{GuiLev2}, they construct the tree of cylinders of a given $(\mc{A}, \mc{H})$--tree $T$.  In \cite{GuiLev2} all groups in $\mc{A}$ are infinite, whereas our $\mc{A}$ may contain finite groups. However, the construction works the same way in this case, as can be seen in \cite{GL11}. We next discuss this construction and verify it has the properties we need even when $\mc{A}$ contains finite groups.

For $A, B\in \mc{A}$, define $A\sim B$ if $\langle A, B\rangle$ is virtually abelian. Since $G$ is weakly $K$--CSA, this is an admissible equivalence relation on $\mc{A}$ (see \cite[Lemma 9.13]{GuiLev2}), so for any $(\mc{A}, \mc{H})$--tree $T$, we can form the tree of cylinders $T_c$  (see \cite{GL11}). If $A$ and $B$ are not equivalent, then by assumption $|A\cap B|\leq K$, so the action of $G$ on $T_c$ is $(2, K)$--acylindrical (See the proof of \cite[Lemma 7.7]{GuiLev2} or \cite[Proposition 5.13]{GL11}). Now, $T$ dominates $T_c$, and vertex stabilizers of $T_c$ which are not elliptic in $T$ are (maximal) non-(virtually cyclic) virtually abelian subgroups. That is, $T$ smally dominates $T_c$. Then $\mc{A}$ contains all virtually cyclic and all finite subgroups of $G$, so by \cite[Theorem 8.7]{GuiLev2} there exists an $(\mc{A} ,\mc{H})$--JSJ tree whose flexible vertices are either virtually abelian or QH with fiber of size at most $K$.

If $T_{JSJ}$ is the JSJ-tree its tree of cylinders is compatible with every $(\mc{A} ,\mc{H})$--tree. This follows from \cite[Lemmas 7.14 and 7.15]{GuiLev2}. Note that \cite[Lemma 7.15]{GuiLev2} assumes one-endedness. Here is how this is used: Suppose $S\to T$ collapses a single $G$--orbit of edges into the $G$--orbit of a vertex $v$ of $T$. Let $H=\Stab_G(v)$. In case $H$ is small in $S$ the proof from \cite{GuiLev2} works as written. Assume that $H$ is QH with fiber $F$, where $|F|\leq K$. Let $S_v$ be the minimal subtree of $S$ for the stabilizer $H$. Since $G$ does not split over any subgroup of order $\leq 2K$, by \cite[Lemma 5.16]{GuiLev2} all boundary components of the associated orbifold are used. Hence by \cite[Lemma 5.18]{GuiLev2} the splitting of $H$ corresponding to the action on $S_v$ is dual to a family of geodesics on the orbifold. 

The claim is that any cylinder of $S$ containing an edge of $S_v$ is contained in $S_v$. Suppose there are edges $e$ and $f$ with stabilizers $A=\Stab_G(e)$ and $B=\Stab_G(f)$ so that $e$ is in $S_v$ and $f$ has exactly one endpoint in $S_v$. One-endedness would imply that $B$ is infinite, and in this case the proof in \cite{GuiLev2} works as written. So assume that $B$ is finite. Since $B/F$ is a subgroup of the orbifold corresponding to the QH vertex $v$, it is in fact cyclic and contained (or conjugate into) a cone point subgroup of the orbifold (it does not come from a mirror since $|B/F|\geq 3$). By \cite[Proposition 5.4]{GuiLev2}, $A/F$ is cyclic subgroup corresponding to a geodesic on the orbifold. Hence $\langle A, B\rangle$ is not virtually abelian, so these groups are not equivalent.

The rest of the proof of \cite[Theorem 9.14]{GuiLev2} works verbatim in our situation, proving Theorem~\ref{thm:JSJwithtorsion}. 
\end{proof}

\small
\bibliographystyle{abbrv}

\end{document}